\definecolor{qqqqff}{rgb}{0.,0.,1.}
\definecolor{cqcqcq}{rgb}{0.7529411764705882,0.7529411764705882,0.7529411764705882}
\definecolor{ttqqqq}{rgb}{0.2,0.,0.}
\definecolor{qqqqff}{rgb}{0.,0.,1.}
\definecolor{xdxdff}{rgb}{0.49019607843137253,0.49019607843137253,1.}
\definecolor{zzttqq}{rgb}{0.6,0.2,0.}
\definecolor{cqcqcq}{rgb}{0.7529411764705882,0.7529411764705882,0.7529411764705882}
\definecolor{yqyqyq}{rgb}{0.5019607843137255,0.5019607843137255,0.5019607843137255}
\definecolor{uuuuuu}{rgb}{0.26666666666666666,0.26666666666666666,0.26666666666666666}
\definecolor{xdxdff}{rgb}{0.49019607843137253,0.49019607843137253,1.}
\definecolor{qqqqff}{rgb}{0.,0.,1.}
 \font\ncsc=cmcsc10
 \font\ntt=cmtt12
\newcommand{\PP}{\mathbb{P}}
\newcommand{\ZZ}{\mathbb{Z}}
\newcommand{\RR}{\mathbb{R}}
\newcommand{\CC}{\mathbb{C}}
\newcommand{\KK}{\mathbb{K}}
\newcommand{\HH}{\mathbb{H}}
\newcommand{\minus}[1]{\left[ #1 \right]_-}
\newcommand{\plus}[1]{\left[ #1 \right]_+}
\newcommand{\dpar}{\Delta^\mathrm{par}(m_1,m_2,m_3)}
\newcommand{\dcon}{\Delta^\mathrm{con}(m_1,m_2,m_3)}
\newcommand{\dd}{ \mathrm{d} }
\newcommand{\val}{\mathrm{val}}
\newcommand{\fix}{\mathrm{Fix}(\sigma)}
\newcommand{\tg}{ {\mathfrak{t}(\gamma)} }
\newcommand{\hg}{ {\mathfrak{h}(\gamma)} }
\newcommand{\tgs}{ {\mathfrak{t}(\gamma^\sigma)} }
\newcommand{\hgs}{ {\mathfrak{h}(\gamma^\sigma)} }
\newcommand{\gs}{ {\gamma^\sigma} }
\newcommand{\re}{\mathfrak{Re}}
\newcommand{\im}{\mathfrak{Im}}
\newcommand{\parfrac}[2]{\frac{\partial #1}{\partial #2}}
\newcommand{\qd}{q^{1/2}-q^{-1/2}}
\newtheorem{theo}{Theorem}[section]
\newtheorem*{theom}{Theorem}
\newtheorem{prop}[theo]{Proposition}
\newtheorem{coro}[theo]{Corollary}
\newtheorem{lem}[theo]{Lemma}
\theoremstyle{definition}
\newtheorem{defi}[theo]{Definition}
\newtheorem{prob}[theo]{Problem}
\theoremstyle{remark}
\newtheorem{remark}[theo]{Remark}
\newenvironment{rem}[1]{
    \begin{remark}#1}{
    \xqed{\blacklozenge}\end{remark}
}
\theoremstyle{remark}
\newtheorem{example}[theo]{Example}
\newenvironment{expl}[1]{
    \begin{example}#1}{
    \xqed{\lozenge}\end{example}
}
\newcommand{\xqed}[1]{
    \leavevmode\unskip\penalty9999 \hbox{}\nobreak\hfill
    \quad\hbox{\ensuremath{#1}}}
\begin{document}
 
 
\title{A Tropical Computation of Refined Toric Invariants II}
\author{Thomas Blomme}
\maketitle

\begin{abstract}
In 2015, G.~Mikhalkin introduced a refined count for real rational curves in toric surfaces. The counted curves have to pass through some real and complex points located on the toric boundary of the surface, and the count is refined according to the value of a so called quantum index. This count happens only to depend on the number of complex points on each toric divisors, leading to an invariant. First, we give a way to compute the quantum index of any oriented real rational curve, getting rid of the previously needed "purely imaginary" assumption on the complex points. Then, we use the tropical geometry approach to relate these classical refined invariants to tropical refined invariants, defined using Block-G\"ottsche multiplicity. This generalizes the result of Mikhalkin relating both invariants in the case where all the points are real, and the result of the author where complex points are located on a single toric divisor.
\end{abstract}

\tableofcontents

\section{Introduction}

\subsection{Enumeration of rational curves in toric surfaces}

In this paper we consider rational curves in $(\CC^*)^2$. Such a curve corresponds to the choice of two rational functions $F$ and $G$ on the projective line $\CC P^1$. We thus get a parametrized curve $\varphi:t\longmapsto (F(t),G(t))\in (\CC^*)^2$. The function is not defined at the poles and zeros of the rational functions. The collection of orders of vanishing of $F$ and $G$ at each of these points is called the degree $\Delta$ of the curve. It is then possible to define a toric surface $\CC\Delta$, which is a compactification of $(\CC^*)^2$, so that $\varphi$ extends at the zeros and poles of $F$ and $G$, but with value in $\CC\Delta$ instead of $(\CC^*)^2$. For instance, if $(a_i,b_i,c_i)_{1\leqslant i\leqslant d}$ are $3d$ distinct scalars, we can consider parametrized curves of the form
$$t\longmapsto\left( \prod\frac{t-a_i}{t-c_i},\prod\frac{t-b_i}{t-c_i}\right),$$
which is a rational curve of degree $d$. The associated toric surface is the standard projective plane $\CC P^2$, and $a_i,b_i,c_i$ are sent to the coordinate axis of $\CC P^2$. However, this representation tends to use the canonical basis of $(\CC^*)^2$. We prefer to adopt a viewpoint independent of this choice. To do this, let $N$ be a two dimensional lattice and $M$ its dual lattice. They are respectively called the lattices of co-characters and characters. The version independent of coordinates of $(\CC^*)^2$ is $N\otimes\CC^*$. As the group law on $\CC^*$ is multiplicative, we prefer to denote the tensor product $n\otimes z$ by $z^n$. To each element $n\in N$ is then associated a function $z\in\CC^*\longmapsto z^n=n\otimes z\in N\otimes\CC^*$, called a co-character, while to any element $m$ of $M$ is associated a character, also called monomial, $\chi^m: z^n\in N\otimes \CC^*\mapsto z^{\langle m,n\rangle}\in\CC^*$. In the case of $(\CC^*)^2$, these monomials are the classical monomials in the two coordinates $z$ and $w$. Using this point of view, a rational curve can be parametrized in the following way:
$$\varphi:t\in\CC P^1\longmapsto \chi\prod_{i=1}^m (t-\alpha_i)^{n_i}\in N\otimes\CC^*,$$
where $\chi\in N\otimes\CC^*$, $\alpha_i\in\CC$ are some scalars and $n_i\in N$ are some co-characters. the multiset of lattice vectors $\Delta=(n_i)\subset N$ is called the degree of the curve. To $\Delta$ is associated a polygon $P_\Delta$ in $M$, unique up to translation, characterized as the unique polygon such that the outer normal vectors to the sides of $P_\Delta$ are the vectors of $\Delta$, counted with a multiplicity equal to their lattice length. For instance, the polygon associated to $\Delta_d=\{ (-1,0)^d,(0,-1)^d,(1,1)^d\}$ is the standard triangle of side length $d$. To $P_\Delta$ is associated a toric surface $\CC\Delta\supset N\otimes\CC^*$. The toric divisor, \textit{i.e.} the coordinate axis of $\CC P^2$, are in bijection with the directions of the vector $\Delta$. Thus, to each co-character of $\Delta$ is associated a toric divisor.\\

The conjugation $t^n\mapsto\overline{t}^n$ extends to $\CC\Delta$ and makes it into a real surface. We say that a rational curve is real if the parametrization can also be chosen invariant by conjugation. In the above notation, this means that $\chi\in N\otimes\RR^*$, and $\alpha_i\in\RR$, or $\alpha_i$ and $\overline{\alpha_i}$ both appear with the same exponent co-character $n_i\in\RR$.\\

Unless stated otherwise, the vectors in the multiset $\Delta$ will be chosen to be primitive, \textit{i.e.} of lattice length $1$. Some lattice length would correspond to a tangency between the curve and the toric divisor where they meet. The dimension of the space of rational curves of degree $\Delta$ is $m-1$: there is $m$ points $\alpha_i$ to choose, $\chi$ which varies in a $2$-dimensional space, but the space of reparametrization of $\CC P^1$ has dimension $3$. Thus, it seems natural to count the number of curves that satisfy  $m-1$ chosen constraints. The easiest constraint that comes to mind is to pass through a given point. Let $\mathcal{P}$ be a set of generic points in $\CC\Delta$, called a configuration. Let $\mathcal{S}^\CC(\mathcal{P})$ be the set of complex rational curves that pass through $\mathcal{P}$. Then, the cardinal $N_\Delta$ of this set does not depend on the choice of $\mathcal{P}$ as long as it is generic. This number is equal to the degree of some varieties of Severi, fact which explains their invariance.\\

Over the real numbers, the situation is more complicated, since the number of real curves passing through a real configuration may depend on the choice of the configuration. By real configuration, we mean the following: a set of real points and pairs of complex conjugated points. Indeed, if a real curve pass through some point, as it is invariant by conjugation, it also passes through the conjugate point. Let $\mathcal{S}^\RR(\mathcal{P})$ be the number of real curves passing through $\mathcal{P}$. The cardinal of $\mathcal{S}^\RR(\mathcal{P})$ depends on $\mathcal{P}$. However, J-Y.~Welschinger \cite{welschinger2005invariants} proved that for toric del Pezzo surfaces, if the curves are counted with a suitable sign, their number only depends on the number $s$ of pairs of conjugated points in the configuration $\mathcal{P}$. This invariant is dented by $W_{\Delta,s}$.\\

The properties and the computation of both invariants $N_\Delta$ and $W_{\Delta,s}$ have the been at the center of many works. One particular approach to their computation unveiled a tremendously rich field of studies, and could to some extent be considered as the ground point of tropical geometry: the correspondence theorem of G.~Mikhalkin \cite{mikhalkin2005enumerative}. He proved a theorem relating the computation of $N_\Delta$ and $W_{\Delta,0}$ (only real points) to a count of tropical curves along with an algorithm to effectively compute these tropical numbers. Although the values of $N_\Delta$ were already known, it was the first computation of $W_{\Delta,0}$. The computation of the values of $W_{\Delta,s}$ for any value of $s$ was dealt with by E.~Shustin in \cite{shustin2004tropical}. Since, the correspondence has first been generalized in any dimension by T.~Nishinou and B.~Siebert \cite{nishinou2006toric} using logarithmic stable maps. Other approaches to the correspondence theorem have also lead to different proofs of it. The reader can refer to G.~Mikhalkin \cite{mikhalkin2006tropical}, E.~Shustin \cite{shustin2002patchworking} or I.~Tyomkin \cite{tyomkin2012tropical}.\\

The computation of $N_\Delta$ and $W_{\Delta,0}$ through the tropical geometry approach counts tropical curves using two distinct multiplicities. This multiplicity is a product over the vertices of the tropical curves. Following this computation, F.~Block and L.~G\"ottsche \cite{block2016refined} proposed to refine these multiplicities into a polynomial one, also a product over the vertices, and this new refined multiplicity specifies itself on both previous multiplicities when evaluated at $\pm 1$. Moreover, this new multiplicity was proved \cite{itenberg2013block} to also lead to an invariant in the tropical world, but whose analog in the classical world remains mysterious. Moreover, several other enumerative problems can be solved using the tropical geometry approach, and the tropical side can be \textit{refined}, in the sense that multiplicities can be changed into polynomial ones and we still get an invariant count of solution, making refined invariants appear in several other situations, naturally, or less naturally, see for example \cite{schroeter2018refined}, \cite{blechman2019refined} or \cite{bousseau2019tropical}. Conjecturally, the refined invariant in our situation should coincide with the refinement of the Euler characteristic of Severi degrees by the Hirzebrich genera, as proposed by L.~G\"ottsche and V.~Shende in \cite{gottsche2014refined}. Some advances in that direction have been made by J.~Nicaise, S.~Payne and F.~Schroeter in \cite{nicaise2018tropical}.\\

This conjectural meaning, does not prevent these invariants to bear other interpretations, although it would probably emphasize some deep connection between them. One of these other meanings is given by G.~Mikhalkin in \cite{mikhalkin2017quantum}. He provides a way of refining the count of real oriented curves passing through some configuration of point on the boundary of a toric surface, and proved that the computation in some particular case can be dealt with tropically using the refined multiplicities. Details are explained below.

\subsection{Quantum indices of real curves}

In \cite{mikhalkin2017quantum}, Mikhalkin introduced a quantum index for oriented real curves. However, the definition in \cite{mikhalkin2017quantum} is restricted to real oriented curves which have real or purely intersection points with the toric boundary. Let $\varphi:\CC P^1\rightarrow\CC\Delta$ be an oriented real rational curve of degree $\Delta$ and $\mathrm{Sq}:\CC\Delta\rightarrow\CC\Delta$ be the extension of the square map $z^n\mapsto z^{2n}$ to $\CC\Delta$. We say that $\varphi$ has real or purely imaginary intersection points with the toric boundary if all the intersection points of $\mathrm{Sq}\circ\varphi$ with the toric boundary are real. This means that the coordinates of the intersection points, which are the evaluations of some monomials $\chi^m$, are real or purely imaginary. The definition can be extended to any real oriented curve provided that we allow a slight modification.\\

In the case of real or purely imaginary intersection points, the quantum index is defined as the log-area of the curve, which happens to be an half-multiple of $\pi^2$. Let $\varphi:\CC P^1\dashrightarrow N\otimes\CC^*$ be an oriented real rational curve. The dense torus orbit $N\otimes\CC^*$ is endowed with the logarithmic map, and the argument map:
$$\text{Log}:z^n\in N\otimes \CC^*\longmapsto \log|z|n\in N\otimes\RR,$$
$$\arg :z^n\in N\otimes \CC^*\longmapsto \arg(z)n\in N\otimes(\RR/2\pi\ZZ).$$
The image of a curve under these maps are respectively called the \textit{amoeba} and the \textit{coamoeba}. Let $\omega$ be a generator of $\Lambda^2 M$, which is thus a non-degenerate $2$-form on $N$. It extends to respective volume forms $\omega_{|\bullet|}$ and $\omega_\theta$ on the vector space $N_\RR$ and the argument torus $N\otimes(\RR/2\pi\ZZ)$. We can look at the (signed) area of the amoeba and coamoeba for these area forms. Let $\HH=\{\mathfrak{Im}t>0\}\subset\CC P^1$ be the Poincar\'e half-plain, which is one of the two connected components of $\CC P^1\backslash\RR P^1$. We define
$$\mathcal{A}_\text{Log}(\HH,\varphi)=\int_\HH \varphi^*\text{Log}^*\omega_{|\bullet|},$$
$$\mathcal{A}_{\arg} (\HH,\varphi) = \int_\HH \varphi^*\mathrm{arg}^*\omega_\theta.$$

The reason to integrate only over $\HH$ and not $\CC P^1$ is that the integral over the conjugated connected component $\overline{\HH}$ is the opposite, and the whole area is zero. Moreover, these two areas are equal:
$$\mathcal{A}_\text{Log}(\HH,\varphi)=\mathcal{A}_{\arg} (\HH,\varphi).$$
Their common value is denoted $\mathcal{A}(\HH,\varphi)$.\\

If the intersection points with the toric boundary are real or purely imaginary, these common area are an half-integer multiple of $\pi^2$. Otherwise, we can shift the log-area to get an half-integer multiple of $\pi^2$ and we have the following Theorem, which is a variation of Mikhalkin's theorem in \cite{mikhalkin2017quantum}. 

\begin{theom}(Mikhalkin, \ref{prop existence quantum index})
Let $\varphi:\CC P^1\rightarrow\CC\Delta$ be a oriented real rational curve. Let $\varepsilon_j\theta_j$, with $\varepsilon_j=\pm 1$ and $0<\theta_j<\pi$, be the arguments of the complex intersection points of $\HH$ with the toric boundary. Then one has
$$\mathcal{A}(\HH,\varphi)-\pi\sum \varepsilon_j(2\theta_j-\pi) =k(\HH,\varphi)\pi^2\in\frac{\pi^2}{2}\ZZ.$$
The half-integer $k(\HH,\varphi)$ is called the quantum index of the oriented curve.
\end{theom}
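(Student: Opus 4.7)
The idea is a continuity argument reducing the statement to Mikhalkin's original theorem, which handles the purely imaginary case. Define $\Phi(\varphi) = \mathcal{A}(\HH, \varphi) - \pi\sum_j \varepsilon_j(2\theta_j - \pi)$, viewed as a real-valued function on a connected component $\mathcal{C}$ of the moduli space of oriented real rational curves of degree $\Delta$ sharing a fixed combinatorial type (i.e., fixed number of real and complex intersections with each toric divisor, together with signs). The claim $\Phi(\mathcal{C}) \subset \frac{\pi^2}{2}\ZZ$ reduces, via continuity and the discreteness of the target, to (a) local constancy of $\Phi$ on $\mathcal{C}$, and (b) existence of some $\varphi_0 \in \mathcal{C}$ with $\Phi(\varphi_0) \in \frac{\pi^2}{2}\ZZ$.

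For (b), one arranges a representative $\varphi_0 \in \mathcal{C}$ whose complex intersection points are all purely imaginary, i.e.\ $\theta_j = \pi/2$ for every $j$, by an explicit construction that moves each pair $(\alpha_j, \overline{\alpha_j})$ symmetrically in position while preserving real intersections. For this $\varphi_0$ the correction term vanishes identically, and Mikhalkin's original theorem gives $\mathcal{A}(\HH, \varphi_0) \in \frac{\pi^2}{2}\ZZ$, so $\Phi(\varphi_0) \in \frac{\pi^2}{2}\ZZ$.

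For (a), along a smooth deformation $\varphi_s$ in $\mathcal{C}$, I would differentiate $\mathcal{A}(\HH, \varphi_s) = \int_\HH (\arg\circ\varphi_s)^*\omega_\theta$ and decompose the derivative via Stokes' theorem (after excising small disks around the zeros and poles of the monomial coordinates of $\varphi_s$ lying in $\HH$). The contributions from $\partial\HH = \RR P^1$ vanish, because on the real locus each $\arg F_j$ is locked in $\{0,\pi\}$ by reality, and the real intersection points do not move within $\mathcal{C}$. The contribution from a small circle around each complex zero $\alpha_j(s)$ in $\HH$ reduces, by a local computation, to $2\pi\varepsilon_j\dot\theta_j(s)$. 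Summing over $j$ yields $\frac{d}{ds}\mathcal{A}(\HH, \varphi_s) = 2\pi\sum_j \varepsilon_j\dot\theta_j(s)$, which exactly matches the derivative of the correction term, so $\Phi$ is locally constant.

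The hard part will be the local derivative computation around each moving complex zero $\alpha_j(s)$. In adapted monomial coordinates, $F_1$ vanishes at $\alpha_j(s)$ and $F_2(\alpha_j(s))$ has argument precisely $\varepsilon_j\theta_j(s)$; the primitive 1-form $\arg F_1 \, \dd\arg F_2$ has a logarithmic-type singularity at $\alpha_j(s)$, and its boundary integral on a small circle, in the joint limit as the disk radius tends to $0$ and $s$ varies, is what must be shown to equal $2\pi\varepsilon_j\dot\theta_j$. Carrying out this evaluation cleanly, and checking that it is independent of the chosen monomial frame (since different frames give different primitives but the same cohomology class), is the technical heart of the proof. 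A secondary but non-trivial issue is keeping the deformation inside $\mathcal{C}$: one must avoid collisions of zeros of the coordinate monomials and crossings of the real axis by any $\alpha_j(s)$, so the deformation should be chosen along a generic path in the real-analytic stratum defining $\mathcal{C}$.
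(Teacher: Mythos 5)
Your step (a) — local constancy of $\Phi$ via a Stokes computation of the variation of the log-area — is plausible and is consistent with the explicit examples computed later in the paper (for the parabola, $\mathcal{A}=2\pi\arctan c$ with $\theta=\mathrm{arccot}(-c)$; for the tangent ellipse, $\mathcal{A}=2\pi(\varphi-\theta)$, and in both cases one checks $\tfrac{d}{ds}\mathcal{A}=2\pi\sum\varepsilon_j\dot\theta_j$). The genuine gap is step (b): it is simply not true that every connected component of the stratum contains a curve whose complex boundary points are all purely imaginary. The tangent ellipse of Section 3.3.4 is a concrete counterexample. For $\psi(t)=\bigl(t^2-2t\cos\varphi+1,\ t^2-2t\cos\theta+1\bigr)$ the two intersection points lying in $\HH$ are at $t=e^{i\theta}$ and $t=e^{i\varphi}$, with divisor coordinates $2e^{i\theta}(\cos\theta-\cos\varphi)$ and $2e^{i\varphi}(\cos\varphi-\cos\theta)$ (up to inversion, depending on the sign convention for $\omega$); these are simultaneously purely imaginary only when $\theta=\varphi=\pi/2$, which is exactly the excluded degenerate locus where the map becomes a double cover of a line. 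Hence neither component $\{\theta<\varphi\}$ nor $\{\theta>\varphi\}$ contains an anchor point at which Mikhalkin's original theorem applies, and your continuity argument has nothing to propagate from. This is precisely the obstruction that motivates the paper's new definition: the purely imaginary normalization cannot in general be achieved by deforming the curve.

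The paper's proof avoids any deformation of the curve. It works with the single coamoeba $2$-chain $\arg\circ\varphi|_S$ in the argument torus, whose boundary is a union of geodesics, the one attached to the $j$-th complex point sitting at height $\varepsilon_j\theta_j$ for the monomial $\iota_{n'_j}\omega$. For each $j$ one adds the explicit cylinder cobounding that geodesic and its translate at height $\varepsilon_j\pi/2$; the total area of these cylinders is exactly the correction term $\pi\sum_j\varepsilon_j(2\theta_j-\pi)$. The shifted chain has all of its boundary geodesics passing through fixed points of the antipodal involution, so after reducing mod $\pi$ it pushes forward to a cycle on the sphere $N_\pi/\{\pm\mathrm{id}\}$, whose area $\pi^2/2$ quantizes the shifted log-area. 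In other words, the deformation you want should be performed in the target (moving the boundary geodesics of the chain), not in the moduli space of curves; done that way, your continuity idea becomes the paper's one-curve argument and the problematic step (b) disappears.
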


\begin{rem}
If the complex intersection points are purely imaginary, we recover the fact from \cite{mikhalkin2017quantum} that the log-area is an half-integer multiple of $\pi^2$ since the correction term is zero.
\end{rem}

Due to the additive character of the area, the quantum index is also additive. This can be used to compute the quantum index of a family of curves close to the tropical limit.\\

Furthermore, the behavior of the log-area with respect to monomial maps allows one to compute the quantum index of any rational curve with at least one real intersection point with the toric boundary, using only the computation of rational curves of degree less than $2$, namely: a line, a parabola, an ellipse tangent to one of the coordinate axis not intersecting the other ones. This allows us to provide a formula to compute the log-area of any rational curve provided there is at least one real intersection point with the toric boundary. See Theorem \ref{log-area rational curve}.\\

\subsection{Refined enumerative geometry} 

Now, let $\mathcal{P}$ be a real configuration, \textit{i.e.} consisting of real points and pairs of complex conjugated points, of $m$ points on the toric boundary, such that any toric divisor contains a number of points equal to the integral length of the corresponding side in the polygon $P_\Delta$. Label the sides of $P_\Delta$ by $1,\dots,p$. Let $s_i$ be the number of pairs of complex conjugated points on the toric divisor corresponding to the $i$th side. The Vi\`ete formula ensures that there exists a curve of degree $\Delta$ passing through $\mathcal{P}$ if and only if the points of $\mathcal{P}$ satisfy the Menelaus condition, which we therefore assume: the product of the coordinates of the points is $(-1)^m$. Concretely, this means that the position of the last point is forced by the choice of the other points.\\ 

Let $\mathcal{S}(\mathcal{P})$ be the set of oriented real rational curves of degree $\Delta$ such that for each point $p\in\mathcal{P}$, the curve passes through $p$ or $-p$. Notice that if a real curve passes through a complex point, it also passes through the conjugated point. This set of curves could also be defined in the following way: recall the square map $\mathrm{Sq}:\CC\Delta\rightarrow\CC\Delta$, then $\mathcal{S}(\mathcal{P})$ is the set of oriented real rational curves $C$ such that $\mathrm{Sq}(C)$ passes through $\mathrm{Sq}(\mathcal{P})$. The curves $\mathrm{Sq}(C)$ are tangent to the toric divisors at all intersection points.\\

For an oriented curve $(S,\varphi)\in\mathcal{S}(\mathcal{P})$, with $\varphi:\CC P^1\rightarrow \CC C$ a parametrization of the curve, and the orientation given by $S\subset\CC P^1\backslash\RR P^1$ one of the connected components. We consider the composition of the parametrization $\varphi$ with the logarithmic map, restricted to the real locus:
$$\text{Log}|\varphi|:\RR P^1\rightarrow N\otimes\RR=N_\RR.$$
Its image is $\text{Log}(\varphi(\RR P^1))\subset N_\RR$. We now consider the logarithmic Gauss map that associates to each point of $\RR P^1$ the tangent direction to $\text{Log}(\varphi(\RR P^1))$, which is a point in $\PP^1(N_\RR)\simeq\RR P^1$, oriented by $\omega$. Since both copies of $\RR P^1$ are oriented, one can consider the degree $\text{Rot}_\text{Log}(S,\varphi)$ of the logarithmic Gauss map, \textit{i.e. }when the domain $\RR P^1$ is oriented by the choice of the complex orientation of the curve, and the target $\mathbb{P}^1(N_\RR)\simeq\RR P^1$ oriented by $\omega$. Let $\sigma(S,\varphi)=(-1)^\frac{m-\text{Rot}_\text{Log}(S,\varphi)}{2}$, which is equal to $\pm 1$. We then set
$$R_{\Delta,k}(\mathcal{P})=\sum_{(S,\varphi)\in\mathcal{S}_k(\mathcal{P})} \sigma(S,\varphi)\in\ZZ,$$
where $\mathcal{S}_k(\mathcal{P})$ denotes the subset of $\mathcal{S}(\mathcal{P})$ of oriented real rational curves having quantum index $k$. Finally, we define
$$R_\Delta(\mathcal{P})=\frac{1}{4}\sum_{k}R_{\Delta,k}(\mathcal{P})q^k\in \ZZ\left[ q^{\pm\frac{1}{2}}\right].$$

Although it is not stated in these terms, the following invariance statement is proven by Mikhalkin in \cite{mikhalkin2017quantum}. More precisely, using the given above definition of quantum indices, the exact same proof applies.

\begin{theo}[Mikhalkin\cite{mikhalkin2017quantum}]
As long as the configuration $\mathcal{P}$ is generic, the Laurent polynomial $R_{\Delta}(\mathcal{P})$ only depends on $s=(s_1,\dots,s_p)$.
\end{theo}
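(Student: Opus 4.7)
The plan is to adapt Mikhalkin's invariance argument from \cite{mikhalkin2017quantum}, now using the extended quantum index from the preceding theorem. First, I would set up the space of real point configurations $\mathcal{P}$ on the toric boundary with prescribed numbers $s_i$ of conjugate pairs on the $i$-th divisor and satisfying the Menelaus condition. This is a real manifold of dimension $m-1$ whose generic locus is the complement of a codimension-one discriminant along which $\mathcal{S}(\mathcal{P})$ changes; connectedness of each fixed-$s$ stratum of the generic locus is standard. The goal is then to show that $R_\Delta(\mathcal{P})$ is locally constant on the generic locus and unchanged across every wall.

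Local constancy is the easy part. At a generic $\mathcal{P}$ the set $\mathcal{S}(\mathcal{P})$ is finite and the implicit function theorem provides a smooth deformation of each $(S,\varphi)\in\mathcal{S}(\mathcal{P})$ as $\mathcal{P}$ varies. The logarithmic Gauss map deforms continuously, so $\mathrm{Rot}_\text{Log}$, being integer-valued, remains constant, and so does $\sigma(S,\varphi)$. The log-area $\mathcal{A}(\HH,\varphi)$ and the correction term $\pi\sum_j \varepsilon_j(2\theta_j-\pi)$ both depend continuously on $\varphi$ and $\mathcal{P}$, and their difference is $\pi^2/2$-quantized, so the quantum index $k(S,\varphi)$ is locally constant too. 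Hence $R_\Delta(\mathcal{P})$ is locally constant on the generic locus.

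The heart of the proof is the wall-crossing step. Generically, at a wall two oriented solutions $(S_1,\varphi_1)$ and $(S_2,\varphi_2)$ collide via a nodal degeneration. By the bifurcation analysis already used in the unrefined cases of Welschinger and Mikhalkin, the two colliding curves carry opposite signs $\sigma$. For the refined cancellation I additionally need $k(\HH,\varphi_1)=k(\HH,\varphi_2)$ in a neighborhood of the wall. This follows from continuity: both curves share a common limit at the wall, so their log-areas and correction terms converge to the same limiting values, and since the quantum index is discrete it must already coincide on each side. The colliding pair therefore contributes $\sigma\,q^k+(-\sigma)\,q^k=0$ to $R_\Delta$, so the Laurent polynomial is preserved across the wall.

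The main obstacle is handling the walls exhaustively. Beyond the generic nodal bifurcation one must verify that no new walls are introduced by the correction term, in particular walls where one of the arguments $\theta_j$ degenerates. The critical case is when a complex marked point transitions to a real one, since then $\varepsilon_j(2\theta_j-\pi)\to\pm\pi$ while the index $j$ leaves the sum altogether; but such a transition changes $s$ and lies on the boundary between different strata rather than inside a fixed-$s$ stratum, so it does not affect the argument. Once this is checked and Mikhalkin's original wall analysis from \cite{mikhalkin2017quantum} is carried over verbatim, the theorem follows.
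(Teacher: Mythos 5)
Your proposal is essentially the paper's own treatment of this statement: the paper does not reprove the invariance but cites Mikhalkin's wall-crossing argument from \cite{mikhalkin2017quantum} and observes that, with the corrected definition of the quantum index, ``the exact same proof applies.'' The genuinely new verification --- that the correction term $\pi\sum_j\varepsilon_j(2\theta_j-\pi)$ varies continuously on a fixed-$s$ stratum (no $\theta_j$ degenerating to $0$ or $\pi$), so the quantum index remains locally constant and constant through each fold --- is exactly the point you check, and you check it correctly; the remaining bifurcation analysis (opposite signs $\sigma$ at a fold, the full inventory of walls including reducible degenerations and collisions of boundary points, and the role of the symmetric configuration $\pm\mathcal{P}$ in the pairing) is deferred to Mikhalkin in your write-up just as it is in the paper.
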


The above Laurent polynomial only depending on $s$ is denoted by $R_{\Delta,s}$.

\begin{rem}
It is important for the result and for its proof to consider not only curves passing through $\mathcal{P}$ but also through the symmetric points, otherwise the invariance might fail.
\end{rem}

In the case of a totally real configuration of points, \textit{i.e.} $s_i=0$, using the correspondence theorem, Mikhalkin proved that the invariant $R_{\Delta,(0)}$ coincides, up to a normalization, with the tropical refined invariant $N_\Delta^{\partial,\text{trop}}$. This invariant is obtained as follows. (For a more complete description, see section \ref{tropical enumerative inv}.) We consider tropical curves of degree $\Delta\subset N$. For each vector $n_j$ in $\Delta$, one chooses  line directed by $n_j$, and such that the configuration of lines is generic. Then we count rational tropical curves of degree $\Delta$ whose unbounded ends are contained in the chosen lines, using the Block-G\"ottsche multiplicities from \cite{block2016refined}. The result does not depend on the chosen configuration, and is denoted by $N_\Delta^{\partial,\text{trop}}$.

\begin{theo}[Mikhalkin\cite{mikhalkin2017quantum}]
One has
$$R_{\Delta,(0)}=(q^{1/2}-q^{-1/2})^{m-2}N_\Delta^{\partial,\text{trop}}.$$
\end{theo}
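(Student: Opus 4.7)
The plan is to prove the identity by the tropical correspondence theorem in its refined form. Since $R_{\Delta,(0)}(\mathcal{P})$ does not depend on the generic real configuration $\mathcal{P}$, one chooses a degenerating family $\mathcal{P}_t$ whose image under the logarithmic map converges to a generic tropical boundary configuration $\mathcal{P}^{\mathrm{trop}}$: on the toric divisor dual to each $n_j\in\Delta$, the real points are sent along the divisor with controlled logarithms. The correspondence theorem (Mikhalkin \cite{mikhalkin2005enumerative}, Shustin \cite{shustin2004tropical}), adapted to this boundary-constrained setting, gives, for $t$ sufficiently large, a bijection between the oriented real curves in $\mathcal{S}(\mathcal{P}_t)$ and pairs $(\Gamma,\ell)$ where $\Gamma$ is a trivalent rational tropical curve of degree $\Delta$ matching $\mathcal{P}^{\mathrm{trop}}$ and $\ell$ is a choice of real oriented lift. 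An Euler characteristic count shows that such a $\Gamma$ has exactly $m-2$ vertices and $m-3$ bounded edges, which is where the prefactor $(q^{1/2}-q^{-1/2})^{m-2}$ will eventually come from.

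By additivity of the log-area, the quantum index $k(\HH,\varphi)$ of a lift decomposes as a sum of local contributions $k_V$ indexed by the trivalent vertices of $\Gamma$. Near a vertex $V$ dual to a lattice triangle of area $m_V/2$, an appropriate monomial change of coordinates reduces the picture to a real oriented rational curve of small degree meeting the local toric boundary transversally; its local quantum index takes one of $m_V$ possible values according to which sheet of the local $m_V$-fold $\mathrm{Sq}$-cover the branch sits on. The computation of these local contributions, together with the sign $\sigma$ coming from the logarithmic Gauss rotation of the local piece, relies on the formula for $\mathcal{A}(\HH,\varphi)$ stated above and on the behaviour of the log-area under monomial maps discussed in the introduction.

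Summing the weights $\sigma(S,\varphi)q^{k(\HH,\varphi)}$ over the oriented real lifts of a fixed $\Gamma$, the $1/4$ in the definition of $R_\Delta$ absorbs the redundancies in orientation, and the local analysis at each vertex telescopes into the identity
\[
\sum_{\text{lifts at }V}\sigma\cdot q^{k_V}\;=\;q^{m_V/2}-q^{-m_V/2}.
\]
Taking the product over the $m-2$ trivalent vertices of $\Gamma$ gives $(q^{1/2}-q^{-1/2})^{m-2}$ times the Block--G\"ottsche multiplicity of $\Gamma$; summing over tropical curves yields the result. The main obstacle is precisely this local telescoping at a vertex: one must verify that the quantum indices of the $m_V$ local branches form an arithmetic progression with appropriately alternating signs, so that their signed $q$-weighted sum collapses to $q^{m_V/2}-q^{-m_V/2}$. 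This is what makes the Block--G\"ottsche polynomial emerge as the refinement of the classical Mikhalkin multiplicity.
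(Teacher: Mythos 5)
Your overall strategy is the same one the paper uses for the general Theorem \ref{theorem paper} (of which this statement is the case $s=0$): degenerate the boundary configuration to a tropical limit, apply a boundary-constrained correspondence theorem, use additivity of the log-area over the fixed vertices of the limiting tropical curve, and reduce to a local computation at each trivalent vertex whose product reproduces $(q^{1/2}-q^{-1/2})^{m-2}$ times the Block--G\"ottsche multiplicity. The skeleton, including the count of $m-2$ vertices, is right. But the step you yourself single out as ``the main obstacle'' is both left unproved and mis-described, and it is where all the content of the theorem lives.

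Concretely: at a real trivalent vertex $V$ the local picture is \emph{not} ``$m_V$ branches whose quantum indices form an arithmetic progression with alternating signs.'' The local curve is the image of a line under a monomial map $A$ with $|\det A|=m_V$; by Lemma \ref{lemma monomial behavior} its log-area is $\pm m_V\pi^2/2$ according to orientation only, so every local real lift has quantum index exactly $+m_V/2$ or $-m_V/2$ --- there is no spread of $m_V$ values. What actually has to be proved (Proposition \ref{prop trivalent real vertex} and step (i) of the proof of Proposition \ref{firstordersol}) is that the \emph{symmetrized} local problem, through $\pm p_1$ and $\pm p_2$, has exactly $4$ real solutions irrespective of $m_V$ (this is precisely why one must count curves through the symmetric configuration: the unsymmetrized count depends on the signs of the coordinates and on the parity of $m_V$), that the $8$ oriented lifts contribute $4\left(q^{m_V/2}-q^{-m_V/2}\right)$, and that when $V$ is glued to the rest of the curve only part of the sign choices are compatible with a given partial solution while the equation for the gluing parameter $\alpha_\gamma$ restores the factor $q^{m_V/2}-q^{-m_V/2}$; this bookkeeping of deck transformations is exactly what the global prefactor $\frac{1}{4}$ absorbs. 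The arithmetic-progression phenomenon you invoke does occur, but only at vertices carrying complex marked points (the parabola and ellipse computations of Section \ref{local computations}), which are absent when $s=0$. Finally, you also need that each combinatorial local solution lifts uniquely to an actual curve near the tropical limit; this is the invertibility of the Jacobian in Theorem \ref{realization theorem} (Mikhalkin's transversality argument in the original setting), not the standard point-constrained correspondence theorem, and should be invoked explicitly rather than as an unspecified ``adaptation.''
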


\begin{rem}
The exponent $m-2$ corresponds to the number of vertices of a tropical curve involved in the enumerative problem defining $N_\Delta^{\partial\text{trop}}$. Thus, the normalization $(q^{1/2}-q^{-1/2})^{m-2}$ amounts to clear the denominators of the Block-G\"ottsche multiplicities from \cite{block2016refined}. We recall their definition in section \ref{tropical enumerative inv}.
\end{rem} 

The results of Mikhalkin reduce the computation of the invariants $R_{\Delta,(0)}$ to a tropical computation of $N_\Delta^{\partial,\text{trop}}$. In this paper we prove that the computation all the $R_{\Delta,s}$ can also be carried out using tropical geometry.\\

More precisely, we have the following theorem. Let $\Delta=(n_j)\subset N$ still be a family of primitive vectors whose total sum is zero, allowing us to consider curves of degree $\Delta$ inside the toric surface $\CC\Delta$, and choose decompositions $r_j+2s_j=l(n_j)$. We denote by $\Delta(s)$ the family $(\Delta\backslash\{n_j^{2s_j}\}_j)\cup\{(2n_j)^{s_j}\}_j$, where $2s_j$ copies of $n_j$ are replaced by $s_j$ copies of $2n_j$. Assume that for some $i$, we have $r_i>0$, meaning there is at least one real intersection point with the toric boundary. The degree $\Delta(s)$, not consisting only of primitive vectors anymore, still allows us to consider tropical curves of degree $\Delta(s)$, and the associated refined invariant $N_{\Delta(s)}^{\partial,\text{trop}}$.

\begin{theom}[\ref{theorem paper}]
For $\Delta,s$ chosen as above, the refined invariants satisfy
$$R_{\Delta,s}=2^{|s|}\frac{(\qd)^{m-2-|s|}}{(q-q^{-1})^{|s|}}N^{\partial,\text{trop}}_{\Delta(s)} , $$
where $|s|=\sum_1^p s_i$ is the total number of complex pairs.
\end{theom}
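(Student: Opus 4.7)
The plan is to proceed by a tropical degeneration. I would work with a one-parameter family of real configurations $\mathcal{P}_t$ on the toric boundary of $\CC\Delta$ whose images in $N_\RR$, under a scaled logarithmic map, converge as $t\to 0$ to a generic tropical configuration of points on unbounded rays in the directions of $\Delta(s)$. The key observation is that after applying the square map $\mathrm{Sq}$, each pair of complex conjugate points on a toric divisor collapses to a single ``doubled'' real marked point, which is precisely why the relevant tropical degree is $\Delta(s)$: each weight-$2$ end replaces two weight-$1$ ends on a divisor carrying a complex conjugate pair.

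First I would establish a correspondence-type statement identifying, in this tropical limit, each oriented real rational curve $(S,\varphi)\in\mathcal{S}(\mathcal{P}_t)$ with the data of a tropical curve $\Gamma$ of degree $\Delta(s)$ through the limiting configuration, together with combinatorial data specifying its classical lift. Since $\Gamma$ is rational with $m-|s|$ ends it is trivalent with $m-|s|-2$ vertices. The additivity of the log-area, together with the boundary-shifted quantum index of Theorem \ref{prop existence quantum index}, allows one to write $k(S,\varphi)$ as a sum of local vertex contributions plus a boundary correction at each weight-$2$ end, and the sign $\sigma(S,\varphi)$ factors similarly. Hence $R_{\Delta,s}$ rewrites as a sum over $\Gamma$ of a product of local factors, one per vertex and one per weight-$2$ end.

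Next I would compute these local factors. At an interior trivalent vertex $v$ of $\Gamma$ the refined sum over real oriented lifts, combined with the $\tfrac14$ normalization in the definition of $R_\Delta$, should give the Mikhalkin contribution $q^{w_v/2}-q^{-w_v/2}=(\qd)\cdot[w_v]_q$, where $[w_v]_q$ is the Block--G\"ottsche multiplicity; this is the computation already carried out in \cite{mikhalkin2017quantum} and in the author's earlier work for primitive ends. At each of the $|s|$ weight-$2$ boundary ends the classical curve is tangent to the toric divisor, and a new local computation should yield the contribution $2/(q-q^{-1})$: the factor $2$ corresponds to the two real branches at the pair of conjugate boundary points, and the denominator $q-q^{-1}$ arises from combining the tropical limit of the log-area with the boundary shift $\pi\varepsilon_j(2\theta_j-\pi)$ and the contribution of the logarithmic Gauss rotation near the tangency. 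Multiplying the $m-|s|-2$ interior factors by the $|s|$ end factors yields $(\qd)^{m-2-|s|}\cdot 2^{|s|}/(q-q^{-1})^{|s|}$ times $\prod_v[w_v]_q$, which summed over $\Gamma$ gives exactly the right-hand side of the statement.

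The main obstacle will be the local analysis at the weight-$2$ ends. One must verify rigorously that the family of real curves in $\mathcal{S}(\mathcal{P}_t)$ degenerates, in a neighborhood of each conjugate pair of boundary points, to a well-controlled local model, essentially an ellipse tangent to one coordinate axis in the spirit of the degree-$2$ discussion already used elsewhere in the paper, and that the global assumption $r_i>0$ for some $i$ is precisely what pins down the sign ambiguity of the boundary shift $\pi\sum\varepsilon_j(2\theta_j-\pi)$: without at least one real intersection point there is no preferred base value for that shift. Once this local contribution is under control, the rest of the proof is combinatorial bookkeeping paralleling the primitive-degree case.
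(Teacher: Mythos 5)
Your overall architecture (degenerate $\mathcal{P}_t$ tropically, prove a correspondence theorem, multiply local refined contributions) is the same as the paper's, and your final bookkeeping $(m-|s|-2)$ vertices times $|s|$ end-factors does reproduce the right answer. But there is a genuine gap in the middle step: you assume that the contribution of each complex conjugate pair is a \emph{local} factor $2/(q-q^{-1})$ attached to the corresponding weight-$2$ end of the tropical curve of degree $\Delta(s)$, with every trivalent vertex contributing the standard $(\qd)[m_V^\CC]_q$. That is not what happens for an individual classical lift. The classical curves have degree $\Delta$ (all ends primitive), so one must sum over all real rational parametrizations of degree $\Delta$ of the fixed plane tropical curve; by Proposition \ref{realstruc} these are indexed by admissible splitting loci $\mathcal{R}\subset\Gamma_{\text{even}}$, and the pair of conjugate branches can merge at a vertex arbitrarily far from the boundary. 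The vertex where the merging occurs is quadrivalent or pentavalent, modeled on the parabola of degree $\dpar$ or the tangent ellipse of degree $\dcon$ (the tangent ellipse is the model for a vertex absorbing \emph{two} conjugate pairs, not for a single weight-$2$ end as you suggest), and its refined count depends nontrivially on the arguments of the complex boundary points, through factors $\plus{2\theta-1}$ and the two-variable sum $S(\theta,\varphi)$ of Corollary \ref{coro ellipse problem}. No single real tropical parametrization has a multiplicity of your claimed product form; the product formula only emerges after summing over all splitting loci and invoking the telescoping identity
$$R+\frac{C}{\minus{1}}\sum_{\theta\in\Theta}\plus{2\theta-1}=\minus{\tfrac{m_V}{2}}\Bigl(R_1+\frac{C_1}{\minus{1}}\sum_{\Theta_1}\plus{2\theta_1-1}\Bigr)\Bigl(R_2+\frac{C_2}{\minus{1}}\sum_{\Theta_2}\plus{2\theta_2-1}\Bigr),$$
whose proof requires the exact case analysis of $S(\theta,\varphi)$ according to the signs of $\theta_1-\theta_2$ and $\theta_1+\theta_2-1$. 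Your sketch contains no mechanism for this cancellation, and without it the "local factor at the end" claim is simply false.

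Two smaller inaccuracies: the factor $2$ in $2/(q-q^{-1})$ does not come from "two real branches at the conjugate boundary points" but from evaluating the quantum-index shift, i.e.\ $\plus{2\theta-1}\to\plus{0}=2$ when the boundary correction $\pi\varepsilon_j(2\theta_j-\pi)$ is absorbed; and the hypothesis $r_i>0$ is used primarily to root the parametrizing trees at a real end and to guarantee each component of $\Gamma_{\text{even}}$ has a unique stem (hence a well-defined enumeration of real structures), not merely to fix a base value of the shift. Finally, the correspondence step you black-box is itself a substantial part of the argument (invertibility of the Jacobian of $\Theta$ at every first-order solution, including the non-purely-imaginary condition on $\beta$ at pentavalent vertices), so it cannot be dismissed as routine.
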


This paper is a sequel to the paper \cite{blomme2020tropical} in which we computed the invariants $R_{\Delta,s}$ when $s$ is of the form $(s_1,0,\dots,0)$, meaning that all complex points lie on the same toric divisor, and complex points were purely imaginary. Briefly, generic configurations $\mathcal{P}$ for which the refined count gives the value of the invariant need to be regular values of the function that maps a curve to the coordinates of its intersection point with the toric boundary. Moreover, one previously needed that intersection points with the toric boundary were purely imaginary to define the quantum index as the logarithmic area. For those specific values of $s$, one can show the existence of regular values for which complex points are purely imaginary. The purely imaginary assumption also prevents an effective computation using tropical geometry. The new general definition of the quantum index to the case of curves with non purely imaginary intersection points with the toric boundary allows one to compute the invariant in the general case.\\

The paper is organized as follows. In the second section we recall the basics about tropical curves, real tropical curves and tropicalization. We also describe the possible real structures one can put on a rational tropical curve. Then, the third section is devoted to the definition of quantum indices in the general setting, \textit{i.e.} complex intersection points with the boundary may not be purely imaginary. We also compute the quantum index of  some auxiliary curves, leading to a general formula in Theorem \ref{log-area rational curve}. In the fourth section we define properly the classical and tropical enumerative problems that give birth to refined invariants in both settings, allowing us to state our main result. The rest of the paper is devoted to the proof of this same result. In the fifth section we prove an extension of the correspondence theorem from \cite{blomme2020tropical}, which was itself a real version of the correspondence theorem from I.~Tyomkin \cite{tyomkin2017enumeration}. In the last section, we use this theorem and make the refined count of solutions close to a tropical curve to relate the multiplicities in both enumerative problems and finally prove Theorem \ref{theorem paper}. \\

{\it Acknowledgments} I am grateful to Ilia Itenberg for numerous discussions leading to this result, and to Grigory Mikhalkin for several explanations about the quantum indices. The author is funded by an ENS PhD grant.

\section{Tropical curves  and real tropical curves}

The present section, included for completeness, can also be found in \cite{blomme2020tropical}.

	\subsection{Real abstract tropical curves}

Let $\overline{\Gamma}$ be a finite connected graph without bivalent vertices. Let $\overline{\Gamma}_\infty^0$ be the set of $1$-valent vertices of $\overline{\Gamma}$, and $\Gamma=\overline{\Gamma}\backslash\overline{\Gamma}^0_\infty$. If $m$ denotes the cardinal of $\overline{\Gamma}_\infty^0$, its elements are labeled with integers from $[\![1;m]\!]$. We denote by $\Gamma^0$ the set of vertices of $\Gamma$, and by $\Gamma^1$ the set of edges of $\Gamma$. The non-compact edges resulting from the eviction of $1$-valent vertices are called \textit{unbounded ends}. The set of unbounded ends is denoted by $\Gamma^1_\infty$, while its complement, the set of bounded edges, is denoted by $\Gamma_b^1$. Notice that $\Gamma_\infty^1$ is also labeled by $[\![1;m]\!]$. Let $l:\gamma\in\Gamma_b^1\mapsto |\gamma|\in\RR_+^*=]0;+\infty[$ be a function, called length function. It endows $\Gamma$ with the structure of a metric graph by decreting that a bounded edge $\gamma$ is isometric to $[0;|\gamma|]$, and an unbounded end is isometric to $[0;+\infty[$.

\begin{defi}
Such a metric graph $\Gamma$ is called an \textit{abstract tropical curve}.
\end{defi}

An isomorphism between two abstract tropical curves $\Gamma$ and $\Gamma'$ is an isometry $\Gamma\rightarrow\Gamma'$. In particular an automorphism of $\Gamma$ does not necessarily respect the labeling of the unbounded ends since it only respects the metric. Therefore, an automorphism of $\Gamma$ induces a permutation of the set $I=[\![1;m]\!]$ of unbounded ends.

\begin{defi}
Let $\Gamma$ be an abstract tropical curve. A \textit{real structure} on $\Gamma$ is an involutive isometry $\sigma:\Gamma\rightarrow\Gamma$. A \textit{real abstract tropical curve} is an abstract tropical curve enhanced with a real structure.
\end{defi}

Since a real structure $\sigma:\Gamma\rightarrow\Gamma$ has to preserve the metric, for any bounded edge $\gamma$, one has $|\gamma|=|\sigma(\gamma)|$. The real structure also induces an involution on the set of ends $I=[\![1;m]\!]$ of $\Gamma$. The fixed ends are called \textit{real ends} and the pairs of exchanged ends are called the \textit{conjugated ends}, or \textit{complex ends}. The fixed locus of $\sigma$ is denoted by $\fix$. It is a subgraph of $\Gamma$.\\

\begin{expl}
\begin{itemize}[label=-]
\item The trivial real structure $\sigma=\text{id}_\Gamma$ is the most common example, useful despite its simplicity.
\item If $\Gamma$ is an abstract tropical curve and $e,e'\in\Gamma^1_\infty$ are two unbounded ends adjacent to the same vertex $w$, another example is given by permuting the two unbounded ends $e$ and $e'$, and leaving the rest of the graph invariant.
\item See Figure \ref{real tropical curve} for another example. Two exchanged edges are draw nearby each other.
\end{itemize}
\end{expl}

	\subsection{Real parametrized tropical curves}

Recall that we have two dual lattices $N$ and $M$, and $N_\RR=N\otimes\RR$. We now define parametrized tropical curves in $N_\RR$.

\begin{defi}
A \textit{parametrized tropical curve} in $N_\RR\simeq\RR^2$ is a pair $(\Gamma,h)$, where $\Gamma$ is an abstract tropical curve and $h:\Gamma\rightarrow\RR^2$ is a map satisfying the following requirements:
\begin{itemize}
\item For every edge $E\in\Gamma^1$, the map $h|_E$ is affine. If we choose an orientation of $E$, the value of the differential of $h$ taken at any interior point of $E$, evaluated on a tangent vector of unit length, is called the slope of $h$ alongside $E$. This slope must lie in $N$.
\item We have the so called \textit{balancing condition}: at each vertex $V\in\Gamma^0$, if $E$ is an edge containing $V$, and $u_E$ is the slope of $h$ along $E$ when $E$ is oriented outside $V$, then 
$$\sum_{E:\partial E\ni V} u_E=0\in N.$$
\end{itemize}
\end{defi}

Two parametrized curves $h:\Gamma\rightarrow N_\RR$ and $h':\Gamma'\rightarrow N_\RR$ are isomorphic if there exists an isomorphism of abstract tropical curves $\varphi:\Gamma\rightarrow\Gamma'$ such that $h=h'\circ\varphi$.\\

\begin{defi}
A \textit{real parametrized tropical curve} is a triplet $(\Gamma,\sigma,h)$, where $(\Gamma,h)$ is a parametrized tropical curve, $\sigma$ is a real structure on $\Gamma$, and $h$ is $\sigma$-invariant: $h\circ\sigma=h$.
\end{defi}

\begin{rem}
In particular, two vertices that are exchanged by $\sigma$ have the same image under $h$, and two edges that are exchanged by $\sigma$ have the same slope and the same image. Such edges are called \textit{double edges}. If they are unbounded, we call them a \textit{double end}. Thus, the image $h(\Gamma)\subset N_\RR$ may not be sufficient to recover $\Gamma$ and the real structure, since for instance there is no way of distinguishing a double end from a simple end with twice their slope.
\end{rem}

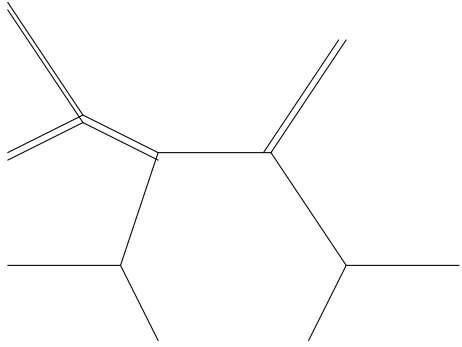
\begin{figure}
\begin{center}

\begin{tikzpicture}[line cap=round,line join=round,>=triangle 45,x=0.5cm,y=0.5cm]
\clip(0.,0.) rectangle (12.,9.);
\draw (0.,9.)-- (2.,6.);
\draw (2.,6.)-- (0.,5.);
\draw (2.,6.)-- (4.,5.);
\draw (0.,8.8)-- (2.,5.8);
\draw (2.,5.8)-- (0.,4.8);
\draw (2.,5.8)-- (4.,4.8);

\draw (4.,5.)-- (3.,2.);
\draw (3.,2.)-- (0.,2.);
\draw (3.,2.)-- (4.,0.);
\draw (4.,5.)-- (7.,5.);

\draw (7.,5.)-- (9.,8.);
\draw (6.8,5.)-- (8.8,8.);

\draw (7.,5.)-- (9.,2.);
\draw (9.,2.)-- (8.,0.);
\draw (9.,2.)-- (12.,2.);
\end{tikzpicture}
\caption{\label{real tropical curve} Abstract real tropical curve with its real structure depicted by doubling the exchanged edges.}
\end{center}
\end{figure}

\begin{rem}
We could assume that $M=N=\ZZ^2$, but the distinction is now useful since the lattice $M$ is a set of functions on the space $N_\RR$ where the tropical curves live, while $N$ is the space of the slopes of the edges of a tropical curve. Moreover, notice that we deal with tropical curves in the affine space $N_\RR$, identified with its tangent space at $0$.
\end{rem}

If $e\in\Gamma^1_\infty$ is an unbounded end of $\Gamma$, let $n_e\in N$ be the slope of $h$ alongside $e$, oriented out of its unique adjacent vertex, \textit{i.e.} toward infinity. The multiset
$$\Delta=\{n_e\in N|e\in\Gamma^1_\infty \}\subset N,$$
is called the \textit{degree} of the parametrized curve. It is a multiset since an element may appear several times. Let $(e_1,e_2)$ be a basis of $N$. We say that a curve is of degree $d$ if $\Delta=\Delta_d=\{(-e_1)^d,(-e_2)^d,(e_1+e_2)^d\}$. Using the balancing condition, one can show that $\sum_{n_e\in\Delta}n_e=0$.\\


\begin{defi}
\begin{itemize}
\item[-] Let $\Gamma$ be an abstract tropical curve. The \textit{genus} of $\Gamma$ is its first Betti number $b_1(\Gamma)$.
\item[-] A curve is \textit{rational} if it is of genus $0$.
\item[-] A parametrized tropical curve $(\Gamma,h)$ is \textit{rational} if $\Gamma$ is rational.
\end{itemize}
\end{defi}

\begin{rem}
A parametrized tropical curve is then rational if the graph that parametrizes it is a tree.
\end{rem}

	\subsection{Plane tropical curves}
	
	We now look at plane tropical curves, which are the images of the parametrized tropical curves. To define a plane tropical curve, we consider a \textit{tropical polynomial}: for $x\in N_\RR$, put
	$$P(x)=\max_{u\in P_\Delta}(a_u+\langle u,x\rangle ),$$
	where $P_\Delta\subset M$ is a convex lattice polygon, and $a_u\in\RR\cup\{-\infty\}$ are the coefficients of the polynomial, different from $-\infty$ if $m\in M$ is a vertex of $P_\Delta$. The polygon $P_\Delta$ is called the \textit{Newton polygon} of the plane tropical curve. If we choose a basis of $M$ and $N$, the tropical polynomial $P$ takes the following form:
	$$P(x,y)=\max_{(i,j)\in P_\Delta}(a_{ij}+ix+jy).$$
	The tropical polynomial $P$ is then a piecewise affine convex function which is the maximum of a finite number of affine functions. We assume that $P_\Delta$ contains more than one point, otherwise $P$ is an affine function. The tropical polynomial $P$ induces a subdivision of $P_\Delta$ with the following rule: $u,u'\in P_\Delta$ are connected by an edge if $\{x\in N_\RR : P(x)=a_u+\langle u,x\rangle=a_{u'}+\langle u',x\rangle\}\neq\emptyset$. The \textit{corner locus} $C$ of $P$, \textit{i.e.} the set where at least two of the affine functions realize the maximum, is a rectilinear graph in $N_\RR$. Equivalently, this is the set of points where $P$ is not differentiable. The subdivision of $P_\Delta$ induced by $P$ is \textit{dual} to the corner locus $C$ in the sense that there exists natural bijections between the following pairs of sets: edges of $C$ and edges of the subdivision, vertices of $C$ and polygons of the subdivision, components where $P$ is smooth equal to one of the affine functions and vertices of the subdivision.
	
	\begin{defi}
The \textit{plane tropical curve} $C$ associated to a tropical polynomial $P$ is the corner locus of $P$, enhanced with the following weights on the edges of $C$: the weight of an edge is the lattice length of the dual edge in the subdivision of $P_\Delta$. The polygon $P_\Delta$ is called the \textit{degree} of the curve $C$.
\end{defi}

Plane tropical curves can be characterized as finite weighted graphs (weights on the edges) with unbounded ends in $N_\RR$, such that the edges are affine with slope in $N$, and the vertices satisfy the following balancing condition: if $E$ is an edge of weight $w_E$ adjacent to a vertex $V$, and $u_E$ is a primitive lattice vector directing $E$ oriented outward from $V$, we have
$$\sum_{E\ni V}w_Eu_E=0.$$
For more details on plane tropical curves, see \cite{brugalle2014bit}. \\

One can show that the image of a parametrized tropical curve is indeed a plane tropical curve with this definition. Moreover, the relation between $\Delta$, degree of the parametrized tropical curve, and $P_\Delta$, degree of the plane curve, is as described in the introduction. Conversely, a plane tropical curve can always be parametrized by an abstract tropical curve, leading to a parametrized tropical curve. Moreover, if $C$ is a plane curve parametrized by $h:\Gamma\rightarrow N_\RR$, the weight of an edge $E$ of $C$ can be recovered as the sum of the lattice lengths of the slopes of $h$ on the edges $\gamma\in\Gamma$ which project onto $E$. However, there are often many ways of choosing a parametrization of a plane tropical curve, by non-isomorphic tropical curves. In fact, even the degree $P_\Delta\subset M$ does not uniquely determine the degree $\Delta\subset N$ of a parametrized curve parametrizing $C$: \textit{e.g.} if $C$ has un unbounded end of weight $2$, a parametrizing graph $\Gamma$ could have either an end $e$ with $h|_e$ having slope of lattice length $2$, or two ends of primitive slope with the same image by $h$.\\

We now define the usual concepts associated to plane classical curves in the case of plane tropical curves, starting with reducible curves. 

\begin{defi}
\begin{itemize}[label=-]
\item A plane tropical curve is \textit{reducible} if it can be represented as the union of two distinct plane tropical curves. 
\item A plane tropical curve is \textit{irreducible} if it is not reducible.
\end{itemize}
\end{defi}

Going on with the definition of the genus, one needs to be careful since it depends on the chosen parametrization.

\begin{defi}
\begin{itemize}[label=-]
\item The \textit{genus} of a plane tropical curve is the smallest genus among its possible parametrizations.
\item A plane tropical curve is \textit{rational} if it is irreducible and can be parametrized by a rational tropical curve.
\end{itemize}
\end{defi}

One can show that if $C$ is an irreducible rational plane tropical curve with unbounded ends of weight $1$, it admits a unique rational parametrization. More generally, we have the following statement.

\begin{prop}\cite{mikhalkin2005enumerative}\label{unique parametrization}
Let $C$ be a rational plane tropical curve, and let $u_e$ be a directing primitive lattice vector for each unbounded end $e$, oriented toward infinity. Let $w_e$ be the weight of $e$. Then $C$ is the image of a unique rational parametrized tropical curve of degree $\Delta=\{w_e u_e\}_e$.
\end{prop}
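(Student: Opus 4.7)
The approach is to prove existence via a direct construction and uniqueness via a combinatorial rigidity argument.

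For existence, I first argue that the underlying abstract graph of $C$ is itself a tree. This uses the rationality assumption together with a balancing argument: walking around a cycle in $C$, any tree parametrization would have to lift the cycle, but this is impossible without creating a cycle in the parametrizing tree. Granting this, I construct the canonical parametrization $(\Gamma,h)$ by taking $\Gamma$ to be the metric graph underlying $C$ (with matching edge lengths), letting $h$ be the inclusion into $N_\RR$, and assigning to each edge $E$ the slope $w_E u_E$, where $w_E$ is the weight and $u_E$ the primitive direction of $E$ in $C$. The balancing condition at each vertex of $C$ transfers verbatim to $\Gamma$, and the prescribed degree $\Delta=\{w_e u_e\}_e$ is built in.

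For uniqueness, let $(\Gamma',h')$ be another rational parametrization of $C$ with degree $\Delta$. The strategy is to show $(\Gamma',h')$ is isomorphic to the canonical one. Each unbounded end of $\Gamma'$, carrying the full slope $w_e u_e$ by the degree assumption, covers the corresponding unbounded end of $C$, and its adjacent vertex in $\Gamma'$ maps to the vertex of $C$ adjacent to that end. Combined with the tree structure of both $\Gamma'$ and $C$, this forces the induced vertex map $(h')^0\colon (\Gamma')^0\to V(C)$ to be injective: if two vertices of $\Gamma'$ mapped to the same vertex of $C$, the unique path between them in the tree $\Gamma'$ would project to a non-trivial cycle in $C$, contradicting that $C$ is a tree. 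A counting argument on the bounded edges (using that both $\Gamma'$ and $C$ are trees with the same number of unbounded ends) then upgrades this to a combinatorial isomorphism matching the canonical parametrization edge-by-edge.

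The main obstacle will be verifying that the vertex map $(h')^0$ actually takes values in $V(C)$ rather than in the interior of edges of $C$. This amounts to ruling out ``ghost'' vertices of $\Gamma'$ whose image lies on a straight edge of $C$ with all incident slopes collinear; such a vertex would create a configuration in which several edges of $\Gamma'$ partially overlap in the image, with weights adding up correctly on the image but with extra combinatorial data in $\Gamma'$. A careful case analysis based on the balancing condition at such a vertex, the tree structure of $\Gamma'$, and the prescribed degree (so that unbounded ends are never split into multiple preimages) handles this delicate step, and it is really the only place where the full strength of the hypotheses is used.
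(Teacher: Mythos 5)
Your argument has a fatal flaw at its foundation: the claim that the underlying graph of $C$ is a tree is false. ``Rational'' for a plane tropical curve means (by the paper's own definition) that $C$ is irreducible and \emph{admits} a genus-zero parametrization; it does not mean that $C$ itself has first Betti number zero. A generic rational plane tropical curve of degree $d\geqslant 3$ has $\binom{d-1}{2}$ nodes, i.e.\ four-valent vertices of $C$ at which two edges of the parametrizing tree cross transversally. The image is then the quotient of the tree $\Gamma$ obtained by identifying pairs of points lying in the interiors of distinct edges, and each such identification raises $b_1$ by one, so $b_1(C)>0$. Your ``walking around a cycle'' argument fails precisely because the parametrization need not be injective: a cycle of $C$ lifts to a \emph{path} in $\Gamma$ whose two distinct endpoints have the same image. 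Consequently your existence construction ($\Gamma=C$, $h=$ inclusion) produces a parametrization of genus $b_1(C)>0$, not a rational one, and the whole content of the proposition in the nodal case is that the correct tree $\Gamma$ is \emph{not} isomorphic to $C$ as a graph.

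The uniqueness half inherits the same defect: you assume the vertex map $(h')^0$ lands injectively in $V(C)$ and promote it to a graph isomorphism with the (nonexistent) identity parametrization, but in the true parametrization the nodal vertices of $C$ are not images of any vertex of $\Gamma'$ at all --- they are images of interior points of two distinct edges --- so no such isomorphism can exist, and your injectivity argument again invokes ``contradicting that $C$ is a tree,'' the false premise. Separately, the ``ghost vertex'' issue you flag is real but is only asserted to be handled by ``a careful case analysis'' that is never carried out; ruling out flat vertices and partially overlapping edges in the image is exactly where irreducibility and genus zero must do actual work. Note that the paper itself gives no proof, citing Mikhalkin instead; a correct argument starts from an arbitrary genus-zero parametrization (which exists by the definition of a rational plane curve), analyses at each vertex of $C$ the groupings of local branches forced by balancing, irreducibility and genus zero, and uses the prescribed degree $\Delta=\{w_eu_e\}_e$ only to forbid splitting an unbounded end of weight $w_e$ into several ends of smaller weight.
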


\subsection{Real parametrizations of a plane tropical curve}
 
In this subsection, we extend Proposition \ref{unique parametrization} by describing the possible real rational parametrizations of an irreducible rational plane tropical curve, with unbounded ends of weights $1$ or $2$.\\

Let $C$ be a rational plane tropical curve with unbounded ends of weight $1$ or $2$. Let $u_1$, $\dots$, $u_r$, $2v_1$, $\dots$, $2v_s$ be the weighted directing vectors of the unbounded ends of $C$, with vectors $u_i,v_j$ being primitive vectors in $N$. We assume that $r\geqslant 1$. Let $h_0:\Gamma_0\rightarrow N_\RR$ be the unique rational parametrization of $C$ given by Proposition \ref{unique parametrization}, which is of degree $\{u_i,2v_j\}_{i,j}$. We now describe the parametrizations of $C$ by real parametrized rational curves of degree $\{u_i,v_j^2\}_{i,j}$, which means that now all vectors are primitive, and each unbounded end of weight $2$ is replaced with two ends of weight $1$.\\



We define the subgraph $\Gamma_\text{even}$ of $\Gamma_0$ as the minimal subgraph satisfying both following requirements:
	\begin{itemize}[label=-]
	\item Every unbounded end of $\Gamma_0$ of even weight (\textit{i.e.} mapped to an end of $C$ directed by $2v_j$ for some $j$) is in $\Gamma_\text{even}$.
	\item If $V$ is a vertex of $\Gamma_0$ and all edges adjacent to $V$ but one are in $\Gamma_\text{even}$, then the remaining adjacent edge also is in $\Gamma_\text{even}$. Following \cite{shustin2004tropical}, such a vertex is called an \textit{extendable vertex}.
	\end{itemize}

\begin{rem}
The subgraph $\Gamma_\text{even}$ is the maximal graph on which we can "cut $\Gamma_0$ in two" in order to obtain a new graph $\Gamma$, used to parametrize $C$. Notice that on all the edges of $\Gamma_\text{even}$, the map
$h_0$ has an even slope.
\end{rem}

As $C$ admits at least one odd unbounded end, each connected component $\Gamma_\text{even}^i$ of $\Gamma_\text{even}$ contains a unique \textit{stem}, which is a non-extendable vertex. We orient the edges of $\Gamma_\text{even}^i$ away from the stem. Then we say that a subset of points $\mathcal{R}_i\subset\Gamma_\text{even}^i$ is \textit{admissible} if no point of $\mathcal{R}_i$ is joint to another by an oriented path, and for each unbounded end $e$ in $\Gamma_\text{even}^i$, there is at least (and thus exactly one) point of $\mathcal{R}_i$ on the shortest path between the stem and $e$. Let $\mathcal{R}=\bigcup_i\mathcal{R}_i$. We then define a real abstract tropical curve $(\Gamma(\mathcal{R}),\sigma)$ with a map $h_\mathcal{R}:\Gamma(\mathcal{R})\rightarrow N_\RR$ that factors through $\Gamma(\mathcal{R})\rightarrow \Gamma_0\rightarrow N_\RR$ and makes it a real parametrized tropical curve.\\

Let $\Gamma_\text{fix}(\mathcal{R})$ be the closure of the union of the connected components of $\Gamma_0-\mathcal{R}$ not containing any even end. The abstract tropical curve $\Gamma(\mathcal{R})$ is obtained as the disjoint union of two copies of $\Gamma_0$, glued along $\Gamma_\text{fix}(\mathcal{R})$:
$$\begin{tikzcd}
\Gamma_\text{fix}(\mathcal{R}) \arrow{r} \arrow{d} & \Gamma_0\arrow{d} \\
\Gamma_0 \arrow{r} & \Gamma(\mathcal{R}) \\
\end{tikzcd}.$$

 In other terms, $\Gamma(\mathcal{R})=\Gamma_0\coprod_{\Gamma_\text{fix}(\mathcal{R})}\Gamma_0$. It means that we have doubled the components of $\Gamma_0-\mathcal{R}$ containing the even ends. We denote by $\pi:\Gamma(\mathcal{R})\rightarrow\Gamma_0$ the map obtained by gluing the identity maps of $\Gamma_0$. The complement of $\Gamma_\text{fix}(\mathcal{R})$ in $\Gamma_0$ is called the \textit{splitting graph}. It is a subset of $\Gamma_\text{even}$. The splitting graph is maximal if its closure is equal to $\Gamma_\text{even}$. The length function on $\Gamma(\mathcal{R})$ is defined as follows: we consider points of $\mathcal{R}$ as vertices of $\Gamma_0$, then, the length of an edge $\gamma$ of $\Gamma(\mathcal{R})$ is the length of its image $\pi(\gamma)$ if it is an edge of $\Gamma_\text{fix}(\mathcal{R})$ and twice the length of $\pi(\gamma)$ otherwise. The involution $\sigma$ is the automorphism of $\Gamma(\mathcal{R})$ that exchanges the two antecedents whenever there are two. The parametrized map $h_\mathcal{R}:\Gamma(\mathcal{R})\rightarrow N_\RR$ is the composition of $\pi$ and $h_0$.
 
\begin{rem}
The map $\pi$ really looks like a tropical cover, as defined in \cite{cavalieri2010tropical} and \cite{buchholz2015tropical}. However, it is not always the case. This is normal since the purpose of the notion of tropical cover is to mimick ramified covers between complex curves. The map $\pi$ here plays the role of the quotient map by a real involution, which is not a ramified cover.
\end{rem}

Let $\gamma$ be an edge of $\Gamma(\mathcal{R})$, and $n\in N$ be the slope of $\pi(\gamma)$. Then, one can easily check that the choice of length on $\Gamma(\mathcal{R})$ ensures that $h_\mathcal{R}$ has slope $n$ if $\gamma\in\fix$ and $\frac{n}{2}$ otherwise. However, as the edges of $\Gamma_\text{even}$ have an even slope, it is still an element of $N$. One can check that the balancing condition is still satisfied. Therefore, $(\Gamma(\mathcal{R}),h_\mathcal{R},\sigma)$ is a real parametrized tropical curve, of image $C$, and of degree $\{u_i,v_j^2\}_{i,j}$.

\begin{prop}\cite{blomme2020tropical}\label{realstruc}
Let $C$ be an irreducible rational plane tropical curve of degree $P_\Delta\subset M$ having unbounded ends of weight $1$ or $2$. Let $\Delta\subset N$ be the degree associated to $P_\Delta$ consisting only of primitive lattice vectors. let $h_0:\Gamma_0\rightarrow N_\RR$ be the unique rational parametrization of $C$ given by Proposition \ref{unique parametrization}. Using previous notations, every real rational parametrized curve of degree $\Delta$ having the image $C$ is one of the curves $\Gamma(\mathcal{R})$.
\end{prop}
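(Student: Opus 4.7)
The plan is to recover the admissible set $\mathcal R$ directly from any given real rational parametrization $(\Gamma, \sigma, h)$ of degree $\Delta$ with image $C$, and to exhibit an isomorphism with the corresponding $\Gamma(\mathcal R)$.

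\textbf{Step 1: Factoring $h$ through $h_0$.} I first construct a canonical continuous surjection $\pi \colon \Gamma \to \Gamma_0$ satisfying $h = h_0 \circ \pi$. Both $(\Gamma,h)$ and $(\Gamma_0, h_0)$ parametrize $C$ rationally, and their degrees differ only in that each weight-$2$ end $2v_j$ of $\Gamma_0$ corresponds to a pair of primitive ends of $\Gamma$ of slope $v_j$. Pair these and define $\pi$ on the unbounded ends by folding each such pair onto its $2v_j$-end with distance stretch factor $2$, while sending every remaining primitive end of $\Gamma$ isometrically onto the corresponding end of $\Gamma_0$. One extends $\pi$ inward along the tree $\Gamma$, using at each internal vertex the balancing condition to propagate the definition unambiguously. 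Existence and uniqueness follow from the tree property together with the uniqueness clause of Proposition~\ref{unique parametrization}. Since $h \circ \sigma = h$, uniqueness also yields $\pi \circ \sigma = \pi$.

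\textbf{Step 2: Fibre analysis via $\sigma$.} For each edge $E$ of $\Gamma_0$ with slope $n_0 \in N$, every preimage edge $\gamma$ of $\Gamma$ in $\pi^{-1}(E)$ has slope $n_\gamma$ parallel to $n_0$, with $\pi$ stretching lengths on $\gamma$ by $|n_\gamma|/|n_0|$. Equating the weight of $E$ as part of $C$ yields $\sum_{\gamma \mapsto E} |n_\gamma| = |n_0|$. The involution $\sigma$ permutes $\pi^{-1}(E)$. Suppose now that $\pi^{-1}(E) = \{\gamma_1, \gamma_2\}$ with $\gamma_1 \neq \gamma_2$. If $\sigma$ preserved each $\gamma_i$ setwise, a tree-rigidity argument in $\Gamma$ --- any two distinct $\sigma$-fixed points sharing a $\pi$-image are joined by a $\sigma$-fixed geodesic, forcing $h = h_0 \circ \pi$ to double back through $E$ and contradicting the weight equation above --- produces a contradiction. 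Hence $\sigma$ swaps $\gamma_1$ and $\gamma_2$, they share length and slope $n_\gamma$, and $|n_0| = 2|n_\gamma|$. Therefore the $\pi$-image of the 2-to-1 locus lies in the subgraph of $\Gamma_0$ where $h_0$ has even slope, hence in $\Gamma_\text{even}$.

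\textbf{Step 3: Reconstructing $\mathcal R$ and concluding.} Let $S \subset \Gamma_\text{even}$ denote the closed subgraph obtained as the $\pi$-image of the 2-to-1 locus. Every even unbounded end lies in $S$, since its two primitive preimages in $\Gamma$ are necessarily swapped by $\sigma$. Set $\mathcal R$ to be the topological boundary of $S$ in $\Gamma_0$. With each $\Gamma_\text{even}^i$ oriented away from its stem, the subset $\mathcal R_i = \mathcal R \cap \Gamma_\text{even}^i$ is an antichain: the fibre size of $\pi$ is locally constant on each open edge, so a nested pair of points in $\mathcal R_i$ would require the impossible fibre pattern $1,2,1$ along a single oriented path. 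Moreover, the path from the stem of $\Gamma_\text{even}^i$ to any even end crosses $\mathcal R_i$ exactly once: its endpoint in $S$ and its other extremity outside $S$ (the stem has at least one adjacent odd-slope edge outside $\Gamma_\text{even}$) force at least one crossing, and the antichain property rules out more. Hence $\mathcal R$ is admissible. Finally, the complement $\Gamma_0 \setminus S$ agrees with $\Gamma_\text{fix}(\mathcal R)$, and the fibre structure of $\pi$ realises $\Gamma$ as two copies of $\Gamma_0$ glued along $\Gamma_\text{fix}(\mathcal R)$, with $\sigma$ swapping the two copies; this identifies $(\Gamma, \sigma, h)$ with $(\Gamma(\mathcal R), \sigma, h_\mathcal R)$.

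The main obstacle is the rigidity step in Step~2: excluding a fibre of $\pi$ that splits into two $\sigma$-invariant pieces rather than a $\sigma$-swapped pair. This rests crucially on the weight equation $\sum |n_\gamma| = |n_0|$ combined with the tree structure of $\Gamma$, which together forbid the double traversal of $E$ that a $\sigma$-fixed geodesic in $\Gamma$ would otherwise produce.
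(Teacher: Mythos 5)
The paper does not actually prove this proposition --- it is imported from \cite{blomme2020tropical} --- so there is no internal proof to compare against; your overall strategy (factor $h$ through $h_0$ via a map $\pi$ and read off $\mathcal{R}$ as the boundary of the splitting locus) is the natural one and matches the spirit of the construction of $\Gamma(\mathcal{R})$. However, the argument has a genuine gap exactly at its crux, namely the claim in Step~2 that $\sigma$ must \emph{swap} the two preimages of a doubled edge $E$. Your justification is that two $\sigma$-fixed preimages would force $h_0\circ\pi$ to ``double back through $E$, contradicting the weight equation $\sum_{\gamma\mapsto E}|n_\gamma|=|n_0|$.'' But doubling back does not contradict that equation: two primitive edges of total lattice length $2$ lying over a weight-$2$ edge satisfy it whether $\sigma$ exchanges them or fixes each of them. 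Concretely, for the degree $\{(1,1),(-1,1),(0,-2)\}$ curve with one trivalent vertex, the graph with a single $4$-valent vertex of outgoing slopes $(1,1),(-1,1),(0,-1),(0,-1)$ and $\sigma=\mathrm{id}$ is balanced, has degree $\Delta$, has image $C$, and satisfies your weight equation, yet both preimages of the even end are $\sigma$-fixed. Ruling this configuration out (equivalently, showing the two components of $\pi^{-1}$ over a splitting component are exchanged rather than each preserved, so that the two preimage edges have equal slope $n_0/2$) is the actual content of the proposition, and your tree-rigidity sentence does not supply it.

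There are two further, smaller problems. First, the inference ``the $2$-to-$1$ locus has even slope, hence lies in $\Gamma_{\mathrm{even}}$'' is a non sequitur: $\Gamma_{\mathrm{even}}$ is the \emph{minimal} subgraph containing the even ends and closed under the extendable-vertex rule, and is in general a proper subgraph of the even-slope locus. The correct route is to show first that each connected component of the splitting locus has exactly one boundary point (otherwise the two preimage components together with two merge points would create a cycle in the tree $\Gamma$), so that the component is a union of full branches at that point all of whose unbounded ends are even, and then to propagate membership in $\Gamma_{\mathrm{even}}$ downward from those ends using the extendable-vertex rule. Second, you never exclude fibres of $\pi$ of cardinality greater than $2$ over an edge of $\Gamma_0$ (you only \emph{suppose} $\pi^{-1}(E)=\{\gamma_1,\gamma_2\}$), and the existence and uniqueness of the factorization $\pi$ itself in Step~1 --- in particular that an edge of $\Gamma$ cannot switch branches of $\Gamma_0$ over a node of $C$ --- is asserted rather than argued. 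These are fixable, but as written the proof does not establish the statement.
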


	\subsection{Moment of an edge}
	
	Let $\omega$ be a generator of $\Lambda^2 M$, \textit{i.e.} a non-degenerated $2$-form on $N$. It extends to a volume form on $N_\RR\simeq\RR^2$. Let $e\in\Gamma^1_\infty$ be an unbounded end oriented toward infinity, directed by $n_e$. Then the moment of $e$ is the scalar
	$$\mu_e=\omega(n_e,p)\in\RR,$$
	where $p\in e$ is any point on the edge $e$. Remember that we identify the affine space $N_\RR$ with its tangent space at $0$, allowing us to plug in $\omega$ a tangent vector $n_e$ and a point $p$. We similarly define the moment of a bounded edge if we specify its orientation. The moment of a bounded edge is reversed when its orientation is reversed.\\
	
	Intuitively, the moment of an unbounded end is just a way of measuring its position alongside a transversal axis. Thus, fixing the moment of an unbounded end amounts to impose on the curve that it goes through some point at infinity, or equivalently the unbounded end is contained in a fixed line. In a way, this allows us to do toric geometry in a compactification of $N_\RR$ but staying in $N_\RR$. It provides a coordinate on the components of the toric boundary without even having to introduce the concept of toric boundary in the tropical world. Following this observation, the moment has also a definition in complex toric geometry, where it corresponds to the coordinate of the intersection point of the curve with the toric divisor. Let

$$\begin{array}{rccl}
\varphi: & \CC P^1 & \dashrightarrow & N\otimes\CC^*\simeq(\CC^*)^2 \\
 & t & \mapsto & \chi\prod_{1}^r(t-\alpha_j)^{n_j}. \\
 \end{array}$$
	 be a parametrized rational curve. This is a curve of degree $\Delta=(n_j)\subset N$. The degree $\Delta$ defines a fan $\Sigma_\Delta$ and a toric surface $\CC\Delta$ to which the map $\varphi$ naturally extends. The toric divisors $D_k$ of $\CC\Delta$ are in bijection with the rays of the fan, which are directed by the vectors $n_j$. Several vectors $n_j$ may direct the same ray. Moreover, the map $\varphi$ extends to the points $\alpha_j$ by sending $\alpha_j$ to a point on the toric divisor $D_k$ corresponding to the ray directed by $n_j$. A coordinate on $D$ is a primitive monomial $\chi^m\in M$ in the lattice of characters such that $\langle m,n_j\rangle=0$. This latter equality ensures that the monomial $\chi^m$ extends on the divisor $D_k$. If $n_j$ is primitive, $\iota_{n_j}\omega\in M$ is such a monomial, and then the complex moment is the evaluation of the monomial at the corresponding point on the divisor:
	$$\mu_j=\left(\varphi^*\chi^{\iota_{n_j}\omega}\right)(\alpha_j).$$
	
	The Weil reciprocity law, or an easy computation, gives us the following relation between the moments:
$$\prod_{i=1}^m \mu_i=(-1)^m.$$	
	We could also prove the relation using Vi\`ete formula. In the tropical world we have an analog called the \textit{tropical Menelaus theorem}, which gives a relation between the moments of the unbounded ends of a parametrized tropical curve.
	
	\begin{prop}[Tropical Menelaus Theorem \cite{mikhalkin2017quantum}] For a parametrized tropical curve of degree $\Delta$, we have
	$$\sum_{n_e\in\Delta} \mu_e =0.$$
	\end{prop}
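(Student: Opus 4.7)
The plan is to prove the identity by summing a local vanishing identity, coming from the balancing condition, over all vertices of $\Gamma$, and observing that bounded edges contribute trivially while each unbounded end contributes exactly its moment.

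First I would verify that the definition $\mu_e = \omega(n_e,p)$ does not depend on the chosen point $p \in e$: if $p,p' \in e$, then $p-p' = \lambda n_e$ for some $\lambda \in \RR$, so $\omega(n_e,p)-\omega(n_e,p')=\omega(n_e,\lambda n_e)=0$. The same argument works for a bounded edge $E$ with a prescribed orientation and slope $n_E$, and reversing the orientation multiplies the moment by $-1$. In particular, for a bounded edge $E$ with endpoints $V_1$ (tail) and $V_2$ (head) and slope $n_E$ pointing from $V_1$ to $V_2$, one has $\omega(n_E,V_1)=\omega(n_E,V_2)$.

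Next, at a vertex $V\in\Gamma^0$, orient every edge $E$ incident to $V$ outward, and let $n_E^{\mathrm{out}}$ be the corresponding slope. By $\RR$-linearity of $\omega$ in its first slot and the balancing condition,
$$\sum_{E\ni V}\omega\bigl(n_E^{\mathrm{out}},V\bigr) = \omega\Bigl(\sum_{E\ni V}n_E^{\mathrm{out}},\,V\Bigr) = \omega(0,V) = 0.$$
Now I would sum this equality over all vertices $V \in \Gamma^0$. Each bounded edge $E$ with endpoints $V_1,V_2$ appears twice in the double sum: once at $V_1$ with slope $n_E$ and once at $V_2$ with slope $-n_E$, contributing $\omega(n_E,V_1)-\omega(n_E,V_2)=0$ by the observation above. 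Each unbounded end $e$ is incident to a unique vertex $V$, and its contribution is $\omega(n_e,V) = \mu_e$ since $V \in e$.

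Therefore, the double sum reduces to $\sum_{e\in\Gamma^1_\infty}\mu_e$, which must vanish. The main potential obstacle is purely bookkeeping: keeping careful track of orientations when an edge is counted from both of its endpoints, but the skew-symmetric nature of $\omega$ with respect to directions and the independence of $\mu$ on the chosen point on the edge make the cancellation immediate. No case analysis or non-trivial input beyond the balancing condition is required.
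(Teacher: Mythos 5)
Your proof is correct. The paper does not actually prove this proposition --- it is stated as a citation to Mikhalkin's work --- but your argument (pairing the balancing condition at each vertex against the vertex position via $\omega$, then summing over vertices so that bounded edges cancel and unbounded ends contribute their moments) is the standard telescoping proof and is complete; the only cosmetic point is that one should write $h(V)$ rather than $V$ in the second slot of $\omega$, since the moment is computed from the image of the vertex in $N_\RR$.
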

	
	In the tropical case as well as in the complex case, a configuration of $m$ points on the toric divisors is said to satisfy the \textit{Menelaus condition} if this relation is satisfied. Be careful that in tropical case, the moment of a point depends on the vector of $M$ used to compute it: it is of lattice length $1$ if the point is real, and of lattice length $2$ for a non-real point.\\
	
	If the slope $n_j$ of the edge is non-primitive, it corresponds to a tangency to the toric divisor. We then distinguish two moments in the following definition.
	\begin{defi}
	\label{definition primitive moment}
	\begin{itemize}
	\item The \textit{"primitive moment"} is computed with the monomial $\iota_{\frac{n_j}{l(n_j)}}\omega$, \textit{i.e.} the associated normal vector but primitive. It corresponds to the coordinate of the point in the toric divisor, where the tangency occurs.
	\item The \textit{"Menelaus moment"}, or briefly the moment, is computed with the monomial $\iota_{n_j}\omega$. It is a power of the primitive moment, and it satisfies the Menelaus theorem.
	\end{itemize}
	\end{defi}
	
	\subsection{Moduli space of tropical curves and refined multiplicity of a simple tropical curve}
	
	Let $(\Gamma,h)$ be a parametrized tropical curve such that $\Gamma$ is trivalent, and has no \textit{flat vertex}. A flat vertex is a vertex whose outgoing edges have their slope contained in a common line. It just means that for any two outgoing edges of respective slopes $u,v$, we have $\omega(u,v)=0$. In particular, when the curve is trivalent, no edge can have a zero slope since it would imply that its extremities are flat vertices. A plane tropical curve is a \textit{simple nodal curve} if the dual subdivision of its Newton polygon consists only of triangles and parallelograms. The unique rational parametrization (given by Proposition \ref{unique parametrization}) of a plane rational nodal curve has a trivalent underlying graph, and has no flat vertex.\\
	
	\begin{defi}
	The \textit{combinatorial type} of a tropical curve is the homeomorphism type of its underlying labeled graph $\Gamma$, \textit{i.e.} the labeled graph $\Gamma$ without the metric.
	\end{defi}
	
	To give a graph a tropical structure, one just needs to specify the lengths of the bounded edges. If the curve is trivalent and has $m$ unbounded ends, there are $m-3$ bounded edges, otherwise the number of bounded edges is $m-3-\text{ov}(\Gamma)$, where $\text{ov}(\Gamma)$ is the \textit{overvalence} of the graph. The overvalence is given by $\sum_V (\text{val}(V)-3)$, where $V$ runs over the vertices of $\Gamma$, and $\text{val}(V)$ denotes the valence of the vertex. Therefore, the set of curves having the same combinatorial type is homeomorphic to $\RR_{\geqslant 0}^{m-3-\text{ov}(\Gamma)}$, and the coordinates are the lengths of the bounded edges. If $\Gamma$ is an abstract tropical curve, we denote by $\text{Comb}(\Gamma)$ the set of curves having the same combinatorial type as $\Gamma$.\\
	
	For a given combinatorial type $\text{Comb}(\Gamma)$, the boundary of $\RR_{\geqslant 0}^{m-3-\text{ov}(\Gamma)}$ corresponds to curves for which the length of an edge is zero, and therefore corresponds to a graph having a different combinatorial type. This graph is obtained by deleting the edge with zero length and merging its extremities. We can thus glue together all the cones of the finitely many combinatorial types and obtain the \textit{moduli space $\mathcal{M}_{0,m}$ of rational tropical curves with $m$ marked points}. It is a simplicial fan of pure dimension $m-3$, and the top-dimensional cones correspond to trivalent curves. The combinatorial types of codimension $1$ are called \textit{walls}.\\
	
	Given an abstract tropical curve $\Gamma$, if we specify the slope of every unbounded end, and the position of a vertex, we can define uniquely a parametrized tropical curve $h:\Gamma\rightarrow N_\RR$. Therefore, if $\Delta\subset N$ denotes the set of slopes of the unbounded ends, the \textit{moduli space $\mathcal{M}_0(\Delta,N_\RR)$ of parametrized rational tropical curves of degree 
$\Delta$} is isomorphic to $\mathcal{M}_{0,m}\times N_\RR$ as a fan, where the $N_\RR$ factor corresponds to the position of the finite vertex adjacent to the first unbounded end.\\
	
	On this moduli space, we have a well-defined evaluation map that associates to each parametrized curve the family of moments of its unbounded ends :
	$$\begin{array}{crcl}
	\text{ev} : & \mathcal{M}_0(\Delta,N_\RR) & \longrightarrow & \RR^{m-1} \\
	 & (\Gamma,h) & \longmapsto & \mu=(\mu_i)_{2\leqslant i\leqslant m}
	\end{array}.$$
	By the tropical Menelaus theorem, the moment $\mu_1$ is equal to the opposite of the sum of the other moments, hence we do not take it into account in the map. Notice that the evaluation map is linear on every cone of $\mathcal{M}_0(\Delta,N_\RR)$. Furthermore, 
both spaces have the same dimension $m-1$. Thus, if $\Gamma$ is a trivalent curve, the restriction of $\text{ev}$ on
$\text{Comb}(\Gamma)\times N_\RR$ has a determinant well-defined up to sign when $\RR^{m-1}$ and $\text{Comb}(\Gamma)\simeq\RR_{\leqslant0}^{m-3}$ are both endowed with their canonical basis, and $N_\RR$ is endowed with a basis of $N$. 
The absolute value $m_\Gamma^\CC$ of the determinant is called the \textit{complex multiplicity} of the curve, well-known to factor into the following product over the vertices of $\Gamma$:
	$$m_\Gamma^\CC=\prod_V m_V^\CC,$$
	where $m_V^\CC=|\omega(u,v)|$ if $u$ and $v$ are the slopes of two outgoing edges of $V$. The balancing condition ensures that $m_V^\CC$ does not depend on the chosen edges. This multiplicity is the one that appears in the correspondence theorem of Mikhalkin \cite{mikhalkin2005enumerative}. Notice that the simple parametrized tropical curves are precisely the points of the cones with trivalent graph and non-zero multiplicity. We finally recall the definition of the refined Block-G\"ottsche multiplicity.
	
	\begin{defi}
	\label{refined multiplicity}
	The \textit{refined multiplicity} of a simple nodal tropical curve is
	$$m^q_\Gamma=\prod_V [m_V^\CC]_q,$$
	where $[a]_q=\frac{q^{a/2}-q^{-a/2}}{q^{1/2}-q^{-1/2}}$ is the $q$-analog of $a$.
	\end{defi}
	
	This refined multiplicity is sometimes called the Block-G\"ottsche multiplicity and intervenes in the definition of the invariant $N_\Delta^{\partial,\text{trop}}$. Notice that the multiplicity is the same for every curve inside a given combinatorial type.

	\subsection{Tropicalization}
	\label{tropicalization}

We briefly recall how to obtain an abstract tropical curve and a parametrized tropical curve from a non-archimedean parametrized curve given by a rational map $f:(C,\textbf{q})\rightarrow\mathrm{Hom}(M,\CC((t))^*)$, where $(C,\textbf{q})$ is a curve with marked points. For more details, see \cite{tyomkin2012tropical}.\\

\subsubsection{Tropicalization of a marked curve} 
 
Let $(C,\textbf{q})$ be a smooth marked curve over $\CC((t))$. Let $\mathcal{C}^{(t)}\rightarrow\text{Spec}\CC[[t]]$ be the stable model of $(C,\textbf{q})$, defined over $\CC[[t]]$. The marked points $q_i$ provide sections $\text{Spec}\CC[[t]]\rightarrow\mathcal{C}^{(t)}$. We have the special fiber $\mathcal{C}^{(0)}$, which is a stable nodal curve, meaning that each irreducible component of genus zero has at least three marked points or nodes, and each irreducible component of genus 1 has at least one marked point or a node. Let $\overline{\Gamma}$ be the dual graph in the following sense: we have one finite vertex per irreducible component of the special fiber, one infinite vertex per marked point, an infinite vertex is joined to the finite vertex of the component where the point specializes, and two finite vertices are joined by an edge if they share a node. We make $\overline{\Gamma}$ into an abstract tropical curve by declaring the length of such an edge to be $l$ if the node is locally given by $xy=t^l$ in an etale neighborhood of the node.

\begin{rem}
Intuitively, if our curve is rational, $\mathcal{C}^{(t)}$ is just $\CC P^1$ with points depending on a small complex parameter $t$ on it, \textit{i.e.} points given by locally convergent Laurent series in $\CC((t))$. If we take naively the special fiber $t=0$, some marked points may collide, \textit{i.e.} specialize on the same point, other may go to infinity, ... Taking the stable model means that we prevent that. For instance, assume a bunch of points specialize to $0$. This means that they are of given by formal series of the form $t^k x(t)$ with $k>0$. We then blow-up this point and get two copies of $\CC P^1$ sharing a node. All the points previously specializing to $0$ now specialize to at least two different points on the exceptional divisor. The length of the edge between the two copies is the smallest $k$ for all the points specializing on it. Concretely, the blow-up amounts to change the coordinate $z$ on $\CC P^1$ by $t^{-k}z$. We then repeat as long as necessary. 
\end{rem}

If the curve $(C,\textbf{q})$ is a real curve, with a real configuration of points $\textbf{q}$, the involution restricted to the special fiber induces a real structure on $\Gamma$.


\subsubsection{Tropicalization of a parametrized curve} Now assume given a rational map $f:(C,\textbf{q})\dashrightarrow\mathrm{Hom}(M,\CC((t))^*)$. There is a tropical curve $\Gamma$ associated to $(C,\textbf{q})$. The rational map $f$ extends to a rational map on the stable model $\mathcal{C}^{(t)}$ of $(C,\textbf{q})$. In order to make $\Gamma$ into a parametrized tropical curve, we define a map $h:\Gamma\rightarrow N_\RR$ in the following way:
\begin{itemize}
\item If $w\in\Gamma^0$ is a vertex dual to a component $C_w$ of $\mathcal{C}^{(0)}$, then $h(w)$ is the element of $N$ defined as follows:
$$h(w)(m)=\text{ord}_{C_w}(f^*\chi^m),$$
where $\text{ord}_{C_w}$ stands for the multiplicity of $C_w$ in the divisor of $f^*\chi^m$.
\item Then $h$ maps a bounded edge to the line segment linking its extremities.
\item If $q_i$ is a marked point, then the slope of the associated unbounded end is $\text{ord}_{q_i}(f^*\chi^m)$, where $\text{ord}_{q_i}$ stands for the multiplicity of $q_i$ in the divisor of $f^*\chi^m$.
\end{itemize}

\begin{rem}
The slope of the unbounded end associated to a given marked point $q_i$ is given both by the order of vanishing of $f^*\chi^m$ at $q_i$, and by the multiplicity of the section defined by $q_i$ in the divisor of $f^*\chi^m$ in the stable model $\mathcal{C}^{(t)}$. This is normal since the marked points provide divisors in $\mathcal{C}^{(t)}$ which are transverse to the special fiber. Concretely, in the rational case, if $y$ is a coordinate on $C$ and $f$ is given by
$$f:y\longmapsto\chi\prod_{i=1}^r(y-y(q_i))^{n_i}\in\mathrm{Hom}(M,\CC((t))^*),$$
with $\chi\in\mathrm{Hom}(M,\CC((t))^*)$, then the slope of the edge associated to the marked point $q_i$ is $n_i$.
\end{rem}

\begin{rem}
The special fiber is given by the equation $t=0$. Therefore $h(w)(m)=\text{ord}_{C_w}(f^*\chi^m)$ is the valuation in $t$ of the function evaluated at the generic point of $C_w$. Concretely, in the rational case, let $y$ be a coordinate on $C$ specializing to a coordinate on $C_w$, which is a copy of $\CC P^1$, such that no point specializes to $\infty$. Assume $f$ is given by
$$f:y\longmapsto\chi\prod_{i=1}^r(y-y(q_i))^{n_i}\in\mathrm{Hom}(M,\CC((t))^*),$$
where $\chi\in\mathrm{Hom}(M,\CC((t))^*)$. Then $h(w)(m)=\val(\chi(m))$.
\end{rem}

The fact that $(\Gamma,h)$ is indeed a parametrized tropical curve is proved in \cite{tyomkin2012tropical}. One essentially needs to check the balancing condition, and the fact that if $\gamma$ is an edge with extremities $v$ and $w$, the slope of $h(\gamma)$ lies in $N$, and the length of $h(w)-h(v)$ coincide with the length $|\gamma|$ in $\Gamma$. Finally, we can refine the tropicalization in the following way that is useful to compute quantum indices: on each $C_w$ the rational map $f$ specializes to give a parametrized complex rational curve $f_w:C_w\simeq\CC P^1\dashrightarrow\mathrm{Hom}(M,\CC^*)$. Concretely, this is the curve we would obtain by taking the naive limit of $f$ in a coordinate specializing to a coordinate of $C_w$. Therefore, we have a tropical curve and a complex curve associated to every vertex.

\begin{rem}
All our curves are taken with coefficients in $\CC((t))$, which is not algebraically closed, and has a discrete valuation. Thus, every tropicalization data has coefficients in $\ZZ$. Instead we could take the algebraic closure, which is the field of Puiseux series $\CC\{\{t\}\}=\bigcup_{k\geqslant 1}\CC((t^\frac{1}{k}))$, but as we are using only a finite number of coefficients, all belong to $\CC((t^\frac{1}{k}))$ for some $k$, and by taking $u=t^\frac{1}{k}$ we reduce it to the previous case. Therefore, we can assume that everything is defined in $\CC((t))$, up to a change of base.
\end{rem}

\subsubsection{Tropicalization of a plane curve}

We finish by describing the tropicalization of a plane curve. This tropicalization is more elementary than the tropicalization of a parametrized curve. Moreover, the tropicalization of a parametrized curve gives a parametrization of the tropicalization of its image plane curve. Let $C$ be a plane curve, defined by a polynomial $P_t\in\CC((t))[M]$ with coefficients in $\CC((t))$. We look for the points of the curve over the Puiseux series, \textit{i.e.} in $N\otimes\CC\{\{t\}\}^*$. In a basis of $M$, the polynomial is given in coordinates by
$$P_t(x,y)=\sum_{(i,j)\in P_\Delta}a_{i,j}(t)x^iy^j.$$
We assume that the coefficients in the corners of $P_\Delta$ are non-zero. Then, we have the associated tropical polynomial
$$\text{Trop}(P_t)(x,y)=\max_{(i,j)\in P_\Delta}\left(\val(a_{i,j}(t))+ix+jy\right),$$
along with a valuation map, also called \textit{tropicalization map}:
$$\text{Val}:\chi\in \mathrm{Hom}(M,\CC\{\{t\}\}^*)\longmapsto\text{val}\circ \chi\in \mathrm{Hom}(M,\RR)=N_\RR.$$
In coordinates, $\text{Val}$ is given by the coordinatewise valuation:
$$\text{Val}:(x,y)\in(\CC\{\{t\}\}^*)^2\longmapsto (\val(x),\val(y))\in\RR^2.$$
The Kapranov theorem \cite{brugalle2014bit} then ensures that the closure of the image of the vanishing locus of $P_t$ in $(\CC\{\{t\}\}^*)^2$ under the valuation map is equal to the tropical curve defined by $\text{Trop}(P_t)$.

\begin{theo}[Kapranov]
Let $C_\text{trop}$ be the tropical curve defined by $\text{Trop}(P_t)$. Then, one has
$$\overline{\text{Val}(C)}=C_\text{trop}.$$
\end{theo}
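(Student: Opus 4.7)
The plan is to establish the two inclusions $\overline{\text{Val}(C)}\subseteq C_{\text{trop}}$ and $C_{\text{trop}}\subseteq\overline{\text{Val}(C)}$ separately; the first is essentially a valuation inequality, while the second requires building actual points of $C$ over each tropical point.

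For the easy inclusion, I would let $(x,y)\in (\CC\{\{t\}\}^*)^2$ be a solution of $P_t(x,y)=0$ and expand the equation as $\sum_{(i,j)\in P_\Delta} a_{i,j}(t)x^iy^j = 0$. The valuation of the $(i,j)$-term is $\val(a_{i,j}(t))+i\val(x)+j\val(y)$, which equals the value of the affine function $\val(a_{i,j})+i u+j v$ at $(u,v)=\text{Val}(x,y)$. If the maximum of these quantities (in the convention used throughout the paper) were attained by a unique index $(i^*,j^*)$, then the corresponding monomial would strictly dominate, so its leading Puiseux coefficient could not cancel against the others, contradicting $P_t(x,y)=0$. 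Hence the maximum in $\mathrm{Trop}(P_t)(u,v)$ is attained at least twice at $\text{Val}(x,y)$, which is precisely the defining condition for membership in $C_{\text{trop}}$. Taking closures preserves this because $C_{\text{trop}}$ is closed in $N_\RR$.

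For the reverse inclusion, I would first observe that rational points are dense in $C_{\text{trop}}$: indeed, within each maximal cell of $C_{\text{trop}}$ the defining condition is $\QQ$-linear, so it suffices to treat rational $v\in C_{\text{trop}}$ and then conclude by density. Fix such a rational $v$. After a multiplicative change of variables $(x,y)\mapsto(t^{-v_1}x,t^{-v_2}y)$ (and, if needed, a base change $u=t^{1/k}$ to clear denominators in $v$), we may assume $v=(0,0)$, that $\val(a_{i,j}(t))\geqslant 0$ for all $(i,j)$, and that equality holds exactly on the dual face $F$ of the subdivision of $P_\Delta$. The \emph{initial form} $\overline{P}(x,y)=\sum_{(i,j)\in F} a_{i,j}(0)\,x^iy^j\in\CC[M]$ is then a Laurent polynomial whose Newton polytope is $F$. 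The hypothesis $v\in C_{\text{trop}}$ is equivalent to $F$ having more than one vertex, so $\overline{P}$ is not a monomial and its zero locus in $(\CC^*)^2$ is a non-empty curve.

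The remaining step, which is the main obstacle, is to lift a point $(\xi,\eta)\in V(\overline{P})\cap(\CC^*)^2$ to an honest solution $(x(t),y(t))\in(\CC\{\{t\}\}^*)^2$ of $P_t$ with $\text{Val}(x,y)=(0,0)$. I would handle this by a Newton–Puiseux style argument: choose $\xi\in\CC^*$ generically so that $\overline{P}(\xi,Y)\in\CC[Y^{\pm 1}]$ remains non-trivial and admits $\eta$ as a root, then set $x(t)=\xi$ and look for $y(t)=\eta+\sum_{r>0} c_r t^r\in\CC\{\{t\}\}^*$ satisfying $P_t(\xi,y(t))=0$. Viewing $P_t(\xi,Y)$ as a polynomial in $Y$ with Puiseux-series coefficients, the specialization $t=0$ is $\overline{P}(\xi,Y)$, which vanishes at $\eta$; invoking the Newton–Puiseux theorem for univariate polynomials over $\CC\{\{t\}\}$ produces a Puiseux-series root $y(t)$ with $y(0)=\eta$, so that $(\xi,y(t))$ is the sought point of $C$ whose valuation is $(0,0)=v$. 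The genericity of $\xi$ ensures that the fiber $\overline{P}(\xi,\cdot)$ is not identically zero and not a monomial, which is what makes the univariate Newton–Puiseux step applicable; together with density of rational points, this completes the inclusion $C_{\text{trop}}\subseteq\overline{\text{Val}(C)}$.
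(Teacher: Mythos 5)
The paper does not actually prove this statement: it is quoted as Kapranov's theorem with a pointer to \cite{brugalle2014bit}, so your argument can only be judged on its own merits. Your overall plan --- the easy inclusion via the ``extremum attained at least twice'' argument, and the reverse inclusion by reducing to rational points, passing to the initial form $\overline{P}$ supported on the dual face $F$, and lifting a torus point of $V(\overline{P})$ by a univariate Newton--Puiseux argument --- is the standard and essentially correct route.

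There is, however, a concrete gap in the lifting step. You claim that for generic $\xi\in\CC^*$ the fiber $\overline{P}(\xi,Y)$ is neither identically zero nor a monomial. This fails exactly when the dual face $F$ is a horizontal edge of the subdivision, i.e.\ $F\subset\{j=j_0\}$: then $\overline{P}(x,y)=y^{j_0}R(x)$ with $R$ nonconstant, so for every $\xi$ with $R(\xi)\neq 0$ the fiber is a monomial in $Y$ (hence has no root in $\CC^*$), and for $\xi$ a root of $R$ it is identically zero, so the univariate Newton--Puiseux step has nothing to hold on to. The points $v$ with such an $F$ form an entire (vertical) edge of $C_{\text{trop}}$ --- already for a tropical line one of the three rays is of this type --- so they cannot be recovered by density from the points you do handle. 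The repair is easy but must be stated: either apply a monomial ($GL_2(\ZZ)$) change of coordinates so that $F$ is not horizontal, or, symmetrically, fix $y=\eta$ and solve for $x(t)$ when $F$ is horizontal. A smaller issue you inherit from the paper: with $\val$ the order of vanishing in $t$, the dominant monomial is the one of \emph{minimal} valuation, so the ``attained at least twice'' argument should be run for the minimum (or $\val$ replaced by $-\val$) to match the $\max$ defining $\mathrm{Trop}(P_t)$; this is a convention mismatch rather than an error in your reasoning.
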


Let $\alpha_{i,j}=\val(a_{i,j}(t))$, and $a_{i,j}(t)=t^{\alpha_{i,j}}a_{i,j}^0(t)$. The function $(i,j)\mapsto\alpha_{i,j}$ induces a convex subdivision of $P_\Delta$, which is dual to $C_\text{trop}$. As in the tropicalization of a parametrized curve, one can recover complex curves, by specializing the polynomial $P_t$ to one of the polygons of the subdivision. Let $\varpi$ be one of the polygons of the subdivision of $P_\Delta$. Then, the curve associated to $\varpi$ is given by $P_\varpi(x,y)=\sum_{(i,j)\in\varpi}a_{i,j}^0(0)x^iy^j=0$, defined over $\CC$.\\

One can show that if $\varphi:C\rightarrow N\otimes\CC\{\{t\}\}^*$ is a parametrized curve tropicalizing to $h:\Gamma\rightarrow N_\RR$, then the image $h(\Gamma)$ and the tropicalization of the image $\overline{\text{Val}(\varphi(C))}$ are the same. Moreover, the local parametrized curves $f_w:C_w\dashrightarrow N\otimes \CC^*$ resulting from the tropicalization as parametrized curve, are precisely the irreducible components of the curves defined by $P_\varpi=0$. 

\section{Quantum indices of real curves}

	\subsection{The quantum index of a type $I$ real curve}

Let $\varphi:\CC C\rightarrow\CC\Delta$ be a real parametrized curve of type I, which means that $\CC C\backslash\RR C$ is disconnected, and let $S$ be one of its two connected components, which induces an orientation of $\RR C$ as its boundary, called complex orientation. The curve can assumed to be rational rational but this is not needed for the definition of the quantum index.\\

Let $\alpha_1,\dots,\alpha_r\in\RR C$ and $\beta_1,\dots,\beta_s\in S\subset\CC C\backslash\RR C$ be the parameters of the intersection points between the curve and the toric boundary of $\CC\Delta$. Let $n_j:m\mapsto\val_{\alpha_j}(\varphi^*\chi^m)$ and $n'_j:m\mapsto\val_{\beta_j}(\varphi^*\chi^m)$ be the associated weight vectors. In the rational case, the parametrization would then be
$$t\longmapsto \chi\prod_1^r(t-\alpha_i)^{n_i}\prod_1^s (t-\beta_j)^{n'_j}(t-\overline{\beta_j})^{n'_j}.$$
For each intersection points with the boundary, lying on a toric divisor associated to the primitive direction $n\in N$, its coordinate is measured using the monomial $\iota_n\omega\in M$. This coordinate is also called the moment of the point.\\

We have the following logarithmic map and the argument map defined on $N\otimes\CC^*$:
$$\begin{array}{l}
\mathrm{Log}:z^n\in N\otimes\CC^*\longmapsto n\times\log|z|\in N\otimes\RR=N_\RR,\\
\arg : z^n\in N\otimes\CC^*\longmapsto n\times\arg(z)\in N\otimes(\RR/2\pi\ZZ)=N_{2\pi}.\\
\end{array}$$
The argument is here taken mod $2\pi$ but it can also be taken mod $\pi$. We then denote $N_\pi=N\otimes(\RR/\pi\ZZ)$. In either case, each codomain is endowed with a volume form induced by the volume form $\omega$ on $N$. If coordinates on $N\otimes\CC^*$ are $(z_1,z_2)=(e^{x_1+i\theta_1},e^{x_2+i\theta_2})$, where $x_i\in\RR$ and $\theta_j\in\RR/2\pi\ZZ$, then the forms are
$$\omega_{|\bullet|}=\dd x_1\wedge\dd x_2 \text{ and }\omega_\theta=\dd\theta_1\wedge\dd\theta_2.$$
For the form $\omega_\theta$, the volume of $N_{2\pi}$ is $4\pi^2$, and the volume of $N_\pi$ is $\pi^2$. Due to the vanishing of the meromorphic form $\frac{\dd z_1}{z_1}\wedge\frac{\dd z_2}{z_2}$ on $S$, the pullbacks of $\omega_{|\bullet|}$ and $\omega_\theta$ to $S$ coincide, and therefore,
$$\boxed{ \mathcal{A}(S,\varphi)=\int_S\varphi^*\mathrm{Log}^*\omega_{|\bullet|}=\int_S\varphi^*\mathrm{arg}^*\omega_\theta, }$$
is well-defined and called the logarithmic area of $(S,\varphi)$.\\

Recall that the coordinate of each complex intersection point is given by $\varphi^*\chi^{\iota_{n'_j}\omega}|_{\beta_j}$. Let $\varepsilon_j\theta_j=\arg\varphi^*\chi^{\iota_{n'_j}\omega}|_{\beta_j}$, with $\varepsilon_j=\pm 1$ and $\theta_j\in]0;\pi[$, denote their arguments. Although the argument map is not defined at the parameters $\alpha_j$ nor $\beta_j$, it can be extended to the oriented blow-up  of $S$ at $\alpha_j$ and $\beta_j$.
\begin{itemize}
\item Under the coamoeba map to $N_{2\pi}$, each circle of the blow-up corresponding to $\beta_j$ is sent to a geodesic in the direction $n'_j$, and the value of $\iota_{n'j}\omega$ at any point of the geodesic is $\varepsilon_j\theta_j$. This geodesic is traveled a number of times equal to the integral length of $n'_j$.
\item  The half-cricles corresponding to real intersection points are sent to geodesics in the direction associated to the corresponding divisor, but traveled a number of times euqal to the integral length of $n_j$. In particular, if the integral length is odd, there is a half-way travel.
\item If the arguments are rather taken in $N_\pi$, the real intersection points now correspond to full geodesics, and complex intersection points to geodesics traveled twice.
\end{itemize}

Moreover, the complement of the geodesics is endowed with an integer function corresponding to the signed number of antecedents by $(\arg\circ\varphi)|_S$. This function is a $2$-cochain on the argument torus $N_\pi$ (or $N_{2\pi}$), and it changes by when passing on the other side of a geodesic. This last property allows to determine the function up to a shift. The knowing of this function is enough to determine the total signed area. For the coamoeba of the wole curve $\CC C$, as the total area is known to be zero, the shift can be determined. For an half-curve $S$, determining the shift can be more complicated.\\


As noticed by Mikhalkin \cite{mikhalkin2017quantum}, in case the intersection points are purely imaginary, \textit{i.e.} $\theta_j=\frac{\pi}{2}$, the logarithmic area $\mathcal{A}(S)$ is an half-integer multiple of $\pi^2$ for the following reason. Consider the quotient map
$$N_\pi\twoheadrightarrow N_\pi/\{\pm\mathrm{id}\},$$
from the argument torus mod $\pi$ to its quotient by the antipodal map. Since each map $2\arg\circ\varphi|_S:S\rightarrow N_\pi$ defines a chain in the argument torus $N_\pi$, we can push-forward to the quotient and get a chain in the quotient $N_\pi/\{\pm\mathrm{id}\}$, which is homeomorphic to a sphere. However, due to the assumption on the arguments, this chain has no boundary anymore. The boundary previously consisted of the geodesics in $N_\pi$, but as they each pass through a fixed point of the antipodal map, they cancel themselves passing to the quotient. The volume form $\omega_\theta$ also passes to the quotient, and therefore the total area is an integer multiple of the area of $N_\pi/\{\pm\mathrm{id}\}$, which is $\frac{\pi^2}{2}$.\\

In the general case where the $\theta_j$ are not necessarily $\frac{\pi}{2}$, we have the following result.

\begin{prop}
\label{prop existence quantum index}
There exists an half-integer, called the quantum index, such that
$$k\pi^2=\mathcal{A}(S)-\pi\sum_j \varepsilon_j(2\theta_j-\pi).$$
\end{prop}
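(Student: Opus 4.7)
The plan is to extend Mikhalkin's sphere-cycle argument from the purely imaginary case ($\theta_j=\pi/2$) by modifying the coamoeba chain with explicit strips whose total signed area equals the correction $\pi\sum_j\varepsilon_j(2\theta_j-\pi)$. As discussed above, after taking the oriented real blow-up of $S$ at the punctures $\alpha_j,\beta_j$, the argument map $\arg\circ\varphi$ extends and produces a $2$-chain $\Sigma$ in the argument torus $N_{2\pi}$ whose signed $\omega_\theta$-area is $\mathcal{A}(S)$. Its boundary decomposes as: (i) half-geodesics in direction $n_j$ linking two fixed points of the antipodal involution $\pm\mathrm{id}$, one for each real puncture $\alpha_j$; (ii) full geodesic loops in direction $n'_j$ at argument $\varepsilon_j\theta_j$ in the $\iota_{n'_j}\omega$ coordinate, one per complex puncture $\beta_j$; the arcs of $\RR C$ between consecutive real punctures map constantly to fixed points and contribute nothing.

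Next I push $\Sigma$ forward along the covering map $p\colon N_{2\pi}\to N_\pi$ and then along $q\colon N_\pi\to N_\pi/\{\pm\mathrm{id}\}$. Since $\omega_\theta$ descends along both maps, neither push-forward changes signed area. The target $N_\pi/\{\pm\mathrm{id}\}$ is a topological sphere of total $\omega_\theta$-volume $\pi^2/2$. Under these maps each real half-geodesic becomes a closed loop through the fixed point $[0]$, and the usual involution-cancellation identifies its two halves with reversed orientation, killing their contribution. Complex loops at angle $\varepsilon_j\theta_j$ likewise become closed loops (traversed twice in $N_\pi$ in the primitive case, and with multiplicity $2l(n'_j)$ in general), but they do not meet any fixed point when $\theta_j\neq\pi/2$, and therefore survive as nonzero $1$-cycles in $N_\pi/\{\pm\mathrm{id}\}$; hence $q_*p_*\Sigma$ is not yet a cycle.

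To remedy this, for each complex puncture I introduce a rectangular $2$-chain $R_j\subset N_\pi$ sweeping in the direction transverse to $n'_j$, oriented so that $\partial R_j$ equals the double geodesic loop at angle $\varepsilon_j\pi/2$ minus the double geodesic loop at $\varepsilon_j\theta_j$ (in the $\iota_{n'_j}\omega$ coordinate). The shifted geodesic now passes through fixed points, hence is killed in $N_\pi/\{\pm\mathrm{id}\}$ by the same involution argument. A direct computation, multiplying the primitive geodesic length $\pi$ by the signed angular shift $\varepsilon_j(\pi/2-\theta_j)$ and by the double traversal produced by $p$, gives
$$\mathrm{area}(R_j)=2\pi\cdot\varepsilon_j(\pi/2-\theta_j)=-\pi\varepsilon_j(2\theta_j-\pi);$$
in the non-primitive case, the extra traversal factor $l(n'_j)$ is exactly compensated by the rescaling of $\iota_{n'_j}\omega$ relative to its primitive version, so the formula is the same. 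The modified chain $\Sigma+\sum_j R_j$ therefore pushes forward to an honest $2$-cycle on the orbifold sphere $N_\pi/\{\pm\mathrm{id}\}$; it represents an integer multiple of the fundamental class, so its area lies in $\tfrac{\pi^2}{2}\ZZ$. Summing areas yields
$$\mathcal{A}(S)-\pi\sum_j\varepsilon_j(2\theta_j-\pi)\in\tfrac{\pi^2}{2}\ZZ,$$
which is the claim with quantum index $k(\HH,\varphi)\in\tfrac{1}{2}\ZZ$.

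The main technical obstacle is bookkeeping: one must pin down the orientations of each half-geodesic and each full loop in $\partial\Sigma$ inherited from the complex orientation of $S$, orient the strips $R_j$ so their boundaries really cancel the unwanted loops rather than doubling them, and verify that for non-primitive $n'_j$ the traversal multiplicities and the rescaling of $\iota_{n'_j}\omega$ combine to produce the uniform expression $-\pi\varepsilon_j(2\theta_j-\pi)$. Once these sign conventions are fixed, the argument becomes a quantitative refinement of Mikhalkin's original closed-chain computation.
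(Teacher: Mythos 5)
Your argument is essentially the paper's own proof: both shift each complex geodesic $B_j$ to the purely imaginary position $\varepsilon_j\frac{\pi}{2}$ by adjoining a strip whose signed area is $-\pi\varepsilon_j(2\theta_j-\pi)$, and then apply Mikhalkin's antipodal-quotient argument to the corrected chain, whose pushforward to $N_\pi/\{\pm\mathrm{id}\}$ is a cycle and hence has area in $\frac{\pi^2}{2}\ZZ$. Your write-up merely makes the orientation and multiplicity bookkeeping more explicit than the paper does.
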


\begin{proof}
We can shift the geodesics in the following way: for each point $\beta_j$ with corresponding geodesic $B_j$ in $N_{2\pi}$, add the chain whose oriented boundary consists $\overline{B_j}$ (which is $B_j$ with the opposite orientation), and $\widehat{B_j}$ which is the translate of $B_j$ so that the value by $\iota_{n'_j}\omega$ is now $\varepsilon_j\frac{\pi}{2}$. We get a new chain $\hat{S}$ whose area satisfies
$$\left\langle\omega_\theta,\hat{S}\right\rangle = \left\langle\omega_\theta,S\right\rangle + \pi\sum_j \varepsilon_j(2\theta_j-\pi).$$
The chain $\hat{S}$ is now subject to passing to the quotient by the antipodal map, after reducing mod $\pi$, and therefore $\left\langle\omega_\theta,\hat{S}\right\rangle\in\frac{\pi^2}{2}\ZZ$.
\end{proof}

We finish recalling the result from \cite{blomme2020tropical} stating that the log-area is well-behaved under the monomial maps, which are covering maps from the complex torus to itself.

\begin{lem}\cite{blomme2020tropical}\label{lemma monomial behavior}
Let $\varphi : \CC C\dashrightarrow\mathrm{Hom}(M,\CC^*)$ be a type $I$ real curve with a choice of a connected component $S\subset \CC C\backslash\RR C$, inducing a complex orientation, and let $\alpha:\mathrm{Hom}(M,\CC^*)\rightarrow \mathrm{Hom}(M',\CC^*)$ be a monomial map, associated to a morphism $A^T : M'\rightarrow M$. We consider the composition
$$\psi : \CC C \stackrel{\varphi}{\longrightarrow}\mathrm{Hom}(M,\CC^*) \stackrel{\alpha}{\longrightarrow}\mathrm{Hom}(M',\CC^*).$$
Let $\omega$ and $\omega'$ be the volume forms on respectively $N$ and $N'$, dual lattices of $M$ and $M'$, so that we have $A^*\omega'=\det A\omega$. Then, we have
$$\mathcal{A}(S,\psi)=\det A\ \mathcal{A}(S,\varphi).$$
\end{lem}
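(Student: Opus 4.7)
The strategy is to unwind the definition of the log-area as the integral of the pulled-back area form and exploit the naturality of the logarithmic map with respect to monomial maps. All three ingredients are transparent once translated into the right language, so the proof is essentially bookkeeping, and the statement ``$\det A$'' (rather than $|\det A|$) must be compatible with the fact that $\mathcal{A}(S,\bullet)$ is defined as an \emph{oriented} integral of a $2$-form, not an unsigned measure.

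First I would verify the key compatibility $\mathrm{Log}\circ\alpha = A\circ\mathrm{Log}$, where $A:N_\RR\to N'_\RR$ is the $\RR$-linear extension of the lattice map dual to $A^T : M'\to M$. Indeed, a point of $\mathrm{Hom}(M,\CC^*)$ is a homomorphism $\phi:M\to\CC^*$, and $\alpha(\phi)=\phi\circ A^T$; taking $\log|\cdot|$ gives exactly $A\circ\mathrm{Log}(\phi)$. In view of the hypothesis $A^*\omega'=\det A\cdot\omega$ on the volume forms of $N$ and $N'$, one also has $A^*\omega'_{|\bullet|}=\det A\cdot\omega_{|\bullet|}$ on $N_\RR$, since these are the canonical extensions of $\omega$ and $\omega'$ via the identification of $N_\RR$ and $N'_\RR$ with their tangent spaces at the origin.

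Then I would just chase pullbacks. With $\psi=\alpha\circ\varphi$,
$$\psi^*\mathrm{Log}^*\omega'_{|\bullet|}
=\varphi^*\alpha^*\mathrm{Log}^*\omega'_{|\bullet|}
=\varphi^*\mathrm{Log}^*A^*\omega'_{|\bullet|}
=\det A\cdot\varphi^*\mathrm{Log}^*\omega_{|\bullet|}.$$
Integrating over $S$ yields $\mathcal{A}(S,\psi)=\det A\cdot\mathcal{A}(S,\varphi)$, which is the claim. The only minor subtlety to flag is that $\alpha$ is defined everywhere on $\mathrm{Hom}(M,\CC^*)$ (monomial maps extend to a morphism of complex tori), so $\psi$ has the same indeterminacy locus as $\varphi$, namely the boundary parameters $\alpha_j,\beta_j$ which lie outside the open surface $S$; hence there is no convergence issue at the boundary of $S$. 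No analysis of arguments or of the quantum-index correction is needed, because the lemma only concerns the log-area itself, defined as an integral of the pulled-back form.
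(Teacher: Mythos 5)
Your proof is correct and is the natural argument: the identity $\mathrm{Log}\circ\alpha=A\circ\mathrm{Log}$, the hypothesis $A^*\omega'=\det A\,\omega$, and functoriality of pullback give the result in one line, with the sign handled automatically because $\mathcal{A}(S,\cdot)$ is an oriented integral. The paper itself does not reprove this lemma (it is quoted from the earlier reference), but your argument is exactly the computation that proof reduces to, and your remark about the indeterminacy locus being unchanged under $\alpha$ is the right point to flag.
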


\begin{rem}
The proposition deals with the computation of log-area in the general case of a real curve. This log-area needs to be shifted in order to get the quantum index.
\end{rem}

\begin{rem}
Notice that the different notations $\mathrm{Hom}(M,\CC^*)$ and $\mathrm{Hom}(M',\CC^*)$ prevent any mistakes in the direction of the various involved maps.
\end{rem}

	\subsection{The quantum index near the tropical limit}
	
	In \cite{mikhalkin2017quantum}, Mikhalkin proved the following result, that computes the log-areas of curves in a family near the tropical limit.
	
	\begin{prop}\cite{mikhalkin2017quantum}
	Let $C^{(t)}=\left(f_t:\CC P^1\rightarrow\mathrm{Hom}(M,\CC^*)\right)$ be a family of type I real parametrized rational curves, enhanced with a family of connected components of the complex locus $S^{(t)}$, inducing complex orientations of the curves. We assume that the family tropicalizes, in the sense of \ref{tropicalization}, to a parametrized tropical curve $h:\Gamma\rightarrow N_\RR$, such that components $S^{(t)}$ specialize to components $S_w$ of $C_w$ for every vertex $w\in\fix$, thus inducing complex orientations of the curves $C_w$. Then, for $t$ large enough,
	$$\mathcal{A}(S^{(t)},f_t)=\sum_w \mathcal{A}(S_w,f_w),$$ 
where the sum is indexed over the fixed vertices of $\Gamma$.
\end{prop}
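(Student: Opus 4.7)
The plan is to decompose $\CC P^1$ according to the tropicalization and show that only fixed vertices contribute to the log-area in the limit. Using the stable model $\mathcal{C}^{(t)}$, for $|T|$ small enough the Riemann surface $\mathcal{C}^{(T)}\simeq\CC P^1$ decomposes into regions $U_w^{(T)}$ indexed by vertices $w\in\Gamma^0$ (each specializing to an open dense subset of $C_w$), glued along annular regions $A_\gamma^{(T)}$ indexed by bounded edges $\gamma$ (whose conformal modulus grows like $|\gamma|\cdot|\log|T||$). This decomposition is equivariant under the real structure: $\sigma$ preserves $U_w^{(T)}$ and $A_\gamma^{(T)}$ when $w$, respectively $\gamma$, belongs to $\fix$, and swaps them with their $\sigma$-images otherwise.

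On each $U_w^{(T)}$, the restriction of $f_T$ converges, after shifting by $h(w)\log|T|$ in the target, to the complex local map $f_w:C_w\dashrightarrow N\otimes\CC^*$ from the tropicalization, so the differential form $f_T^*\mathrm{Log}^*\omega_{|\bullet|}$ converges locally uniformly. For $w\in\fix$, $S^{(T)}\cap U_w^{(T)}$ specializes to $S_w\subset C_w$ and the contribution to $\mathcal{A}(S^{(T)},f_T)$ converges to $\mathcal{A}(S_w,f_w)$. For $w\notin\fix$, the region $U_w^{(T)}$ avoids $\RR P^1$ and therefore sits entirely inside one of the two components of $\CC P^1\setminus\RR P^1$; in the limit its contribution is $\int_{C_w}f_w^*\mathrm{Log}^*\omega_{|\bullet|}$. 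This integral vanishes: on any holomorphic curve $\mathrm{Log}^*\omega_{|\bullet|}$ coincides with $\arg^*\omega_\theta$ because their difference is the real part of the $(2,0)$-form $\frac{\dd z_1}{z_1}\wedge\frac{\dd z_2}{z_2}$; writing $f_w(t)=\chi\prod(t-a_j)^{n_j}$ and expanding the pullback as a double sum over pairs of marked points, each pairwise term integrates to zero on $\CC$ by a $y\mapsto -y$ symmetry after centering around the midpoint of $(a_j,a_k)$.

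Only the edge contributions remain. On each cylinder $A_\gamma^{(T)}$, one chooses an adapted coordinate $z$ in which $f_T(z)$ is asymptotically the monomial map $z\mapsto c\cdot z^{n_\gamma}$, where $n_\gamma\in N$ is the slope of $h$ along $\gamma$. For a strictly monomial map to $N\otimes\CC^*$, the image is contained in a translate of a one-dimensional subgroup of $N\otimes\CC^*$, so $\mathrm{Log}^*\omega_{|\bullet|}$ pulls back to zero (two collinear real differentials). Combined with an exponential-decay estimate on the non-monomial correction terms on $A_\gamma^{(T)}$, this yields $\int_{A_\gamma^{(T)}\cap S^{(T)}}f_T^*\mathrm{Log}^*\omega_{|\bullet|}\to 0$ as $T\to 0$, even though the length of $A_\gamma^{(T)}$ grows like $|\log|T||$. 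Summing over vertices and edges gives $\mathcal{A}(S^{(T)},f_T)\to\sum_{w\in\fix}\mathcal{A}(S_w,f_w)$.

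The main obstacle is the quantitative control on the edge contributions: the cylinders $A_\gamma^{(T)}$ become arbitrarily long and a priori carry growing area, so one must show that the rate at which $f_T$ converges to its monomial approximation on $A_\gamma^{(T)}$, controlled by the geometry of the stable model and the explicit Puiseux expansions described in section \ref{tropicalization}, dominates this growth. Once that estimate is in place, the other two ingredients, namely the convergence on vertex neighborhoods and the vanishing of the log-area of a rational curve in $N\otimes\CC^*$ over all of $\CC P^1$, are straightforward.
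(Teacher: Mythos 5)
A preliminary remark: the paper does not prove this proposition; it is imported from Mikhalkin with a citation, so there is no in-paper argument to compare yours against. The only ingredient the paper does supply is the lemma that the log-area of a complex curve vanishes, which it explicitly says ``justifies the fact that the quantum index near the tropical limit is obtained as a sum over the fixed vertices, since pairs of exchanged vertices do not contribute.'' Your architecture --- vertex regions on which $f_T$ converges to the local maps $f_w$, degenerating annuli on which the map is asymptotically monomial with one-dimensional logarithmic image, and the vanishing of the full complex log-area to kill the non-fixed vertices --- is exactly the intended one, and your reflection-symmetry argument for that vanishing is an acceptable substitute for the paper's degree argument (you should also record the trivial case where $U_w^{(T)}$ lies in the component conjugate to $S^{(T)}$ and meets it not at all).

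There is nevertheless a genuine gap, and it sits exactly where you placed your disclaimer: the uniform control of the log-area density on the long cylinders is the entire analytic content of the proposition, and it is asserted rather than proved. Moreover, as stated, the claim $\int_{A_\gamma^{(T)}\cap S^{(T)}}f_T^*\mathrm{Log}^*\omega_{|\bullet|}\to 0$ fails if the vertex regions have fixed conformal size: near either end of the neck $xy=T^{l}$, $|x|,|y|\leqslant\delta$, the map $f_T$ is not close to a monomial map but to the restriction of $f_{w_i}$ to a punctured neighbourhood of the node, so the annulus integral converges to
$\int_{0<|x|\leqslant\delta}f_{w_1}^*\mathrm{Log}^*\omega_{|\bullet|}+\int_{0<|y|\leqslant\delta}f_{w_2}^*\mathrm{Log}^*\omega_{|\bullet|}$
(intersected with the relevant halves), which is nonzero for fixed $\delta$ and is precisely the piece missing from your vertex integrals, since $\mathcal{A}(S_w,f_w)$ integrates over all of $S_w$ including node neighbourhoods. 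You therefore need either a two-scale limit ($T\to 0$ first, then $\delta\to 0$, using that these punctured-disk integrals are $o_\delta(1)$ because $f_{w_i}$ is monomial to leading order at the node), or, if your $U_w^{(T)}$ are meant to exhaust $C_w$ minus the nodes, a uniform integrability estimate over the growing domains; either way this estimate must be written down, and it is the proof. A secondary discrepancy: you establish convergence, whereas the statement asserts exact equality for $t$ large. In Mikhalkin's original (purely imaginary) setting the log-area lies in $\frac{\pi^2}{2}\ZZ$, so convergence forces eventual equality; in the general setting of this paper the log-area varies continuously and only the limit statement can be expected --- which is worth flagging, since it means the proposition as quoted needs the quantized correction term before ``for $t$ large enough'' is literally true.
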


\begin{rem}
In particular, and this happens in the proof of the correspondence theorem, for one to know the log-area of curves near the tropical limit, one only needs to know the log-areas of the curves associated to the vertices of the tropical curve, and the way they are glued together along the edges. This means that the quantum index may be computed in the patchworking construction. 
\end{rem}


	\subsection{Specific computations}
\label{local computations}

In this section, we compute the log-areas of some auxiliary rational curves. This includes complex conjugated curves, and some curves of degree at most $2$. Using Lemma \ref{lemma monomial behavior}, this enables the computation of the log-area of oriented curves having up to five intersection points with the toric boundary. Some of these computations were already treated in \cite{blomme2020tropical} but we include them here for sake of completeness.

\subsubsection{Log-area of a complex curve}

We begin by proving that the log-area of a complex curve is zero. This justifies the fact that the quantum index near the tropical limit is obtained as a sum over the fixed vertices, since pairs of exchanged vertices do not contribute. The following statement is not specific to rational curves or real curves. 

\begin{lem}
Let $\varphi:\CC C\dashrightarrow N\otimes \CC^*$ be a complex parametrized curve, with $\CC C$ a smooth Riemann surface. Then
$$\mathcal{A}(\CC C,\varphi)=0.$$
\end{lem}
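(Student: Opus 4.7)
The plan is to prove the vanishing by applying Stokes' theorem, exploiting that $\omega_{|\bullet|}$ is globally exact on $N_\RR \simeq \RR^2$. Choose coordinates $x_1,x_2$ so that $\omega_{|\bullet|} = \dd x_1 \wedge \dd x_2 = \dd(x_1\,\dd x_2)$. Let $P \subset \CC C$ be the finite set of points at which $\varphi$ meets the toric boundary of some toric compactification of $N\otimes\CC^*$. On $\CC C\setminus P$, the map $\mathrm{Log}\circ\varphi$ is well-defined and the pulled-back form $\varphi^*\mathrm{Log}^*\omega_{|\bullet|}$ is exact, with primitive $\eta:=\varphi^*\mathrm{Log}^*(x_1\,\dd x_2)$.

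Then I would excise small disks $D_\epsilon(\alpha)$ around each $\alpha\in P$ in a local complex coordinate and apply Stokes' theorem on the compact surface-with-boundary $\CC C \setminus \bigsqcup_{\alpha\in P} D_\epsilon(\alpha)$. Since $\CC C$ itself is closed, the only boundary is the union of the oriented circles $-\partial D_\epsilon(\alpha)$, so
\[
\int_{\CC C \setminus \bigsqcup_\alpha D_\epsilon(\alpha)} \varphi^*\mathrm{Log}^*\omega_{|\bullet|} \;=\; -\sum_{\alpha\in P}\oint_{\partial D_\epsilon(\alpha)} \eta.
\]
I would then estimate each boundary integral. For $\alpha\in P$ with weight vector $n_\alpha=(n^1,n^2)\in N$, the local expansion $z_k = c_k(t-\alpha)^{n^k}(1+O(t-\alpha))$ yields $x_k = n^k\log|t-\alpha| + \log|c_k| + O(|t-\alpha|)$. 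The crucial point is that $\dd\log|t-\alpha|$ restricted to any concentric circle around $\alpha$ vanishes identically; hence the tangential part of $\dd x_k$ on $\partial D_\epsilon(\alpha)$ is of size $O(\epsilon)\,\dd\phi$, while $x_k$ is of size $O(|\log\epsilon|)$. Therefore $\oint_{\partial D_\epsilon(\alpha)} x_1\,\dd x_2 = O(\epsilon|\log\epsilon|)\to 0$ as $\epsilon\to 0$.

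Finally, I would confirm that $\varphi^*\mathrm{Log}^*\omega_{|\bullet|}$ is absolutely integrable on $\CC C$. In polar coordinates $t-\alpha=re^{i\phi}$, the same expansion gives $\dd x_k = (n^k/r + O(1))\,\dd r + O(r)\,\dd\phi$, whence $\varphi^*\mathrm{Log}^*\omega_{|\bullet|} = O(1)\,\dd r\wedge\dd\phi$ near each $\alpha$, which is integrable. Therefore $\int_{\CC C}\varphi^*\mathrm{Log}^*\omega_{|\bullet|}$ is well-defined as a convergent integral equal to the $\epsilon\to 0$ limit of the integrals above, which is $0$.

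The main obstacle is the local analysis at the points of $P$, but it all reduces to a single observation: $\dd\log|t-\alpha|$ has zero restriction to the circle $|t-\alpha|=\epsilon$. This observation simultaneously cancels the would-be $1/r^2$ singularity of the integrand (yielding integrability) and makes the boundary integrals decay, so the conclusion follows with no further work.
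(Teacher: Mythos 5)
Your proof is correct, but it takes a genuinely different route from the paper's. You argue analytically: $\omega_{|\bullet|}$ is globally exact on $N_\RR$, so after excising small disks around the punctures you reduce the integral to boundary terms over small circles, and the single local observation that $\dd\log|t-\alpha|$ restricts to zero on concentric circles does all the work --- it makes the boundary integrals $O(\epsilon\log\tfrac{1}{\epsilon})$ and it cancels the would-be $\frac{\dd r}{r}\wedge\frac{\dd r}{r}$ singularity of the pulled-back form, leaving an integrable $O(1)\,\dd r\wedge\dd\phi$. The paper instead argues topologically: $\mathrm{Log}\circ\varphi$ is a proper map from $\CC C^o$ to $N_\RR$ between oriented surfaces, hence has a well-defined degree; since the amoeba is not all of $N_\RR$ the degree is zero, so the integral of the pullback of any compactly supported $2$-form vanishes, and a partition of unity concludes. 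The topological proof is shorter and requires no local expansions, but it implicitly needs the absolute integrability of $\varphi^*\mathrm{Log}^*\omega_{|\bullet|}$ to justify interchanging the infinite sum over the partition of unity with the integral (the integrand is not of one sign); your local analysis supplies exactly that estimate, so your argument is in a sense the more self-contained of the two. The only point worth making explicit is that your application of Stokes' theorem uses the compactness of $\CC C$, which is implicit in the paper's setting of parametrized curves.
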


\begin{proof}
Let $\CC C^o$ be the open set of $\CC C$ where $\varphi$ is defined. We consider the map $\mathrm{Log}\circ\varphi:\CC C^o\rightarrow N_\RR$. This is a proper map between smooth oriented manifolds, therefore it has a well-defined degree, which corresponds both to the number of antecedents counted with signs over a generic point, and to the map $\RR =H^2_c(N_\RR)\stackrel{ (\text{Log}\circ\varphi)^*}{\longrightarrow} H^2_c(\CC C^o)=\RR$ between compactly supported cohomology groups. Since the map is not surjective, its degree is zero. Hence, if $\tilde{\omega}$ is a compactly supported $2$-form on $N_\RR$, then $\int_{\CC C^o}(\text{Log}\circ\varphi)^*\tilde{\omega}=0$. Thus, by writing $\omega$ as a (infinite) sum of compactly supported $2$-forms using partitions of unity, we get the result.
\end{proof}

\subsubsection{Log-area of a line}

We recall the computation of the quantum index of a line. This was dealt with in \cite{mikhalkin2017quantum}. A line has degree $\Delta^\mathrm{line}=\{(-1,0),(0,-1),(1,1)\}$.

\begin{lem}
The log-area of the parametrized line $t\mapsto(t,1-t)$ is $\frac{\pi^2}{2}$.
\end{lem}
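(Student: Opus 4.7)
The plan is to use the argument formulation of the logarithmic area, since for the line the coamoeba of a half-plane is a classical and very transparent object. Write $\varphi(t)=(t,1-t)$, so that $\arg\circ\varphi$ restricted to $\HH$ is the map $t\mapsto (\arg t,\arg(1-t))$, taking values in the fundamental domain $(0,\pi)\times(-\pi,0)$ of the argument torus $N_{2\pi}$. Setting $t=u+iv$, a direct differentiation yields
$$d(\arg t)\wedge d(\arg(1-t))=\frac{v\,du\wedge dv}{(u^2+v^2)\bigl((1-u)^2+v^2\bigr)},$$
which is strictly positive on $\HH$. In particular $\arg\circ\varphi|_\HH$ is a local diffeomorphism with positive Jacobian, so $\mathcal{A}(\HH,\varphi)$ equals the (unsigned) area of its image weighted by the topological degree.

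The next step is to identify the image. The three boundary intervals of $\HH$ in $\RR P^1$ are collapsed to single points: $(-\infty,0)\mapsto(\pi,0)$, $(0,1)\mapsto(0,0)$, $(1,+\infty)\mapsto(0,-\pi)$. At the three singularities $t\in\{0,1,\infty\}$ the map extends continuously to the real oriented blow-up, and a short computation on a small semicircle $t=\delta e^{i\phi}$, $\phi\in(0,\pi)$, shows:
\begin{itemize}
\item blowing up at $t=0$ gives the segment $\theta_2=0$, $\theta_1\in[0,\pi]$;
\item blowing up at $t=1$ gives $\theta_1=0$, $\theta_2\in[-\pi,0]$;
\item blowing up at $t=\infty$, via $s=1/t$, gives the segment $\theta_2=\theta_1-\pi$, $\theta_1\in[0,\pi]$.
\end{itemize}
Hence the boundary of the image is the triangle $T$ with vertices $(0,0), (\pi,0), (0,-\pi)$, traversed so that $T$ lies on the left, consistent with the positivity of the Jacobian.

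Finally, one checks that the degree of $\arg\circ\varphi|_\HH$ onto the interior of $T$ is $1$: solving $(\arg t,\arg(1-t))=(\pi/4,-\pi/4)$ forces $t=(1+i)/2$ uniquely in $\HH$, and since a covering of fixed degree is determined by its value at one point, the map is a diffeomorphism onto $\mathrm{Int}(T)$. Therefore
$$\mathcal{A}(\HH,\varphi)=\int_T d\theta_1\wedge d\theta_2=\mathrm{Area}(T)=\tfrac{1}{2}\pi\cdot\pi=\frac{\pi^2}{2}.$$
The only slightly delicate step is the blow-up analysis at $\infty$, but the substitution $s=1/t$ reduces it to the same computation as at $0$. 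Note that all three intersection points $0,1,\infty$ are real, so the shift term in Proposition~\ref{prop existence quantum index} is empty and the log-area is indeed a half-integer multiple of $\pi^2$, with quantum index $k=\tfrac12$.
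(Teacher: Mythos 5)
Your proof is correct and follows the same route as the paper, which computes the log-area via the argument map by identifying the coamoeba of the half-curve with one of the two triangles of area $\frac{\pi^2}{2}$ in the argument torus (the paper simply cites its Figure of the line's coamoeba, whereas you supply the Jacobian positivity, the boundary blow-up analysis, and the degree-one check explicitly). All the details check out, including the degree count at $(\pi/4,-\pi/4)$ and the concluding remark that $k=\tfrac12$.
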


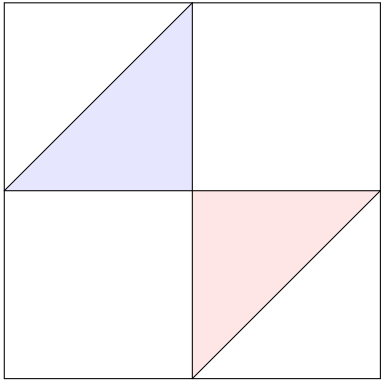
\begin{figure}
\begin{center}
\definecolor{ffqqqq}{rgb}{1.,0.,0.}
\definecolor{qqqqff}{rgb}{0.,0.,1.}
\begin{tikzpicture}[line cap=round,line join=round,>=triangle 45,x=0.5cm,y=0.5cm]
\clip(-1.,-1.) rectangle (11.,11.);
\fill[color=qqqqff,fill=qqqqff,fill opacity=0.1] (0.,5.) -- (5.,5.) -- (5.,10.) -- cycle;
\fill[color=ffqqqq,fill=ffqqqq,fill opacity=0.1] (5.,5.) -- (10.,5.) -- (5.,0.) -- cycle;

\draw (0.,0.)-- (10.,0.);
\draw (10.,0.)-- (10.,10.);
\draw (10.,10.)-- (0.,10.);
\draw (0.,10.)-- (0.,0.);
\draw (0.,5.)-- (10.,5.);
\draw (5.,10.)-- (5.,0.);
\draw (0.,5.)-- (5.,10.);
\draw (5.,0.)-- (10.,5.);
\end{tikzpicture}

\caption{Co-amoeba of a line.}
\label{coamoeba line}
\end{center}
\end{figure}

\begin{proof}
This computation can be done by hand using the logarithmic map, or using the argument map. The coamoeba is depicted on Figure \ref{coamoeba line}, an half-curve corresponds to one of the two triangles. Therefore, the signed area corresponds to the area of one of the triangles.
\end{proof}

\subsubsection{Log-area of a parabola}

We now consider rational curves of degree $\Delta^{\mathrm{par}}=\{(-1,1),(1,1),(0,-1)^2\}$. We assume that the two intersection points with the toric divisor associated to $(0,-1)$ are complex conjugated. Choosing a coordinate on the curve such that these complex points are $\pm i$ and the intersection point with the toric divisor associated to $(1,1)$ is $\infty$. There are two such coordinates which differ by their orientation. Thus, the orientation fixes uniquely the coordinate. Up to a multiplicative translation, the oriented curve has a parametrization
$$\varphi:t\in\CC P^1\longmapsto\left( t-c,\frac{t^2+1}{t-c}\right),$$
where $c\in\RR$ is the coordinate of the last intersection point with the toric boundary. The reversing of the orientation leads to the change of $c$ by $-c$.

\begin{lem}
The log-area of the parabola $\varphi:t\in\CC P^1\longmapsto\left( t-c,\frac{t^2+1}{t-c} \right)$ is $2\pi\arctan c$.
\end{lem}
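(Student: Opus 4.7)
I would compute $\mathcal{A}(\HH,\varphi)=\int_\HH d\theta_1\wedge d\theta_2$ by applying Stokes' theorem to $\HH$ with small neighborhoods of the singular loci of the argument maps excised. For the parametrization $\varphi(t)=(t-c,(t^2+1)/(t-c))$, writing $\theta_1=\arg(t-c)$ and $\theta_2=\arg(t-i)+\arg(t+i)-\arg(t-c)$, the loci to excise are the boundary points $t=c,\infty$ and the single interior puncture $t=i$ (where $v$ has a simple zero); the conjugate $t=-i$ lies outside $\HH$ and plays no role. Using $d\theta_1\wedge d\theta_1=0$, the integrand simplifies to
\[
d\theta_1\wedge d\theta_2 \;=\; d\arg(t-c)\wedge\bigl[\,d\arg(t-i)+d\arg(t+i)\,\bigr],
\]
so the log-area splits as $\mathcal{A}(\HH,\varphi)=I(c,i)+I(c,-i)$ with $I(c,z):=\int_\HH d\arg(t-c)\wedge d\arg(t-z)$.

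Each $I(c,z)$ is then reduced to the boundary integral $\oint\arg(t-c)\,d\arg(t-z)$ around the punctured half-plane, which I evaluate piece by piece. Along $\RR\setminus\{c\}$ the function $\arg(t-c)$ takes only the values $0$ and $\pi$, so the real axis contributes $\pi\bigl[\arg(t-z)\bigr]_{-\infty}^{c}$; summed over $z=\pm i$ these contributions cancel because $\arg(c-i)+\arg(c+i)=\arg(c^2+1)=0$. The large semicircle at infinity contributes $\pi^2/2$ in each $I$, both arguments being asymptotic to $\arg t$. The small semicircle at $c$ contributes nothing, as $\arg(t-z)$ is locally constant there. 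The only nontrivial remaining piece is the small loop around $t=i$ in $I(c,i)$: on this loop $\arg(t-c)$ is approximately the constant $\arg(i-c)=\tfrac{\pi}{2}+\arctan c$ while $d\arg(t-i)\sim d\alpha$, so the loop contributes (in absolute value) $2\pi\arg(i-c)=\pi^2+2\pi\arctan c$.

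Combining these contributions yields the log-area $2\pi\arctan c$, with the overall sign fixed by the orientation conventions on $\HH$ and the volume form $\omega$ (consistency with the two equivalent parametrizations corresponding to the two complex orientations of the curve, which differ by $c\leftrightarrow -c$, is immediate since the answer is odd in $c$). The only real subtlety in the plan is the selection of consistent single-valued branches of the multivalued functions $\arg(t-\ast)$ along each piece of the boundary of the punctured half-plane, so that the jump of $\arg(t-c)$ at $c$, the matching at infinity, and the monodromy of the loop around $i$ all combine coherently; once the branches are fixed, the individual integrals are elementary.
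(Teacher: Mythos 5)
Your argument is correct in substance, but it takes a genuinely different route from the paper. The paper works entirely on the coamoeba side: it draws the image of the curve in the argument torus (three geodesics), determines the integer order map on the complementary regions (the $\pm1$ on the two families of triangles, pinned down by restricting the coamoeba of the whole curve to the half-curve), locates the vertical geodesics at abscissa $\mathrm{arccot}(-c)$, and reads off the signed area as a difference of two triangle areas, $\mathrm{arccot}(-c)^2-(\pi-\mathrm{arccot}(-c))^2=2\pi\arctan c$. You instead evaluate $\int_\HH d\theta_1\wedge d\theta_2$ directly by Stokes on the punctured half-plane, which localizes the answer at the interior zero $t=i$ of the second coordinate and at the arc at infinity; this is essentially the ``direct integration'' route that the paper's remark dismisses as painful on the logarithmic side, but carried out on the argument side where it becomes clean. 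What each buys: the coamoeba picture is the one the paper reuses for the tangent ellipse (where pinning down the order map requires the conic-intersection bound) and for the general chain formalism, whereas your computation is more elementary and self-contained, needs no determination of the order map, and visibly generalizes to the pairwise-interaction formula of Theorem \ref{log-area rational curve}. Your decomposition $d\theta_1\wedge d\theta_2=d\arg(t-c)\wedge[d\arg(t-i)+d\arg(t+i)]$, the vanishing of the real-axis term via $\arg(c-i)+\arg(c+i)=0$, the $\pi^2/2$ from each arc at infinity, and the $2\pi\arg(i-c)=\pi^2+2\pi\arctan c$ from the loop at $i$ are all right, and note that no branch cut is even needed in the interior since $\arg(t-c)$ is single-valued on $\HH$. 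The one point you should tighten is the sign: oddness in $c$ only confirms the answer up to a global sign (the involution $c\mapsto-c$ also reverses orientation), so you must actually fix the orientation of $\partial(\HH\setminus D_\epsilon(i))$ and the convention $\omega_\theta=d\theta_1\wedge d\theta_2$ and check one value (e.g.\ the sign of the limit $\pm\pi^2$ as $c\to+\infty$, which the paper's coamoeba gives as $+\pi^2$) to conclude $+2\pi\arctan c$ rather than $-2\pi\arctan c$.
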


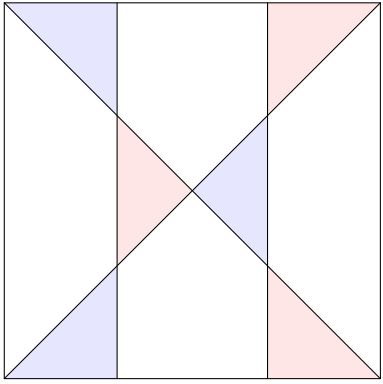
\begin{figure}
\begin{center}
\definecolor{ffqqqq}{rgb}{1.,0.,0.}
\definecolor{qqqqff}{rgb}{0.,0.,1.}
\begin{tikzpicture}[line cap=round,line join=round,>=triangle 45,x=0.5cm,y=0.5cm]
\clip(-1.,-1.) rectangle (11.,11.);
\fill[color=qqqqff,fill=qqqqff,fill opacity=0.1] (0.,10.) -- (3.,10.) -- (3.,7.) -- cycle;
\fill[color=qqqqff,fill=qqqqff,fill opacity=0.1] (0.,0.) -- (3.,3.) -- (3.,0.) -- cycle;
\fill[color=ffqqqq,fill=ffqqqq,fill opacity=0.1] (3.,3.) -- (3.,7.) -- (5.,5.) -- cycle;
\fill[color=qqqqff,fill=qqqqff,fill opacity=0.1] (5.,5.) -- (7.,7.) -- (7.,3.) -- cycle;
\fill[color=ffqqqq,fill=ffqqqq,fill opacity=0.1] (7.,10.) -- (7.,7.) -- (10.,10.) -- cycle;
\fill[color=ffqqqq,fill=ffqqqq,fill opacity=0.1] (7.,3.) -- (7.,0.) -- (10.,0.) -- cycle;
\draw (0.,0.)-- (10.,0.);
\draw (10.,0.)-- (10.,10.);
\draw (10.,10.)-- (0.,10.);
\draw (0.,10.)-- (0.,0.);
\draw (0.,0.)-- (10.,10.);
\draw (0.,10.)-- (10.,0.);
\draw (7.,0.)-- (7.,10.);
\draw (3.,0.)-- (3.,10.);
\end{tikzpicture}

\caption{Co-amoeba of a parabola with order map: $-1$ for red (triangles with left vertical side), $+1$ for blue (triangles with right vertical side).}
\label{coamoeba parabola}
\end{center}
\end{figure}

\begin{proof}
The coamoeba along with its order map is depicted on Figure \ref{coamoeba parabola}. The order map has value $1$ on the blue triangles and $-1$ on the red ones. The abscissa of the two vertical geodesics are both opposite arguments of the complex intersection points with the boundary. The one with parameter $i$ is
$$\arg(i-c) =\mathrm{arcot}(-c)\in]0;\pi[.$$
This is the co-amoeba of the whole curve. However, as $z$ is a coorinate on the curve, we can restrict to the co-amoeba of the half-curve parametrized by $\HH$ if we restrict to the triangles in the right half of the square. Therefore, the log-area is equal to
\begin{align*}
\mathcal{A}(\HH,\varphi) &= \mathrm{arcot}(-c)^2 - (\pi-\mathrm{arcot}(-c))^2 \\
	& = 2\pi \mathrm{arcot}(-c) - \pi^2 \\
	& = 2\pi\arctan c.\\
\end{align*}
\end{proof}

\begin{rem}
Such curves have an equation of the form $w=az+b+\frac{c}{z}$, which becomes $zw=w'=az^2+bz+c$ after a change of toric coordinates. This is why we call them parabolas.
\end{rem}

\subsubsection{Log-area of a tangent ellipse}
\label{subsubsection log-area of a tangent ellipse}

We now consider an ellipse tangent to one of the coordinate axis. An ellipse means that this is a curve of degree $2$ in $\CC P^2$, \textit{i.e.} a conic. We assume that its intersection points with the coordinate axis $\{w=0\}$ and $\{z=0\}$ are complex, and that it has a unique intersection points with the infinite axis. In the standard denomination, a conic tangent to the infinite axis is called a parabola, but we call it an tangent ellipse to distinguish it from the previous parabola case studied.\\

Same as in the parabola base, we choose a suitable coordinate on the oriented curve such that the infinite point has coordinate $\infty$. The other condition to fix this coordinate is that both parameters corresponding to complex intersection points have modulus $1$. This is possible by taking as $0$ the center of the circle containing all four points, and a suitable scaling. Let $\theta,\varphi\in ]0;\pi[$ be the arguments of these parameters. The parametrization is up to a multiplicative translation
$$\psi:t\in\CC P^1\longmapsto\left( t^2-2t\cos\varphi+1,t^2-2t\cos\theta+1\right).$$

Moreover, up to a permutation if the coordinate axis, one can assume that $\theta<\varphi$, and up to a change of the orientation of the curve, which can be done by changing $t$ by $-t$ in the parametrization, we can assume that $\theta+\varphi<\pi$. Indeed, the change of $t$ by $-t$ is the curve where $\theta$ and $\varphi$ have been change by $\pi-\theta$ and $\pi-\varphi$. If $\theta+\varphi>\pi$, then $(\pi-\theta)+(\pi-\varphi)<\pi$. This also reverses their order.

\begin{figure}
\begin{center}
\begin{tabular}{cc} 
\definecolor{ududff}{rgb}{0.30196078431372547,0.30196078431372547,1.}
\definecolor{xdxdff}{rgb}{0.49019607843137253,0.49019607843137253,1.}
\begin{tikzpicture}[line cap=round,line join=round,>=triangle 45,x=0.5cm,y=0.5cm]
\clip(-1.,-1.) rectangle (11.,11.);
\draw [line width=2.pt] (0.,0.)-- (10.,0.);
\draw [line width=2.pt] (10.,0.)-- (10.,10.);
\draw [line width=2.pt] (10.,10.)-- (0.,10.);
\draw [line width=2.pt] (0.,10.)-- (0.,0.);
\draw [line width=1.2pt] (1.,10.)-- (1.,0.);
\draw [line width=1.2pt] (1.,4.904805811625541) -- (0.8810072645319262,5.);
\draw [line width=1.2pt] (1.,4.904805811625541) -- (1.1189927354680747,5.);
\draw [line width=1.2pt] (10.,8.)-- (0.,8.);
\draw [line width=1.2pt] (4.90480581162554,8.) -- (5.,8.118992735468074);
\draw [line width=1.2pt] (4.90480581162554,8.) -- (5.,7.881007264531926);
\draw [line width=1.2pt] (0.,0.)-- (5.,5.);
\draw [line width=1.2pt] (2.6346249122582575,2.6346249122582592) -- (2.651453026290539,2.4831718859677174);
\draw [line width=1.2pt] (2.6346249122582575,2.6346249122582592) -- (2.483171885967715,2.651453026290542);
\draw [line width=1.2pt] (2.5,2.5) -- (2.5168281140322804,2.348546973709457);
\draw [line width=1.2pt] (2.5,2.5) -- (2.3485469737094564,2.516828114032281);
\draw [line width=1.2pt] (5.,5.)-- (10.,10.);
\draw [line width=1.2pt] (7.634624912258259,7.634624912258259) -- (7.6514530262905405,7.4831718859677165);
\draw [line width=1.2pt] (7.634624912258259,7.634624912258259) -- (7.4831718859677165,7.651453026290541);
\draw [line width=1.2pt] (7.5,7.5) -- (7.516828114032282,7.348546973709458);
\draw [line width=1.2pt] (7.5,7.5) -- (7.348546973709458,7.516828114032282);
\begin{scriptsize}
\draw [fill=red] (0.,0.) circle (2.5pt);
\draw [fill=red] (10.,0.) circle (2.5pt);
\draw [fill=red] (10.,10.) circle (2.5pt);
\draw (9.2,8.6) node {$1$};
\draw [fill=red] (0.,10.) circle (2.5pt);
\draw (0.5089140343942222,9.091821214176585) node {$1$};
\draw [fill=red] (5.,10.) circle (2.5pt);
\draw (4.7609211151200705,9.075955516114176) node {$0$};
\draw [fill=red] (10.,5.) circle (2.5pt);
\draw [fill=red] (5.,5.) circle (2.5pt);
\draw [fill=red] (0.,5.) circle (2.5pt);
\draw (0.5247797324566321,4.538365870264948) node {$0$};
\draw (2.999828630192574,5.934547299757018) node {$-1$};
\draw (6.569610694234798,2.967661762086369) node {$0$};
\draw [fill=red] (5.,0.) circle (2.5pt);

\fill[color=blue,fill=blue,fill opacity=0.1] (0.,10.) -- (1.,10.) -- (1.,8.) -- (0.,8.) -- cycle;
\fill[color=blue,fill=blue,fill opacity=0.1] (0.,0.) -- (1.,0.) -- (1.,1.) -- cycle;
\fill[color=blue,fill=blue,fill opacity=0.1] (8.,8.) -- (10.,8.) -- (10.,10.) -- cycle;

\fill[color=red,fill=red,fill opacity=0.1] (1.,1.) -- (8.,8.) -- (1.,8.) -- cycle;

\end{scriptsize}
\end{tikzpicture}
&
\begin{tikzpicture}[line cap=round,line join=round,>=triangle 45,x=0.5cm,y=0.5cm]
\clip(-3.5,-3.5) rectangle (8.5,8.5);
\draw [line width=2.pt] (0.,0.)-- (5.,0.);
\draw [line width=2.pt] (5.,0.)-- (5.,5.);
\draw [line width=2.pt] (5.,5.)-- (0.,5.);
\draw [line width=2.pt] (0.,5.)-- (0.,0.);
\draw [line width=1.2pt] (0.,0.)-- (5.,5.);
\draw [line width=1.2pt] (2.6194190024394985,2.6194190024394994) -- (2.634346377744435,2.485072624695063);
\draw [line width=1.2pt] (2.6194190024394985,2.6194190024394994) -- (2.485072624695062,2.634346377744437);
\draw [line width=1.2pt] (2.5,2.5) -- (2.514927375304937,2.3656536222555635);
\draw [line width=1.2pt] (2.5,2.5) -- (2.3656536222555635,2.514927375304938);
\draw [line width=1.2pt] (1.,5.)-- (1.,0.);
\draw [line width=1.2pt] (1.,2.4155580135724977) -- (0.894447516965622,2.5);
\draw [line width=1.2pt] (1.,2.4155580135724977) -- (1.1055524830343786,2.5);
\draw [line width=2.pt] (0.,0.)-- (5.,0.);
\draw [line width=1.2pt] (5.,3.)-- (0.,3.);
\draw [line width=1.2pt] (2.4155580135724977,3.) -- (2.5,3.1055524830343777);
\draw [line width=1.2pt] (2.4155580135724977,3.) -- (2.5,2.8944475169656214);
\begin{scriptsize}
\draw [fill=red] (0.,0.) circle (2.0pt);
\draw [fill=red] (5.,0.) circle (2.5pt);
\draw [fill=red] (5.,5.) circle (2.5pt);
\draw [fill=red] (0.,5.) circle (2.5pt);
\end{scriptsize}
\end{tikzpicture}
\\
$(a)$ & $(b)$ \\
\end{tabular}
\caption{\label{coamoeba tangent ellipse} Coemoeba of an half ellipse tangent to the infinite axis. On $(a)$ the coamoeba in $N_{2\pi}$, on $(b)$ in $N_\pi$}
\end{center}
\end{figure}
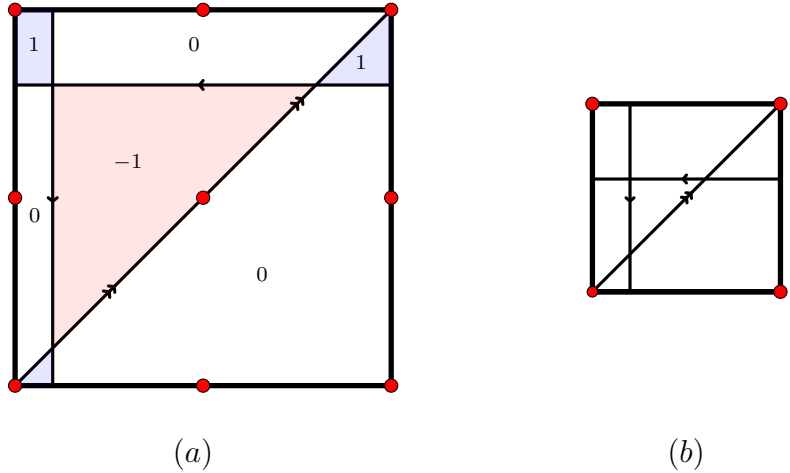

\begin{lem}
The log-area of the oriented curve
$$\psi:t\in\CC P^1\longmapsto\left( t^2-2t\cos\varphi+1,t^2-2t\cos\theta+1\right),$$
is
$$\mathcal{A}(\HH,\psi)=2\pi(\varphi-\theta)=4\pi\arctan\left(\frac{r'-r}{s'+s}\right),$$
where $r+is$ and $r'+is'$ are the coordinates of $e^{i\theta}$ and $e^{i\varphi}$ in any coordinate of the curve keeping the last intersection point with coordinate $\infty$.
\end{lem}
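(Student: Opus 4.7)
The plan is to read off the log-area from the coamoeba of the half-curve $\psi|_\HH$ pictured in Figure \ref{coamoeba tangent ellipse}(a), and then convert the result to the $\arctan$ form via an affine change of coordinate on $\CC P^{1}$.

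First I will identify the three boundary geodesics of the coamoeba in $N_{2\pi}$. From the factorization $z(t)=(t-e^{i\varphi})(t-e^{-i\varphi})$ one computes $z(e^{i\theta})=2e^{i\theta}(\cos\theta-\cos\varphi)$, whose argument equals $\theta$ since $\theta<\varphi$ forces $\cos\theta>\cos\varphi$; this yields the vertical geodesic $\{\arg z=\theta\}$. Symmetrically, $w(e^{i\varphi})$ has argument $\pi+\varphi$, producing the horizontal geodesic $\{\arg w=\pi+\varphi\}$, while the tangency at $\infty$ produces the diagonal $\{\arg z=\arg w\}$ traversed twice (matching the doubled end in direction $(1,1)$). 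Then I will determine the order function of $\arg\circ\psi|_\HH$ on the complement of these geodesics: it is constant on each connected component and jumps by $\pm 1$ across each geodesic, so starting from a region of known order (for instance one disjoint from the image) and propagating yields the labels in Figure \ref{coamoeba tangent ellipse}(a), namely three regions of order $+1$ (the rectangle $[0,\theta]\times[\pi+\varphi,2\pi]$ and the two corner triangles along the diagonal) together with one central triangle of order $-1$ bounded by all three geodesics.

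Summing the weighted areas gives
\[
\theta(\pi-\varphi)+\tfrac{1}{2}\theta^{2}+\tfrac{1}{2}(\pi-\varphi)^{2}-\tfrac{1}{2}(\pi+\varphi-\theta)^{2},
\]
which, via the identity $uv+\tfrac{1}{2}u^{2}+\tfrac{1}{2}v^{2}=\tfrac{1}{2}(u+v)^{2}$ with $u=\theta$ and $v=\pi-\varphi$, collapses to $\tfrac{1}{2}[(\pi-c)^{2}-(\pi+c)^{2}]=-2\pi c$ where $c=\varphi-\theta$. After fixing the orientation of $\HH$ and of $\omega$, this yields $\mathcal{A}(\HH,\psi)=2\pi(\varphi-\theta)$.

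For the $\arctan$ form, any reparametrization of $\CC P^{1}$ preserving $\infty$ and the orientation of $\HH$ is an affine map $t\mapsto at+b$ with $a>0$ and $b\in\RR$; writing $r+is=ae^{i\theta}+b$ and $r'+is'=ae^{i\varphi}+b$, sum-to-product identities give
\[
\frac{r'-r}{s'+s}=\frac{\cos\varphi-\cos\theta}{\sin\varphi+\sin\theta}=-\tan\frac{\varphi-\theta}{2},
\]
and the hypothesis $\theta+\varphi<\pi$ keeps $(\varphi-\theta)/2$ inside $(-\pi/2,\pi/2)$, so applying $\arctan$ and multiplying by $4\pi$ recovers $2\pi(\varphi-\theta)$ up to the sign convention (equivalently by swapping the roles of the primed and unprimed coordinates). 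I expect the main obstacle to be the second step above: correctly assigning the order function to every region of the coamoeba, which requires a careful local analysis at each geodesic or an explicit preimage count. Once that combinatorial data is in place, the algebraic collapse and the affine coordinate change are both routine.
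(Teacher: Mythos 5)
Your overall strategy --- locate the three boundary geodesics, determine the order function of the coamoeba of the half-curve, sum the weighted areas, then change coordinates for the $\arctan$ form --- is exactly the paper's. The geodesic positions, the region labels, and the algebraic collapse to $\pm 2\pi(\varphi-\theta)$ all match. Your derivation of the $\arctan$ identity by sum-to-product formulas, $\frac{r'-r}{s'+s}=\frac{\cos\varphi-\cos\theta}{\sin\varphi+\sin\theta}=-\tan\frac{\varphi-\theta}{2}$, is cleaner and more direct than the paper's synthetic argument with cocyclic points, and it has the merit of making explicit the sign ambiguity that the paper hides by silently assuming $r<r'$.

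The genuine gap is the step you yourself flag: pinning down the order function. The jump conditions across the geodesics determine it only up to a global additive constant $k$, and for a \emph{half}-curve there is no region that is a priori ``disjoint from the image'': a region of order $0$ may be covered by two sheets of opposite sign, and the boundary data alone gives you no point with provably no preimage. (The paper warns about exactly this earlier: for the whole curve the shift is fixed because the total area vanishes, but for a half-curve ``determining the shift can be more complicated.'') The paper closes this by a preimage bound: the preimages in $N_\pi$ of a generic $(\alpha,\beta)$ are fixed points of the twisted conjugation $(z,w)\mapsto(e^{2i\alpha}\overline{z},e^{2i\beta}\overline{w})$, hence lie on the intersection of two conics, hence number at most $4$; combined with the lower bounds $3|k-1|+|k|$, $3|k|+|k-1|$, $3|k|+|k+1|$ read off the three region types, this forces $k=0$. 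Your alternative of ``an explicit preimage count'' is the right idea, but it must actually be carried out --- without it the computation only determines $\mathcal{A}(\HH,\psi)$ up to an additive $4\pi^2k$. Two smaller points: the diagonal geodesic is traversed \emph{once} by the half-circle at $\infty$ (twice only for the full curve), which is what makes your jump-by-one propagation consistent with your stated labels; and the verification that the formula persists when $\theta>\varphi$ or $\theta+\varphi>\pi$, which the paper carries out because the lemma is later applied without the normalization, is missing.
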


\begin{proof}
We use the coamoeba of the half-curve to determine the logarithmic area. The coamoeba is depicted on Figure \ref{coamoeba tangent ellipse}. The fact that it is a chain allows us to determine the degree map up to a shift by  some $k\in\ZZ$. The degree is then as depicted on \ref{coamoeba tangent ellipse} $(a)$ up to $k$. We then have to show that $k=0$.\\

First, let compute the position of the geodesics.
\begin{itemize}[label=-]
\item The geodesic with slope $1$ is the geodesic corresponding to the tangent point with the infinite axis, and passes through $(0,0)$ since the curve is located in the quadrant $\RR_{>0}^2$.
\item The abscissa of the vertical geodesic is the argument of the intersection point with the axis $\{w=0\}$:
$$e^{2i\theta}-2e^{i\theta}\cos\varphi + 1  = 2e^{i\theta}(\cos\theta-\cos\varphi).$$
As $\cos\theta-\cos\varphi>0$, the argument is $\theta$.
\item The height of the horizontal geodesic is the argument of the intersection point with the coordinate axis $\{z=0\}$, which has itself coordinate
$$e^{2i\varphi}-2e^{i\varphi}\cos\theta + 1  = 2e^{i\varphi}(\cos\varphi-\cos\theta).$$
This time, the argument is $\varphi+\pi$.
\end{itemize}

For each generic point in $N_{2\pi}$, the absolute value of the degree is a lower bound to the number of antecedents by the coamoeba map. For a point in $N_\pi$, we can also use the degree, but we obtain a finer lower bound by adding the absolute value of the degrees for each of its four antecedents by the covering map $N_{2\pi}\rightarrow N_\pi$. We now provide an upper bound to the number of antecedents in $N_\pi$ to get a bound on $k$. Let $(\alpha,\beta)$ be a generic point in $N_\pi$. The antecedents of $(\alpha,\beta)$ by the coamoeba map are the points of the half-curve lying in $e^{i\alpha}\RR^*\times e^{i\beta}\RR^*$. This set is the fixed point set of the twisted conjugation
$$\mathrm{conj}_{\alpha,\beta}(z,w)=(e^{2i\alpha}\overline{z},e^{2i\beta}\overline{w}),$$
therefore, they are a subset of the fixed points of the whole curve $C$ by this conjugation. The fixed point coincide with the intersection $C\cap\mathrm{conj}_{\alpha,\beta}(C)$. These are two conics, thus they have at most $4$ intersection points. Therefore, the number of antecedents is at most $4$.\\

We get the following set of inequality by looking at each of the three regions of the complement of the geodesics in $N_\pi$.
$$\left\{ \begin{array}{l}
3|k-1|+|k|\leqslant 4 \text{ for the central triangle}, \\
3|k|+|k-1|\leqslant 4 \text{for the hexagone}, \\
3|k|+|k+1| \leqslant 4 \text{for the splitted triangle}.\\
\end{array}\right.$$
The two first inequalities imply $k=0$ or $1$. The last inequality forces $k=0$. In fact, the degree in $N_\pi$ is bounded by $4$, and any change in the degree in $N_{2\pi}$ changes the degree in $N_\pi$ by $4$, that is why there is only a few allowed values.\\

We now only have to compute the signed area, which is equal to the difference between the areas of the triangles. The red triangle has side length $\pi+\varphi-\theta$ and sign $-1$, while the blue one has side length $\theta+\pi-\varphi$, and sign $+1$
\begin{align*}
\mathcal{A} & = \frac{1}{2}(\pi+\varphi-\theta)^2-\frac{1}{2}(\pi+\theta-\varphi)^2 \\
	& = 2\pi(\varphi-\theta).\\
\end{align*}

Without the assumption $\theta<\varphi$ and $\theta+\varphi<\pi$, the result is still valid:
\begin{itemize}[label=-]
\item If $\theta>\varphi$ and $\varphi+\theta<\pi$, the change of the axis reduces to the first case by changing the role of $\theta$ and $\varphi$, but also changes the sign of the volume form, so the formula gives $-2\pi(\theta-\varphi)=2\pi(\varphi-\theta)$, and the same formula holds.
\item If $\theta>\varphi$ and $\theta+\varphi>\pi$, replacing $(\theta,\varphi)$ by $(\pi-\theta,\pi-\varphi)$ reduces to the first case, but the orientation of the curve has to be changed. Therefore, the log-area becomes $-2\pi(\pi-\varphi-(\pi-\theta))=2\pi(\varphi-\theta)$.
\item The last case is a combination of both previous cases. If $\theta+\varphi>\pi$ and $\theta<\varphi$, the replacement of $(\theta,\varphi)$ by $(\pi-\varphi,\pi-\theta)$ reduces to the first case, and the formula gives $2\pi(\pi-\theta-(\pi-\varphi))=2\pi(\varphi-\theta)$. The formula still holds. There is no additional sign since the curve changes its orientation when changing to $\pi-\bullet$, and also the volume form when changing the axis.
\end{itemize}

We have proven that the formula is valid for any $(\theta,\varphi)$; Last, we relate the value of $\theta$ and $\varphi$ to the affixes of the parameters under any coordinate of the curve sending the tangent point to $\infty$. The calculation emphasizes the drawing on Figure \ref{tangent ellipse change of coordinate}. The goal is to exprime the angle $\varphi-\theta$ in terms of $r,r',s$ and $s'$. The drawing is obtained as follows:
\begin{itemize}[label=-]
\item We start with the complex points $B:r+is$ and $C:r'+is'$ in $\CC$, with $s,s'>0$ and assuming $r<r'$.
\item The points $A$ and $B$ are their projections on the real axis.
\item The point $O$ correspond to the $0$ of the coordinate for which $B$ and $C$ are $e^{i\varphi}$ and $e^{i\theta}$. It lies on the mediator $(IO)$. Notice that $O$ does not necessarily lie on the segment $[AD]$.
\end{itemize}

Then, we proceed as follows:
\begin{itemize}[label=-]
\item Triangles $OBA$ and $OBI$ are right triangles, therefore $A,B,I$ and $O$ are cocyclic. Thus, $\widehat{OBI}=\widehat{OAI}$.
\item The triangles $OBI$ and $DAA'$ are right triangle whose angle have the same value. Thus, they are similar. Hence, $\frac{\varphi-\theta}{2}=\widehat{IOB}=\widehat{AA'D}$.
\item Finally, $\tan\widehat{AA'D}=\frac{r'-r}{s+s'}$.
\end{itemize}

We have proven that $\mathcal{A}=2\pi(\varphi-\theta)=4\pi\arctan\left(\frac{r'-r}{s+s'}\right)$.

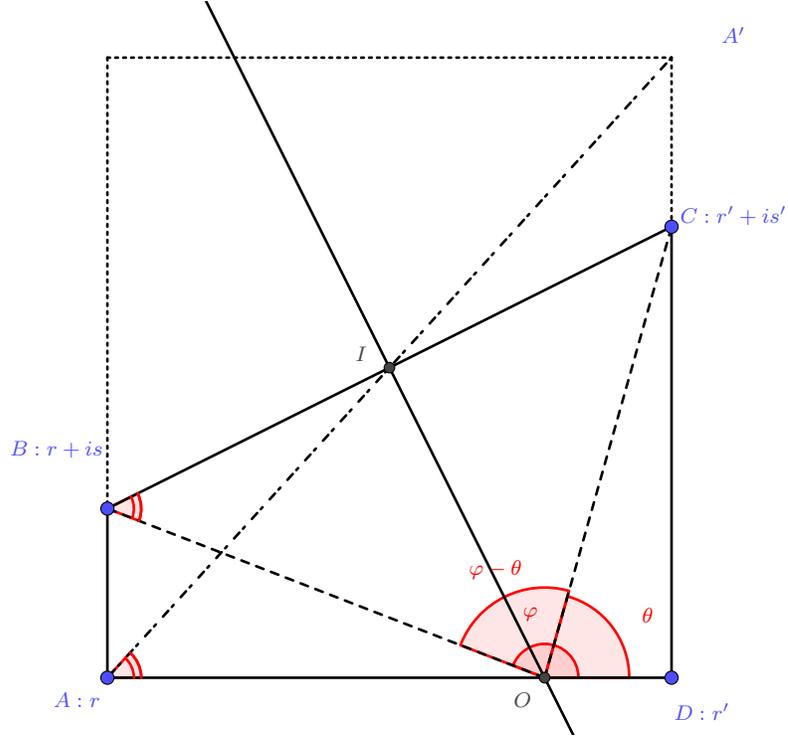
\begin{figure}
\begin{center}
\definecolor{qqwuqq}{rgb}{0.,0.39215686274509803,0.}
\definecolor{uuuuuu}{rgb}{0.26666666666666666,0.26666666666666666,0.26666666666666666}
\definecolor{ududff}{rgb}{0.30196078431372547,0.30196078431372547,1.}
\begin{tikzpicture}[line cap=round,line join=round,>=triangle 45,x=0.75cm,y=0.75cm]
\clip(-1.0,0.) rectangle (13.,13.);
\draw [shift={(8.75,1.)},line width=1.pt,color=red,fill=red,fill opacity=0.10000000149011612] (0,0) -- (0.:1.5) arc (0.:74.29136217098426:1.5) -- cycle;
\draw [shift={(8.75,1.)},line width=1.pt,color=red,fill=red,fill opacity=0.10000000149011612] (0,0) -- (0.:0.6) arc (0.:158.83874018317172:0.6) -- cycle;
\draw [shift={(8.75,1.)},line width=1.pt,color=red,fill=red,fill opacity=0.10000000149011612] (0,0) -- (74.29136217098426:1.6) arc (74.29136217098426:158.83874018317172:1.6) -- cycle;
\draw [shift={(1.,4.)},line width=1.pt,color=red,fill=red,fill opacity=0.10000000149011612] (0,0) -- (-21.161259816828277:0.6) arc (-21.161259816828277:26.56505117707799:0.6) -- cycle;
\draw [shift={(1.,1.)},line width=1.pt,color=red,fill=red,fill opacity=0.10000000149011612] (0,0) -- (0.:0.6) arc (0.:47.72631099390627:0.6) -- cycle;
\draw [line width=1.pt] (1.,1.)-- (1.,4.);
\draw [line width=1.pt] (1.,4.)-- (11.,9.);
\draw [line width=1.pt] (11.,9.)-- (11.,1.);
\draw [line width=1.pt] (11.,1.)-- (1.,1.);
\draw [line width=1.pt,domain=0.:13.] plot(\x,{(-92.5--10.*\x)/-5.});
\draw [line width=1.pt,dash pattern=on 3pt off 3pt] (8.75,1.)-- (1.,4.);
\draw [line width=1.pt,dash pattern=on 3pt off 3pt] (8.75,1.)-- (11.,9.);
\draw [line width=1.pt,dotted] (1.,12.)-- (1.,4.);
\draw [line width=1.pt,dotted] (1.,12.)-- (11.,12.);
\draw [line width=1.pt,dotted] (11.,12.)-- (11.,9.);
\draw [line width=1.pt,dash pattern=on 1pt off 2pt on 3pt off 4pt] (11.,12.)-- (1.,1.);
\draw [shift={(1.,4.)},line width=1.pt,color=red] (-21.161259816828277:0.6) arc (-21.161259816828277:26.56505117707799:0.6);
\draw [shift={(1.,4.)},line width=1.pt,color=red] (-21.161259816828277:0.47) arc (-21.161259816828277:26.56505117707799:0.47);
\draw [shift={(1.,1.)},line width=1.pt,color=red] (0.:0.6) arc (0.:47.72631099390627:0.6);
\draw [shift={(1.,1.)},line width=1.pt,color=red] (0.:0.47) arc (0.:47.72631099390627:0.47);
\begin{scriptsize}
\draw [fill=ududff] (1.,1.) circle (2.5pt);
\draw[color=ududff] (0.46,0.61) node {$A:r$};
\draw [fill=ududff] (1.,4.) circle (2.5pt);
\draw[color=ududff] (0.1,5.05) node {$B:r+is$};
\draw [fill=ududff] (11.,9.) circle (2.5pt);
\draw[color=ududff] (12.1,9.2) node {$C:r'+is'$};
\draw[color=ududff] (12.1,12.4) node {$A'$};
\draw [fill=ududff] (11.,1.) circle (2.5pt);
\draw[color=ududff] (11.54,0.4) node {$D:r'$};
\draw [fill=uuuuuu] (8.75,1.) circle (2.0pt);
\draw[color=uuuuuu] (8.36,0.61) node {$O$};
\draw [fill=uuuuuu] (6.,6.5) circle (2.0pt);
\draw[color=uuuuuu] (5.5,6.75) node {$I$};
\draw[color=red] (10.58,2.11) node {$\theta$};
\draw[color=red] (8.5,2.13) node {$\varphi$};
\draw[color=red] (7.88,2.91) node {$\varphi-\theta$};
\end{scriptsize}
\end{tikzpicture}
\caption{\label{tangent ellipse change of coordinate} Depiction of the change of coordinate in the tangent ellipse case}
\end{center}
\end{figure}

\end{proof}

\begin{rem}
The computation is also technically doable by the logarithmic side. We have to integrate over the Poincar\'e half-plane $\RR\times ]0;\infty[$ the jacobian of the logarithmic map. the jacobian is a rational function in the two coordinates of $\HH$ whose denominator is already factorized. The integral over $\RR$ can be dealt with using the residue formula, and the remaining integral over $]0;\infty[$ dealt with using standard methods. However, the computation passes through several decompositions and can be considered painful.
\end{rem}

\subsection{Log-area of a real rational curve}

Using Lemma \ref{lemma monomial behavior} and the previous computations, it is possible to compute the logarithmic area of any real oriented curve which has
\begin{itemize}[label=-]
\item three real punctures,
\item two real punctures and two complex ones,
\item one real and two pairs of complex ones.
\end{itemize}

This is already enough to compute the logarithmic area of a family of curves close to the tropical limit  provided that the tropical limit curve is generic. We now show that this also allows us to compute the logarithmic area of any oriented real rational curve provided that there is at least one real boundary point and we are given a parametrization of the oriented rational curve and the parameters of the intersection points with the toric boundary, \textit{i.e.} the factorization of both rational functions on the rational curve is known.\\

Let us be given a parametrized oriented real rational curve
$$\varphi:t\in\CC P^1\longmapsto \chi\prod_1^r (t-\alpha_i)^{n_i}\prod_1^s (t^2-2t\re\beta_j +|\beta_j|^2)^{n'_j}\in N\otimes\CC^*.$$

\begin{theo}
\label{log-area rational curve}
If $r\geqslant 1$, assume that $\alpha_r=\infty$, the logarithmic area is given by
$$\begin{array}{rcl}
\mathcal{A}(\HH,\varphi)= & & \sum_{i<i'}\omega(n_i,n_{i'})\frac{\pi^2}{2}\mathrm{sgn}(\alpha_{i'}-\alpha_i) \\
& + & \sum_{j<j'}\omega(n'_j,n'_{j'})2\pi\arctan\left( \frac{\alpha_i-\Re\beta_j}{\Im\beta_j} \right) \\
	& + & \sum_{i,j}\omega(n_i,n'_j)4\pi\arctan\left( \frac{\Re\beta_{j'}-\Re\beta_j}{\Im\beta_{j'}+\Im\beta_j} \right).\\
\end{array}$$
\end{theo}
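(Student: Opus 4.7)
The plan is to decompose $\mathcal{A}(\HH,\varphi)$ as a sum of pairwise contributions indexed by pairs of finite punctures, and then to compute each such contribution by reducing to the auxiliary curves already analyzed in Subsection~\ref{local computations} via Lemma~\ref{lemma monomial behavior}. Writing $f_i(t)=\log|t-\alpha_i|$ for $i<r$ (the factor corresponding to $\alpha_r=\infty$ is absorbed into the constant $\chi$) and $g_j(t)=\log|P_j(t)|$ with $P_j(t)=t^2-2t\Re\beta_j+|\beta_j|^2$, the composition $\mathrm{Log}\circ\varphi$ decomposes linearly as $\log|\chi|+\sum_{i<r}n_i f_i+\sum_j n'_j g_j$. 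Taking the wedge of the differentials of its two components and regrouping via antisymmetry of $\omega$, I obtain
$$\varphi^*\mathrm{Log}^*\omega=\sum_{i<i'}\omega(n_i,n_{i'})\,df_i\wedge df_{i'}+\sum_{i,j}\omega(n_i,n'_j)\,df_i\wedge dg_j+\sum_{j<j'}\omega(n'_j,n'_{j'})\,dg_j\wedge dg_{j'},$$
where $i,i'$ range over the finite real parameters. Integrating over $\HH$ reduces the theorem to computing three families of intrinsic pair integrals.

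For each pair, the idea is to exhibit an auxiliary two-factor curve whose bilinear expansion reduces to a single non-vanishing term, so that its log-area equals the corresponding pair integral. For a real-real pair $(i,i')$, the auxiliary curve is $\psi_{i,i'}\colon t\mapsto(t-\alpha_i,t-\alpha_{i'})$: the real affine reparametrization $t=\alpha_i+(\alpha_{i'}-\alpha_i)s$ followed by a toric translation brings $\psi_{i,i'}$ to the standard line of Subsection~\ref{local computations}, and this reparametrization preserves the orientation of $\HH$ exactly when $\alpha_{i'}>\alpha_i$, so the pair integral equals $\tfrac{\pi^2}{2}\mathrm{sgn}(\alpha_{i'}-\alpha_i)$. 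For a real-complex pair $(i,j)$, the auxiliary curve is $\psi_{i,j}\colon t\mapsto(t-\alpha_i,P_j(t))$: the orientation-preserving reparametrization $t=(\Im\beta_j)s+\Re\beta_j$ normalizes $\beta_j$ to $i$, and a monomial coordinate change of determinant one identifies the result with the parabola $s\mapsto(s-c,s^2+1)$ of Subsection~\ref{local computations} for $c=(\alpha_i-\Re\beta_j)/\Im\beta_j$, whose log-area is $2\pi\arctan c$.

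For a complex-complex pair $(j,j')$, the auxiliary curve is $\psi_{j,j'}\colon t\mapsto(P_j(t),P_{j'}(t))$: a real affine reparametrization $t=as+b$ with $a>0$ can be chosen so that the normalized roots of $P_j$ and $P_{j'}$ both lie on the unit circle, reducing $\psi_{j,j'}$ to the tangent ellipse of Subsection~\ref{subsubsection log-area of a tangent ellipse}, and the change-of-coordinate computation illustrated in Figure~\ref{tangent ellipse change of coordinate} gives the pair integral $4\pi\arctan((\Re\beta_{j'}-\Re\beta_j)/(\Im\beta_j+\Im\beta_{j'}))$. Substituting these three values into the bilinear decomposition produces the formula of the theorem.

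The delicate point in this plan is sign tracking under the reparametrizations and monomial changes of coordinates: I must verify that the real affine change of parameter of the source either preserves the orientation of $\HH$ or, when it reverses it, that the induced sign flip is correctly accounted for (as in the $\mathrm{sgn}$ factor of the real-real case), and that each monomial coordinate change bringing the reparametrized auxiliary curve to its standard form has determinant $+1$, so that by Lemma~\ref{lemma monomial behavior} the log-area is unchanged.
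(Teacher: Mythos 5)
Your proposal is correct and is essentially the paper's own argument: your direct bilinear expansion of $\varphi^*\mathrm{Log}^*\omega$ into pairwise terms is exactly the paper's factorization of $\varphi$ through $(\CC^*)^{r+s-1}$ followed by the decomposition of $A^*\omega$ into the elementary forms $e_i^*\wedge e_{i'}^*$, $e_i^*\wedge f_j^*$, $f_j^*\wedge f_{j'}^*$ (pullbacks by the coordinate projections), and both proofs then reduce each pair integral to the line, parabola and tangent-ellipse computations of Subsection~\ref{local computations} via orientation- and determinant-controlled changes of coordinates as in Lemma~\ref{lemma monomial behavior}.
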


\begin{proof}
We use a higher dimensional setting of Lemma \ref{lemma monomial behavior}. First, assume that $r\geqslant 1$. If we denote by $(e_1,\dots,e_{r-1},f_1,\dots,f_s)$ the canonical base of $\ZZ^{r+s-1}$, there is unique linear map $A:\ZZ^{r+s-1}\rightarrow N$ such that
$$\left\{ \begin{array}{l}
A(e_i)=n_i, \\
A(f_j)=f_j.\\
\end{array}\right. $$
If we denote by $\alpha:(\CC^*)^{r+s-1}\rightarrow N\otimes\CC^*$ the associated monomial map, the parametrized curve factors as $\varphi=\alpha\circ\psi$, where $\psi$ is
$$\psi:t\in\CC P^1\longmapsto \chi'\prod_1^r (t-\alpha_i)^{e_i}\prod_1^s (t^2-2t\re\beta_j +|\beta_j|^2)^{f_j}\in(\CC^*)^{r+s-1},$$
where $\chi'$ is chosen such that $\alpha(\chi')=\chi$. Using the fact that $\mathrm{Log}\circ\alpha=A\circ\mathrm{Log}$, we relate $\mathcal{A}(\HH,\varphi)$ to the log-area of the pullback of the volume form $\omega$ by $A$:
$$\mathcal{A}(\HH,\varphi)=\int_\HH \psi^*\mathrm{Log}^*A^*\omega.$$
The $2$-form $A^*\omega\in(\Lambda^2\ZZ^{r+s-1})^*$ decomposes as follows:
$$A^*\omega = \sum_{i<i'}\omega(n_i,n_{i'})e_i^*\wedge e^*_{i'} + \sum_{j<j'}\omega(n'_j,n'_{j'})f_j^*\wedge f^*_{j'} + \sum_{i,j}\omega(n_i,n'_j)e_i^*\wedge f^*_j.$$
Therefore, we only need to compute the logarithmic area of $(\HH,\psi)$ for the canonical forms $e_i^*\wedge e_{i'}^*$, $f_j^*\wedge f_{j'}^*$ and $e_i^*\wedge f_j^*$. However, we notice that these forms are the pullback of the canonical volume form on $\ZZ^2$ by the projections
$$\left\{ \begin{array}{l}
p_{ii'}:x\in\ZZ^{r+s-1}\longmapsto (x_i,x_{i'}) , \\
p_{jj'}:x\in\ZZ^{r+s-1}\longmapsto (x'_j,x'_{j'}) , \\
p_{ij}:x\in\ZZ^{r+s-1}\longmapsto (x_i,x'_j). \\
\end{array}\right.$$
Using the same trick, the computation of the log-area for these $2$-forms are equal to the log-area of some plane curves which have previously been studied. Therefore, we are left with computing the log-area of a plane curve whose degree is one of the degrees for which the log-area has been computed: it is either a line, a parabola, or a tangent ellipse. In each case, we perform a change of variable in order to get back to the canonical choice of coordinate used in the computation of those specific cases. In order to simplify the formula, we have assumed that $\alpha_r=\infty$. Therefore, the log-area is finally
$$\begin{array}{rcl}
\mathcal{A}(\HH,\varphi)= & & \sum_{i<i'}\omega(n_i,n_{i'})\frac{\pi^2}{2}\varepsilon_{ii'} \\
& + & \sum_{j<j'}\omega(n'_j,n'_{j'})2\pi\arctan\left( \frac{\alpha_i-\Re\beta_j}{\Im\beta_j} \right) \\
	& + & \sum_{i,j}\omega(n_i,n'_j)4\pi\arctan\left( \frac{\Re\beta_{j'}-\Re\beta_j}{\Im\beta_{j'}+\Im\beta_j} \right) ,\\
\end{array}$$
where $\varepsilon_{ii'}$ is $\pm 1$ according to whether $\alpha_i$, $\alpha_{i'}$ and $\alpha_r$ are in the cyclic order induced by the orientation of the curve, \textit{i.e.} $\alpha_i<\alpha_{i'}$.\\
\end{proof}

\begin{rem}
If all the intersection points are real, we recover Mikhalkin's result from \cite{mikhalkin2017quantum} for toric type I curves, stating thet the quantum index, here equal to the log-area up to a $\pi^2$ factor, only depends on the cyclic order in which the oriented curve meets the divisors, here embodied in the collection of signs $\mathrm{sgn}(\alpha_{i'}-\alpha_i)$.
\end{rem}

\section{Tropical enumerative problem and refined curve counting}

	\label{tropical enumerative inv}

Let $\Delta\subset N$ be a family of $m$ primitive lattice vectors, with total sum $0$. As described in the introduction, there is an associated lattice polygon $P_\Delta$ having $m$ lattice points on its boundary. The toric surface obtained from $\Delta$ is denoted by $\CC\Delta$. Let $E_1,\dots, E_p$ denote the sides of the polygon $P_\Delta$ and let $n_1,\dots,n_p\in N$ be their normal primitive vectors. Let $s=(s_1,\dots,s_p)$ where $s_i\leqslant\frac{l(E_i)}{2}$ for each $i$ be tuple of positive integers. Let $r_i=l(E_i)-2s_i$, so that we have $\sum_1^p r_i +2s_i=m$. Let
$$\Delta(s)=\{n_1^{r_1},(2n_1)^{s_1},\dots,n_p^{r_p},(2n_p)^{s_p} \}.$$
The size of $s$ is denoted by $|s|$:
$$|s|=\sum_1^p s_i.$$

\subsection{Tropical Problem}

The tropical curves of degree $\Delta(s)$ have $m-|s|$ unbounded ends, and therefore the moduli space $\mathcal{M}_0(\Delta(s), N_\RR)$ of parametrized rational tropical curves of degree $\Delta(s)$ in $N_\RR\simeq\RR^2$ has dimension $m-|s|-1$. We have the evaluation map:
$$\text{ev} : \mathcal{M}_0(\Delta(s), N_\RR)\longrightarrow \RR^{m-|s|-1},$$
that associates to a parametrized tropical curve the moments of every unbounded end but the first. Recall that the moment of the first unbounded end is equal to minus the sum of the other moments because of the tropical Menelaus theorem. Let $\mu\in\RR^{m-|s|-1}$ be a generic family of moments. We look for parametrized rational tropical curves $\Gamma$ of degree $\Delta(s)$ such that $\text{ev}(\Gamma)=\mu$.\\

Due to genericity, as noticed in \cite{blomme2019caporaso}, every parametrized rational tropical curve $\Gamma$ such that $\text{ev}(\Gamma)=\mu$ is a simple nodal tropical curve and thus has a well-defined refined multiplicity. We then set
$$N_{\Delta(s)}^{\partial,\text{trop}}(\mu)=\sum_{\text{ev}(\Gamma)=\mu}m_\Gamma^q\in\ZZ[q^{\pm\frac{1}{2}}].$$

\begin{theo}\cite{blomme2019caporaso}
\label{tropinvariance}
The value of $N_{\Delta(s)}^{\partial,\text{trop}}(\mu)$ is independent of $\mu$ provided that it is generic.
\end{theo}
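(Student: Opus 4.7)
The plan is to use the standard wall-crossing argument for tropical invariance. Recall from the earlier discussion that $\mathcal{M}_0(\Delta(s),N_\RR)$ is a polyhedral complex of pure dimension $m-|s|-1$ stratified by combinatorial types, with top-dimensional cones corresponding to trivalent combinatorial types, and on each such cone the evaluation map $\mathrm{ev}$ is affine with Jacobian determinant equal (up to sign) to the complex multiplicity $m_\Gamma^\CC$. For generic $\mu$, $\mathrm{ev}^{-1}(\mu)$ is a finite set of trivalent simple nodal parametrized curves, so $m_\Gamma^q$ is well-defined by Definition \ref{refined multiplicity} and the sum $N_{\Delta(s)}^{\partial,\mathrm{trop}}(\mu)$ is a well-defined element of $\ZZ[q^{\pm 1/2}]$.

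To prove invariance I would connect $\mu_0$ and $\mu_1$ by a generic smooth path $(\mu_t)_{t\in[0,1]}$ in $\RR^{m-|s|-1}$ avoiding the images under $\mathrm{ev}$ of all strata of codimension at least $2$. Such a path crosses only finitely many walls, each being the image of a codimension-one cell of $\mathcal{M}_0(\Delta(s),N_\RR)$ on which $\mathrm{ev}$ has full rank. These cells correspond to combinatorial types $\Gamma^{\mathrm{wall}}$ obtained from a trivalent type by contracting one bounded edge, and therefore contain exactly one four-valent non-flat vertex with outgoing slopes $v_1,v_2,v_3,v_4\in N$ satisfying $v_1+v_2+v_3+v_4=0$, all other vertices being trivalent and non-flat. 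Such a wall locally bounds (up to) three top-dimensional cones, corresponding to the three ways of resolving the four-valent vertex into a pair of trivalent vertices joined by a short bounded edge. The image of $\mathrm{ev}$ identifies these cones along the wall so that, on each side of the crossing, the preimage $\mathrm{ev}^{-1}(\mu_t)$ contains a specific subset of the three resolutions.

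The wall-crossing invariance then reduces to a local identity: the sum of $m_\Gamma^q$ over curves contributing just before crossing equals the sum just after. Since contributing curves agree outside a neighborhood of the four-valent vertex, all common factors $\prod_{V\neq V_4} [m_V^\CC]_q$ divide out and the claim collapses to a purely local identity in $q$-integers evaluated at the values $|\omega(v_i,v_j)|$ for the three pairings. This is precisely the refined Block--G\"ottsche balancing identity established in \cite{itenberg2013block}, which holds for arbitrary integer arguments and hence applies without modification in the presence of the non-primitive vectors $2n_i$ appearing in $\Delta(s)$. I would also note the special case where two of the $v_i$ are proportional, which arises precisely because of the doubled slopes; the corresponding $\omega$-value vanishes, one resolution becomes flat and drops out of the count, and the identity degenerates to the trivial statement $[a]_q[b]_q=[a]_q[b]_q$.

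The main obstacle is the combinatorial bookkeeping at walls: one must correctly identify which resolutions appear on which side of the wall (a linear-algebra question about the motion of the contracted edge relative to the moment coordinates), and verify that the only codimension-one strata actually contributing are the ones just described, i.e., that cells with a flat four-valent vertex, with higher-valent vertices, or with rank-dropping $\mathrm{ev}$ on a top cell, all map to strata of codimension at least $2$ in moment space and are therefore avoided by a generic path. This verification is local at each vertex and is carried out exactly as in \cite{blomme2019caporaso}; the non-primitive ends $(2n_i)^{s_i}$ only alter the numerical inputs to the identity, not its structure.
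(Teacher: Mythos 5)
The paper does not prove Theorem \ref{tropinvariance} at all: it is imported verbatim from \cite{blomme2019caporaso} (and, as the surrounding remark notes, is a special case of the broccoli invariance of \cite{gottsche2019refined}), so there is no in-paper argument to compare yours against. Your wall-crossing outline is the standard and correct route, and it is essentially what the cited reference does: generic path in moment space, walls given by combinatorial types with a single non-flat quadrivalent vertex, cancellation of the common vertex factors, and the local $q$-identity of \cite{itenberg2013block} applied to the three resolutions, with the degenerate case of proportional slopes (which is exactly where the doubled ends $2n_j$ can bite) handled as you describe. The only place your write-up is not self-contained is the claim that all other codimension-one phenomena (flat quadrivalent vertices, higher overvalence, rank drops of $\mathrm{ev}$ on top cells) have image of codimension at least $2$ and can be avoided by a generic path; you defer this to \cite{blomme2019caporaso}, which is circular as a proof but acceptable as an attribution, since that verification is precisely the content of the cited result.
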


\begin{rem}
The unbounded edges of weight $2$ are destined to be split, and thus correspond to transverse intersection points between the curve and the toric boundary. Higher order unbounded ends would correspond to tangencies with the boundary for which we do not yet have an associated classical invariant to relate to.
\end{rem}

\begin{rem}
Theorem \ref{tropinvariance} can be seen as a particular case of the theorem about refined broccoli invariants proven by L.~G\"ottsche and F.~Schroeter in \cite{gottsche2019refined}. However, the theorem from \cite{gottsche2019refined} about refined broccoli invariants has more general assumptions, while the context of Theorem \ref{tropinvariance} is more specific. That is why the proof provided in \cite{blomme2019caporaso} is easier.
\end{rem}

\subsection{Classical problem}
\label{classical problem}

Keeping previous notations, let $\mathcal{P}$ be a configuration of $m$ points on the toric boundary $\partial\CC\Delta$ such that:
\begin{itemize}[label=-]
\item each toric divisor associated to a side $E_i$ of $P_\Delta$ contains exactly $r_i$ real points and $s_i$ pairs of complex conjugated points,
\item the configuration satisfies the Menelaus condition.
\end{itemize} 

Let $\mathcal{S}(\mathcal{P})$ be the set of oriented real rational curves such that for every $p\in\mathcal{P}$, the curve passes through $p$ or $-p$. Such a curve is said to \textit{pass through the symmetric configuration} $\mathcal{P}$. As the curves are oriented, each real curve is counted twice: once with each of its orientations. Notice that if a curve passes through one of the points of a pair of non-real points, it also passes through its conjugate since the curve is real. We denote by $\mathcal{S}_k(\mathcal{P})$ the subset of $\mathcal{S}(\mathcal{P})$ formed by oriented curves with quantum index $k$.\\

Let $\varphi:\CC P^1\rightarrow\CC\Delta$ be an oriented real parametrized rational curve, denoted by $(S,\varphi)$. The logarithmic Gauss map sends a point $p\in\RR P^1$ to the tangent direction to $\text{Log}\varphi(\RR P^1)$ inside $N_\RR$. We get a map
$$\gamma:\RR P^1\rightarrow \PP^1(N_\RR).$$
The first space $\RR P^1$ is oriented since the curve is oriented, while $\PP^1(N_\RR)$ is oriented by $\omega$. The degree of this map is denoted by $\text{Rot}_\text{Log}(S,\varphi)\in\ZZ$. If the curve has transverse intersections with the divisors, it has the same parity as the number of boundary points $m$. We then set
$$\sigma(S,\varphi)=(-1)^\frac{m-\text{Rot}_\text{Log}(S,\varphi)}{2}\in\{\pm 1\}.$$
Now let
$$R_{\Delta,k}(\mathcal{P)}=\sum_{(S,\varphi)\in\mathcal{S}_k(\mathcal{P})}\sigma(S,\varphi),$$
and
$$R_{\Delta}(\mathcal{P)}=\frac{1}{4}\sum_{k}R_{\Delta,k}(\mathcal{P)}q^k\in\ZZ [q^{\pm\frac{1}{2}}].$$
The coefficient $\frac{1}{4}$ is here to account for the deck transformation: if $\{f(x,y)=0\}$ is a curve in $\mathcal{S}(\mathcal{P})$, then $\{f(x,-y)=0\}$, $\{f(-x,y)=0\}$, $\{f(-x,-y)=0\}$ are in $\mathcal{S}(\mathcal{P})$ too.

\begin{rem}
The shift by $m$ to the logarithmic rotation number is only to keep track of its residue mod $2$, while we are interested in its residue mod $4$, one could also choose another convention. For instance the logarithmic rotation number of a maximal curve.
\end{rem}

\begin{theo}(Mikhalkin \cite{mikhalkin2017quantum})
As long as $r=\sum_1^p r_i\geqslant 1$, the value of $R_{\Delta}(\mathcal{P)}$ is independent of the configuration $\mathcal{P}$ as long as it is generic. It only depends on $\Delta$ and $s$.
\end{theo}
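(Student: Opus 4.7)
The strategy is to show that $R_\Delta(\mathcal{P}_t)$ is constant along a generic path $t \in [0,1] \mapsto \mathcal{P}_t$ of admissible configurations (same $r_i,s_i$, satisfying the Menelaus condition) joining two generic endpoints $\mathcal{P}_0,\mathcal{P}_1$. I would establish local constancy on regular times and prove cancellation across the codimension-one walls.

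At a regular $t$, each oriented curve $(S,\varphi) \in \mathcal{S}(\mathcal{P}_t)$ lies in a smooth $1$-parameter family along which the log-area $\mathcal{A}(\HH,\varphi)$, the correction term $\pi \sum_j \varepsilon_j (2\theta_j - \pi)$, and the logarithmic rotation number $\mathrm{Rot}_{\mathrm{Log}}(S,\varphi)$ all vary continuously. By Proposition \ref{prop existence quantum index} the quantum index $k(S,\varphi)$ lies in $\tfrac{1}{2}\ZZ$, and $\mathrm{Rot}_{\mathrm{Log}}$ lies in $\ZZ$; hence both $k$ and $\sigma$ are locally constant as discrete-valued continuous functions. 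Each curve contributes the same monomial $\sigma q^k$ on a neighborhood of $t$, so $t \mapsto R_\Delta(\mathcal{P}_t)$ is locally constant on the regular locus.

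The codimension-one walls met by a generic path are times at which two oriented real curves $(S_+,\varphi_+), (S_-,\varphi_-) \in \mathcal{S}(\mathcal{P}_t)$ coincide in the limit: for $t$ on one side of the wall they exist as a real pair, and for $t$ on the other side they merge and are replaced by a pair of non-real complex conjugate curves, which drop out of $\mathcal{S}$. Continuity of the quantum index forces $k(S_+,\varphi_+) = k(S_-,\varphi_-)$ at the wall. The key point is a Welschinger-style local analysis of the logarithmic Gauss map at the limit curve, showing that the two signs $\sigma(S_\pm,\varphi_\pm)$ are opposite: crossing the wall shifts $\mathrm{Rot}_{\mathrm{Log}}$ by $\pm 2$, hence flips $\sigma$. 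Therefore the joint contribution $\sigma_+ q^k + \sigma_- q^k$ to $R_\Delta$ vanishes at the wall, and $R_\Delta(\mathcal{P}_t)$ extends continuously across it.

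The main obstacle is precisely this local sign analysis at the non-transversality walls: proving that the two real curves colliding along the discriminant carry opposite values of $\sigma$. It is a statement about the local structure of the evaluation map near its discriminant combined with the behavior of the logarithmic Gauss map under the deformation, and it is where the hypothesis $r \geqslant 1$ enters, providing a real boundary point to anchor the local orientation model. Once the wall cancellation and the local constancy step above are in place, we deduce $R_\Delta(\mathcal{P}_0) = R_\Delta(\mathcal{P}_1)$, so $R_\Delta(\mathcal{P})$ depends only on $\Delta$ and $s$.
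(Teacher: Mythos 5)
First, a point of reference: the paper does not prove this statement itself --- it is imported from \cite{mikhalkin2017quantum}, with the remark that ``the exact same proof applies'' once the quantum index is defined as in Proposition \ref{prop existence quantum index}. So your proposal has to be measured against Mikhalkin's argument, whose skeleton you have correctly reproduced: join two generic configurations by a generic path, use discreteness of $k\in\frac{1}{2}\ZZ$ and of $\mathrm{Rot}_\mathrm{Log}\in\ZZ$ together with continuity to get local constancy of each contribution $\sigma q^k$ away from the walls, and cancel the two solutions that merge at a fold of the evaluation map. That much is the right strategy, and your observation that at such a fold the two branches share the same quantum index while their signs must be shown to be opposite is the correct local statement (note, though, that your own continuity argument would naively force $\mathrm{Rot}_\mathrm{Log}$ to be continuous as well; the resolution is that the limit curve carries a degenerate point of the logarithmic Gauss map, which the two branches resolve in the two possible ways, shifting the rotation number by $2$).

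The genuine gap is that you have identified only one type of codimension-one wall, and you have not addressed properness of the family of solutions over the path. A generic path of admissible configurations also crosses configurations admitting a \emph{reducible} limit curve $C_1\cup C_2$: splitting the $m$ boundary points between two components and imposing the Menelaus condition on each component separately costs exactly one condition beyond the global Menelaus relation, so these walls are genuinely of codimension one and cannot be avoided. At such a wall the number of nearby irreducible real solutions can change --- curves are born or die by smoothing the intersection points of $C_1$ and $C_2$ in the various real ways --- and proving that the refined signed count is unchanged requires a local analysis of all these smoothings, not a two-term cancellation; this, together with ruling out or handling degenerations where boundary points of a solution collide or run into torus-fixed points (which is where the compactness of the family over the path is at stake), constitutes the bulk of the actual proof. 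Your proposal is silent on all of this, and your account of where $r\geqslant 1$ enters (``anchoring the local orientation model'' at the fold) is not where the hypothesis does its work: it is needed to control these degenerations and the structure of the solution set, not for the fold analysis, which goes through regardless of $r$. As written, the argument would establish invariance only for paths that happen to meet no reducible or degenerate curves, which generic paths do not satisfy.
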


The obtained polynomial, independent of $\mathcal{P}$, is denoted by $R_{\Delta,s}$.

\begin{rem}
Although this theorem is not stated in these terms in \cite{mikhalkin2017quantum} because the only case considered is the one of purely imaginary points where the quantum index coincides with the log-area, the proof does not use this specific assumption, and thus applies also in this setting.
\end{rem}

\subsection{Statement of the result}

The refined tropical count and the refined classical count find their relation through the following theorem, proven in section \ref{section statement of result and proof}. In what follows, we use the positive and negative $q$-analogs: for $a\in\RR$, let
	$$\plus{a}=q^{a}+q^{-a} \text{ and }\minus{a}=q^a-q^{-a}.$$

\begin{theo}
\label{theorem paper}
One has
$$R_{\Delta,s}=2^{|s|}\frac{\minus{\frac{1}{2}}^{m-2-|s|}}{\minus{1}^{|s|}}N^{\partial,\text{trop}}_{\Delta(s)} =2^{|s|}\frac{\minus{\frac{1}{2}}^{m-2-2|s|}}{\plus{\frac{1}{2}}^{|s|}}N^{\partial,\text{trop}}_{\Delta(s)}  $$
\end{theo}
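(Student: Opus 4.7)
\textbf{Proof plan for Theorem \ref{theorem paper}.} The strategy is to reduce the classical count $R_{\Delta,s}$ to a tropical count via degeneration, using the extended correspondence theorem of section 5 as the bridge, and then to evaluate the contribution of each tropical curve by computing quantum indices locally via Theorem \ref{log-area rational curve}.

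I would first choose a generic configuration of moments $\mu\in\RR^{m-|s|-1}$ and lift it to a real symmetric configuration $\mathcal{P}$ on $\partial\CC\Delta$ over $\CC((t))$ whose valuations are $\mu$. By genericity of $\mu$, the tropical enumerative problem has finitely many solutions, each a simple trivalent rational tropical curve $\Gamma$ of degree $\Delta(s)$ with $m-2-|s|$ trivalent vertices and Block--G\"ottsche multiplicity $m_\Gamma^q=\prod_V [m_V^\CC]_q$. The real correspondence theorem of section 5 then identifies the curves in $\mathcal{S}(\mathcal{P})$ that tropicalize to a given $\Gamma$ with pairs $(\mathcal{R},(\psi_w)_{w\in\fix})$, where $\mathcal{R}$ is an admissible set producing a real structure on $\Gamma$ via Proposition \ref{realstruc}, and $\psi_w$ is a choice of real lift at each fixed vertex of a local complex rational curve solving a constrained problem at the vertex $w$.

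The quantum index of such a lift decomposes additively across the fixed vertices: by the additivity of the log-area near the tropical limit, $\mathcal{A}(S^{(t)},f_t)=\sum_{w\in\fix}\mathcal{A}(S_w,f_w)$, and the correction terms $\pi\sum_j\varepsilon_j(2\theta_j-\pi)$ appearing in Proposition \ref{prop existence quantum index} split across the fixed vertices as well, since each complex intersection point with the toric boundary originates at a unique fixed vertex in the tropical limit. Consequently the signed $q^k$-contribution of a lift factors as $\prod_{w\in\fix}\sigma_w q^{k_w}$, where $(\sigma_w,k_w)$ records the sign and shifted log-area of the local curve $f_w$ at vertex $w$ (with the overall $\sigma$ arising from the product of local signs, since the logarithmic rotation number is also additive across fixed vertices).

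Next I would perform the local computation at each fixed trivalent vertex $V$. Using the three cases of section \ref{local computations} (line, parabola, tangent ellipse) together with Lemma \ref{lemma monomial behavior} and Theorem \ref{log-area rational curve}, the sum $\sum_{\psi_V}\sigma_V q^{k_V}$ over the real lifts of a single complex solution at $V$ produces $\minus{1/2}=q^{1/2}-q^{-1/2}$, and summing over the $m_V^\CC$ complex solutions yields $[m_V^\CC]_q\minus{1/2}$. Taking the product over the $m-2-|s|$ fixed vertices gives $m_\Gamma^q\minus{1/2}^{m-2-|s|}$. Finally, each of the $|s|$ complex conjugated pairs of marked points contributes an additional factor of $2$ to the count (from the two admissible choices at each doubled end, or equivalently from the two ways of splitting a weight-$2$ end), while the presence of a weight-$2$ end in $\Delta(s)$ rather than two primitive ends forces a normalization $\minus{1}^{-1}=(q-q^{-1})^{-1}$ at each such end to clear the doubling in the Block--G\"ottsche multiplicity. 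Gathering these factors yields the announced formula $R_{\Delta,s}=2^{|s|}\minus{1/2}^{m-2-|s|}\minus{1}^{-|s|}N^{\partial,\mathrm{trop}}_{\Delta(s)}$.

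The main obstacle will be the vertex-local computation. It requires both a careful enumeration of the real rational lifts of a complex curve of degree $\leq 2$ through constrained moments, and a proof that the signed sum of their quantum contributions telescopes exactly to the $q$-analog $[m_V^\CC]_q$; in particular, the non purely imaginary correction term $\pi\sum_j\varepsilon_j(2\theta_j-\pi)$ must be distributed correctly between adjacent vertices so that it cancels globally after summing over lifts. A second technical point is to ensure that the $2^{|s|}$ factor coming from the admissible choices is consistent with the $\frac{1}{4}$ deck-transformation normalization in the definition of $R_\Delta(\mathcal{P})$ and with the assumption that the curve passes through $p$ or $-p$ for each $p\in\mathcal{P}$.
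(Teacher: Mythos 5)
Your overall route coincides with the paper's: degenerate to the tropical limit, invoke the real correspondence theorem of section 5 to reduce to counting first order solutions, and evaluate these by the local log-area computations of section 3. The gap is in the step you present as the conclusion of the local analysis, namely that the signed sum over real lifts at a single fixed vertex $V$ telescopes to $[m_V^\CC]_q\,\minus{\frac{1}{2}}$ so that the total contribution of a tropical solution is a clean product over its vertices. This per-vertex factorization is false for a fixed real tropical curve: it holds only at real trivalent vertices adjacent to real ends. At the quadrivalent and pentavalent fixed vertices (those carrying one or two pairs of exchanged edges), the local refined counts depend on the normalized arguments of the complex boundary points --- the parabola problem gives $4\plus{2\theta-1}\minus{\frac{m_V}{2}}/\minus{1}$ and the tangent-ellipse problem gives an expression $S(\theta,\varphi)$ whose very form branches into four cases according to whether $\theta<\varphi$ and $\theta+\varphi<1$. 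Moreover, a given plane tropical curve of degree $\Delta(s)$ admits many real rational parametrizations of degree $\Delta$ (Proposition \ref{realstruc}, indexed by admissible sets $\mathcal{R}$), and within each complex branch with $p$ ends the conjugate points can be distributed in $2^{p-1}$ ways; your plan implicitly fixes one real structure and one distribution. The paper's Proposition \ref{firstordersol} must sum over all real structures and all repartitions, and the $\theta$-dependence disappears only through the recursive identity expressing $R+\frac{C}{\minus{1}}\sum_{\theta\in\Theta}\plus{2\theta-1}$ as $\minus{\frac{m_V}{2}}$ times the product of the analogous quantities for the two outgoing edges, in which the four case-dependent terms of $S(\theta,\varphi)$ cancel exactly against the case analysis of the representatives of $\Theta=(\Theta_1+\Theta_2)\cup(1+\Theta_1+\Theta_2)$. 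Without this global summation and cancellation the factorization you assert cannot be established, and this is precisely the hard content of the proof.

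Two smaller points. First, the factor $2^{|s|}$ does not come from ``the two admissible choices at each doubled end''; in the paper it appears at the very end, when converting the count refined by log-area into the count refined by quantum index: the shift of Proposition \ref{prop existence quantum index} turns each factor $\plus{2\theta_i-1}$ into $\plus{0}=2$, one per conjugate pair. Second, your plan should also rule out real tropical curves with flat trivalent vertices as possible tropicalizations (Lemma \ref{lem no flat vertex} in the paper); otherwise the list of tropical curves contributing to $R_{\Delta,s}$ would not match the one computing $N^{\partial,\text{trop}}_{\Delta(s)}$.
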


In \cite{gottsche2019refined} L. G\"ottsche and F. Schroeter proposed a refined way to count so-called \textit{refined broccoli curves} having fixed ends, and passing through a fixed configuration of "real and complex" points. In the case where there are only marked ends and no marked points, this count coincides with the count of plane tropical curves passing through the configuration with usual Block-G\"ottsche multiplicities from \cite{gottsche2014refined} up to a multiplication by a constant term depending on the degree and easily computed. More precisely, provided there are no marked points, the refined broccoli multiplicity is just the refined multiplicity from \cite{gottsche2014refined} enhanced by a product over the ends of weight higher than $2$, coinciding with this aforementioned constant term. In our case, since the only multiple edges are marked and of weight $2$ (only one real unbounded end is unmarked and of weight one), this factor is $\frac{q+q^{-1}}{q^{1/2}+q^{-1/2}}$ for each of the $|s|$ ends. If we denote by $BG_{\Delta(s)}(q)$ the refined invariant obtained in \cite{gottsche2019refined}, then we have the relation
$$R_{\Delta,s}(q) = \frac{\minus{\frac{1}{2}}^{m-2-2|s|}}{\plus{1}^{|s|}} BG_{\Delta(s)}(q).$$

\section{Realization and correspondence theorem in the real case}


The proof of Theorem \ref{theorem paper} uses a correspondence theorem between real classical curves and tropical curves. Such a theorem was already proven in \cite{blomme2020tropical} by refining to the case of real curves the realization theorem of Tyomkin \cite{tyomkin2017enumeration}. The proof of this theorem followed the same steps as in \cite{tyomkin2017enumeration} and presented similar calculations. Its proof proceeds as follows: find a suitable set of coordinates on the space of family of curves tropicalizing to a given tropical curve, and then prove that the jacobian of the evaluation map is invertible, so that the Hensel's lemma can be applied. The first subsections are dedicated to recall the notations from \cite{blomme2020tropical} which are specific to the correspondence theorem. The last subsection proves that the correspondence theorem can be applied in our setting showing that the jacobian is indeed invertible. \\

	\subsection{Notations}

Let $(\Gamma,\sigma)$ be a real rational abstract  tropical curve with set of ends $I$ of size $m$. The quotient map is denoted by $\pi:\Gamma\rightarrow\Gamma/\sigma$. The action of $\sigma$ on $I$ induces the following decomposition:
$$I=\{ x_1,\dots,x_r,z_1^\pm,\dots,z_s^\pm\},$$
where $\sigma(x_i)=x_i$ and $\sigma(z_i^\pm)=z_i^\mp$. If $\gamma\in\Gamma^1$ is a bounded edge of $\gamma$ (same for $\Gamma/\sigma$), let $\tg$ and $\hg$ be the tail and the head of $\gamma$. Notice that $\hg\notin\fix$ if and only if $\gamma\notin\fix$. Assume that $r\geqslant 1$, and orient the edges of both $\Gamma$ and $\Gamma/\sigma$ away from $x_r$, which makes them rooted trees, thus inducing a partial order $\prec$ on the curve $\Gamma$. We also endow the set $I/\sigma$ with a total order.\\
\begin{itemize}[label=-]
\item If $w\in\Gamma^0$ is a vertex of $\Gamma$, let $I_w^\infty$ be the set of ends of $\Gamma$ which are greater than $w$ for the order $\prec$. We take a similar notation $(I/\sigma)^\infty_w$ for $w\in(\Gamma/\sigma)^0$. Notice that if $w\in\fix$, then $I^\infty_w$ is stable by $\sigma$, and if $w\notin\fix$, then at most one element of each pair $\{z_j^\pm\}$ belongs to $I^\infty_w$.\\
\item If $\gs\in(\Gamma/\sigma)^1$, let $\iota(\gamma^\sigma)=\min(I/\sigma)^\infty_\hgs$.
\item The order on $I/\sigma$ along with this map $\iota$ induces an order on the edges of $\Gamma/\sigma$ having the same tail. We thus can speak about the smallest and biggest edge leaving a vertex $\pi(w)$ of $\Gamma/\sigma$.
	\item If $w\in\fix$, then we have three cases for the lift of an edge $\gs$ such that $\tgs=\pi(w)$:
		\begin{itemize}[label=$\star$]
		\item $(RR)$ the edge $\gs=\{\gamma\}$ lifts to a fixed edge $\gamma$ of $\Gamma$, and $\iota(\gs)=\{x_j\}$ is a real marking. We then set $\iota(\gamma)=x_j$.
		\item $(RC)$ the edge $\gs=\{\gamma\}$ lifts to a fixed edge $\gamma$ of $\Gamma$ but $\iota(\gs)=\{z_j^\pm\}$ is a complex marking. It means that the curve $\Gamma$ splits at some point on the path from $w$ to $\iota(\gs)$, but not right away in $\gamma$.
		\item $(CC)$ the edge $\gs=\{\gamma^+,\gamma^-\}$ lifts to a pair of exchanged edges in $\Gamma$. If $\iota(\gs)=\{z_j^\pm\}$, with $\tg\prec z_j^+$.
		\end{itemize}
	\item If $w\notin\fix$, for every edge $\gamma$ such that $\tg=w$ we have a unique complex end $\iota(\gamma)\in\iota(\gs)=\{z_j^\pm\}$ accessible by $w$.
	\end{itemize}
	
	We denote by $v_\RR$ (resp. $v_\CC$) the number of fixed vertices (resp. pairs of exchanged vertices), and by $e_\RR$ (resp. $e_\CC$) the number of fixed bounded edges (resp. pairs of exchanged bounded edges).\\

	\subsection{Space of rational curves with given tropicalization} 

Let $\Gamma$ be an abstract tropical curve with $m$ ends. Let $(C^{(t)},x_1,\dots,x_r,z_1^\pm,\dots,z_s^\pm)$ be a stable family of real smooth rational curve with a real configuration of marked points, tropicalizing on $\Gamma$, the dual graph of $C^{(0)}$.\\

\begin{rem}
By taking a coordinate, the marked curve $(C^{(t)},\textbf{x},\textbf{z}^\pm)$ can be seen as $\PP^1\left(\CC((t))\right)\simeq\CC((t))\cup\{\infty\}$, the projective line over the field of Laurent series, along with $r+2s$ Laurent series which are the marked points, taken up to a change of coordinate in $GL_2\big(\RR((t))\big)$. The first $r$ Laurent series are in $\RR((t))$, and the last $s$ are taken in $\CC((t))\backslash\RR((t))$ along with their conjugate.
\end{rem}

We associate to each vertex $w\in\Gamma^0$ a coordinate $y_w$ on $C^{(t)}$,such that $y_{\sigma(w)}=\overline{y_w\circ\sigma}$, and $y_w$ specializes to a coordinate on the irreducible component of $C^{(0)}$ associated to $w$.

\begin{itemize}[label=-]
\item If $w\in\fix$, let $\gamma^\sigma_a$ and $\gamma^\sigma_b$ be the smallest and biggest edges emanating from $\pi(w)$ in $\Gamma/\sigma$. We make a disjunction according to the type $(RR),(RC),(CC)$ of each edge:
	\begin{itemize}[label=$\star$]
	\item[$(RR/RR)$] Edges $\gamma^\sigma_a$ and $\gamma^\sigma_b$ lift to edges $\gamma_a$ and $\gamma_b$ of $\Gamma$, which have well-defined real marking $x_a=\iota(\gamma_a)$ and $x_b=\iota(\gamma_b)$. Then we take $y_w$ such that $y_w(x_r)=\infty$, $y_w(x_a)=0$, $y_w(x_b)=1$. Moreover, $\overline{y_w\circ\sigma}=y_w$.
	\item[$(RR/RC)$] If $\gamma^\sigma_a$ and $\gamma^\sigma_b$ lift up to edges $\gamma_a,\gamma_b\in\fix$, and we have $\iota(\gamma_a)=x_a$, $\iota(\gamma_b)=z_b^\pm$. Then $\re z_b^\pm$ is a well-defined real Laurent series whose specialization on the component associated to $w$ is different from the one of $x_a$. Then we can take $y_w$ such that $y_w(x_r)=\infty$, $y_w(x_a)=0$, $y_w(\re z_b^\pm)=1$.
	\item[$(RC/RR)$] We do the same with $\re z_a^\pm$ and $x_b$.
	\item[$(RC/RC)$] If $\gamma^\sigma_a$ and $\gamma^\sigma_b$ are both of type $(RC)$ we do the same with $\re z_a^\pm$ and $\re z_b^\pm$.
	\item[$(CC/-)$] If $\gamma^\sigma_a$ is of type $(CC)$, then $\gamma^\sigma_a$ lifts to a pair of exchanged edges $\{\gamma_a^\pm\}$ both emanating from $w$. They both have a well-defined $\iota(\gamma_a^\pm)=z_a^\pm$. Then we take $y_w$ such that $y_w(x_r)=\infty$, $y_w(z_a^\pm)=\pm i$, which also is a real coordinate.
	\item[$(-/CC)$] If $\gamma^\sigma_a$ is of type $(RR)$ or $(RC)$ and $\gamma^\sigma_b$ is of type $(CC)$, we do the same with $z_b^\pm$.
	\end{itemize}
\item If $w\notin\fix$, then $I_w^\infty$ consists only of complex markings, all edges emanating from $w$ are of type $(CC)$ and we have a well-defined $\iota(\gamma)$ for each of them. Let $z_a^\varepsilon$ and $z_b^\eta$ be the smallest and biggest elements in $I_w^\infty$. We take $y_w$ such that $y_w(x_r)=\infty$, $y_w(z_a^\varepsilon)=0$, $y_w(z_b^\eta)=1$. This choice ensures that $y_{\sigma(w)}=\overline{y_w\circ\sigma}$.
\end{itemize}

The functions $y_w$ are all coordinates on $C$ sending $x_r$ to $\infty$, therefore we can pass from one to another by a real affine function which we now describe. The proof is straightforward by checking that both coordinates coincide in three points.

\begin{prop}
Let $\gamma\in\Gamma^1$ be a bounded edge.
\begin{itemize}[label=-]
\item If $\gamma\notin\fix$, let $z_a^\varepsilon$ and $z_b^\eta$ be the smallest and biggest elements in $I^\infty_\hg$, then
	$$y_\hg=\frac{y_\tg - y_\tg(z_a^\varepsilon)}{y_\tg(z_b^\eta) - y_\tg(z_a^\varepsilon)} \text{ and }|\gamma|=\val(y_\tg(z_b^\eta) - y_\tg(z_a^\varepsilon)).$$
\item If $\gamma\in\fix$, we make a disjunction according to the type of $\hg$:
	\begin{itemize}[label=$\star$]
	\item[$(RR/RR)$] $y_\hg=\frac{y_\tg - y_\tg(x_a)}{y_\tg(x_b) - y_\tg(x_a)} \text{ and }|\gamma|=\val(y_\tg(x_b) - y_\tg(x_a)).$
	\item[$(RR/RC)$] $y_\hg=\frac{y_\tg - y_\tg(x_a)}{\re y_\tg(z_b^\pm) - y_\tg(x_a)} \text{ and }|\gamma|=\val(\re y_\tg(z_b^\pm) - y_\tg(x_a)).$
	\item[$(RC/RR)$] $y_\hg=\frac{y_\tg - \re y_\tg(x_a)}{y_\tg(x_b) - \re y_\tg(z_a^\pm)} \text{ and }|\gamma|=\val(y_\tg(x_b) - \re y_\tg(z_a^\pm)).$
	\item[$(RC/RC)$] $y_\hg=\frac{y_\tg - \re y_\tg(z_a^\pm)}{\re y_\tg(z_b^\pm) - \re y_\tg(z_a^\pm)} \text{ and }|\gamma|=\val(\re y_\tg(z_b^\pm) - \re y_\tg(z_a^\pm)).$
	\item[$(CC/-)$] $y_\hg=\frac{y_\tg-\re y_\tg(z_a^\pm)}{\im y_\tg(z_a^+)} \text{ and } |\gamma|=\val\left(\im y_\tg(z_a^+)\right).$
	\item[$(-/CC)$] same with $a$ switched by $b$.
	\end{itemize}
\end{itemize}
\end{prop}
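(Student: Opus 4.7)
The approach I would take rests on a single elementary observation: all the coordinates $y_w$ are coordinates on the rational curve $C^{(t)} \simeq \PP^1(\CC((t)))$, and all of them send the marked point $x_r$ to $\infty$. Consequently, for any bounded edge $\gamma$, the coordinates $y_\tg$ and $y_\hg$ are related by an affine change of variable $y_\hg = A_\gamma \, y_\tg + B_\gamma$ with $A_\gamma \in \CC((t))^*$ and $B_\gamma \in \CC((t))$. Since an affine map on $\PP^1$ fixing $\infty$ is determined by its values at two further points, verifying the first formula in each case reduces to checking that the candidate expression takes the two correct normalizing values prescribed by the definition of $y_\hg$ recalled just above the statement.

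First I would run through the cases. In the model case $(RR/RR)$, $y_\hg$ is characterized by $y_\hg(x_a) = 0$ and $y_\hg(x_b) = 1$; substituting $y_\tg(x_a)$ and $y_\tg(x_b)$ into the candidate $\tfrac{y_\tg - y_\tg(x_a)}{y_\tg(x_b) - y_\tg(x_a)}$ yields exactly $0$ and $1$, and the expression is already affine in $y_\tg$ hence fixes $\infty$. The mixed cases $(RR/RC)$, $(RC/RR)$, $(RC/RC)$ are identical computations with the real Laurent series $\re y_\tg(z^\pm) = \tfrac12(y_\tg(z^+)+y_\tg(z^-))$ playing the role of a vanishing locus, which is legitimate because $\re z^\pm$ is $\sigma$-invariant by construction. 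The cases $(CC/-)$ and $(-/CC)$ use the identity $y_\tg(z^\pm) = \re y_\tg(z^+) \pm i\, \im y_\tg(z^+)$, which makes the proposed expression take values $\pm i$ at $z^\pm$ as required. The non-fixed case $\gamma\notin\fix$ proceeds by the same argument with no reality constraint.

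For the length formula, I would use the intrinsic definition of $|\gamma|$ as the $t$-adic exponent $l$ such that the node between $C_\tg$ and $C_\hg$ is locally cut out by $xy = t^l$. Once $y_\hg = A_\gamma y_\tg + B_\gamma$ is established, $y_\tg$ must specialize to a constant on $C_\hg$ (namely the image of the node), while the rescaling by $A_\gamma$ is what converts this constant into a full coordinate on $C_\hg$; therefore $\val(A_\gamma^{-1}) = |\gamma|$. Taking valuations of the denominators appearing in the explicit formulas for $y_\hg$ then yields the announced expressions for $|\gamma|$ in each case.

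The main obstacle, though mild, is the bookkeeping in the $(RC)$ and $(CC)$ cases: one has to verify that the replacements of complex-conjugate marked points by their real or imaginary parts produce Laurent series that specialize to distinct points on $C_\tg$, so that the denominators do not have valuation strictly larger than $|\gamma|$. This reality check follows from the $\sigma$-equivariance of the tropicalization together with the combinatorial fact that, in each listed case, the two normalizing points lie on distinct branches leaving $\tg$, and hence have distinct specializations on $C_\tg$ by genericity of the configuration $\mathcal{P}$.
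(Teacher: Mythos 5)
Your main argument --- both $y_\tg$ and $y_\hg$ are coordinates on $\PP^1\left(\CC((t))\right)$ sending $x_r$ to $\infty$, hence differ by an affine map that is pinned down by its values at the two remaining normalizing points of the definition of $y_\hg$ --- is exactly the paper's proof, which is dispatched in one sentence (``check that both coordinates coincide in three points''). Your case checks, including the $\pm i$ normalization via $y_\tg(z_a^\pm)=\re y_\tg(z_a^\pm)\pm i\,\im y_\tg(z_a^+)$ in the $(CC/-)$ case, are correct, and your treatment of the length formula via the local equation $xy=t^{|\gamma|}$ of the node is more explicit than anything the paper offers.

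One step in your last paragraph is backwards, however. The two normalizing points for $y_\hg$ (say $x_a,x_b$ in the $(RR/RR)$ case, and likewise the pairs $z^\pm$ whose real or imaginary parts appear in the other cases) all lie in $I^\infty_\hg$, so on the component $C_\tg$ they specialize to the \emph{same} point, namely the node joining $C_\tg$ to $C_\hg$; this is precisely why the denominator $y_\tg(x_b)-y_\tg(x_a)$ has positive valuation in the first place. If they specialized to distinct points on $C_\tg$, the denominator would be a unit and the length formula would give $|\gamma|=0$. What you actually need, to see that the valuation is exactly $|\gamma|$ and not larger, is that after the rescaling by $t^{-|\gamma|}$ coming from the node equation the two points become distinct on $C_\hg$. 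This holds because, by construction, they are reached through two distinct edges emanating from $\hg$, so the corresponding branches (or marked points) attach to $C_\hg$ at distinct points of the special fiber; it is a property of the stable model and of the choice of the smallest and biggest edges at $\pi(\hg)$, not of any genericity of the boundary configuration $\mathcal{P}$, which plays no role in this proposition.
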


For every edge we now define $\alpha_\gamma\in\CC[[t]]^\times$ and $\beta_\gamma\in\CC[[t]]$ which will be the coordinates on the space of real marked curves tropicalizing on $\Gamma$. Once again, the  definition goes through the distinction of the type of edges emanating from $\hg$. Let $\gamma\in\Gamma^1_b$ be a bounded edge:
	\begin{align*}
	(RR/RR)\ & \alpha_\gamma=t^{-|\gamma|}\left(y_\tg(x_b) - y_\tg(x_a)\right), \\
	(RR/RC)\  & \alpha_\gamma=t^{-|\gamma|}\left(\re y_\tg(z_b^\pm) - y_\tg(x_a)\right), \\
	(RC/RR)\  & \alpha_\gamma=t^{-|\gamma|}\left(y_\tg(x_b) - \re y_\tg(z_a^\pm)\right), \\
	(RC/RC)\  & \alpha_\gamma=t^{-|\gamma|}\left(\re y_\tg(z_b^\pm) - \re y_\tg(z_a^\pm)\right), \\
	(CC/-)\  & \alpha_\gamma=t^{-|\gamma|}\im y_\tg(z_a^+), \\
	(-/CC)\  & \alpha_\gamma=t^{-|\gamma|}\im y_\tg(z_b^+). \\
	\end{align*}
	Let $\gamma\in\Gamma^1$ be a non-necessarily bounded edge:
	\begin{itemize}[label=$\star$]
	\item If $\gamma\notin\fix$ is of type $(CC)$ then $\beta_\gamma=y_\tg(z_{\iota(\gamma)}^\varepsilon)$ where $z_{\iota(\gamma)}^\varepsilon$ is the lift of $z_{\iota(\gamma)}$ accessible by $\gamma$.
	\item If $\gamma\in\fix$ is of type $(RR)$ then $\beta_\gamma=y_\tg(x_{\iota(\gamma)})$.
	\item If $\gamma\in\fix$ is of type $(RC)$ then $\beta_\gamma=\re y_\tg(z_{\iota(\gamma)}^\pm)$.
	\end{itemize}
	
	We now can define the function
	$$\Psi_\gamma(y)=\beta_\gamma + t^{|\gamma|}\alpha_\gamma y,$$
	which allows an easy description of the relations between the $y_w$.
	
	\begin{prop}\label{propcalcul}
	The Laurent series $\alpha_\gamma$ and $\beta_\gamma$ satisfy the following properties.
	\begin{enumerate}[label=(\roman*)]
	\item If $\gamma$ is an edge, one has $\alpha_\gamma\in\CC[[t]]^\times$. Moreover, if $\gamma\neq\gamma'$ are two different edges with the same tail $\tg=\mathfrak{t}(\gamma')$, then $\beta_\gamma-\beta_{\gamma'}\in\CC[[t]]^\times$.
	\item They are real: $\alpha_{\sigma(\gamma)}=\overline{\alpha_\gamma}$, $\beta_{\sigma(\gamma)}=\overline{\beta_\gamma}$. In particular $\alpha_\gamma,\beta_\gamma\in\RR[[t]]$ if $\gamma\in\fix$.
	\item For each edge $\gamma$, one has $y_\tg=\Psi_\gamma(y_\hg)$. In particular, for any marked point $q$, one has
	$$y_\tg-y_\tg(q)=t^{|\gamma|}\alpha_\gamma\left(y_\hg-y_\hg(q)\right).$$
	Moreover $\Psi_\gamma$ is real in the sense that $\Psi_{\sigma(\gamma)}(y)=\overline{\Psi_\gamma(\overline{y})}$.
	\item Let $w,w'$ be two vertices, and $\gamma_1,\dots,\gamma_d$ the geodesic path between from $w$ to $w'$. Let $\varepsilon_i=\pm 1$ according to the orientation of $\gamma_i$ in $\Gamma$, and the orientation in the geodesic path agree or disagree. Then
	$$y_w=\Psi_{\gamma_1}^{\varepsilon_1}\circ\cdots\circ\Psi_{\gamma_d}^{\varepsilon_d}(y_{w'}),$$
	and in particular for any marked point $q$:
	$$y_w-y_w(q)=t^{\sum_1^d \varepsilon_i|\gamma_i|}\left(\prod_1^d \alpha_{\gamma_i}^{\varepsilon_i}\right)\left(y_{w'}-y_{w'}(q)\right).$$
	\item Let $q$ be a marked point associated with an unbounded end $e\in\Gamma_\infty^1$, $w$ be a vertex, and $\gamma_1,\dots,\gamma_d,e$ be the geodesic path from $w$ to $q$. Then
	$$y_w(q)=\Psi_{\gamma_1}^{\varepsilon_1}\circ\cdots\circ\Psi_{\gamma_d}^{\varepsilon_d}(\beta_e),$$
	and in particular for every marked point $q$ associated with the unbounded end $e$, if $v_r$ is the vertex adjacent to the unbounded end associated to $x_r$,
	$$y_{v_r}(q)=\beta_{\gamma_1}+t^{|\gamma_1|}\alpha_{\gamma_1}\left( \cdots (\beta_{\gamma_d}+t^{|\gamma_d|}\alpha_{\gamma_d}\beta_e) \right).$$
	\item For every marked point $q_i$ and vertex $w\in\Gamma^0$, $\val(y_w(q))\geqslant 0\Leftrightarrow$ $q$ is accessible by $w$.
	\item For every edge $\gamma\in\Gamma^1$ and every marked point $q\in I^\infty_\tg\backslash I_\hg^\infty$, we have $\val(y_\tg(q)-\beta_\gamma)=0$.
	\end{enumerate}
	\end{prop}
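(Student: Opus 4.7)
The proposition is a compendium of elementary properties of the coordinates $\alpha_\gamma,\beta_\gamma$, and the strategy is to unwind the definitions rather than invoke deep results. Each item follows either directly from the construction of the $y_w$ via the preceding proposition, or by a short induction on the combinatorial distance between vertices along the tree $\Gamma$.

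The first plan is to dispatch (i) and (ii). For (i), observe that in each of the six cases of the definition, $\alpha_\gamma$ is by construction $t^{-|\gamma|}$ times a difference whose valuation is exactly $|\gamma|$, this being precisely how the length function on $\Gamma$ was read off in the previous proposition; hence $\val(\alpha_\gamma)=0$. The inequality $\beta_\gamma-\beta_{\gamma'}\in\CC[[t]]^\times$ for two edges sharing a tail $w=\tg=\mathfrak{t}(\gamma')$ follows because $\beta_\gamma,\beta_{\gamma'}$ are values of $y_w$ (or real parts thereof) at two marked points that specialize to \emph{distinct} points on the component $C_w$: this is exactly the stability condition for the dual graph $\Gamma$ to be the one associated with the stable model. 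For (ii), conjugation invariance is built into the choices of normalization $y_w(x_r)=\infty$ together with the symmetric pairing $z_a^+\leftrightarrow z_a^-$ in the $(CC/-)$ case, and the symmetry $y_{\sigma(w)}=\overline{y_w\circ\sigma}$; specializing the definitions of $\alpha_\gamma,\beta_\gamma$ then exchanges every $z_a^+$ with $z_a^-$ and conjugates all coefficients.

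Next, (iii) is obtained by direct manipulation: in each of the six cases, the formula for $y_\hg$ established in the preceding proposition rewrites as $y_\tg=\beta_\gamma+t^{|\gamma|}\alpha_\gamma y_\hg$, which is the equation $y_\tg=\Psi_\gamma(y_\hg)$. The subtraction identity $y_\tg-y_\tg(q)=t^{|\gamma|}\alpha_\gamma(y_\hg-y_\hg(q))$ then follows from affineness of $\Psi_\gamma$, and reality of $\Psi_\gamma$ from (ii). Statements (iv) and (v) are then immediate inductions: along the geodesic $w\to w'$ or $w\to q$ in $\Gamma$, iterate (iii) (with $\Psi_\gamma^{-1}$ used whenever the geodesic traverses an edge against its orientation), keeping track of the product of $\alpha_{\gamma_i}^{\varepsilon_i}$ and the sum $\sum\varepsilon_i|\gamma_i|$ of signed lengths. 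The formula in (v) for $y_{v_r}(q)$ is just (iv) applied to the path from $v_r$ to the end carrying $q$, where the coordinate at the unbounded end is $\beta_e$ by definition.

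Finally, (vi) and (vii) are where the geometric content is concentrated, and the main (mild) obstacle is to unpack ``accessible by $w$" correctly via the partial order $\prec$ rooted at $x_r$. For (vi), $q$ is accessible by $w$ iff the unique geodesic from $v_r$ to $q$ passes through $w$, iff the path $w\to q$ uses only edges oriented forward, iff in the iterated formula from (v) all signed lengths are non-negative; combined with (i) this gives $\val(y_w(q))\geqslant 0$, while if $q$ is not accessible the path contains a backward edge and the leading term has strictly negative valuation. For (vii), the hypothesis $q\in I^\infty_\tg\setminus I^\infty_\hg$ means the geodesic from $\tg$ to $q$ leaves $\tg$ along an edge $\gamma'\neq\gamma$. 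Then by (v) applied at $\tg$, both $y_\tg(q)$ and $\beta_\gamma$ belong to $\CC[[t]]$ and specialize at $t=0$ to the marked points carried respectively by $\gamma'$ and $\gamma$ on the component $C_\tg$; these are distinct by the stability of the model, hence their difference is a unit, i.e.\ has valuation $0$. This completes the proof.
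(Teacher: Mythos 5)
Your proof is correct and follows essentially the same route as the paper's, which simply notes that (i)--(ii) follow from the definitions of $\alpha$ and $\beta$, (iii) from the definition of $\Psi_\gamma$, (iv)--(v) by iteration, and (vi)--(vii) from (v) together with (i). You supply more detail than the paper does (in particular for the valuation analysis in (vi) and the distinctness of specializations in (vii)), but the underlying argument is the same.
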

	
	\begin{proof}
	It suffices to check every statement:
	\begin{itemize}[label=-]
	\item $(i)$ and $(ii)$ follow from the definition of $\alpha$ and $\beta$,
	\item $(iii)$ comes from the definition of $\Psi_\gamma$ and from the fact that $\alpha$ and $\beta$ are real,
	\item $(iv)$ and $(v)$ are just iterations from $(iii)$,
	\item $(vi)$ comes from $(v)$ and,$(vii)$ follows from $(i)$ and $(v)$.
	\end{itemize}	 
	\end{proof}
	
	\begin{rem}
	Formally speaking, this proposition is the direct translation of Proposition $4.3$ in \cite{tyomkin2017enumeration}, in the setting of curves with a real structure. Although the formulas seem quite repulsive at the first look, the meaning of each object must be clear. 
The formal series $\alpha_\gamma$ and $\beta_\gamma$ allow one to recover the coordinates of the marked points, following the formula $(v)$ of Proposition \ref{propcalcul}. 
The formal series $\alpha_\gamma$ are the \textit{"phase length"} of the edge $\gamma$, in contrast to $t^{|\gamma|}$ which could be called \textit{"valuation length"}, while the formal series $\beta_\gamma$ are the directions one needs to follow at each $w$ in order to get to the points of $I^\infty_\hg$. In other terms, $\alpha$ and $\beta$ provide the necessary coefficients to find the coordinates of the marked points. One could say that the abstract tropical curve $\Gamma$ only remembers the valuation information, while the formal series $\alpha$ and $\beta$ encode the phase information. 
	\end{rem}
	
	Let $v=v_r$ be the vertex adjacent to the end $x_r$, the tuple
	$$(y_v(x_1),\dots,y_v(x_{r-1}),y_v(z_1^+),\dots,y_v(z_s^+))\in\RR((t))^{r-1}\times\CC((t))^s$$
	provides a system of coordinates on the moduli space of real rational marked curves, that we can restrict on the moduli space of curves tropicalizing on $\Gamma$. Notice that the choice of $y_v$ fixes the value of some members of the tuple. The definition of $\alpha,\beta$ along with a quick induction ensures that they can be written in terms of $\big(y_v(q)\big)_q$. Conversely, the formula from Proposition \ref{propcalcul}$(v)$ allows to recover $\big(y_v(q)\big)_q$ from $\alpha$ and $\beta$. Therefore, they also provide a system of coordinates. Moreover,   Proposition \ref{propcalcul} describes the set of possible values of $\alpha$, $\beta$, since the formula from Proposition \ref{propcalcul}$(v)$ gives the values of the points to choose on $\PP^1\left(\CC((t))\right)$, in order to make it into a marked curve with the right tropicalization and the right formal series $\alpha$, $\beta$.\\
	
	We denote by $\mathcal{A}$ the space $\left(\RR[[t]]^\times\right)^{e_\RR} \times \left(\CC[[t]]^\times\right)^{e_\CC}$ of possible values of $\alpha$. We denote by $\mathcal{B}$ the space of possible values of $\beta$ satisfying the conditions of Proposition \ref{propcalcul}.

	\subsection{Space of morphisms with given tropicalization} 
	
	Let $h:\Gamma\rightarrow N_\RR$ be a real rational parametrized tropical curve of degree $\Delta$. In this subsection we give an explicit description of morphisms $f:(C,\textbf{x},\textbf{z}^\pm)\rightarrow\mathrm{Hom}(M,\CC((t))^*)$ of degree $\Delta$ tropicalizing to it, and for which $(C,\textbf{x},\textbf{z}^\pm)$ is a smooth connected marked rational curve.\\
	
	Let $f:C\rightarrow\mathrm{Hom}(M,\CC((t))^*)$ be a real morphism that tropicalizes to $h$. By assumption, in the coordinate $y_w$, the morphism $f$ takes the following form:
	$$f(y_w)=t^{h(w)}\chi_w\prod_{i\in I^\infty_w} (y_w-y_w(q_i))^{n_i} \prod_{i\notin I^\infty_w}\left( \frac{y_w}{y_w(q_i)}-1\right)^{n_i} \in\mathrm{Hom}(M,\CC((t))^*).$$
	
\begin{rem}	
Notice that $t^{h(w)}$ denotes the morphism $m\mapsto t^{h(w)(m)}\in\CC((t))^*$. The choice of normalization in the product ensures that $\chi_w:M\rightarrow\CC((t))^*$ has value in $\CC[[t]]^\times$. Finally, both products are indexed by the ends of $\Gamma$, and although the writing does not emphasize this aspect, there are real ends and complex ends. Furthermore, there is a constant term $(-1)$ in the second product, corresponding to the root $x_r$, for which $y_w(x_r)=\infty$ for any $w$.
\end{rem}

We now relate the expression of $f$ in two different vertices. We assume that they are connected by an edge $\gamma$. Let
$$ \boxed{ \phi_\gamma=\prod_{i\in I^\infty_\tg\backslash I^\infty_\hg}\left( y_\tg(q_i)-\beta_\gamma\right)^{n_i} \prod_{i\notin I^\infty_\tg}\left( 1-\frac{\beta_\gamma}{y_\tg(q_i)} \right)^{n_i}\in\mathrm{Hom}(M,\CC((t))^*. }$$
Notice that $\phi_\gamma$ depends only on the value of $\alpha,\beta$, and is thus a function $\phi_\gamma(\alpha,\beta)$.

\begin{prop}\label{morphism}\cite{blomme2020tropical}
We have the following properties of $\chi_w$ and $\phi_\gamma$:
\begin{enumerate}[label=(\roman*)]
\item The $\chi_w$ are real: $\chi_{\sigma(w)}=\overline{\chi_w}$. In particular, if $w\in\fix$, $\chi_w$ takes values in $\RR[[t]]^\times$.
\item The co-characters $\phi_\gamma$ are real: $\phi_{\sigma(\gamma)}=\overline{\phi_\gamma}$. In particular, if $\gamma\in\fix$ is a fixed edge, the co-character $\phi_\gamma$ takes values in $\RR[[t]]^\times$.
\item For any edge $\gamma$, let $n_\gamma$ denote the slope of $h$ on $\gamma$. One has the following \emph{"transfer equation"}:
$$ \boxed {\phi_\gamma\cdot\frac{\chi_\tg}{\chi_\hg}\cdot\alpha_\gamma^{n_\gamma}=1\in\mathrm{Hom}(M,\CC((t))^*). }$$
\end{enumerate}
\end{prop}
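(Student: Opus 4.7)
The plan is to express $f$ in the two coordinates $y_\tg$ and $y_\hg$ and impose that these describe the same morphism, which will yield the transfer equation (iii). Statements (i) and (ii) will then follow from the reality constraint $f \circ \sigma = \bar{f}$ (componentwise conjugation on the target) combined with the defining property $y_{\sigma(w)} = \overline{y_w \circ \sigma}$ of the coordinates.

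For (iii), I would substitute $y_\tg = \Psi_\gamma(y_\hg) = \beta_\gamma + t^{|\gamma|}\alpha_\gamma\, y_\hg$ into the given formula for $f(y_\tg)$ and split the product over marked points $q_i$ into three regimes depending on the location of $q_i$ relative to $\gamma$: the accessible-past-$\hg$ regime $i \in I^\infty_\hg$, the transverse regime $i \in I^\infty_\tg \setminus I^\infty_\hg$, and the regime $i \notin I^\infty_\tg$. In the first regime, $y_\tg - y_\tg(q_i) = t^{|\gamma|}\alpha_\gamma(y_\hg - y_\hg(q_i))$, and iterated balancing past $\hg$ gives $\sum_{i \in I^\infty_\hg} n_i = n_\gamma$, producing a factor $(t^{|\gamma|}\alpha_\gamma)^{n_\gamma}$ whose $t$-part cancels $t^{h(\tg) - h(\hg)} = t^{-|\gamma| n_\gamma}$. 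In the two remaining regimes, a short computation based on Proposition \ref{propcalcul} shows that the ratio of the corresponding factors in $f(y_\tg)$ and $f(y_\hg)$ is exactly $(y_\tg(q_i) - \beta_\gamma)^{n_i}$ and $(1 - \beta_\gamma/y_\tg(q_i))^{n_i}$ respectively, so their total contribution is precisely $\phi_\gamma$. Matching the two expressions for $f$ then gives $\chi_\tg/\chi_\hg \cdot \alpha_\gamma^{n_\gamma} \cdot \phi_\gamma = 1$.

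For (i) and (ii), I would use that since $r \geq 1$, the root $x_r$ is a real end, so $\sigma$ preserves the root and hence the partial order $\prec$; it follows that $\sigma(I^\infty_w) = I^\infty_{\sigma(w)}$. Combined with $h \circ \sigma = h$ (which yields $h(\sigma(w)) = h(w)$ and $n_{\sigma(i)} = n_i$) and with the relation $\overline{y_w(q_i)} = y_{\sigma(w)}(q_{\sigma(i)})$ coming from the construction of the coordinates, complex conjugation of the expression for $f(y_w)$ followed by the re-indexing $j = \sigma(i)$ reproduces term by term the expression for $f(y_{\sigma(w)})$, except that $\chi_{\sigma(w)}$ is replaced by $\overline{\chi_w}$; this gives (i). The same re-indexing applied to the defining product of $\phi_\gamma$, together with $\overline{\beta_\gamma} = \beta_{\sigma(\gamma)}$ from Proposition \ref{propcalcul}(ii), yields $\overline{\phi_\gamma} = \phi_{\sigma(\gamma)}$, which is (ii).

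The main obstacle I expect is the case-by-case trifurcation of marked points in the derivation of the transfer equation: each type of factor in the normalized product defining $f(y_\tg)$ transforms in a different way under $y_\tg = \Psi_\gamma(y_\hg)$, and the cancellation of all $t$-powers hinges crucially on the balancing identity $\sum_{i \in I^\infty_\hg} n_i = n_\gamma$. The rest is essentially bookkeeping enabled by the $\sigma$-equivariance of the combinatorial data and by the reality statements already proved in Proposition \ref{propcalcul}.
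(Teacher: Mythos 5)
Your proof is correct, and it follows the standard route: the paper itself states Proposition \ref{morphism} as a citation from \cite{blomme2020tropical} without reproving it, but the argument there is exactly the one you give — compare the two coordinate expressions of $f$ via $y_\tg=\Psi_\gamma(y_\hg)$, use the iterated balancing identity $\sum_{i\in I^\infty_\hg}n_i=n_\gamma$ together with $h(\hg)-h(\tg)=|\gamma|n_\gamma$ to cancel the powers of $t$, and identify the leftover factors with $\phi_\gamma$; reality then follows from $\sigma$-equivariance of the combinatorial data and Proposition \ref{propcalcul}(ii). No gaps.
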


Conversely, if we are given a real family of $\chi_w:M\rightarrow\CC[[t]]^\times$ such that Proposition \ref{morphism} holds for any $\gamma$, then the maps defined by the formulas 
$$f(y_w)=t^{h(w)}\chi_w\prod_{i\in I^\infty_w} (y_w-y_w(q_i))^{n_i} \prod_{i\notin I^\infty_w}\left( \frac{y_w}{y_w(q_i)}-1\right)^{n_i}$$
agree and define a real morphism $f$ tropicalizing to $h:\Gamma\rightarrow N_\RR$.\\

If $G$ is an abelian group, let $N_G=N\otimes G$. Let $\mathcal{X}=N_{\RR[[t]]^\times}^{v_\RR}\times N_{\CC[[t]]^\times}^{v_\CC}$ be the space where the tuple $\chi$ is chosen. Then, the space of morphisms $f:(C,\textbf{x},\textbf{z}^\pm)\rightarrow N\otimes\CC((t))^*$ tropicalizing to $h:\Gamma\rightarrow N_\RR $ is the subset of $\mathcal{X}\times\mathcal{A}\times\mathcal{B}$ given by the following equations:
$$\forall\gamma\in \Gamma^1_b\ :\ \phi_\gamma(\alpha,\beta)\cdot\frac{\chi_\tg}{\chi_\hg}\cdot\alpha_\gamma^{n_\gamma}=1\in\mathrm{Hom}(M,\CC((t))^*).$$
The tuples $\alpha,\beta$ deal with the tropicalization of the curve, and the tuple $\chi$ with the tropicalization of the morphism.
	
	\subsection{Evaluation map} 

We now use the previous description to write down the conditions that a curve having fixed moments must satisfy. For each $n_j\in\Delta$, let $m_j=\iota_{n_j}\omega$ be the monomial used to measure the moment of the corresponding end $q_j$.\\

Let $q_j$ be a real or complex marked point, and $v_j$ be the adjacent vertex of the associated unbounded end $e_j$. In the coordinate $y_{v_j}$, the expression of the moment of the marked point $q_j$ takes the following form:
$$f^*\chi^{m_j}|_{q_j} = t^{h(v_j)(m_j)}\chi_{v_j}(m_j)\prod_{i\in I^\infty_{v_j}}\left(\beta_{e_j}-y_{v_j}(q_i)\right)^{\omega(n_j,n_i)}\prod_{i\notin I^\infty_{v_j}}\left( \frac{\beta_{e_j}}{y_{v_j}(q_i)}-1\right)^{\omega(n_j,n_i)}.$$
We then put 
$$\boxed{ \varphi_j=\prod_{i\in I^\infty_{v_j}}\left(\beta_{e_j}-y_{v_j}(q_i)\right)^{\omega(n_j,n_i)}\prod_{i\notin I^\infty_{v_j}}\left( \frac{\beta_{e_j}}{y_{v_j}(q_i)}-1\right)^{\omega(n_j,n_i)}\in\CC[[t]]^\times, }$$
which, according to Proposition \ref{propcalcul}, is an invertible formal series only depending on $\alpha,\beta$. The series is invertible since the only terms of the product having positive valuation are taken with a zero exponent.

\begin{prop}
The formal series $\varphi_j$ are real: $\varphi_{\sigma(j)}=\overline{\varphi_j}$ for every end $e_j$, and in particular, if $x_j$ is a real marked point, then $\varphi_j\in\RR[[t]]^\times$. Moreover, $\varphi$ is a function of $\alpha,\beta$, \textit{i.e.} it does not depend on $\chi$.
\end{prop}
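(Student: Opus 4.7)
The proposition has two claims: (i) $\varphi_j$ depends only on $\alpha$ and $\beta$, not on $\chi$; and (ii) $\varphi_{\sigma(j)}=\overline{\varphi_j}$, with the real-end case as an immediate consequence. Both are essentially bookkeeping of the $\sigma$-equivariant structure already set up, and the strategy is to read off each claim directly from the definition by invoking the corresponding item of Proposition \ref{propcalcul}.

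For the $\chi$-independence, the plan is to observe that the defining expression
$$\varphi_j=\prod_{i\in I^\infty_{v_j}}\!\left(\beta_{e_j}-y_{v_j}(q_i)\right)^{\omega(n_j,n_i)}\prod_{i\notin I^\infty_{v_j}}\!\left( \frac{\beta_{e_j}}{y_{v_j}(q_i)}-1\right)^{\omega(n_j,n_i)}$$
only involves the quantities $\beta_{e_j}$ and the values $y_{v_j}(q_i)$. The first is one of the $\beta$-coordinates by definition. For the second, Proposition \ref{propcalcul}(v) gives, for each marked point $q_i$, an explicit expression of $y_{v_j}(q_i)$ as an iterated $\Psi_{\gamma_1}^{\varepsilon_1}\circ\cdots\circ\Psi_{\gamma_d}^{\varepsilon_d}(\beta_{e_i})$ along the geodesic from $v_j$ to $e_i$, which is a polynomial in $\alpha,\beta$ and in powers $t^{|\gamma|}$. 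So the whole expression is a function of $(\alpha,\beta)$ alone, as required.

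For the reality, the plan is to apply $\sigma$ termwise. The ingredients to collect beforehand are: $\sigma(v_j)=v_{\sigma(j)}$ and $\sigma$ restricts to a bijection $I^\infty_{v_j}\to I^\infty_{v_{\sigma(j)}}$ (since the partial order $\prec$ and the subgraph above $v_j$ are $\sigma$-equivariantly mapped); $n_{\sigma(j)}=n_j$ because $h\circ\sigma=h$ forces the slopes of exchanged ends to coincide; the coordinate identity $y_{\sigma(w)}=\overline{y_w\circ\sigma}$, giving $y_{v_{\sigma(j)}}(q_i)=\overline{y_{v_j}(q_{\sigma(i)})}$; and Proposition \ref{propcalcul}(ii), which yields $\beta_{e_{\sigma(j)}}=\overline{\beta_{e_j}}$. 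Writing out $\varphi_{\sigma(j)}$ with these substitutions and reindexing $i\mapsto\sigma(i)$ (which preserves $\omega(n_j,n_i)$ by bilinearity and $n_{\sigma(i)}=n_i$), each factor becomes the complex conjugate of the corresponding factor of $\varphi_j$, giving $\varphi_{\sigma(j)}=\overline{\varphi_j}$.

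The "in particular" statement is then an immediate consequence: if $x_j$ is a real marked end then $\sigma(j)=j$ and the equality reduces to $\varphi_j=\overline{\varphi_j}$, forcing $\varphi_j\in\RR[[t]]$; invertibility is already noted in the text (the only factors of positive valuation in the product carry exponent $\omega(n_j,n_j)=0$), so $\varphi_j\in\RR[[t]]^\times$. There is no genuine obstacle here: the proof is short and the only mildly delicate point is carefully tracking the action of $\sigma$ on the index set $I^\infty_{v_j}$ and on the slopes, which is already codified in the setup of the previous subsections.
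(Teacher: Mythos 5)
Your proposal is correct and matches the paper's argument, which simply observes that the reality of $\varphi_j$ follows from the reality of all quantities entering its definition (the $\sigma$-equivariance of $y_w$, $\beta$, and the slopes $n_i$) and that the $\chi$-independence is obvious from the formula. You have merely written out in full the termwise conjugation and reindexing that the paper leaves implicit; there is no gap and no difference in method.
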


\begin{proof}
It follows from the fact that all the quantities that intervene in the definition of $\varphi_j$ are real. The second part is obvious.
\end{proof}



Thus, with this new notation, the evaluation map takes the following form:

$$f^*\chi^{m_j}|_{q_j} = t^{h(v_j)(m_j)}\chi_{v_j}(m_j)\varphi_j.$$

	\subsection{Correspondence theorem}
	\label{correspondence theorem section}
	Now we look at the following map: 
	$$\boxed{ \begin{array}{rccl}
	\Theta : & \mathcal{X}\times\mathcal{A}\times\mathcal{B} &\longrightarrow & \left(N_{\RR[[t]]^\times}^{e_\RR}\times N_{\CC[[t]]^\times}^{e_\CC}\right)\times\RR[[t]]^{\times r-1}\times\CC[[t]]^{\times s} \\
		& (\chi,\alpha,\beta) & \longmapsto & \left( \left(\phi_\gamma\cdot\frac{\chi_\tg}{\chi_\hg}\cdot\alpha_\gamma^{n_\gamma}\right)_{\gamma\in\Gamma^1_b},\left(\chi_{v_j}(m_j)\varphi_j \right)_j \right) \\
	\end{array}. }$$
	This map is the same as in \cite{tyomkin2017enumeration} but for a curve endowed with a non-trivial real involution. That is why we take every real vertex or real edge with real coefficients, and only one of every pair of complex vertices or complex edges with complex coefficients.\\

	\begin{lem}\cite{blomme2020tropical}
	The dimensions of the source and target spaces are the same.
	\end{lem}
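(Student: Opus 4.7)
The proof proceeds by a direct dimension count. I treat each formal factor as an $\RR[[t]]$-scheme with its natural relative dimension: $\RR[[t]]^\times$ is one-dimensional and $\CC[[t]]^\times$ is two-dimensional (free of rank two on units over $\RR[[t]]$). Since $N$ has rank two, writing $V = v_\RR + 2v_\CC$ for the total number of vertices of $\Gamma$, I immediately get $\dim\mathcal{X} = 2V$ and $\dim\mathcal{A} = e_\RR + 2e_\CC$, while the target has dimension $2e_\RR + 4e_\CC + (r-1) + 2s$. The rationality of $\Gamma$ (equivalently, the fact that $\Gamma$ is a tree) yields the Euler-characteristic identity $e_\RR + 2e_\CC = V - 1$.

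The delicate part is computing $\dim\mathcal{B}$. Every edge of $\Gamma$ except the distinguished root end $e_r$ (on which $\beta$ is formally $\infty$) contributes one free coordinate---a real number if $\gamma \in \fix$, a complex number otherwise---giving a raw count of $e_\RR + 2e_\CC + (r-1) + 2s$ real parameters. From this one must subtract the vertex normalizations, and here care is needed. The crucial observation is that the condition $y_w(x_r) = \infty$ propagates automatically from the root down the tree: each transition map $\Psi_\gamma(y) = \beta_\gamma + t^{|\gamma|}\alpha_\gamma y$ is affine and hence fixes $\infty$. Consequently, at each fixed non-root vertex $w$, only two of the three normalizations on $y_w$ are fresh---they pin down the $\beta$'s of the smallest and biggest outgoing edges---and at each pair of exchanged vertices $\{w, \sigma(w)\}$ only four real conditions are fresh, namely two complex normalizations on $y_w$, the ones on $y_{\sigma(w)}$ being forced by conjugation. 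At the root vertex $v$ the condition $y_v(x_r) = \infty$ is already built into our exclusion of $\beta_{e_r}$, so $v$ likewise contributes exactly two fresh real constraints. Summing, we get $2v_\RR + 4v_\CC = 2V$ real constraints, hence $\dim\mathcal{B} = (V-1) + (m-1) - 2V = m - V - 2$.

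Putting it together, the source has total dimension $2V + (V-1) + (m - V - 2) = 2V + m - 3$, which equals the target dimension $2(V-1) + (m-1) = 2V + m - 3$. The main risk in this plan is the bookkeeping around $y_w(x_r) = \infty$: mistakenly counting it as a fresh constraint at every vertex would over-count by $V - 1$. To avoid this I would verify case-by-case for each of the six types of fixed vertex $(RR/RR)$, $(RR/RC)$, $(RC/RR)$, $(RC/RC)$, $(CC/-)$, $(-/CC)$ that exactly two fresh real constraints arise, and similarly for exchanged pairs that exactly four do; once this check is in place, the tree identity $e_\RR + 2e_\CC = V - 1$ closes the argument.
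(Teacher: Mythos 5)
Your count is correct, and this is essentially the only available argument: the paper itself does not reprove this lemma (it cites \cite{blomme2020tropical}), but the intended proof is exactly the direct dimension count you carry out. The key assertion — that the coordinate normalizations pin exactly two real parameters' worth of $\beta$'s at each fixed vertex and four at each exchanged pair — does hold in all six cases: in types $(RR/RR)$, $(RR/RC)$, $(RC/RR)$, $(RC/RC)$ two real $\beta$'s are set to $0$ and $1$, while in $(CC/-)$ and $(-/CC)$ a single complex $\beta$ is set to $i$ (two real conditions) and the remaining outgoing $\beta$ stays free; since each $\beta_\gamma$ is attached to the unique tail vertex $\tg$, no constraint is double-counted, and the condition $y_w(x_r)=\infty$ never touches a $\beta$ because no $\beta$ is assigned to the root end. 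Two cosmetic remarks: the tree identity $e_\RR+2e_\CC=V-1$ is not actually needed, since you apply it to both sides — the balance already holds identically in the form $(2v_\RR+4v_\CC)+(e_\RR+2e_\CC)+\bigl[(e_\RR+2e_\CC)+(r-1)+2s-(2v_\RR+4v_\CC)\bigr]=2e_\RR+4e_\CC+(r-1)+2s$, with the $\mathcal{X}$-dimension cancelling exactly against the normalization constraints in $\mathcal{B}$; and "dimension" here should be read as relative dimension over $\RR[[t]]$ (number of formal-series-valued coordinates), which is what you implicitly do.
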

		
	Let $\zeta\in\RR((t))^{*r-1}\times \CC((t))^{*s}$ and $(\mu_j)=(\val\zeta_j)$ be families of complex and tropical moments, both assumed to be generic. We denote by $\zeta_j^\Gamma=\zeta_j t^{-\mu_j}\in\CC[[t]]^\times$ the complex moments normalized to have a zero valuation. The next proposition is straightforward to check.

\begin{prop}
Any family of classical curves passing through the symmetric configuration $\mathcal{P}$ tropicalizes on a real parametrized tropical curve $h:\Gamma\rightarrow N_\RR$ satisfying $\mathrm{ev}(\Gamma)=\mu$, and specializes to a tuple $(\chi,\alpha,\beta)\in\mathcal{X}\times\mathcal{A}\times\mathcal{B}$, satisfying $\Theta(\chi,\alpha,\beta)=(1,\zeta^\Gamma)$.
\end{prop}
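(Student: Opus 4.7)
The plan is to unpack the tropicalization procedure from Section~\ref{tropicalization} and compare it with the explicit coordinates introduced in the preceding subsections. Starting from a family $f_t:C^{(t)}\dashrightarrow N\otimes\CC((t))^*$ of real rational curves passing through the symmetric configuration $\mathcal{P}$, the tropicalization machinery produces a parametrized tropical curve $h:\Gamma\rightarrow N_\RR$, where $\Gamma$ is the dual graph of the stable model $\mathcal{C}^{(0)}$ (endowed with the real structure inherited from the involution on the special fiber) and $h(w)$ records the valuations $\mathrm{ord}_{C_w}(f_t^*\chi^m)$. Since each marked point $q_j$ gives an unbounded end adjacent to a vertex $v_j$, the moment of this end equals $h(v_j)(m_j)=\val(f_t^*\chi^{m_j}|_{q_j})=\val(\zeta_j)=\mu_j$, so $\mathrm{ev}(\Gamma)=\mu$.

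Next, I would read off the tuple $(\chi,\alpha,\beta)$ from the family. The definitions of the coordinates $y_w$ and of $\alpha_\gamma$, $\beta_\gamma$ from the previous subsection yield series in $\CC[[t]]$ with the realness and compatibility properties collected in Proposition~\ref{propcalcul}, so $(\alpha,\beta)\in\mathcal{A}\times\mathcal{B}$. Expressing $f_t$ in the coordinate $y_w$ at each vertex $w$ provides the factorization $f_t(y_w)=t^{h(w)}\chi_w\prod(\cdots)$ defining $\chi_w$; by the observation preceding Proposition~\ref{morphism}, $\chi_w$ takes values in $\CC[[t]]^\times$ and transforms covariantly under $\sigma$, so $\chi\in\mathcal{X}$.

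The two components of $\Theta(\chi,\alpha,\beta)=(1,\zeta^\Gamma)$ correspond to two checks. The first coordinate is the transfer equation $\phi_\gamma\cdot\chi_\tg/\chi_\hg\cdot\alpha_\gamma^{n_\gamma}=1$, which is exactly Proposition~\ref{morphism}(iii) and holds automatically because the local expressions for $f_t$ at the two endpoints of $\gamma$ must agree. For the second coordinate, I substitute the formula $f_t^*\chi^{m_j}|_{q_j}=t^{h(v_j)(m_j)}\chi_{v_j}(m_j)\varphi_j$ derived in the subsection on the evaluation map. The left-hand side equals $\zeta_j$ by the hypothesis that the curve passes through $\mathcal{P}$, while $t^{h(v_j)(m_j)}=t^{\mu_j}$; dividing gives $\chi_{v_j}(m_j)\varphi_j=\zeta_jt^{-\mu_j}=\zeta_j^\Gamma$, as required.

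The only real subtlety is ensuring that the lengths and invertibility statements match, i.e.\ that the $\alpha_\gamma$ are units in $\CC[[t]]^\times$ and the differences $\beta_\gamma-\beta_{\gamma'}$ are units whenever $\tg=\mathfrak{t}(\gamma')$. This is precisely the content of Proposition~\ref{propcalcul}(i) and relies on $\Gamma$ being the dual graph of the \emph{stable} model, so that no further collapse occurs on the special fiber and the edge lengths $|\gamma|$ exhaust the valuations of the relevant displacements. Granting these structural facts, the entire verification reduces to bookkeeping between the two descriptions.
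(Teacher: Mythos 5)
Your proposal is correct and is exactly the routine verification the paper leaves implicit (it only remarks that the proposition is ``straightforward to check''): tropicalize, read off $(\chi,\alpha,\beta)$ from the coordinates $y_w$, and observe that the two blocks of $\Theta$ are Proposition~\ref{morphism}(iii) and the evaluation formula $f^*\chi^{m_j}|_{q_j}=t^{h(v_j)(m_j)}\chi_{v_j}(m_j)\varphi_j$. The only cosmetic point is that a curve through the \emph{symmetric} configuration has moment $\pm\zeta_j$ rather than $\zeta_j$, so the conclusion is $\Theta=(1,\zeta^\Gamma)$ up to the sign choices that the paper itself absorbs into $\zeta^\Gamma$ when it later solves the equation ``for any possible sign of $\zeta^\Gamma$''.
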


\begin{rem}
Moreover, the plane tropical curve image $h(\Gamma)$ has a unique parametrization as a parametrized tropical curve of degree $\Delta(s)$. Notice that the space $\mathcal{X}\times\mathcal{A}\times\mathcal{B}$ depends on the choice of the parametrized tropical curve $(\Gamma,h)$.\\
\end{rem}
	
	Conversely, for each real parametrized tropical curve $(\Gamma,h)$ with $\mathrm{ev}(\Gamma)=\mu$, we need to find the classical curves tropicalizing on $(\Gamma,h)$ and passing through the symmetric configuration $\mathcal{P}$. Such a curve corresponds to a point in the moduli space $\mathcal{X}\times\mathcal{A}\times\mathcal{B}$. Finding the curves passing through the symmetric configuration $\mathcal{P}$ and tropicalizing on $\Gamma$ thus amounts to solve for $(\chi,\alpha,\beta)$ the equation $\Theta(\chi,\alpha,\beta)=(1,\zeta^\Gamma)$, for any possible sign of $\zeta^\Gamma$. We say that $(\chi_0,\alpha_0,\beta_0)$ is a first order solution if $\Theta(\chi_0,\alpha_0,\beta_0)=(1,\zeta^\Gamma)$ mod $t$.\\
	
	Let be given a parametrized tropical curve $h_0:\Gamma_0\rightarrow N_\RR$ of degree $\Delta(s)$ with $\text{ev}(\Gamma_0)=\mu$.  Notice that, as $\mu$ is generic, $C_\text{trop}=h(\Gamma)_0$ is a nodal curve. The real rational curves with image $C_\text{trop}$ are described in Lemma \ref{realstruc}. Assume that $(\Gamma,h)$ has no flat vertex. We prove in the next section that it is a necessary condition for $(\Gamma,h)$ to have first order solutions. We then have the following theorem, that allows us to lift first order solutions to true solutions, provided that the Jacobian is invertible.
	
	\begin{theo}
	\label{realization theorem}
	For each real parametrized tropical curve $(\Gamma,h)$ of degree $\Delta$, obtained from a parametrized tropical curve of degree $\Delta(s)$ passing through $\mu$, without any flat vertex, and each first order solution $\Theta(\chi_0,\alpha_0,\beta_0)=(1,\zeta^\Gamma)$:
	\begin{itemize}
	\item the Jacobian of $\Theta$ at $(\chi_0,\alpha_0,\beta_0)$ is invertible at first order,
	\item there is a unique lift of $(\chi_0,\alpha_0,\beta_0)$ to a true solution $(\chi,\alpha,\beta)$ in $\mathcal{X}\times\mathcal{A}\times\mathcal{B}$.
	\end{itemize}
	\end{theo}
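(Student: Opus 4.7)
The plan is to follow the strategy of Tyomkin~\cite{tyomkin2017enumeration}, adapted to the real setting as in~\cite{blomme2020tropical}: reduce the theorem to invertibility of the Jacobian of $\Theta$ at the first order solution, and then invoke Hensel's lemma to produce the unique lift automatically.

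First I would exploit the tree structure of $\Gamma$ by enumerating its bounded edges in a depth-first order away from the root end $x_r$. Taking logarithmic differentials of each transfer equation $\phi_\gamma\cdot(\chi_\tg/\chi_\hg)\cdot\alpha_\gamma^{n_\gamma}=1$, and using Proposition~\ref{propcalcul} to note that $\delta\phi_\gamma$ depends only on the $\delta\alpha$ and $\delta\beta$ of edges preceding $\gamma$, one can solve recursively for $\delta\chi_\hg$ in terms of $\delta\chi_{v_r}$ and of the $(\delta\alpha_{\gamma_i},\delta\beta_{\gamma_i})$ along the path from $v_r$ to $\hg$. Substituting these expressions into the linearized evaluation equations eliminates all $\chi$-variables except $\delta\chi_{v_r}$, and reduces invertibility of the full Jacobian to that of a smaller matrix $J$ in the $(\alpha,\beta,\chi_{v_r})$ variables.

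The second step identifies $J$ modulo $t$ with the Jacobian (up to a nonzero monomial unit) of the evaluation map sending a parametrized tropical curve of fixed combinatorial type to its collection of moments. By the standard factorization recalled before Definition~\ref{refined multiplicity}, this determinant equals $\pm\prod_V\omega(u_V,v_V)=\pm m_\Gamma^\CC$, where $u_V,v_V$ are any two outgoing slopes at $V$. Genericity of $\mu$ forces $\Gamma$ to be trivalent and simple nodal, and the no-flat-vertex hypothesis gives $\omega(u_V,v_V)\neq 0$ at every vertex, so $m_\Gamma^\CC\neq 0$. The Jacobian is therefore invertible modulo $t$ over $\CC$.

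Finally I would transfer this invertibility to the real setting via $\sigma$-equivariance of $\Theta$. The spaces $\mathcal{X},\mathcal{A},\mathcal{B}$ and the target of $\Theta$ were built precisely as the fixed part of the natural $\sigma$-action on the complexified moduli, and the real Jacobian is the fixed part of the complex one; invertibility is thus inherited, and Hensel's lemma applied to $\Theta=(1,\zeta^\Gamma)$ in the formal power series ring yields the unique lift. I expect the main obstacle to lie in this last step, specifically for the mixed edges of type $(RC)$ where a fixed edge has complex markings further downstream: one must carefully check that the bookkeeping of real and imaginary parts of the $\beta$'s meshes with the splitting of the target, so that the reduction from complex to real invertibility is legitimate and the Hensel iteration remains within $\mathcal{X}\times\mathcal{A}\times\mathcal{B}$.
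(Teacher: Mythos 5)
Your overall architecture (Hensel's lemma plus first-order invertibility of the Jacobian, exploiting the tree structure to eliminate the $\chi$-variables recursively) is the same as the paper's, but your second step contains a genuine gap. You identify the reduced Jacobian modulo $t$ with the Jacobian of the tropical evaluation map and conclude that its determinant is $\pm m_\Gamma^\CC=\pm\prod_V\omega(u_V,v_V)$, nonzero by genericity and the no-flat-vertex hypothesis. This is not correct for the real curve $(\Gamma,h,\sigma)$: genericity of $\mu$ makes the degree-$\Delta(s)$ parametrization $\Gamma_0$ trivalent, but the real curve $\Gamma$ obtained by splitting along even edges has quadrivalent and pentavalent fixed vertices (a fixed vertex carrying one or two pairs of exchanged edges). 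For such $\Gamma$ the tropical evaluation map on $\text{Comb}(\Gamma)\times N_\RR$ is not even a square matrix (the overvalence changes the dimension count), and the missing directions are supplied by the phase variables $\beta$. Consequently the first-order determinant is not a product of tropical vertex multiplicities; it involves blocks such as $\parfrac{\log Z_0}{\beta}$ and $\parfrac{\log Z_1}{\beta}$, which at a pentavalent vertex are invertible if and only if the pair $(\beta,i)$ is free over $\RR$, i.e.\ $\beta$ is not purely imaginary. This is a condition on the first-order solution itself, guaranteed by genericity of the point configuration $\mathcal{P}$ (the arguments $\theta_j$), and it cannot be read off from $m_\Gamma^\CC\neq 0$. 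The paper's proof is precisely a vertex-by-vertex induction (four lemmata for real trivalent, exchanged trivalent, quadrivalent and pentavalent vertices) whose only delicate case is this pentavalent non-degeneracy.

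Your third step inherits the same problem: to deduce real invertibility from $\sigma$-equivariance you would need invertibility of the \emph{complexified} Jacobian, which is Tyomkin's problem for the non-trivalent, edge-collapsing parametrization $h:\Gamma\to N_\RR$ rather than for $\Gamma_0$; that invertibility is not covered by the trivalent case of \cite{tyomkin2017enumeration} and again reduces to the explicit $\beta$-computation above. You correctly sense that the danger lies in how the $\beta$'s interact with the splitting of the target, but the actual obstruction is not the $(RC)$ bookkeeping: it is the purely-imaginary degeneration of $\beta$ at pentavalent vertices, excluded by genericity of $\mathcal{P}$ as explained in Section \ref{section statement of result and proof}.
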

	
	\begin{rem}
	Most of the proof consists in showing that the Jacobian is indeed invertible for all the considered first order solutions.
	\end{rem}
		
	\begin{proof}
	Let $h:\Gamma\rightarrow N_\RR$ be one of the above real parametrized tropical curves, meaning that $h(\Gamma)$ is a plane tropical curve such that its parametrization $h_0:\Gamma_0\rightarrow N_\RR$ as a curve of degree $\Delta(s)$ satisfies $\text{ev}(\Gamma_0)=\mu$, and that $(\Gamma,h)$ has no flat vertex. Let $(\chi_0,\alpha_0,\beta_0)$ be a first order solution. Using Hensel's lemma, the invertibleness of the Jacobian at $(\chi_0,\alpha_0,\beta_0)$ allows one to lift to a unique true solution. Therefore, the second statement follows from the first, which we now show by induction. For simplicity, and because of the multiplicative nature of the map $\Theta$, we use logarithmic coordinates for every variable except $\beta$. It means that we look at $\log\Theta$, depending on the new variables $(\log\chi,\log\alpha,\beta)$. Each time, the logarithm is taken coordinate by coordinate. Hence, if $\gamma$ is an edge, $\log\alpha_\gamma$ and $\beta_\gamma$ are scalars, while if $w$ is a real vertex, $\log\chi_w:M\rightarrow\RR$ is an element of $N_\RR$.\\
	
To compute the Jacobian	relative to coordinates $(\log\chi,\log\alpha,\beta)$ at $t=0$, we can first put $t=0$. Thus, we get for every bounded edge $\gamma\in\Gamma^1_b$:
$$\phi_\gamma|_{t=0}=\prod_{ \substack{ \gamma'\neq\gamma \\ \mathfrak{t}(\gamma')=\tg }} (\beta_{\gamma'}|_{t=0}-\beta_\gamma|_{t=0})^{n_{\gamma'}}\in N\otimes\RR^*,$$
and for every unbounded end $e_j$:
$$\varphi_j|_{t=0}=\pm \prod_{ \substack{ \gamma'\neq e_j \\ \mathfrak{t}(\gamma')=v_j }} (\beta_{e_j}|_{t=0}-\beta_{\gamma'}|_{t=0})^{\omega(n_j,n_{\gamma'})}\in\CC^*.$$
Notice that at the first order, $\phi_\gamma$ depends only on $\beta$ and not $\alpha$. For convenience of notation, all the following computations are taken at the first order, and we drop $"|_{t=0}"$ out of the notation.\\
	
For a bounded edge $\gamma\in\Gamma^1_b$, let $N_\gamma=\phi_\gamma\cdot\frac{\chi_\tg}{\chi_\hg}\cdot\alpha_\gamma^{n_\gamma}\in N_{\KK[[t]]^\times}$, where $\KK=\RR$ ou $\CC$ according to whether or not $\gamma\in\fix$. Similarly, let $X_j=\chi_{v_j}(m_j)\varphi_j\in\RR[[t]]^\times$ for a real end $x_j$, and $Z_j=\chi_{v_j}(m_j)\varphi_j\in\CC[[t]]^\times$ for a complex end $z_j^\pm$. The variables $N_\gamma,X_j$ and $Z_j$ index the lines of the Jacobian matrix $\parfrac{\log\Theta}{(\log\chi,\log\alpha,\beta)}$. Thus, we have
$$\log N_\gamma=\sum_{ \substack{ \gamma'\neq\gamma \\ \mathfrak{t}(\gamma')=\tg }}  n_{\gamma'}\log(\beta_{\gamma'}-\beta_\gamma) \ +\ \log\chi_\tg-\log\chi_\hg+n_\gamma\log\alpha_\gamma \in\mathrm{Hom}(M,\RR),$$

$$\log X_j=\sum_{ \substack{ \gamma'\neq e_j \\ \mathfrak{t}(\gamma')=v_j }} \omega(n_j,n_{\gamma'}) \log(\beta_{e_j}-\beta_{\gamma'}) \ +\ \log\chi_{v_j}(m_j)\in\RR,$$

$$\log Z_j=\sum_{ \substack{ \gamma'\neq e_j \\ \mathfrak{t}(\gamma')=v_j }} \omega(n_j,n_{\gamma'}) \log(\beta_{e_j}-\beta_{\gamma'}) \ +\ \log\chi_{v_j}(m_j)\in\CC.$$

Notice that the vertices are either trivalent, quadrivalent including a pair of exchanged edges, or pentavalent including two pairs of exchanged edges. This means that the sum over $\gamma'$ is one constant term if $\tg$ is trivalent, contains one or two terms depending on a real parameter $\beta$ if $\tg$ is quadrivalent, and two or three terms depending on a complex parameter $\beta$ if it is pentavalent, as depicted below. The following lemmata are used to prove by induction that the Jacobian is invertible.

	\begin{lem}
	Let $V$ be a real trivalent vertex adjacent to two real unbounded ends.
	\begin{itemize}[label=-]
	\item If $V$ is the only vertex, the Jacobian is invertible,
	\item If not, the invertibleness of the Jacobian reduces to the invertibleness of the Jacobian for the curve where $V$ is removed and the unique bounded edge adjacent to $V$ is replaced with an unbounded ends of the same direction.
	\end{itemize}
	\end{lem}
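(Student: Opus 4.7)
The plan is to exploit the local structure of the Jacobian $J = \partial\log\Theta/\partial(\log\chi,\log\alpha,\beta)$ at $t=0$. Inspecting the formulas at the end of the preceding subsection, one observes that $\chi_V$ appears only in the two evaluation equations $\log X_a,\log X_b$ at the real unbounded ends of $V$ and in the transfer equation $\log N_{\gamma_0}$ of the unique bounded edge $\gamma_0$ adjacent to $V$ (when such an edge exists). Consequently the columns of $J$ indexed by the two components of $\log\chi_V$ are supported only on the (at most three) rows indexed by these equations, which makes a block elimination natural.

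For the base case, $V$ is the only vertex, so there is no bounded edge and no $\log\alpha$ variable. The three $\beta$'s at $V$ are constants fixed by the coordinate choice $y_V(x_a)=0,\ y_V(x_b)=1,\ y_V(x_r)=\infty$, hence the $\varphi_a,\varphi_b$ reduce to signs. The only variables are the two components of $\chi_V\in N_\RR$ and the only equations are $\log X_a, \log X_b$, whose derivatives with respect to $\log\chi_V$ give the $2\times 2$ matrix with rows $m_a = \iota_{n_a}\omega$ and $m_b = \iota_{n_b}\omega$. Its determinant is $\pm \omega(n_a,n_b) \neq 0$ since $V$ is non-flat, yielding invertibility.

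For the inductive step, the same $2\times 2$ block $\partial(\log X_a,\log X_b)/\partial\log\chi_V$ is invertible, so I use the rows $X_a, X_b$ to eliminate the columns $\chi_V$: these two rows and two columns are discarded and $\log N_{\gamma_0}$ is updated by subtracting the corresponding multiples. Next, the balancing condition at $V$ combined with non-flatness yields $n_{\gamma_0} = n_a + n_b \neq 0$, where $n_{\gamma_0}$ denotes the slope of $h$ on $\gamma_0$ oriented out of $w' := \mathfrak{t}(\gamma_0)$, so $\partial\log N_{\gamma_0}/\partial\log\alpha_{\gamma_0} = n_{\gamma_0}$ is nontrivial. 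A second elimination decomposes the updated two-dimensional equation $\widetilde{\log N}_{\gamma_0}\in N_\RR$ along $n_{\gamma_0}$ (used to solve for $\log\alpha_{\gamma_0}$) and in the dual direction $m_{\mathrm{new}} := \iota_{n_{\gamma_0}}\omega = m_a + m_b$, producing a single scalar equation in the remaining variables.

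The main technical task, and the only nontrivial verification, is to identify this remaining scalar equation with the evaluation equation $\log X_{\mathrm{new}} = \log\chi_{w'}(m_{\mathrm{new}}) + \log\varphi_{\mathrm{new}}$ for the new unbounded end of slope $n_a+n_b$ in the reduced curve $\Gamma'$ obtained by deleting $V,e_a,e_b$ and promoting $\gamma_0$. Using $m_{\mathrm{new}} = m_a + m_b$, one substitutes the expressions for $\log\chi_V(m_a),\log\chi_V(m_b)$ extracted from $X_a,X_b$ and pairs $\log N_{\gamma_0}$ against $m_{\mathrm{new}}$; the factors from $\phi_{\gamma_0}$ at $w'$ together with the constant $\beta$-terms at $V$ recombine, via the formulas of Proposition \ref{propcalcul} and the definition of $\varphi_{\mathrm{new}}$, to yield the desired evaluation equation. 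This bookkeeping is routine but tedious; once it is done, $\det J$ equals a nonzero factor times $\det J'$ where $J'$ is the Jacobian of $\Gamma'$, completing the reduction.
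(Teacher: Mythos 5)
Your argument is correct and matches the paper's proof essentially step for step: the base case via non-degeneracy of $(m_a,m_b)$ (i.e.\ $\omega(n_a,n_b)\neq 0$), then elimination of the $\chi_V$ columns against the rows $X_a,X_b$, followed by splitting the $N_{\gamma_0}$ rows along $n_{\gamma_0}$ (solving for $\log\alpha_{\gamma_0}$) and its $\omega$-dual $\iota_{n_{\gamma_0}}\omega=m_a+m_b$, the latter being identified with the evaluation row of the reduced curve. The paper likewise asserts that last identification without further computation, so the level of rigor is the same.
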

	
	\begin{proof}
	Let $\gamma$ be the edge with $\hg=V$. We assume that $\gamma$ is a bounded edge. The matrix has the following form

$$\text{Jac}\Theta=\begin{array}{lcccccc}
                                        & \multicolumn{1}{l}{} & \multicolumn{1}{l}{} & \multicolumn{1}{l}{}   & \multicolumn{2}{c}{\chi_V}            & \multicolumn{1}{c}{\alpha_\gamma}                 \\ \cline{5-7} 
                                        &                      &                      & \multicolumn{1}{c|}{}  & 0     & \multicolumn{1}{c|}{0}     & \multicolumn{1}{c|}{0}                    \\
                                        &                      & \ast                  & \multicolumn{1}{c|}{}  & \vdots & \multicolumn{1}{c|}{\vdots} & \multicolumn{1}{c|}{\vdots}                \\
                                        &                      &                      & \multicolumn{1}{c|}{}  & 0     & \multicolumn{1}{c|}{0}     & \multicolumn{1}{c|}{0}                    \\ \cline{2-7} 
\multicolumn{1}{r|}{\multirow{2}{*}{$N_\gamma$}} &        \ast              &                      \cdots & \multicolumn{1}{c|}{\ast}  & -1    & \multicolumn{1}{c|}{0}     & \multicolumn{1}{c|}{\multirow{2}{*}{$n_\gamma$}} \\
\multicolumn{1}{r|}{}                   &        \ast              &                     \cdots & \multicolumn{1}{c|}{\ast}  & 0     & \multicolumn{1}{c|}{-1}    & \multicolumn{1}{c|}{}                     \\ \cline{2-7} 
\multicolumn{1}{r|}{X_0}                  & 0                    & \cdots                & \multicolumn{1}{c|}{0} & \multicolumn{2}{c|}{m_0}             & \multicolumn{1}{c|}{0}                    \\ \cline{2-7} 
\multicolumn{1}{r|}{X_1}                  & 0                    & \cdots                & \multicolumn{1}{c|}{0} & \multicolumn{2}{c|}{m_1}             & \multicolumn{1}{c|}{0}                    \\ \cline{2-7} 
\end{array} \ ,$$
where $n_\gamma=n_0+n_1$. Indeed, at first order we have
$$\left\{ \begin{array}{l}
\log N_\gamma=\sum_{ \substack{ \gamma'\neq\gamma \\ \mathfrak{t}(\gamma')=\tg }}  n_{\gamma'}\log(\beta_{\gamma'}-\beta_\gamma) \ +\ \log\chi_\tg-\log\chi_V+n_\gamma\log\alpha_\gamma \in\mathrm{Hom}(M,\RR) ,\\
\log X_0=\log\chi_V(m_0) ,\\
\log X_1=\log\chi_V(m_1) .\\
\end{array}\right.$$
Therefore we have the following partial derivatives:
$$\boxed{ \parfrac{\log N_\gamma}{\log\chi_V}=-I_2 , \ \parfrac{\log N_\gamma}{\log\alpha_\gamma}=n_\gamma ,} \text{ and } \boxed{ \parfrac{\log X_0}{\log\chi_V}=m_0 , \ \parfrac{\log X_1}{\log\chi_V}=m_1 .}$$
By developing with respect to the last two rows, since $(m_0,m_1)$ are free, we are left with the following determinant:
$$\begin{array}{lcccc}
                                        & \multicolumn{1}{l}{} & \multicolumn{1}{l}{} & \multicolumn{1}{l}{}     & \multicolumn{1}{l}{\alpha_\gamma}                 \\ \cline{5-5} 
                                        &                      &                      & \multicolumn{1}{c|}{}    & \multicolumn{1}{c|}{0}                    \\
                                        &                      & \ast                  & \multicolumn{1}{c|}{}    & \multicolumn{1}{c|}{\vdots}                \\
                                        &                      &                      & \multicolumn{1}{c|}{}    & \multicolumn{1}{c|}{0}                    \\ \cline{2-5} 
\multicolumn{1}{r|}{\multirow{2}{*}{$N_\gamma$}} & \ast                  & \cdots                & \multicolumn{1}{c|}{\ast} & \multicolumn{1}{c|}{\multirow{2}{*}{$n_\gamma$}} \\
\multicolumn{1}{r|}{}                   & \ast                  & \cdots                & \multicolumn{1}{c|}{\ast} & \multicolumn{1}{c|}{}                     \\ \cline{2-5} 
\end{array} \ .$$
If $\gamma$ was an unbounded end, we would be left with the empty matrix and we would have proven invertibleness. Otherwise, the last two rows correspond to a copy of $N_\RR$, and are thus given by two elements of $M_\RR$, the dual of $N_\RR$. Up to a change of basis, one can assume that one of these elements of $M_\RR$ is $\omega(n_\gamma,-)$, which takes $0$ value on $n_\gamma$. Thus, by making a development with respect to the column, noticing that $\omega(n_\gamma,N_\gamma)$ is precisely the line corresponding to the unbounded end in the curve where $V$ is removed and $\gamma$ replaced by an unbounded end, we are reduced to the invertibleness of the corresponding matrix.	
	\end{proof}

	\begin{lem}
	Let $V$ be a complex trivalent vertex adjacent to two complex unbounded ends. The invertibleness of the Jacobian reduces to the invertibleness of the Jacobian for the curve where $V$ and $\sigma(V)$ are removed and the unique bounded edges adjacent to $V$ and $\sigma(V)$ are replaced with unbounded ends of the same direction.
	\end{lem}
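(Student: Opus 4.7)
The plan is to mirror the argument of the previous lemma, promoting every real entry to a complex one and letting the complex variable encode both $V$ and its conjugate $\sigma(V)$ via conjugation.

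First I would identify the unique bounded edge $\gamma$ with $\hg=V$ and the two complex unbounded ends $e_0,e_1$ at $V$, of slopes $n_0,n_1$, so that $n_\gamma=n_0+n_1$ by the balancing condition. Because $V\notin\fix$, the normalization of the coordinate $y_V$ (as prescribed by the rule for vertices not in $\fix$) fixes $\beta_{e_0}=0$ and $\beta_{e_1}=1$; these are not variables in $\mathcal{B}$. At first order the terms involving $\beta_{e_0},\beta_{e_1}$ in $\log Z_0$ and $\log Z_1$ become constants, and differentiation leaves only $\partial\log Z_j/\partial\log\chi_V=m_j$. The resulting local block of the Jacobian in the columns $(\chi_V,\alpha_\gamma)$ and rows $(N_\gamma,Z_0,Z_1)$ therefore has the same shape as in the real case, just with complex entries, and outside the block the $Z_0,Z_1$ rows vanish. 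The conjugate data at $\sigma(V)$ is handled automatically by the complex coordinate convention, so no separate block is needed.

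Next I would expand the determinant along the $Z_0,Z_1$ rows. The $2\times 2$ submatrix at the $\chi_V$ columns has determinant $\det(m_0,m_1)=\omega(n_0,n_1)$, which is non-zero since $V$ is not flat. The expansion leaves two new rows for $N_\gamma$ and one new column for $\alpha_\gamma$, with entry $n_\gamma$ in the latter. Since $n_\gamma\neq 0$ (as $n_0,n_1$ are linearly independent, hence $n_0+n_1\neq 0$), I would then make the same change of basis on the two $N_\gamma$ rows as in the real case, using $\omega(n_\gamma,-)\in M$ as one basis element; the corresponding row has a zero entry in the $\alpha_\gamma$ column. Expanding along the $\alpha_\gamma$ column with the remaining non-zero entry reduces the computation to the Jacobian of the graph obtained by removing $V,\sigma(V)$ and replacing $\gamma,\sigma(\gamma)$ by unbounded ends of direction $n_\gamma$; the surviving row $\omega(n_\gamma,N_\gamma)$ becomes the moment equation for this new end, consistent with the induction hypothesis.

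The main difficulty I expect is bookkeeping rather than geometry: one must check that the complexification of the real argument preserves the conjugation symmetry, so that pruning the complex pair $V,\sigma(V)$ simultaneously yields a smaller system of the same structural type on which the induction continues, and that no hidden sign or factor is lost when passing from $\RR$ to $\CC$. In particular, one should verify that the $2\times 2$ determinant $\omega(n_0,n_1)$ and the non-vanishing of $n_\gamma$ genuinely come from the no-flat-vertex assumption applied at $V$, rather than from any accidental cancellation involving $\sigma(V)$.
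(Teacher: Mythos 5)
Your proposal is correct and follows essentially the same route as the paper: eliminate the $Z_0,Z_1$ rows against the $\chi_V$ columns using the linear independence of $m_0,m_1$ (non-flatness of $V$), then change basis in the $N_\gamma$ rows via $\omega(n_\gamma,-)$ and eliminate the $\alpha_\gamma$ column, leaving the moment row $\omega(n_\gamma,N_\gamma)$ for the new complex unbounded end. The only difference is presentational: the paper makes your "same shape, complex entries" observation rigorous by explicitly realifying into modulus/argument blocks and performing each development twice, which is legitimate here precisely because, with $\beta_{e_0},\beta_{e_1}$ fixed by the coordinate normalization at $V\notin\fix$, the local block is genuinely complex-linear.
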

	
	\begin{proof}
Let $\gamma$ be the bounded edge with $\hg=V$. The matrix has the following form

	$$\mathrm{Jac}\Theta=\begin{array}{ccccccccccc}
                                                                                   &                     &                       &                       &                                          & \multicolumn{2}{c}{|\chi_V|}             & \multicolumn{2}{c}{\arg\chi_V} & \multicolumn{2}{c}{\alpha_\gamma}                                           \\ \cline{6-11} 
                                                                                   &                     & \ast                  &                       & \multicolumn{1}{c|}{}                    & \multicolumn{2}{c}{0}                     & \multicolumn{2}{c|}{0}                                         & 0                                         & \multicolumn{1}{c|}{0}                                         \\
                                                                                   &                     &                       &                       & \multicolumn{1}{c|}{}                    & \multicolumn{2}{c}{\vdots} & \multicolumn{2}{c|}{\vdots}                     & \vdots                     & \multicolumn{1}{c|}{\vdots}                     \\
                                                                                   &                     &                       &                       & \multicolumn{1}{c|}{}                    & \multicolumn{2}{c}{0}                     & \multicolumn{2}{c|}{0}                                         & 0                                         & \multicolumn{1}{c|}{0}                                         \\ \cline{2-11} 
\multicolumn{1}{c|}{\multirow{2}{*}{$|N_\gamma|$}}                   & \ast & \cdots & \cdots & \multicolumn{1}{c|}{\ast} & -1                   & 0                   & 0                   & \multicolumn{1}{c|}{0}                  & \multirow{2}{*}{$n_\gamma$} & \multicolumn{1}{c|}{\multirow{2}{*}{$0$}}                        \\
\multicolumn{1}{c|}{}                                                              & \ast & \cdots & \cdots & \multicolumn{1}{c|}{\ast} & 0                    & -1                  & 0                   & \multicolumn{1}{c|}{0}                  &                                           & \multicolumn{1}{c|}{}                                          \\
\multicolumn{1}{c|}{\multirow{2}{*}{$\arg N_\gamma$}} & \ast & \cdots & \cdots & \multicolumn{1}{c|}{\ast} & 0                    & 0                   & -1                  & \multicolumn{1}{c|}{0}                  & \multirow{2}{*}{$0$}                        & \multicolumn{1}{c|}{\multirow{2}{*}{$n_\gamma$}} \\
\multicolumn{1}{c|}{}                                                              & \ast & \cdots & \cdots & \multicolumn{1}{c|}{\ast} & 0                    & 0                   & 0                   & \multicolumn{1}{c|}{-1}                 &                                           & \multicolumn{1}{c|}{}                                          \\ \cline{2-11} 
\multicolumn{1}{c|}{\multirow{2}{*}{$Z_0$}}                                         & 0                   & \cdots & \cdots & \multicolumn{1}{c|}{0}                   & \multicolumn{2}{c}{m_0}                  & \multicolumn{2}{c|}{0}                                         & 0                                         & \multicolumn{1}{c|}{0}                                         \\
\multicolumn{1}{c|}{}                                                              & 0                   & \cdots & \cdots & \multicolumn{1}{c|}{0}                   & \multicolumn{2}{c}{0}                     & \multicolumn{2}{c|}{m_0}                                      & 0                                         & \multicolumn{1}{c|}{0}                                         \\ \cline{2-11} 
\multicolumn{1}{c|}{\multirow{2}{*}{$Z_1$}}                                         & 0                   & \cdots & \cdots & \multicolumn{1}{c|}{0}                   & \multicolumn{2}{c}{m_1}                  & \multicolumn{2}{c|}{0}                                         & 0                                         & \multicolumn{1}{c|}{0}                                         \\
\multicolumn{1}{c|}{}                                                              & 0                   & \cdots & \cdots & \multicolumn{1}{c|}{0}                   & \multicolumn{2}{c}{0}                     & \multicolumn{2}{c|}{m_1}                                      & 0                                         & \multicolumn{1}{c|}{0}                                         \\ \cline{2-11} 
\end{array}$$
Indeed, the coordinates are now complex: $\alpha_\gamma$ is a complex scalar, $\chi_V$ has complex values, $N_\gamma\in N\otimes\CC^*$ and $Z_0,Z_1\in\CC^*$. To get real coordinate, we split them into real and imaginary parts. But as we take logarithmic coordinate, the real/imaginary decomposition emphasizes the modulus/argument decomposition of the coordinates. One has
$$\left\{\begin{array}{l}
\log\alpha_\gamma=\log|\alpha_\gamma|+i\arg\alpha_\gamma ,\\
\log\chi_V=\log|\chi_V|+i\arg\chi_V.\\
\end{array}\right.$$
Therefore, we also have
\begin{align*}
\log N_\gamma & =\sum_{ \substack{ \gamma'\neq\gamma \\ \mathfrak{t}(\gamma')=\tg }}  n_{\gamma'}\log(\beta_{\gamma'}-\beta_\gamma) \ +\ \log\chi_\tg-\log|\chi_V|-i\arg\chi_V + n_\gamma\log|\alpha_\gamma|+in_\gamma\arg\alpha_\gamma ,\\
\log Z_j & =\log|\chi_V|(m_j)+i\arg\chi_V(m_j) +c.\\
\end{align*}
Hence, we get the announced Jacobian matrix. Then, the computation is entirely similar, except it is done twice: develop with respect to the last four rows, do a change of basis in the new last four rows and develop with respect to the last two columns, thus reducing to the Jacobian of the announced curve.	
	\end{proof}

	\begin{lem}
	Let $V$ be a real quadrivalent vertex adjacent to real unbounded end and two exchanged complex unbounded ends.
	\begin{itemize}[label=-]
	\item If $V$ is the only vertex, the Jacobian is invertible,
	\item If not, the invertibleness of the Jacobian reduces to the invertibleness of the Jacobian for the curve where $V$ is removed and the unique bounded edge adjacent to $V$ is replaced with an unbounded edge of the same direction.
	\end{itemize}
	\end{lem}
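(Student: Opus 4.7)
My plan follows the same peel-off strategy used in the previous two lemmas: expand the Jacobian determinant by a sequence of local cofactor expansions at $V$, reducing invertibility to that of the Jacobian of a smaller real parametrized tropical curve in which $V$ together with its three unbounded ends is removed and the bounded edge $\gamma$ is promoted to a new real unbounded end of slope $n_\gamma$. Denote by $n_0$ the slope of the real end $e_0$ at $V$ and by $n_1$ the common slope of the complex conjugate ends $e^\pm$; balancing at $V$ forces $n_\gamma = n_0 + 2n_1$ (with $n_\gamma$ oriented from $\tg$ toward $V$). Since $V$ is real, $\chi_V$ is real-valued; since $\gamma$ is fixed by $\sigma$, $\alpha_\gamma$ is real.

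The local block of $\mathrm{Jac}\,\Theta$ at $V$, with rows $(|N_\gamma|,X_0,|Z_1|,\arg Z_1)$ and columns $(\log\chi_V,\log\alpha_\gamma)$, reads at first order
\[
\begin{pmatrix}
-I_2 & n_\gamma \\
m_0 & 0 \\
m_1 & 0 \\
0 & 0
\end{pmatrix},\qquad m_i := \iota_{n_i}\omega,
\]
the last row being zero because $\chi_V(m_1)\in\RR$ contributes only to the modulus of $Z_1$, and $\alpha_\gamma$ does not appear in $\varphi_1$. I would perform three successive cofactor expansions. First, expand along the $\log\chi_V$ columns using the rows $X_0$ and $|Z_1|$; the relevant $2\times 2$ minor is a nonzero multiple of $\omega(n_0,n_1)$, nonzero by the no-flat-vertex hypothesis. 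Second, using the basis $(\omega(n_\gamma,-),n_\gamma^*)$ of $M_\RR$ for the two components of $|N_\gamma|$, with $n_\gamma^*$ any covector satisfying $\langle n_\gamma^*,n_\gamma\rangle\neq 0$, the $\log\alpha_\gamma$ column acquires a single nonzero entry in the $n_\gamma^*$-component, along which I expand. The surviving row $\omega(n_\gamma,-)\cdot N_\gamma$ is then exactly the $X$-equation for a real unbounded end of slope $n_\gamma$ attached to $\tg$ in the reduced tropical curve. Third, the leftover $\arg Z_1$ row must pair with an unfixed component of the complex column $\beta_{e^+}$; using $\beta_{e_0},\beta_\gamma\in\RR$ and $\omega(n_1,n_1)=0$ one obtains
\[
\arg Z_1 \;=\; \omega(n_1,n_0)\bigl[\arg(\beta_{e^+}-\beta_{e_0}) - \arg(\beta_{e^+}-\beta_\gamma)\bigr] + c,
\]
and a direct computation shows $\partial\arg Z_1/\partial\im\beta_{e^+}\neq 0$, allowing the final expansion.

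Induction on the number of vertices finishes the bounded case. The only-vertex case, where $\gamma$ is itself unbounded and the rows $|N_\gamma|$ and the column $\log\alpha_\gamma$ are absent, is handled by the same $\log\chi_V$ and $\arg Z_1$ expansions followed by a direct finite check. I expect the main obstacle to be verifying that $\partial\arg Z_1/\partial\im\beta_{e^+}$ does not vanish at \emph{any} first-order solution associated with $\mu$, not merely at generic ones, since the correspondence theorem must apply uniformly. Genericity of $\mu$ and strict positivity of $\im\beta_{e^+}$ (the complex marked point lies off the real locus) should rule out the degenerate real locus $(\re\beta_{e^+}-\beta_{e_0})(\re\beta_{e^+}-\beta_\gamma)=(\im\beta_{e^+})^2$ on which the two contributions to the derivative cancel. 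A short case analysis over the possible coordinate choices for $y_V$ (which fix different subsets of $\beta$-values at $V$ according to whether the smallest or biggest edge leaving $\pi(V)$ is of type RR, RC or CC) will also be required to ensure that an unfixed $\beta$-component is always available to pair with $\arg Z_1$.
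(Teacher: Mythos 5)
Your overall peel-off strategy is the same as the paper's, and your first two reductions (the $2\times 2$ minor $\det(m_0,m_1)=\pm\omega(n_0,n_1)\neq 0$ against the $\log\chi_V$ columns, and the change of basis $(\omega(n_\gamma,-),n_\gamma^*)$ isolating the $\log\alpha_\gamma$ column so that the surviving row becomes the moment equation of the new unbounded end) match what the paper does, up to the order of expansion. But your third step contains a genuine gap: you pair the $\arg Z_1$ row with $\im\beta_{e^+}$, and $\beta_{e^+}$ is not a coordinate of the moduli space in the parametrization this correspondence theorem uses. At a fixed vertex whose smallest outgoing edge is of type $(CC)$, the local coordinate $y_V$ is normalized by $y_V(x_r)=\infty$ and $y_V(z^\pm)=\pm i$, which exhausts the three real degrees of gauge freedom; the complex marked points are therefore pinned at $\pm i$ and the one remaining free real parameter at $V$ is $\beta_1$, the position of the \emph{real} marked point. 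Consequently your formula for $\arg Z_1$ is also off: the only outgoing edges at $V$ besides $z^+$ are $z^-$ and the real end, so at first order $\varphi_{z^+}=\pm(\beta_{z^+}-\beta_1)^{\omega(n_1,n_0)}$ (the $(\beta_{z^+}-\beta_{z^-})$ factor dies because $\omega(n_1,n_1)=0$, and the incoming edge $\gamma$ does not contribute since its tail is not $V$); there is no $\arg(\beta_{e^+}-\beta_\gamma)$ term.

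With the correct free variable the step you were worried about becomes automatic and needs no genericity: the complex-end equation reads $\log Z_0=\omega(n_1,n_0)\log(\pm i-\beta_1)+\log\chi_V(m_1)$, the real-end equation contributes $\omega(n_0,n_1)\log(\beta_1^2+1)$, and the crucial entry is
$$\im\,\parfrac{\log Z_0}{\beta_1}=\frac{\omega(n_0,n_1)}{\beta_1^2+1}\neq 0,$$
so the $\arg$-row has a single nonzero entry sitting in the $\beta_1$ column and can be expanded first (doing it first also repairs the bookkeeping problem in your proposed order: the rows $X_0$ and $|Z_1|$ have nonzero entries in the $\beta_1$ column, so a Laplace expansion along the $\log\chi_V$ columns before clearing that column produces extra terms). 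The degenerate locus $(\re\beta_{e^+}-\beta_{e_0})(\re\beta_{e^+}-\beta_\gamma)=(\im\beta_{e^+})^2$ you propose to exclude by genericity does not arise in the correct setup; the case analysis over coordinate normalizations that you defer at the end is exactly the point that must be resolved before the argument closes, and resolving it removes the need for any genericity at this vertex.
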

	
	\begin{proof}
	If the vertex $V$ is adjacent to two complex unbounded ends (directed by $n_0$), and to a real unbounded end (directed by $n_1$), let $\beta_1$ be the $\beta$ coordinate associated to the real end. Let $\gamma$ be the edge with $\hg=V$, thus directed by $2n_0+n_1$. The determinant takes the following form:
$$\text{Jac}\Theta=\begin{array}{lccccccc}
                                        &     &       &                          & \multicolumn{2}{c}{\chi_V}            & \alpha_\gamma                                   & \beta_1                       \\ \cline{5-8} 
                                        &     &       & \multicolumn{1}{c|}{}    & 0     & \multicolumn{1}{c|}{0}     & \multicolumn{1}{c|}{0}                  & \multicolumn{1}{c|}{0}     \\
                                        &     & \ast   & \multicolumn{1}{c|}{}    & \vdots & \multicolumn{1}{c|}{\vdots} & \multicolumn{1}{c|}{\vdots}              & \multicolumn{1}{c|}{\vdots} \\
                                        &     &       & \multicolumn{1}{c|}{}    & 0     & \multicolumn{1}{c|}{0}     & \multicolumn{1}{c|}{0}                  & \multicolumn{1}{c|}{0}     \\ \cline{2-8} 
\multicolumn{1}{l|}{\multirow{2}{*}{$N_\gamma$}} & \ast & \cdots & \multicolumn{1}{c|}{\ast} & -1    & \multicolumn{1}{c|}{0}     & \multicolumn{1}{c|}{\multirow{2}{*}{$n_\gamma$}} & \multicolumn{1}{c|}{0}     \\
\multicolumn{1}{l|}{}                   & \ast & \cdots & \multicolumn{1}{c|}{\ast} & 0     & \multicolumn{1}{c|}{-1}    & \multicolumn{1}{c|}{}                   & \multicolumn{1}{c|}{0}     \\ \cline{2-8} 
\multicolumn{1}{l|}{\multirow{2}{*}{$Z_0$}} & 0   & \cdots & \multicolumn{1}{c|}{0}   & \multicolumn{2}{c|}{m_0}             & \multicolumn{1}{c|}{0}                  & \multicolumn{1}{c|}{\beta_1}  \\
\multicolumn{1}{l|}{}                   & 0   & \cdots & \multicolumn{1}{c|}{0}     & \multicolumn{2}{c|}{0}     &  \multicolumn{1}{c|}{0}                  & \multicolumn{1}{c|}{1}     \\ \cline{2-8} 
\multicolumn{1}{l|}{X_1}                  & 0   & \cdots & \multicolumn{1}{c|}{0}   & \multicolumn{2}{c|}{m_1}             & \multicolumn{1}{c|}{0}                  & \multicolumn{1}{c|}{-2\beta_1}  \\ \cline{2-8} 
\end{array}.
$$
Indeed, we have
$$\left\{ \begin{array}{rl}
\log N_\gamma &=\cdots + \log\chi_\tg -\log\chi_V+n_\gamma\log\alpha_\gamma,\\
\log Z_0 &= \omega(n_0,n_1)\log(i-\beta_1)+\log\chi_V(m_0),\\
\log X_1 &= \omega(n_1,n_0)\log(\beta_1-i)+\omega(n_1,n_0)\log(\beta_1+i)+\log\chi_V(m_1)\\
& = \omega(n_1,n_0)\log(\beta_1^2+1)+\log\chi_V(m_1).\\
\end{array}\right.$$
Hence, we recover the partial derivatives $\parfrac{\log N_\gamma}{\log\chi_V}$, $\parfrac{\log N_\gamma}{\log\alpha_\gamma}$, $\parfrac{\log Z_0}{\log\chi_V}$ and $\parfrac{\log X_1}{\log\chi_V}$, but we also get the new partial derivatives:
$$ \boxed{ \parfrac{\log X_1}{\beta_1}=\omega(n_1,n_0)\frac{2\beta_1}{\beta_1^2+1} } \text{ and } \boxed{ \parfrac{\log Z_0}{\beta_1}= \omega(n_0,n_1)\frac{-1}{i-\beta_1} = \frac{\omega(n_0,n_1)}{\beta_1^2+1}(\beta_1+i).}$$
Notice that we dropped out the non-zero constant factor $\frac{\omega(n_0,n_1)}{\beta_1^2+1}$ in the last column. Then, one can make a development with respect to the penultimate row, and then the second resulting last rows. We get the empty determinant if $\gamma$ is an unbounded end, thus proving invertibleness. Otherwise, as before, we make a change of basis in the rows associated to $N_\gamma$ and develop with respect to the last column, thus reducing to the associated determinant.	
	\end{proof}

	\begin{lem}
	Let $V$ be a real pentavalent vertex adjacent to two real unbounded ends.
	\begin{itemize}[label=-]
	\item If $V$ is the only vertex, the Jacobian is invertible,
	\item If not, the invertibleness of the Jacobian reduces to the invertibleness of the Jacobian for the curve where $V$ is removed and the unique bounded edge adjacent to $V$ is replaced with an unbounded edge of the same direction.
	\end{itemize}
	\end{lem}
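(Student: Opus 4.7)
My plan is to follow the exact pattern established in the four preceding lemmas: isolate the block of the Jacobian corresponding to variables and equations local to $V$, compute its entries at first order, and show by row/column development that this block contributes a nonzero factor and reduces the problem to the Jacobian of the smaller curve in which $V$ is removed and $\gamma$ is promoted to an unbounded end of slope $n_\gamma$. At the pentavalent vertex $V$, the local variables are the two components of $\log\chi_V$, the scalar $\log\alpha_\gamma$ associated to the unique bounded edge $\gamma$ with $\hg=V$, and the $\beta$-coordinates of the outgoing edges that remain free after the chosen normalization of $y_V$ (determined by the types of the smallest and biggest edges in $\Gamma/\sigma$ emanating from $\pi(V)$). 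The corresponding local rows are $\log N_\gamma$, together with the rows $\log X_j$ and $\log Z_j$ for each real (resp.~complex) end incident to $V$.

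Next I would compute the partial derivatives at $t=0$ using the first-order expressions for $\phi_\gamma$ and $\varphi_j$. As in the previous lemmas, these are
$$\parfrac{\log N_\gamma}{\log\chi_V}=-I_2,\quad \parfrac{\log N_\gamma}{\log\alpha_\gamma}=n_\gamma,\quad \parfrac{\log X_j}{\log\chi_V}=m_j,\quad \parfrac{\log Z_j}{\log\chi_V}=m_j,$$
together with new derivatives of the moment rows with respect to the free $\beta$-coordinates, which are explicit rational expressions of the form $\omega(n_j,n_{\gamma'})/(\beta_{e_j}-\beta_{\gamma'})$ (as already encountered in the quadrivalent lemma for the single free real $\beta$). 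The non-flatness assumption at $V$ guarantees that the vectors $m_j=\iota_{n_j}\omega$ associated to the edges at $V$ are in general position, and in particular the pairwise $\omega(n_i,n_j)$ are nonzero for any two distinct slopes at $V$.

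I would then carry out the same block reduction. The rows $\log X_j,\log Z_j$ couple only to $\log\chi_V$ (via the $m_j$'s) and to the free $\beta$-columns (via the derivatives above); developing the determinant with respect to these rows removes the $\log\chi_V$ columns and the free $\beta$-columns at the cost of a factor equal to the determinant of a square local block in the $(\log\chi_V,\beta)$ variables. Non-flatness of $V$ together with the explicit formulas for the $\beta$-derivatives makes this local determinant nonzero. After this step, one is left with the $\log N_\gamma$ rows and the $\log\alpha_\gamma$ column, plus everything else outside $V$; choosing $\omega(n_\gamma,-)$ as one element of a basis of $M_\RR$ in the $\log N_\gamma$ rows kills all entries of the $\log\alpha_\gamma$ column except one, and developing there reduces the matrix exactly to the Jacobian of the curve with $V$ removed and $\gamma$ promoted to an unbounded end of slope $n_\gamma$. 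If $V$ is the only vertex, there is no bounded $\gamma$, the reduction terminates at the empty determinant, and the invertibility is direct.

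The main obstacle will be to verify the nonvanishing of the local block involving both $\log\chi_V$ and the free $\beta$-variables. Unlike the trivalent case, where this block collapsed to a $2\times2$ determinant of moment vectors, and the quadrivalent case, where a single row involving $2\beta_1/(\beta_1^2+1)$ had to be handled, the pentavalent vertex produces a genuine $4\times4$ interaction between the two pairs of moment rows and the free $\beta$'s. Showing that this sub-determinant is nonzero proceeds by direct computation using the rational formulas above, factoring out the denominators $(\beta_{e_j}-\beta_{\gamma'})$ (which are invertible in $\CC[[t]]^\times$ by Proposition \ref{propcalcul}(vii)) and reducing to a polynomial expression in the slopes. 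The non-flatness of $V$ guarantees that this polynomial does not vanish, completing the reduction.
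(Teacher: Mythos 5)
Your overall reduction scheme (isolate the local block at $V$, use the free pair $(m_0,m_1)$ to eliminate the $\log\chi_V$ columns, then change basis in the $\log N_\gamma$ rows so that $\omega(n_\gamma,-)$ kills the $\log\alpha_\gamma$ column and the matrix collapses onto the Jacobian of the smaller curve) is exactly the paper's strategy, and most of your partial derivatives are right. But there is a genuine gap in how you justify the non-vanishing of the block involving the free $\beta$.

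At a pentavalent fixed vertex the two adjacent ends are two \emph{pairs} of exchanged complex ends; the normalization of $y_V$ pins one pair at $\pm i$, so only one complex parameter $\beta$ survives. Writing $\log Z_1=\omega(n_1,n_0)\log(\beta^2+1)+\cdots$ and $\log Z_0=\omega(n_0,n_1)\bigl(\log(i-\beta)+\log(i-\overline{\beta})\bigr)+\cdots$, the rows of $\mathrm{Jac}\,\Theta$ corresponding to the imaginary parts of $Z_0,Z_1$ have zero entries in the $\chi_V$ and $\alpha_\gamma$ columns, so the first development must be taken along those two rows, and the factor one needs to be nonzero is the $2\times 2$ determinant built from the second rows of $\parfrac{\log Z_0}{\beta}=\omega(n_0,n_1)\bigl(\frac{1}{\beta-i}+\frac{1}{\overline{\beta}-i}\circ J\bigr)$ and $\parfrac{\log Z_1}{\beta}=\omega(n_1,n_0)\frac{2\beta}{\beta^2+1}$. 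After factoring out $\frac{2}{\beta^2+1}$ and the nonzero $\omega(n_0,n_1)$, this determinant is the determinant of the family $(i,\beta)$ over $\RR$. It therefore vanishes precisely when $\beta$ is purely imaginary --- a condition on the \emph{phase} of the configuration, entirely independent of the slopes at $V$. Your claim that the local determinant reduces ``to a polynomial expression in the slopes'' whose non-vanishing is ``guaranteed by non-flatness'' is therefore false: no hypothesis on the slopes can prevent this degeneration. The correct argument is that for a generic symmetric configuration $\mathcal{P}$ the parameter $\beta$ cannot be purely imaginary (otherwise the moments of the corresponding complex ends would be real), which is a genericity statement about the points, established separately in the last section of the paper. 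Without this input your reduction stalls at the pentavalent vertex.

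A secondary inaccuracy: you allow rows $\log X_j$ for real ends at $V$, but a pentavalent vertex in this setting carries no real unbounded end (its five edges are two pairs of exchanged complex edges plus the single bounded edge $\gamma$); the lemma's phrase ``two real unbounded ends'' is a slip in the statement, as the proof makes clear. This does not affect the logic of your reduction, but the missing genericity argument for $\beta$ does.
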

	
	\begin{proof}
	Let the adjacent unbounded ends be labeled $0$ and $1$, and let $\gamma$ be the unique edge with $\hg=V$. In the coordinate $y_V$, the ends associated to the label $0$ are $\pm i$, and the ends associated to $1$ are $\beta$ and $\overline{\beta}$. The evaluation matrix takes the following form:
	
$$\mathrm{Jac}\Theta = \begin{array}{cccccccccc}
                                                               &                     &                       &                       &                                          & \multicolumn{2}{c}{\chi_V} & \alpha                                          & \multicolumn{2}{c}{\beta}                                                                          \\ \cline{6-10} 
                                                               &                     & \ast   &                       & \multicolumn{1}{c|}{}                    & \multicolumn{2}{c|}{0}                     & \multicolumn{1}{c|}{0}                                         & \multicolumn{2}{c|}{0}                                                                                            \\
                                                               &                     &                       &                       & \multicolumn{1}{c|}{}                    & \multicolumn{2}{c|}{\vdots} & \multicolumn{1}{c|}{\vdots}                     & \multicolumn{2}{c|}{\vdots}                                                                        \\
                                                               &                     &                       &                       & \multicolumn{1}{c|}{}                    & \multicolumn{2}{c|}{0}                     & \multicolumn{1}{c|}{0}                                         & \multicolumn{2}{c|}{0}                                                                                            \\ \cline{2-10} 
\multicolumn{1}{c|}{\multirow{2}{*}{$N_\gamma$}} & \ast & \cdots & \cdots & \multicolumn{1}{c|}{\ast} & -1        & \multicolumn{1}{c|}{0}         & \multicolumn{1}{c|}{\multirow{2}{*}{$n_\gamma$}} & 0                                             & \multicolumn{1}{c|}{0}                                            \\
\multicolumn{1}{c|}{}                                          & \ast & \cdots & \cdots & \multicolumn{1}{c|}{\ast} & 0         & \multicolumn{1}{c|}{-1}        & \multicolumn{1}{c|}{}                                          & 0                                             & \multicolumn{1}{c|}{0}                                            \\ \cline{2-10} 
\multicolumn{1}{c|}{\multirow{2}{*}{$Z_0$}}                     & 0                   & \cdots & \cdots & \multicolumn{1}{c|}{0}                   & \multicolumn{2}{c|}{m_0}                  & \multicolumn{1}{c|}{0}                                         & \multicolumn{2}{c|}{\multirow{2}{*}{$\parfrac{\log Z_0}{\beta}$}} \\
\multicolumn{1}{c|}{}                                          & 0                   & \cdots & \cdots & \multicolumn{1}{c|}{0}                   & \multicolumn{2}{c|}{0}                     & \multicolumn{1}{c|}{0}                                         & \multicolumn{2}{c|}{}                                                                                             \\ \cline{2-10} 
\multicolumn{1}{c|}{\multirow{2}{*}{$Z_1$}}                     & 0                   & \cdots & \cdots & \multicolumn{1}{c|}{0}                   & \multicolumn{2}{c|}{m_1}                  & \multicolumn{1}{c|}{0}                                         & \multicolumn{2}{c|}{\multirow{2}{*}{$\parfrac{\log Z_1}{\beta}$}} \\
\multicolumn{1}{c|}{}                                          & 0                   & \cdots & \cdots & \multicolumn{1}{c|}{0}                   & \multicolumn{2}{c|}{0}                     & \multicolumn{1}{c|}{0}                                         & \multicolumn{2}{c|}{}                                                                                             \\ \cline{2-10} 
\end{array}$$

where $\parfrac{\log Z_0}{\beta}$ and $\parfrac{\log Z_1}{\beta}$ are some matrices in $\mathcal{M}_2(\RR)$. Indeed, we still have the previously computed partial derivative, but we have the new partial derivatives depending on $\beta$ to compute. We have the following computations. Since
$$\begin{array}{rl}
\log Z_1 & =\omega(n_1,n_0)\log(\beta-i)+\omega(n_1,n_0)\log(\beta+i)+\cdots \\
 & =\omega(n_1,n_0)\log(\beta^2+1) + \cdots,\\
 \end{array}$$
which is function of the complex variable $\beta$, we have
$$\boxed{ \parfrac{\log Z_1}{\beta} = \omega(n_1,n_0)\frac{2\beta}{\beta^2+1}\in\CC\subset\mathcal{M}_2(\RR) , }$$
where the scalar complex represents the multiplication by itself from $\CC$ to $\CC$:
$$r+is\in\CC\mapsto \begin{pmatrix}
r & -s \\
s & r \\
\end{pmatrix} \in\mathcal{M}_2(\RR).$$
Now,
$$\log Z_0=\omega(n_0,n_1)\log(i-\beta)+\omega(n_0,n_1)\log(i-\overline{\beta})+\cdots,$$
which depends on $\beta$ but also $\overline{\beta}$. Same as before, the derivative of the first term is $\omega(n_0,n_1)\frac{1}{\beta-i}$, the second being precomposed by the conjugation, whose matrix is $J=\begin{pmatrix}
1 & 0 \\
0 & -1 \\
\end{pmatrix},$ its derivative is
$$\omega(n_0,n_1)\frac{1}{\overline{\beta}-i}\circ J\in\mathcal{M}_2(\RR),$$
where $\circ$ is the product of matrices. The total partial derivative is
$$\parfrac{\log Z_0}{\beta} = \omega(n_0,n_1)\left(  \frac{1}{\beta-i} + \frac{1}{\overline{\beta}-i}\circ J \right) \in\mathcal{M}_2(\RR).$$
We now make our usual development in the determinant. Provided that the determinant consisting of the second rows of $\parfrac{\log Z_0}{\beta}$ and $\parfrac{\log Z_1}{\beta}$ is non-zero, we can make the development with respect the the last and second to last rows. Then, as in all previous cases, we can make a development with respect to the last two rows since $(m_0,m_1)$ is free, and either get the empty determinant, or do the change of basis in the rows corresponding to $N_\gamma$ and reduce to a smaller determinant. Thus, it only suffices to prove the invertibleness of the $2\times 2$ determinant consisting of the second row os both $2\times 2$ matrices. Identifying the second row with $\CC$ by the following map
$$r+is\in\CC\longmapsto (s\ r)\in\mathcal{M}_{1,2}(\RR),$$
the second row of $\parfrac{\log Z_1}{\beta}$ is $\frac{2\beta}{\beta^2+1}$, while the second row of $\parfrac{\log Z_0}{\beta}$ is $\frac{1}{\beta-i}-\frac{1}{\beta+i}$. Indeed,
$$\frac{1}{\overline{\beta}-i}\circ J=J\circ\frac{1}{\beta+i},$$
and multiplication by $J$ on the left just changes the sign of the second row. Thus, we get $\frac{2i}{\beta^2+1}$. Therefore, up to the multiplication by $\frac{2}{\beta^2+1}$, we have to show that the family $(\beta,i)$ is free over $\RR$, which is the case if $\beta$ is not purely imaginary. If $\beta$ was purely imaginary, the moments of the complex ends would be real, and this is excluded by genericity. See section \ref{section statement of result and proof} for more details on why $\beta$ cannot be purely imaginary.
	
	\end{proof}

The previous lemmata prove that the Jacobian is always invertible. Therefore, this finishes the proof of the correspondence theorem.

\end{proof}

\begin{figure}
\begin{center}
\begin{tabular}{cc}
\begin{tikzpicture}[line cap=round,line join=round,>=triangle 45,x=0.5cm,y=0.5cm]
\clip(-1.,-1.) rectangle (7.,8.);
\draw (3.,0.)-- (3.,4.);
\draw (3.,4.)-- (0.,7.);
\draw (3.,4.)-- (6.,7.);
\begin{scriptsize}
\draw[color=black] (4.5,3.744529205178832) node {$V:\ \chi_V$};
\draw[color=black] (4.5,2.0531972812887695) node {$\gamma:\ \alpha_\gamma$};
\draw[color=black] (-0.24508937151643118,7.280950500585326) node {$0$};
\draw[color=black] (1.6838676987052943,5.99497912043751) node {$n_0$};
\draw[color=black] (6.4,7.6) node {$1$};
\draw[color=black] (5.1,5.170280083168802) node {$n_1$};
\end{scriptsize}
\end{tikzpicture}
&
\begin{tikzpicture}[line cap=round,line join=round,>=triangle 45,x=0.5cm,y=0.5cm]
\clip(-1.,-1.) rectangle (7.,8.);
\draw (3.,0.)-- (3.,4.);
\draw (3.,4.)-- (0.,7.);
\draw (3.,3.7)-- (-0.15,6.85);
\draw (3.,4.)-- (6.,7.);
\begin{scriptsize}
\draw[color=black] (4.5,3.744529205178832) node {$V:\ \chi_V$};
\draw[color=black] (4.5,2.0531972812887695) node {$\gamma:\ \alpha_\gamma$};
\draw[color=black] (-0.24508937151643118,7.280950500585326) node {$0$};
\draw[color=black] (1.6838676987052943,5.99497912043751) node {$n_0$};
\draw[color=black] (6.4,7.6) node {$1$};
\draw[color=black] (5.4,5.170280083168802) node {$n_1:\ \beta_1$};
\end{scriptsize}
\end{tikzpicture}
\\
$(a)$ & $(b)$ \\
\end{tabular}
\label{induction vertex}
\caption{Vertices adjacent to two real ends $(a)$ or a real end and two complex ends $(b)$.}
\end{center}
\end{figure}
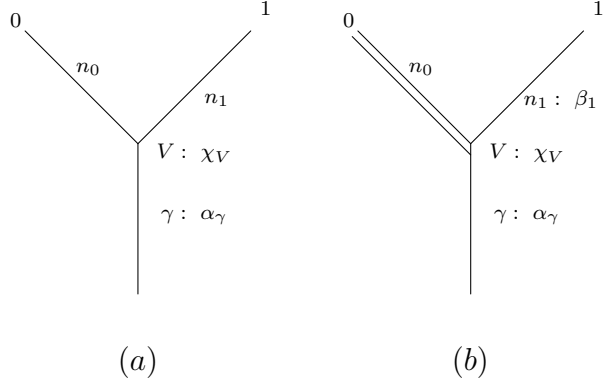

	\section{Proof of Theorem \ref{theorem paper} }
	\label{section statement of result and proof}
	
	Now that we have a correspondence theorem that allows one to lift every first order solution (provided the coordinates $\beta$ are not purely imaginary), one only needs to count the first order solutions for any rational tropical curve solution to the tropical enumerative problem. To do this, we first solve the enumerative problems associated to the vertices of a real rational tropical curve. Then, we use these local resolutions to solve inductively the system $\Theta=(1,\zeta^\Gamma)$ at first order. There are three local enumerative problems: one for trivalent vertices, one for quadrivalent vertices, and one for pentavalent vertices. Each time, we do the refined count of real oriented curves having fixed moments, by previously doing the same count but over the curves with fixed primitive moments, which explains that the computation is rather long.
	
	\subsection{Local enumerative problems}

	\subsubsection{Line problem: trivalent vertex}
	
We consider curves of degree
$$\Delta(n_1,n_2,n_3)=\{ n_1,n_2,n_3 \},$$
where $n_1+n_2+n_3=0$, and to which is assigned to a triangle of lattice area $m_\Delta=|\det(n_1,n_2)|$. A curve of this degree is the image of a line under a monomial map followed by a multiplicative translation. It has a parametrization of the form
$$\varphi:t\longmapsto \chi \times t^{n_1}(t-1)^{n_2}.$$
Let $\CC E_j$ be the toric divisor associated to the vector $n_j$. Let $l_j$ be the integral length of $n_j$. We here recall the resolution of two classic enumerative problems:
\begin{prob}
Let take one point on $\CC E_1$ and one on $\CC E_2$.
\begin{itemize}[label=-]
\item How many complex curves of degree $\Delta(n_1,n_2,n_3)$ pass through  the two fixed points on the toric divisors $\CC E_1$ and $\CC E_3$ ?
\item What if the points are real and the curve asked to be real ?
\end{itemize}
\end{prob}

The first proposition is proved in \cite{mikhalkin2005enumerative}.

\begin{prop}(Mikhalkin \cite{mikhalkin2005enumerative})
Let $p_1\in\CC E_1$ an $p_2\in\CC E_2$ be two complex points. There are $\frac{m_\Delta}{l_1l_2}$ curves of degree $\Delta(n_1,n_2,n_3)$ passing through $p_1$ and $p_2$.
\end{prop}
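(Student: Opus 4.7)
The plan is to reduce the question to counting preimages of a monomial map, by using the explicit parametrization $\varphi(t)=\chi\cdot t^{n_1}(t-1)^{n_2}$. First I would observe that the reparametrization group $PGL_2(\CC)$ acts freely on the data, and that placing the three marked points (corresponding to the three directions $n_1,n_2,n_3$) at $0,1,\infty$ rigidifies the parametrization. Under this normalization, the parametrized curve is uniquely determined by $\chi\in N\otimes\CC^*$, which is a two (complex) dimensional parameter space, matching the two codimension-one conditions imposed by fixing a point on each of $\CC E_1$ and $\CC E_2$.

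Next I would turn the two point-conditions into explicit equations in $\chi$. The primitive coordinate on the divisor $\CC E_j$ is the monomial $m_j=\iota_{n_j/l_j}\omega\in M$, for which $\langle m_j,n_j\rangle=0$, so $\varphi^*\chi^{m_j}$ extends across the puncture. Direct evaluation gives
$$\varphi^*\chi^{m_1}\big|_{t=0}=\chi^{m_1}\cdot(-1)^{\langle m_1,n_2\rangle},\qquad \varphi^*\chi^{m_2}\big|_{t=1}=\chi^{m_2}.$$
Fixing the primitive moments of $p_1$ and $p_2$ therefore amounts to prescribing $\chi^{m_1}$ and $\chi^{m_2}$ in $\CC^*$, i.e.\ to counting preimages of a specified point under the monomial map
$$\alpha : N\otimes\CC^*\longrightarrow (\CC^*)^2,\qquad \chi\longmapsto\bigl(\chi^{m_1},\chi^{m_2}\bigr).$$

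Finally I would compute the degree of $\alpha$. This degree is $|\det(m_1,m_2)|$, which I would evaluate via a one-line wedge-product computation: since $\iota_u\omega\wedge\iota_v\omega=\omega(u,v)\cdot\omega^*$ in $\Lambda^2 M$ (with $\omega^*$ the dual volume form), one has
$$m_1\wedge m_2 = \frac{\omega(n_1,n_2)}{l_1 l_2}\,\omega^* = \pm\frac{m_\Delta}{l_1 l_2}\,\omega^*.$$
Hence $\alpha$ is a covering of degree $m_\Delta/(l_1 l_2)$, and for generic $(p_1,p_2)$ it has exactly $m_\Delta/(l_1 l_2)$ preimages, giving the claimed count. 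The only mild subtlety is bookkeeping the $(-1)^{\langle m_1,n_2\rangle}$ factor at $t=0$, which is a harmless constant and merely shifts the target of $\alpha$; it does not affect the generic cardinality of a fiber.
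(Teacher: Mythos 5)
Your proof is correct, and it is essentially the argument the paper itself uses for the neighbouring statements (the paper defers this particular proposition to Mikhalkin, but its proof of Proposition \ref{prop trivalent complex vertex} proceeds by exactly the same reduction: rigidify the parametrization, translate the point conditions into $\chi^{m_1},\chi^{m_2}$ being prescribed, and count solutions as the index of the sublattice $\ZZ m_1+\ZZ m_2\subset M$, i.e.\ the degree $|\det(m_1,m_2)|=\frac{m_\Delta}{l_1l_2}$ of the monomial map). The bookkeeping of the constant $(-1)^{\langle m_1,n_2\rangle}$ is indeed harmless, as you note.
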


We now turn our focus to a slight variation on the same problem: we look for curves having a fixed moment. The difference is that the moment is not measured with the primitive vector $\frac{n_j}{l_j}$ but with the non-primitive vector $n_j$. Geometrically, this means that we count curve passing by several pairs of points at the same time.

\begin{prop}
\label{prop trivalent complex vertex}
Let $\mu_1,\mu_2\in\CC^*$. There are $m_\Delta$ curves of degree $\Delta(n_1,n_2,n_3)$ having moments $\mu_1$ and $\mu_2$. Moreover, the curves are equally distributed among the possible intersection points with $\CC E_3$.
\end{prop}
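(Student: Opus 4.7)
The plan is to parametrize all curves of degree $\Delta(n_1,n_2,n_3)$ by the torus $N\otimes\CC^*$, translate the moment conditions into monomial equations on that torus, and then reduce the count to a lattice index computation.

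First, up to reparametrization of $\CC P^1$, any rational curve of the given degree admits a parametrization of the form
$$\varphi_\chi : t\longmapsto \chi\cdot t^{n_1}(t-1)^{n_2},$$
with $\chi\in N\otimes\CC^*$ uniquely determined by the curve. Evaluating at $t=0$, $t=1$ and (after the substitution $s=1/t$) at $s=0$, a direct computation expresses the Menelaus moments in the form $\mu_1=\pm\chi^{\iota_{n_1}\omega}$, $\mu_2=\chi^{\iota_{n_2}\omega}$, and the primitive moment on $\CC E_3$ as $\mu_{\mathrm{prim},3}=\chi^{\iota_{n_3/l_3}\omega}$. Counting curves of degree $\Delta(n_1,n_2,n_3)$ with prescribed $(\mu_1,\mu_2)$ therefore amounts to counting preimages of the morphism of algebraic tori
$$\Phi : N\otimes\CC^*\longrightarrow (\CC^*)^2,\qquad \chi\longmapsto\bigl(\chi^{\iota_{n_1}\omega},\chi^{\iota_{n_2}\omega}\bigr).$$
The morphism $\Phi$ is dual, at the level of character lattices, to the map $\ZZ^2\to M$ whose image is $\ZZ\iota_{n_1}\omega+\ZZ\iota_{n_2}\omega$, and I computed earlier in the paper that $\iota_{n_1}\omega\wedge\iota_{n_2}\omega=\omega(n_1,n_2)\,\omega$, so that this lattice map has determinant $\pm m_\Delta$. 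Consequently $\Phi$ is surjective with finite kernel $K=\mathrm{Hom}\bigl(M/(\ZZ\iota_{n_1}\omega+\ZZ\iota_{n_2}\omega),\CC^*\bigr)$ of order $m_\Delta$, which yields the announced count of $m_\Delta$ curves.

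For the equidistribution claim I would use the free action of $K$ on the fiber $\Phi^{-1}(\mu_1,\mu_2)$ by multiplication $\chi\mapsto\chi\eta$. Under this action the primitive moment on $\CC E_3$ transforms by $\mu_{\mathrm{prim},3}\mapsto\mu_{\mathrm{prim},3}\cdot\eta^{\iota_{n_3/l_3}\omega}$, and since $l_3\iota_{n_3/l_3}\omega=\iota_{n_3}\omega=-\iota_{n_1}\omega-\iota_{n_2}\omega$ lies in the sublattice, the character $\eta\mapsto\eta^{\iota_{n_3/l_3}\omega}$ takes values in $\mu_{l_3}$. Equidistribution will follow as soon as this character is surjective, equivalently as soon as $\iota_{n_3/l_3}\omega$ has order exactly $l_3$ in $M/(\ZZ\iota_{n_1}\omega+\ZZ\iota_{n_2}\omega)$.

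The main (very mild) obstacle is proving this order-$l_3$ statement, which I would do via the isomorphism $\iota_{\bullet}\omega:N\xrightarrow{\sim}M$: the equation $k\iota_{n_3/l_3}\omega=u\iota_{n_1}\omega+v\iota_{n_2}\omega$ in $M$ becomes $k(n_3/l_3)=un_1+vn_2$ in $N$. Using $n_1+n_2=-l_3(n_3/l_3)$ to eliminate $n_2$, one is left with $(u-v)l_1 n_1'=(k+vl_3)n_3'$, where $n_1'$ and $n_3'$ are the primitive vectors along $n_1$ and $n_3$; these are linearly independent because $m_\Delta\neq 0$, so $u=v$ and $k\in l_3\ZZ$, forcing $k=l_3$. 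Thus $K$ acts transitively on the $l_3$ possible primitive moments on $\CC E_3$, and since the action is free, each of the $l_3$ admissible intersection points with $\CC E_3$ supports exactly $m_\Delta/l_3$ of the counted curves.
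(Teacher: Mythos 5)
Your proof is correct and follows essentially the same route as the paper: both reduce the count to extending $\chi$ from the index-$m_\Delta$ sublattice $\ZZ\,\iota_{n_1}\omega+\ZZ\,\iota_{n_2}\omega\subset M$, and both derive the equidistribution over the $l_3$ intersection points with $\CC E_3$ from the fact that $\iota_{n_3/l_3}\omega$ has order exactly $l_3$ in the quotient (the paper phrases this via a Smith basis with $e_1=m_3/l_3$, you via the free action of the kernel group $K$ and surjectivity of the evaluation character onto $\mu_{l_3}$). Your explicit verification of the order-$l_3$ claim is a welcome addition, as the paper only asserts the corresponding basis choice.
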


\begin{proof}
The problem amounts to solve
$$\left\{\begin{array}{l}
\chi(m_1)=\mu_1 \\
\chi(m_2)=\mu_2 \\
\end{array}\right. ,$$
Taking the logarithm solves uniquely for the modulus. Taking the argument leads to a system for $\arg\chi:N\rightarrow \RR/2\pi\ZZ$, knowing it on the lattice $\ZZ m_1+\ZZ m_2$, which is a sublattice of index $m_\Delta$. This leads to $m_\Delta=|\det(n_1,n_2)|$ solutions: let $(e_1,e_2)$ be a basis of $N$ such that $(d_1e_1,d_2e_2)$ is a basis of $m_1\ZZ+m_2\ZZ$, with $d_1d_2=m_\Delta$. We have $d_1$ possible values for $\chi(e_1)$ and $d_2$ for $\chi(e_2)$.\\

 The value of $m_3=-(m_1+m_2)$ must satisfy $\chi(m_3)=\frac{1}{\mu_1\mu_2}$. By choosing $e_1=\frac{m_3}{l_3}$, it thus can take $l_3$ values. Thus there are each time $\frac{m_\Delta}{l_3}$ for each value of $\chi\left(\frac{m_3}{l_3}\right)$.
\end{proof}

Finally, when the points are real and the curve is real, we have the following proposition.

\begin{prop}
\label{prop trivalent real vertex}
Let $p_1\in\RR E_1$ and $p_2\in\RR E_2$ be two points with respective coordinates $\mu_1$ and $\mu_2$. There are four real curves passing through $\pm p_1$ an $\pm p_2$. This leads to $8$ oriented real curves, and their refined count is
$$4(q^{m_\Delta/2}-q^{-m_\Delta/2})=4\minus{\frac{m_\Delta}{2}}.$$
\end{prop}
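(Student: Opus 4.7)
The plan is to parametrize the real curves explicitly and then compute quantum indices via the monomial-map trick of Lemma \ref{lemma monomial behavior}. Writing a curve of degree $\Delta(n_1,n_2,n_3)$ as
$$\varphi(t)=\chi\cdot t^{n_1}(t-1)^{n_2},\qquad \chi\in\mathrm{Hom}(M,\CC^*),$$
reality means $\chi\in\mathrm{Hom}(M,\RR^*)$. The condition that $\varphi$ passes through $\pm p_1$ and $\pm p_2$ translates to $\chi(m_i)^2=\mu_i^2$, equivalently $|\chi(m_i)|=|\mu_i|$ with no constraint on the signs. Taking logarithms determines the homomorphism $\log|\chi|:M\to\RR$ uniquely, since it is prescribed on the finite-index sublattice $m_1\ZZ+m_2\ZZ$ of index $m_\Delta$ and $\RR$ is torsion-free. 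The sign character $\mathrm{sgn}\,\chi$ is then an arbitrary element of $\mathrm{Hom}(M,\{\pm 1\})\simeq(\ZZ/2\ZZ)^2$, yielding exactly $4$ real curves.

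Each smooth real rational curve is of type I, so every real curve contributes $2$ oriented curves, for a total of $8$. To compute the log-area, I factor the parametrization through the monomial map $\alpha:(\CC^*)^2\to\mathrm{Hom}(M,\CC^*)$ associated to the linear map $A:\ZZ^2\to N$, $(1,0)\mapsto n_1$, $(0,1)\mapsto n_2$, of determinant $\det A=m_\Delta$. The pulled-back curve is a standard line (up to a multiplicative translation), whose log-area over $\HH$ equals $\pi^2/2$ by the line lemma. Lemma \ref{lemma monomial behavior} then gives $\mathcal{A}(\HH,\varphi)=\pm m_\Delta\pi^2/2$, where the sign depends on the chosen orientation. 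Since every intersection with the toric boundary is real, the correction term in Proposition \ref{prop existence quantum index} is zero, so the quantum indices are exactly $k=\pm m_\Delta/2$.

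It remains to compute the signs $\sigma(S,\varphi)=(-1)^{(m-\mathrm{Rot}_\mathrm{Log})/2}$. The key observation is that the four real curves differ by sign characters $\epsilon\in\mathrm{Hom}(M,\{\pm 1\})$, i.e.\ $\varphi\mapsto\epsilon\cdot\varphi$ on the real locus, which preserves $|\varphi|$ and thus the amoeba, the logarithmic Gauss map, and $\mathrm{Rot}_\mathrm{Log}$. Hence for a fixed orientation all four real curves share a common sign $\sigma_0$. Reversing the orientation negates $\mathrm{Rot}_\mathrm{Log}$; since $m=3$ is odd (so $\mathrm{Rot}_\mathrm{Log}$ is odd), this changes the parity of $(m-\mathrm{Rot}_\mathrm{Log})/2$ and therefore flips $\sigma$ while simultaneously sending $k$ to $-k$.

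Summing the $8$ contributions gives
$$R=4\bigl(\sigma_0\,q^{m_\Delta/2}-\sigma_0\,q^{-m_\Delta/2}\bigr)=4\sigma_0\bigl(q^{m_\Delta/2}-q^{-m_\Delta/2}\bigr).$$
The main obstacle is checking that $\sigma_0=+1$. This reduces, via the monomial-map reduction, to the case of a standard real line $t\mapsto(t,1-t)$ through two real points, for which a direct computation of the logarithmic Gauss degree on $\RR P^1$ (or a comparison with the coamoeba picture of Figure \ref{coamoeba line}) and the convention fixed in Section \ref{classical problem} settle the sign. This yields the announced refined count $4(q^{m_\Delta/2}-q^{-m_\Delta/2})$.
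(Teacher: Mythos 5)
Your proof is correct and follows essentially the same route as the paper: solve for $\chi$ (the modulus is determined uniquely, the sign character in $\mathrm{Hom}(M,\{\pm 1\})$ is free, giving four real curves and eight orientations), then compute the quantum index and logarithmic rotation number by pulling back a line along the monomial map of determinant $\pm m_\Delta$ via Lemma \ref{lemma monomial behavior}. Your sign bookkeeping --- observing that the four solutions differ by sign characters preserving the amoeba, and reducing everything to the single normalization $\sigma_0=+1$ checked on the standard line --- is in fact more explicit than the paper's one-line appeal to that lemma.
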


\begin{proof}
We solve the system
$$\left\{\begin{array}{l}
\chi\left(\frac{m_1}{l_1}\right)=\pm\mu_1 \\
\chi\left(\frac{m_2}{l_2}\right)=\pm\mu_2 \\
\end{array}\right. ,$$
for $\chi:N\rightarrow\RR^*$. We solve uniquely for the absolute value, and any choice of sign leads to a solution. Therefore, there are $4$ solutions, leading to $8$ oriented real curves when accounting for both orientations of each of them. their logarithmic rotation number and logarithmic area are computed with Lemma \ref{lemma monomial behavior}.
\end{proof}

	\subsubsection{Parabola problem: quadrivalent vertex}
	
\paragraph{Parametrization and sign}

	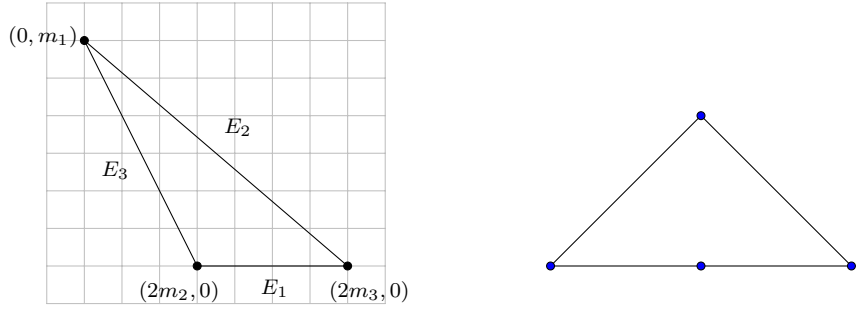
\begin{figure}
\begin{center}

\begin{tabular}{cc}
\definecolor{qqqqff}{rgb}{0.,0.,1.}
\definecolor{xdxdff}{rgb}{0.49019607843137253,0.49019607843137253,1.}
\definecolor{cqcqcq}{rgb}{0.7529411764705882,0.7529411764705882,0.7529411764705882}
\begin{tikzpicture}[line cap=round,line join=round,>=triangle 45,x=0.5cm,y=0.5cm]
\draw [color=cqcqcq,, xstep=0.5cm,ystep=0.5cm] (-1.,-1.) grid (8.,7.);
\clip(-2.,-2.) rectangle (9.,8.);
\draw (0.,6.)-- (3.,0.);
\draw (3.,0.)-- (7.,0.);
\draw (7.,0.)-- (0.,6.);
\begin{scriptsize}
\draw [fill=black] (0.,6.) circle (1.5pt);
\draw[color=black] (-1.1,6.1) node {$(0,m_1)$};
\draw [fill=black] (3.,0.) circle (1.5pt);
\draw[color=black] (2.52,-0.68) node {$(2m_2,0)$};
\draw[color=black] (0.82,2.54) node {$E_3$};
\draw [fill=black] (7.,0.) circle (1.5pt);
\draw[color=black] (7.58,-0.68) node {$(2m_3,0)$};
\draw[color=black] (5.06,-0.6) node {$E_1$};
\draw[color=black] (4.08,3.68) node {$E_2$};
\end{scriptsize}
\end{tikzpicture}

&

\definecolor{qqqqff}{rgb}{0.,0.,1.}
\definecolor{xdxdff}{rgb}{0.49019607843137253,0.49019607843137253,1.}
\begin{tikzpicture}[line cap=round,line join=round,>=triangle 45,x=2.0cm,y=2.0cm]
\clip(8.5,-0.5) rectangle (11.5,1.5);
\draw (9.,0.)-- (10.,1.);
\draw (10.,1.)-- (11.,0.);
\draw (11.,0.)-- (10.,0.);
\draw (10.,0.)-- (9.,0.);
\begin{scriptsize}
\draw [fill=qqqqff] (9.,0.) circle (1.5pt);
\draw [fill=qqqqff] (10.,1.) circle (1.5pt);
\draw [fill=qqqqff] (11.,0.) circle (1.5pt);
\draw [fill=qqqqff] (10.,0.) circle (1.5pt);
\end{scriptsize}
\end{tikzpicture}
\\
\end{tabular}

\caption{\label{figure polygon parabola} The Polygon $P_\Delta^\mathrm{par}(m_1,m_2,m_3)$ and the parabola triangle used for its parametrization}
\end{center}
\end{figure}
	
	We consider a real oriented curve of degree
$$\dpar=\{ (0,m_3-m_2)^2,(m_1,2m_2),(-m_1,-2m_3)\},$$
which is associated to a triangle $P_\Delta^\mathrm{par}(m_1,m_2,m_3)$ of lattice area $m_\Delta$ depicted on Figure \ref{figure polygon parabola}. The sides of the triangle are denoted $E_1$, $E_2$ and $E_3$. The associated toric divisors in the associated toric surface $\CC\Delta(m_1,m_2,m_3)$ are denoted by $\CC E_j$. We denote by $l_j$ the integral length of the corresponding vectors in $\dpar$. Thus, we have $l_1=m_3-m_2$ and $l_3=\mathrm{gcd}(m_1,2m_2)$. A curve having such a degree has a parametrization of the form
$$t\longmapsto\left( a(t-c)^{m_1},b(t-c)^{2m_2}(t^2+1)^{m_3-m_2}\right),$$
where $a,b\in\RR^*$ and $c\in\RR$ are real parameters. The coordinate of parametrization is chosen such that the intersection points with $\CC E_1$ have coordinate $\pm i$ and the intersection point with $\CC E_2$ has coordinate $\infty$. There are two such coordinates. The choice of $i$ determines an orientation of the curve and fixes the coordinate. Such a curve is the image of a curve of degree $\Delta^\mathrm{par}=\{(1,1),(-1,1),(0,-1)^2\}$ having parametrization
$$t\longmapsto\left( t-c ,\frac{t^2+1}{t-c}\right),$$
by a monomial map followed by a multiplicative translation. The matrix of the monomial map between the lattices of co-characters and characters are respectively
$$A=\begin{pmatrix}
m_1 & 0 \\
m_2+m_3 & m_3-m_2 \\
\end{pmatrix}:N_0\rightarrow N \text{ and }A^T=\begin{pmatrix}
m_1 & m_2+m_3 \\
0 & m_3-m_2 \\
\end{pmatrix}:M\rightarrow M_0.$$

\begin{prop}
The curves having parametrization
$$t\longmapsto\left( a(t-c)^{m_1},b(t-c)^{2m_2}(t^2+1)^{m_3-m_2}\right),$$
have logarithmic rotation index $0$ and log-area
$$\pi\frac{m_\Delta}{2}(2\mathrm{arccot}(-c)-\pi).$$
\end{prop}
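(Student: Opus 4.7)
My approach factors the given curve through the standard parabola of the previous subsection via a monomial map. Concretely, with $\psi(t) = (t-c, (t^2+1)/(t-c))$, one checks directly that $\varphi = \tau \circ \alpha \circ \psi$, where $\alpha$ is the monomial map of matrix $A = \begin{pmatrix} m_1 & 0 \\ m_2+m_3 & m_3-m_2 \end{pmatrix}$ on co-characters (already written out just above the statement) and $\tau(z,w) = (az, bw)$ is a multiplicative translation. Since multiplicative translations affect neither the log-area nor the logarithmic Gauss map, everything reduces to analyzing $\alpha \circ \psi$, with $|\det A| = m_1(m_3 - m_2) = m_\Delta$.

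For the log-area I invoke Lemma \ref{lemma monomial behavior} together with the standard parabola's log-area computed in the previous lemma:
$$\mathcal{A}(\HH, \varphi) = \det(A) \cdot \mathcal{A}(\HH, \psi) = m_\Delta \cdot 2\pi \arctan c.$$
To bring this into the stated $\mathrm{arccot}$-form, I use the elementary identity $2\arctan c = 2\mathrm{arccot}(-c) - \pi$, which follows from $\arctan c + \mathrm{arccot}(c) = \pi/2$ combined with $\mathrm{arccot}(-c) = \pi - \mathrm{arccot}(c)$.

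For the logarithmic rotation index, I note that $\alpha$ acts linearly on tangent vectors by $A$, so it induces on $\PP^1(N_\RR) \cong \RR P^1$ a diffeomorphism of degree $\mathrm{sgn}(\det A) = +1$. Consequently the logarithmic Gauss map of $\varphi$ factors through that of $\psi$ via this diffeomorphism, and $\mathrm{Rot}_\mathrm{Log}(\varphi) = \mathrm{Rot}_\mathrm{Log}(\psi)$. It then suffices to verify that the standard parabola has trivial rotation index: a direct computation gives that the tangent direction to $\mathrm{Log}\circ\psi$ at $t \in \RR$ is proportional to $(t^2+1, t^2 - 2ct - 1)$, whose first coordinate is positive on all of $\RR$, so the image of the Gauss map stays in a contractible affine chart of $\PP^1(N_\RR)$; in particular it has degree zero.

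The main obstacle I expect is the rotation-index verification: one must ensure that the tangent direction extends continuously across the removable singularities of $\mathrm{Log} \circ \psi$ at $t = c$ and $t = \infty$, and show that the two real critical points of $s(t) = (t^2-2ct-1)/(t^2+1)$ do not cause the trace in $\PP^1(\RR)$ to wrap around the point at infinity (guaranteed by $s(\pm\infty) = 1$). The log-area step is comparatively straightforward once the monomial factorization is set up.
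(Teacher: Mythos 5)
Your factorization through the standard parabola via the monomial map $A$ and a multiplicative translation is exactly the paper's argument, and your rotation-index verification is actually \emph{more} detailed than the paper's, which simply asserts that a parabola has logarithmic rotation index $0$. Your computation that the logarithmic Gauss map of $\psi$ is proportional to $(t^2+1,\,t^2-2ct-1)$, whose image in $\PP^1(N_\RR)$ avoids the vertical direction and is therefore null-homotopic, correctly closes that assertion, and the reduction $\mathrm{Rot}_\mathrm{Log}(\varphi)=\mathrm{sgn}(\det A)\,\mathrm{Rot}_\mathrm{Log}(\psi)$ is the same step the paper uses.

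There is, however, a normalization error that makes your displayed log-area twice the stated value. The paper's $m_\Delta$ is the \emph{lattice} area of the triangle with vertices $(0,m_1)$, $(2m_2,0)$, $(2m_3,0)$; equivalently $m_\Delta=h_1\cdot 2l_1$ with $h_1=m_1$ and $l_1=m_3-m_2$, as spelled out in the proof of Proposition \ref{prop parabola problem}. Hence $m_\Delta=2m_1(m_3-m_2)$ and $\det A=m_1(m_3-m_2)=\frac{m_\Delta}{2}$, not $m_\Delta$. With the correct value, Lemma \ref{lemma monomial behavior} gives
$$\mathcal{A}(\HH,\varphi)=\frac{m_\Delta}{2}\cdot 2\pi\arctan c,$$
which your identity $2\arctan c=2\,\mathrm{arccot}(-c)-\pi$ converts into the claimed $\pi\frac{m_\Delta}{2}(2\,\mathrm{arccot}(-c)-\pi)$. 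As written, your formula $m_\Delta\cdot 2\pi\arctan c$ contradicts the proposition by a factor of $2$; the fix is purely the identification of $\det A$ with $\frac{m_\Delta}{2}$.
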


\begin{proof}
The log-area is computed using Lemma \ref{lemma monomial behavior} and the computation of the log-area of a parabola. As the monomial map acts as the linear map $A$ on the logarithmic side, the logarithmic rotation number of the image curve is equal to the one of the curve in the domain multiplied by the sign of the determinant of the monomial map. But as a parabola has logarithmic rotation index $0$, it stays $0$.
\end{proof}

\paragraph{Enumerative problems}	

We consider several similar enumerative problems dealing with parabolas. First, we look for real oriented curves of degree $\dpar$ having fixed intersection points with the toric boundary of $\CC\dpar$. It means their primitive moments are fixed. (see Definition \ref{definition primitive moment}) Let $\mu_2\in\RR^*$ be the coordinate of a point on $\CC E_3$, and $\mu_1 e^{\pm i\pi\theta}\in\CC^*$, with $\mu_1>0$ and $0<\theta< 1$ be the coordinate of a pair of complex conjugated points on $\CC E_1$. The coordinate on the toric divisor $\CC E_1$ is the monomial $z$, and the coordinate on $\CC E_3$ is the monomial $z^{2m_2/l_3}w^{-m_1/l_3}$. These are in fact primitive lattice vectors directing the side of the triangle $P_\Delta^\mathrm{par}(m_1,m_2,m_3)$ from Figure \ref{figure polygon parabola}.

\begin{prob}
\label{problem parabola prim}
How many oriented real rational curves of degree $\dpar$ pass through the pair of conjugated points on $\CC E_1$ and one of the opposite real points on $\CC E_3$ ? Their count is refined by their log-area and counted with sign.
\end{prob}

The enumerative problem amounts to solving the following system:
$$\left\{ \begin{array}{l}
a(i-c)^{m_1}= \mu_1 e^{\pm i\pi\theta} \\
a^{\frac{2m_2}{l_3}}b^{-\frac{m_1}{l_3}}(c^2+1)^{-\frac{m_1}{l_3}(m_3-m_2)}=\pm\mu_2 .\\
\end{array}\right.$$
Recall $l_1=m_3-m_2$ is the integral length of both vectors associated to the side $E_1$. The height relative to the corresponding side is $m_1$, now rather denoted by $h_1$, so that $h_1\times 2l_1=m_\Delta$, which is the multiplicity of the dual vertex. We have two systems, according to whether $i$ is sent on $\mu_1 e^{i\pi\theta}$ or its conjugate $\mu_1 e^{-i\pi\theta}$. Each system solves itself as follows: first take the argument mod $\pi$ of the first equation. This determines the possible values of $c$. Then, the first equation solves for $a$, and the second for $b$. Doing so, we get the following proposition.

\begin{prop}
\label{prop parabola problem}
For $0<\theta<1$, the refined count of real oriented curves solution to the enumerative problem \ref{problem parabola prim} is:
$$2(q^{l_1(2\theta-1)}+q^{-l_1(2\theta-1)})\frac{q^{\frac{m_\Delta}{2}} - q^{-\frac{m_\Delta}{2}} }{q^{l_1}-q^{-l_1}} = 2\plus{l_1(2\theta-1)} \frac{ \minus{\frac{m_\Delta}{2} } }{ \minus{l_1} } .$$
\end{prop}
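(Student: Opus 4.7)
The plan is to exploit the explicit parametrization
$$\varphi(t) \;=\; \bigl(a(t-c)^{m_1},\, b(t-c)^{2m_2}(t^2+1)^{l_1}\bigr),\qquad a,b\in\RR^*,\ c\in\RR,$$
of an oriented real curve of degree $\dpar$, the coordinate $t$ on $\CC P^1$ being chosen so that the intersection with $\CC E_2$ sits at $t=\infty$ and the two conjugate intersections with $\CC E_1$ sit at $\pm i$; the orientation is then encoded in which of the two conjugate points is the image of $+i$.

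I would first solve the primitive moment condition at $\CC E_1$. The equation $a(i-c)^{m_1}=\varepsilon\mu_1 e^{i\pi\theta}$, with $\varepsilon\in\{\pm 1\}$ labelling the orientation, translates on arguments modulo $\pi$ (since $a\in\RR^*$ has either sign) into $m_1\arg(i-c)\equiv\varepsilon\pi\theta\pmod{\pi}$. Because $\arg(i-c)\in(0,\pi)$, this yields exactly $m_1$ real values of $c$, one for each $k\in\{0,\ldots,m_1-1\}$ with $\arg(i-c)=\pi(\varepsilon\theta+k)/m_1$, and both $|a|$ and $\mathrm{sgn}(a)$ are then forced. I would then solve the moment condition at $\CC E_3$, which reduces to an equation of the form $b^{m_1/l_3}=\pm L(a,c)$ with $L$ an explicit real function; a short case analysis on the parity of $m_1/l_3$ yields in every case exactly two real values of $b$, independently of $(\varepsilon,k)$. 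In total this produces $4m_1$ oriented real curves solving the enumerative problem.

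The refined count by log-area will then follow from the preceding proposition computing the log-area and rotation index of such parametrizations. Indeed the logarithmic rotation index is $0$ and the log-area equals $\pi\frac{m_\Delta}{2}(2\arg(i-c)-\pi)=\pi^2 l_1(2(\varepsilon\theta+k)-m_1)$, while the sign is $\sigma=(-1)^{(m-\mathrm{Rot}_\mathrm{Log})/2}=(-1)^{(4-0)/2}=+1$ since $m=r_1+2s_1+1=4$; the refined count is thus simply $\sum q^{\mathcal{A}/\pi^2}$. Factoring out the $b$-contribution $2$ and the orientation-dependent shift $q^{l_1(2\varepsilon\theta-m_1)}$, the $k$-sum reduces to the geometric progression
$$\sum_{k=0}^{m_1-1} q^{2l_1 k} \;=\; q^{l_1(m_1-1)}\,\frac{\minus{m_\Delta/2}}{\minus{l_1}},$$
and summing over $\varepsilon\in\{\pm 1\}$ assembles the factor $\plus{l_1(2\theta-1)}$, yielding the claimed identity. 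The one careful step will be the accounting of orientations: the reparametrization $t\mapsto -t$ acts on triples as $(a,b,c)\mapsto((-1)^{m_1}a,b,-c)$ and precisely matches the orient-$(+)$ solution at index $k$ with the orient-$(-)$ solution at index $m_1-k$, so that the $2m_1$ underlying unoriented real curves are enumerated once with each of their two orientations without duplication.
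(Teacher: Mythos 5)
Your proposal follows essentially the same route as the paper's proof: the same normalized parametrization, solving for $c$ by taking the argument of the $\CC E_1$-equation modulo $\pi$, then determining $a$ and $b$ (with the parity discussion on $m_1/l_3$ supplying the overall factor $2$ once both signs of $\mu_2$ are allowed), and finally summing a geometric progression of log-areas computed via the monomial-map lemma. Two small points need tightening. First, the orientation label should sit in the exponent, $\mu_1 e^{i\varepsilon\pi\theta}$, not as a multiplicative sign $\varepsilon\mu_1 e^{i\pi\theta}$: a real prefactor is invisible modulo $\pi$, so only the exponent form yields your congruence $m_1\arg(i-c)\equiv\varepsilon\pi\theta\ (\mathrm{mod}\ \pi)$. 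Second, and more substantively, for $\varepsilon=-1$ the requirement $\arg(i-c)=\pi(-\theta+k)/m_1\in(0,\pi)$ forces $k\in\{1,\dots,m_1\}$ rather than $\{0,\dots,m_1-1\}$; equivalently one replaces $\theta$ by $1-\theta$ and keeps $k\in\{0,\dots,m_1-1\}$. With your stated range and shift $q^{l_1(2\varepsilon\theta-m_1)}$, the two orientation classes would contribute $q^{l_1(2\theta-1)}$ and $q^{-l_1(2\theta+1)}$, which do not assemble into $\plus{l_1(2\theta-1)}$; after the reindexing the second contribution becomes $q^{l_1(2(1-\theta)-1)}=q^{-l_1(2\theta-1)}$ and the claimed factor does appear. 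This is precisely the point the paper flags when passing from the $e^{i\pi\theta}$ system to the $e^{-i\pi\theta}$ one. Your explicit check that $t\mapsto -t$ matches the orientation-$(+)$ solution at index $k$ with the orientation-$(-)$ solution at index $m_1-k$ is a nice addition not spelled out in the paper, and it confirms that the $4m_1$ oriented solutions are the $2m_1$ real curves each taken with both orientations.
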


\begin{proof}
One needs to carry out the solving explained before the statement of Proposition \ref{prop parabola problem}.
\begin{itemize}[label=-]
\item The system with $e^{i\pi\theta}$ solves itself as follows. The first equation solves for $c$ by taking the argument of the equation mod $\pi$. We take mod $\pi$ rather than $2\pi$ since the sign of $a$ is unknown. It then solves uniquely for $a$, including its sign. Last, the second equation solves for $b$ with a unique solution if $\frac{m_1}{l_3}$ is odd, and $0$ or $2$ solutions according to the sign of $\mu_2$ if it is even. Let us carry this computation. The argument mod $\pi$ of the first equation gives us
$$h_1\arg(i-c)\equiv \pi\theta\ [\pi].$$
This is equivalent to
$$\mathrm{arccot}(-c)\equiv \frac{\theta+k}{h_1}\pi \ [\pi],\ \text{ with }0\leqslant k<h_1$$
As $\mathrm{arccot}$ has image $]0;\pi[$, for each $k$ we get a unique solution $c_k=-\cot\left(\frac{\theta+k}{h_1}\pi\right)$ provided that $0<\theta+k<h_1$. Notice that in total, we get $2h_1$ oriented curves for the solving with $e^{i\pi\theta}$. The log-area of the oriented curve is given by $\frac{m_\Delta}{2}(2\pi\mathrm{arccot}(-c)-\pi^2)$, which is equal to
$$ l_1(2(\theta+k)-h_1)\pi^2, \text{ for }0\leqslant k<h_1. $$
The count of real oriented curves passing through the fixed points and refined by the value of the log-area is thus
$$2\sum_{k=0}^{h_1-1} q^{l_1(2\theta+2k-h_1)} = 2q^{l_1(2\theta-1)}\frac{q^{\frac{m_\Delta}{2}} - q^{-\frac{m_\Delta}{2}} }{q^{l_1}-q^{-l_1}}=2q^{l_1(2\theta-1)}\frac{ \minus{\frac{m_\Delta}{2}} }{ \minus{l_1} }.$$

\item For the system with $e^{-i\pi\theta}$, the solving is identical, by replacing $\theta$ with $-\theta$. Except a change in the possible values  for $k$, since we must still have $0< -\theta+k<h_1$. As a matter of fact, this is equivalent to replacing $\theta$ by $1-\theta$ in the previous formula. Adding both contribution, and considering both signs of $\mu_1$, we get the desired refined count.
\end{itemize}
\end{proof}

We can now slightly modify the problem by also allowing to pass through the opposite pair of complex points.

\begin{prob}
\label{problem parabola prim opposite}
How many real rational curves of degree $\dpar$ pass through one of the opposite pairs of complex points on $\CC E_1$, and one of the opposite real points on $\CC E_2$ ?
\end{prob}

\begin{coro}
For $0<\theta<1$, the refined count of real oriented curves passing through the symmetric configuration is equal to
$$4\plus{l_1(2\theta-1)}\frac{ \minus{\frac{m_\Delta}{2} } }{ \minus{l_1} } .$$
\end{coro}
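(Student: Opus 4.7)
The plan is to reduce Problem~\ref{problem parabola prim opposite} to Proposition~\ref{prop parabola problem} by exploiting the symmetry $p \leftrightarrow -p$. Recall that a curve in $\mathcal{S}(\mathcal{P})$ must, for each marked point $p$, pass through $p$ or $-p$. Reality of the curve forces the two choices at a complex conjugated pair $(p,\bar p)$ to be coupled: either the curve meets both $p$ and $\bar p$, or it meets both $-p$ and $-\bar p$. Hence compared with Problem~\ref{problem parabola prim}, the only extra freedom is the binary choice of passing through the pair $\mu_1 e^{\pm i\pi\theta}$ versus the opposite pair $-\mu_1 e^{\pm i\pi\theta}$ on $\CC E_1$; the $\pm \mu_2$ ambiguity on $\CC E_3$ is already built into Problem~\ref{problem parabola prim}.

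Next I would observe that the opposite pair is itself a complex conjugated pair, of the form $\mu_1 e^{\pm i \pi \theta'}$ with $\theta' = 1-\theta \in (0,1)$. Indeed $-\mu_1 e^{i\pi\theta} = \mu_1 e^{i\pi(\theta+1)} = \mu_1 e^{-i\pi(1-\theta)}$, and similarly for its conjugate. Consequently, Proposition~\ref{prop parabola problem} applies verbatim to the opposite configuration, giving a refined count
\[
2\,\plus{l_1(2\theta' - 1)}\,\frac{\minus{\frac{m_\Delta}{2}}}{\minus{l_1}}.
\]
Since $2\theta'-1 = -(2\theta - 1)$ and the positive $q$-analog $\plus{a} = q^a + q^{-a}$ is an even function of $a$, this equals the count obtained for the original pair.

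Finally, summing the two contributions (the original pair and its opposite) yields the total
\[
4\,\plus{l_1(2\theta - 1)}\,\frac{\minus{\frac{m_\Delta}{2}}}{\minus{l_1}},
\]
which is the claimed formula. There is no analytic difficulty here: the step that would require care is only the bookkeeping showing that no curve is counted twice, which follows because the two pairs of complex points are disjoint (one has positive real part of argument in $(0,\pi)$ shifted by $0$, the other by $\pi$), and thus the associated parameters $c_k$ constructed in the proof of Proposition~\ref{prop parabola problem} form disjoint families. Hence the two counts simply add.
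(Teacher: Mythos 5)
Your proposal is correct and follows exactly the paper's argument: the opposite pair of complex points is the conjugate pair with argument parameter $1-\theta$, Proposition \ref{prop parabola problem} applies to each pair, and since $\plus{a}$ is even in $a$ the two contributions are equal and sum to the stated total. The extra bookkeeping you supply (reality coupling the choices at a conjugate pair, disjointness of the two families of solutions for generic $\theta$) is a sound elaboration of what the paper leaves implicit.
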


\begin{proof}
Just add the counts for the values $\theta$ and $1-\theta$, which happen to be equal.
\end{proof}

Finally, we solve the final problem, where we impose the moment of the curve rather than its primitive moment, which would mean imposing the position of its intersection point.

\begin{prob}
\label{problem parabola full}
Let $\mu_1=e^{\pm i\pi\theta}\in\CC^*$ and $\mu_2\in\RR^*$. What is the refined count of real oriented curves of degree $\dpar$ having moments $\pm\mu_1$ on $\CC E_1$ and $\pm\mu_2$ on $\CC E_2$ ?
\end{prob}

The solving is a direct corollary of the solving of the previous problem by taking the $l_1$th-root of the first equation.

\begin{coro}
The refined number of oriented real rational curves of degree $\dpar$ with moments $\pm e^{\pm i\pi\theta}\in\CC^*$ on $\CC E_1$ and $\pm\mu_2\in\RR^*$ on $\CC E_2$ is
$$4\plus{2\theta-1}\frac{ \minus{\frac{m_\Delta}{2} } }{ \minus{1} } .$$
\end{coro}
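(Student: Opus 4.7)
My plan is to reduce the statement to a sum of instances of the previous corollary (treating Problem \ref{problem parabola prim opposite}) by extracting $l_1$-th roots in the first equation of the defining system. Indeed, the Menelaus moment on $\CC E_1$ equals the $l_1$-th power of the primitive moment (the coordinate of the intersection point), so prescribing the Menelaus moment to lie in $\{\pm e^{\pm i\pi\theta}\}$ is tantamount to letting the primitive moment range over the $l_1$-th roots of these four values.

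First I would enumerate the resulting primitive moments on the unit circle, keeping only the $2l_1$ upper-half-plane representatives of each conjugate pair; explicitly, these are the $e^{i\pi\theta'}$ with $\theta' \in (0,1)$ satisfying $l_1\theta' \equiv \pm\theta \pmod{1}$, and can be parametrized by $(\theta+k)/l_1$ and $(-\theta+k)/l_1$ for appropriate integer ranges of $k$. Next I would observe that the $\pm$ symmetry already absorbed into Problem \ref{problem parabola prim opposite} identifies the primitive-moment angle $\theta'$ with $1-\theta'$, since $-e^{i\pi\theta'}$ has upper-half-plane conjugate $e^{i\pi(1-\theta')}$. The $2l_1$ primitive moments therefore group into $l_1$ unordered pairs, one representative of each being $\theta'_k = (\theta+k)/l_1$ for $k = 0, \dots, l_1 - 1$. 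Applying Problem \ref{problem parabola prim opposite} to each such representative contributes
$$4\plus{l_1(2\theta'_k - 1)}\frac{\minus{\frac{m_\Delta}{2}}}{\minus{l_1}} = 4\plus{2\theta + 2k - l_1}\frac{\minus{\frac{m_\Delta}{2}}}{\minus{l_1}},$$
and summing over $k$ yields the total refined count
$$R = \frac{4\minus{\frac{m_\Delta}{2}}}{\minus{l_1}}\sum_{k=0}^{l_1-1}\plus{2\theta + 2k - l_1}.$$

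The final step is a $q$-analog identity. Splitting $\plus{a} = q^a + q^{-a}$ and summing each of the two resulting geometric series in $q^{\pm 2}$ via $\sum_{k=0}^{l_1-1} q^{\pm 2k} = q^{\pm(l_1-1)}\minus{l_1}/\minus{1}$, I obtain
$$\sum_{k=0}^{l_1-1}\plus{2\theta + 2k - l_1} = \frac{\minus{l_1}}{\minus{1}}\plus{2\theta - 1}.$$
The factor $\minus{l_1}$ cancels against the one in the denominator of $R$, producing the announced formula $R = 4\plus{2\theta-1}\minus{\frac{m_\Delta}{2}}/\minus{1}$.

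The main obstacle will be the combinatorial bookkeeping of the first two steps: one must verify carefully that the $2l_1$ upper-half-plane primitive moments pair correctly under the $\pm$ symmetry already built into Problem \ref{problem parabola prim opposite}, so that summing over $l_1$ representatives (rather than $2l_1$) is the correct enumeration without double counting or omission. Once this pairing is pinned down, the remaining $q$-series manipulation is routine and the identity above closes the argument.
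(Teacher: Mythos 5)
Your proposal is correct and follows essentially the same route as the paper: extract $l_1$-th roots of the prescribed Menelaus moment, group the resulting primitive moments into $l_1$ classes of the form $\{\pm e^{\pm i\pi\theta'}\}$ with representatives $\theta'_k=(\theta+k)/l_1$, apply the preceding corollary to each, and close with the identity $\sum_{k=0}^{l_1-1}\plus{2\theta+2k-l_1}=\plus{2\theta-1}\minus{l_1}/\minus{1}$. The paper phrases the grouping as "packs of four" via $k+k'=l_1-1$ rather than your pairing of upper-half-plane representatives under $\theta'\leftrightarrow 1-\theta'$, but these are the same bookkeeping.
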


\begin{proof}
Let $l=l_1$. The moment on $\CC E_1$ is known. Therefore, the primitive moment is its $l$th-root. So it can take the following values:
$$\left\{ \begin{array}{l}
\pm\frac{\theta+k}{l}\pi \text{ for }0\leqslant k<l, \\
\pm\frac{1-\theta+k}{l}\pi \text{ for }0\leqslant k<l. \\
\end{array}\right.$$
We have $4l$ of these values: $l$ roots for each of the four arguments $\pm\theta\pi$ and $(1\pm\theta)\pi$. Half of them belong to $]0;\pi[$ and they are obtained by taking $+$ signs in front of the fractions. These values can be grouped into packs of $4$: if $\varphi\pi$ is a possible primitive moment, so are $-\varphi\pi, (1-\varphi)\pi$ and $(1+\varphi)\pi$. In the choices of parametrization of the values by $k$, this association is as follows:
$$1-\frac{\theta+k}{l}=\frac{1-\theta+l-1-k}{l},$$
thus, $\pm\frac{\theta+k}{l}$ corresponds to $\pm\frac{1-\theta+k'}{l}$, where $k+k'=l-1$. So we can take $\frac{\theta+k}{l}$ to be a positive representative of each group of four. The refined count is known to be
$$4\plus{l\left(2\frac{\theta+k}{l}-1\right)}\frac{ \minus{\frac{m_\Delta}{2} } }{ \minus{l} } .$$
To conclude, we just need to add these contributions, and notice that:
\begin{align*}
\sum_0^{l-1} \plus{2\theta+2k-l} & = q^{2\theta-l}\sum_0^{l-1} q^{2k} + q^{l-2\theta}\sum_0^{l-1} q^{-2k} \\
	&=q^{2\theta-l}\frac{q^{2l}-1}{q^2-1}+q^{l-2\theta}\frac{1-q^{-2l}}{1-q^{-2}} \\
	& = (q^{2\theta-1}+q^{1-2\theta})\frac{q^l-q^{-l}}{q-q^{-1}}\\
	& = \plus{2\theta-1}\frac{\minus{l}}{\minus{1}}.\\
\end{align*}
\end{proof}

\begin{rem}
Notice that if the vectors are primitive, \textit{i.e.} $l_1=1$, by making $\theta\rightarrow\frac{1}{2}$, and divide by $4$, we get the value of the refined invariant announced in \ref{theorem paper}:
$$R_{\dpar} = 2\frac{q^{\frac{m_\Delta}{2}} - q^{-\frac{m_\Delta}{2}} }{q-q^{-1}} = 2\frac{\minus{\frac{1}{2}}}{\minus{1}}N^{\partial,\mathrm{trop}}_{\dpar}.$$
\end{rem}

	\subsubsection{Tangent ellipse problem: pentavalent vertex}
	
\paragraph{Parametrization and sign}

	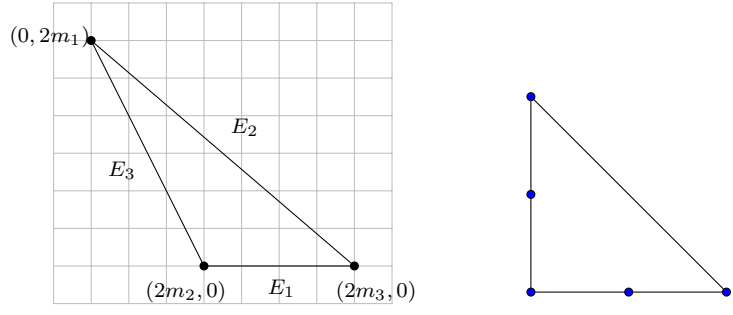
\begin{figure}
\begin{center}

\begin{tabular}{cc}

\definecolor{qqqqff}{rgb}{0.,0.,1.}
\definecolor{xdxdff}{rgb}{0.49019607843137253,0.49019607843137253,1.}
\definecolor{cqcqcq}{rgb}{0.7529411764705882,0.7529411764705882,0.7529411764705882}
\begin{tikzpicture}[line cap=round,line join=round,>=triangle 45,x=0.5cm,y=0.5cm]
\draw [color=cqcqcq,, xstep=0.5cm,ystep=0.5cm] (-1.,-1.) grid (8.,7.);
\clip(-2.2,-2.) rectangle (9.,8.);
\draw (0.,6.)-- (3.,0.);
\draw (3.,0.)-- (7.,0.);
\draw (7.,0.)-- (0.,6.);
\begin{scriptsize}
\draw [fill=black] (0.,6.) circle (1.5pt);
\draw[color=black] (-1.1,6.1) node {$(0,2m_1)$};
\draw [fill=black] (3.,0.) circle (1.5pt);
\draw[color=black] (2.52,-0.68) node {$(2m_2,0)$};
\draw[color=black] (0.82,2.54) node {$E_3$};
\draw [fill=black] (7.,0.) circle (1.5pt);
\draw[color=black] (7.58,-0.68) node {$(2m_3,0)$};
\draw[color=black] (5.06,-0.6) node {$E_1$};
\draw[color=black] (4.08,3.68) node {$E_2$};
\end{scriptsize}
\end{tikzpicture}

&

\definecolor{qqqqff}{rgb}{0.,0.,1.}
\definecolor{xdxdff}{rgb}{0.49019607843137253,0.49019607843137253,1.}
\begin{tikzpicture}[line cap=round,line join=round,>=triangle 45,x=1.3cm,y=1.3cm]
\clip(-0.5,-0.5) rectangle (2.5,2.5);
\draw (0.,0.)-- (0.,1.);
\draw (0.,1.)-- (0.,2.);
\draw (0.,0.)-- (1.,0.);
\draw (1.,0.)-- (2.,0.);
\draw (2.,0.)-- (0.,2.);
\begin{scriptsize}
\draw [fill=qqqqff] (0.,0.) circle (1.5pt);
\draw [fill=qqqqff] (1.,0.) circle (1.5pt);
\draw [fill=qqqqff] (2.,0.) circle (1.5pt);
\draw [fill=qqqqff] (0.,1.) circle (1.5pt);
\draw [fill=qqqqff] (0.,2.) circle (1.5pt);
\end{scriptsize}
\end{tikzpicture}
\\
\end{tabular}

\caption{\label{figure polygon conic} The Polygon $P_\Delta^\mathrm{con}(m_1,m_2,m_3)$ and the conic triangle used for its parametrization}
\end{center}
\end{figure}
	
	We consider real oriented curves of degree
$$\dcon=\{ (0,m_3-m_2)^2,(m_1,m_2)^2,(-2m_1,-2m_3)\},$$
which is dual to the triangle $P_\Delta^\mathrm{con}(m_1,m_2,m_3)$ depicted on Figure \ref{figure polygon conic}. We label the sides $E_1$, $E_2$ and $E_3$, and name the toric divisors in the toric surface accordingly. A curve having such a degree has a parametrization of the form
$$t\longmapsto\left( a(t^2-2t\cos (x_3\pi)+1)^{m_1},b(t^2-2t\cos(x_3\pi)+1)^{m_2}(t^2-2t\cos(x_1\pi)+1)^{m_3-m_2}\right),$$
where $a,b\in\RR^*$ are real parameters, and $0<x_1,x_3<1$ are the normalized arguments of the parameters for the complex intersection points. The coordinate is chosen as in subsubsection \ref{subsubsection log-area of a tangent ellipse}: so that the intersection point with $\CC E_2$ has coordinate $\infty$, and the intersection points with $\CC E_1$ have coordinate of modulus $1$. The choice of the orientation fixes one among the two coordinates satisfying this assumption. In this setting, $e^{i\pi x_1}$ corresponds to the coordinate of the intersection points with $\CC E_1$ and similarly for $\CC E_3$. The normalized arguments $x_1$ and $x_3$ cannot be equal, otherwise the curve is a double cover of a curve of smaller degree $\frac{1}{2}\dcon$.\\

A curve of degree $\dcon$ is the image of a conic in the projective plane parametrized as follows
$$t\longmapsto \left(t^2-2t\cos(x_3\pi)+1,t^2-2t\cos(x_1\pi)+1)\right),$$
under a monomial map followed by a translation. The monomial map between the lattices of co-characters and characters have respective matrices
$$A=\begin{pmatrix}
m_1 & 0 \\
m_2 & m_3-m_2 \\
\end{pmatrix}:N_0\rightarrow N, \text{ and } A^T=\begin{pmatrix}
m_1 & m_2 \\
0 & m_3-m_2 \\
\end{pmatrix}:M\rightarrow M_0.$$
The monomial map is a ramified cover of degree $m_1(m_3-m_2)=\frac{m_\Delta}{4}$, which is ramified over the toric boundary. The degree of the covering map restricted to a $\CC E_j$ divisor is equal to the total degree divided by the integral length of the corresponding size in the associated triangle. Let $l_j$ be the integral length of the vectors in $\dcon$ associated to the divisor, and $h_j$ its relative height, so that $h_j l_j=m_1(m_3-m_2)$. In our case, $l_3=\mathrm{gcd}(m_1,m_2)$, and $l_1=m_3-m_2$.\\

The coordinate on $\CC E_j$ is one of the two primitive monomials directing the side $E_j$. For instance, the coordinate on $\CC E_3$ is $z^{m_2/l_3}w^{-m_1/l_3}$. Its pull-back by the monomial map is $w^{-\frac{m_1(m_3-m_2)}{l_3}}=w^{-h_3}$. The degree of the cover restricted to the toric divisor $\CC E_j$ is $h_j$.\\

Just as in the parabola case, the logarithmic rotation number of a curve and its image by a monomial map are equal up to sign. Then, the following proposition computes the logarithmic rotation number of curves of degree $\dcon$.

\begin{lem}
The logarithmic rotation number of
$$t\longmapsto \left(t^2-2t\cos(x_3\pi)+1,t^2-2t\cos(x_1\pi)+1)\right),$$
 is $\pm 1$, equal to the sign of $x_3-x_1$.
\end{lem}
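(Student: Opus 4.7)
The plan is to realize the logarithmic Gauss map as the explicit rational map $\gamma\colon \RR P^1 \to \PP^1(N_\RR)$ sending $t$ to $[z'(t)/z(t) : w'(t)/w(t)] = [w(t)z'(t) : z(t)w'(t)]$, where $z(t) = t^2-2t\cos(x_3\pi)+1$ and $w(t) = t^2-2t\cos(x_1\pi)+1$. Both $z$ and $w$ are strictly positive on $\RR$, because their complex roots $e^{\pm i x_j \pi}$ lie on the unit circle off the real axis, so $\gamma$ is defined everywhere on $\RR P^1$ and one need not worry about zeros in the denominators.

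To obtain $|\text{Rot}_\text{Log}|$, I would count preimages of the regular value $[0:1]\in\PP^1(N_\RR)$: such a preimage satisfies $w(t)z'(t)=0$, and since $w$ has no real zero, only $z'(t) = 2t-2\cos(x_3\pi)=0$ contributes, giving the unique preimage $t_0 = \cos(x_3\pi)$. Hence $|\text{Rot}_\text{Log}|=1$, and it remains to pin down the sign.

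The sign is extracted from a local analysis of the trajectory of the vector $(wz'(t), zw'(t))\in N_\RR\setminus\{0\}$ as $t$ crosses $t_0$. One computes $w(t_0)>0$, $z(t_0) = \sin^2(x_3\pi) > 0$, and $w'(t_0) = 2(\cos(x_3\pi)-\cos(x_1\pi))$, whose sign is $-\mathrm{sgn}(x_3-x_1)$ because $\cos$ is strictly decreasing on $(0,\pi)$. Therefore $wz'(t) \approx 2w(t_0)(t-t_0)$ changes sign at $t_0$ while $zw'(t)$ keeps the constant sign $-\mathrm{sgn}(x_3-x_1)$ in a neighborhood. When $x_3>x_1$, the trajectory passes through the quadrants $(-,-)\to(0,-)\to(+,-)$, a counterclockwise passage across the vertical direction; when $x_3<x_1$ it moves through $(-,+)\to(0,+)\to(+,+)$, a clockwise one. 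With $\RR P^1$ oriented by $t$ increasing (the complex orientation induced by $\HH$) and $\PP^1(N_\RR)$ oriented so that counterclockwise rotation in $N_\RR$ is positive (the convention from $\omega$), this yields local degree $\mathrm{sgn}(x_3-x_1)$ at $t_0$, and therefore $\text{Rot}_\text{Log} = \mathrm{sgn}(x_3-x_1)\in\{\pm 1\}$.

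The main subtlety is not in the elementary rational computation but in the orientation bookkeeping: once the conventions above are pinned down, the sign of the local degree is determined by the three elementary signs $\mathrm{sgn}(w(t_0))$, $\mathrm{sgn}(z(t_0))$ and $\mathrm{sgn}(w'(t_0))$, all of which are read off immediately from the monotonicity of $\cos$ on $(0,\pi)$. I would perform this check carefully so that the convention matches the one used in Lemma~\ref{lemma monomial behavior} and in the definition of $\sigma(S,\varphi)$ elsewhere in the paper.
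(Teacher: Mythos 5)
Your proposal is correct and follows essentially the same route as the paper: write the logarithmic Gauss map explicitly as a rational map to $\PP^1(N_\RR)$, count signed preimages of the regular value $[0:1]$ (the unique one being $t=\cos(x_3\pi)$), and read the sign off from $\mathrm{sgn}\bigl(\cos(x_3\pi)-\cos(x_1\pi)\bigr)$, i.e.\ from the monotonicity of $\cos$ on $(0,\pi)$. The only cosmetic difference is in the local orientation check (you track the quadrant crossing of the vector $(wz',zw')$, while the paper uses the affine coordinate $-u/v$ on the target); both yield the same local degree.
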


\begin{proof}
The logarithmic map thus takes the following form:
$$t\longmapsto \left( \log(t^2-2t\cos(x_3\pi) +1),\log(t^2-2t\cos(x_1\pi) +1) \right).$$
We can now express the logarithmic Gauss map
$$\gamma:t\longmapsto \left[ \frac{t-\cos(x_3\pi)}{t^2-2t\cos(x_3\pi) +1} : \frac{t-\cos(x_1\pi)}{t^2-2t\cos(x_1\pi) +1} \right] \in\RR P^1.$$
To compute its degree, we compute the signed number of antecedents over a fixed direction in $\RR P^1$. We choose the direction to be $[0:1]$. The unique solution is $t=\cos(x_3\pi)$. We need to know if the signed contribution is $+1$ or $-1$. To do this, we have to notice the orientation of $\RR P^1$ given by the orientation of $\RR^2$ is also given by the canonical orientation of $\RR$ via the coordinate $-\frac{u}{v}$, where $[u:v]\in\RR P^1$. Therefore, the sign is given by the sign of the second coordinate $\frac{\cos(x_3\pi)-\cos(x_1\pi)}{\cos^2(x_3\pi)-2\cos(x_3\pi)\cos(x_1\pi) +1}$. Finally, the logarithmic rotation number is $+1$ if $\cos(x_3\pi)<\cos(x_1\pi)$ and $-1$ otherwise.
\end{proof}

\paragraph{Enumerative problems}

We now consider the same enumerative problems as \ref{problem parabola prim}, \ref{problem parabola prim opposite} and \ref{problem parabola full} but for the degree $\dcon$. First, we look for real oriented curves of degree $\dcon$ having fixed intersection with the toric boundary of $\CC\dcon$. Up to the real torus action, we can assume that the coordinates of the chosen intersection points are of modulus $1$, given by $e^{\pm i\theta_1\pi}$ on $\CC E_1$ and $e^{\pm i\theta_3\pi}$ on $\CC E_3$. We choose $0<\theta_j< 1$.

\begin{prob}
\label{problem ellipse prim}
How many oriented real rational curves of degree $\dcon$ and passing through the fixed pairs of complex conjugated points on $\CC E_1$ and $\CC E_3$ ?
\end{prob}

Using the given parametrization of such a curve, the problem amounts to the following system:
$$\left\{ \begin{array}{l}
a(\cos(x_1\pi)-\cos(x_3\pi))^{h_1}e^{i\pi h_1 x_1}=e^{\pm i\pi\theta_1}, \\
a^{-\frac{m_2}{l_3}}b^\frac{m_1}{l_3}(\cos(x_3\pi)-\cos(x_1\pi))^{h_3} e^{i\pi h_3 x_3}=e^{\pm i\pi\theta_3}.\\
\end{array}\right. $$
The two equations are obtained by evaluating the primitive monomials $z$ and $z^{-\frac{m_2}{l_3}}w^\frac{m_1}{l_3}$ respectively at $e^{i\pi x_1}$ and $e^{i\pi x_3}$ to obtain the coordinates of the intersection points with the toric boundary. Notice that $h_1=m_1$ and $h_3=\frac{m_1(m_3-m_2)}{l_3}$ are the degree of the monomial ramified cover restricted to the toric divisors.\\

The choice of the signs in front of the $\theta_j$ corresponds to the image of $e^{i\pi x_1}$ and $e^{i\pi x_3}$ be either point in each of both pairs of complex conjugated points. In fact, counting real oriented curves is equivalent to counting real holomorphic disks with boundary in the real part of the complex manifold. The disk only passes through one point of each pair of complex conjugated points. \\ 

The system may be solved as follows: take the argument mod $\pi$ and find $x_1$ and $x_3$, then take the modulus to find the modulus of $a$ and $b$, and the argument mod $2\pi$ to find their sign. The first equation fixes the sign of $a$, but the second equation might fail to fix the sign of $b$ if the exponent $\frac{m_1}{l_3}$ is even.

\begin{prop}
\label{prop ellipse problem}
For $0<\theta_1,\theta_3<1$, the refined contribution is:
\begin{itemize}
\item If $\frac{m_1}{l_3}$ is odd:
$$\sum_{ \substack{ 0\leqslant k_1 <h_1 \\ 0\leqslant k_3 < h_3 }} \sigma_{k_1,k_3} q^{2l_3\left( (1-)\theta_3 + k_3 \right) - 2l_1\left( (1-)\theta_1 + k_1 \right)},$$
where the sum has $4h_1h_3$ terms accounting for the values $(1-)\theta$, taken equal to $\theta$ and $1-\theta$. The sign $\sigma_{k_1,k_3}$ is the sign of the exponent.
\item If $\frac{m_1}{l_3}$ is even:
$$2\sum_{ \substack{ 0\leqslant k_1 <h_1 \\ 0\leqslant k_3 < h_3 \\ k_1+k_3\equiv\epsilon [2] }} \sigma_{k_1,k_3} q^{2l_3\left( (1-)\theta_3 + k_3 \right) - 2l_1\left( (1-)\theta_1 + k_1 \right)},$$
where the sum has $2h_1h_3$ terms accounting for the values $(1-)\theta$ and the congruence condition for $k_1,k_3$: $\epsilon=0$ for $(\theta_1,\theta_3)$ and $(1-\theta_1,1-\theta_3)$, and $\epsilon=1$ for $(1-\theta_1,\theta_3)$ and $(\theta_1,1-\theta_3)$. The sign $\sigma_{k_1,k_3}$ is still the sign of the exponent.
\end{itemize}
\end{prop}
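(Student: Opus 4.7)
The plan is to solve the displayed system for the four real unknowns $a,b,x_1,x_3$ in four successive passes, as sketched in the paragraph preceding the statement: extract the argument modulo $\pi$ of each equation to determine $x_1,x_3$; take moduli to determine $|a|,|b|$; then read off the remaining argument modulo $2\pi$ to fix signs. The refined weight of each resulting oriented curve is then computed by combining the log-area of a tangent ellipse with the pullback formula of Lemma \ref{lemma monomial behavior}.

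\textbf{Enumeration of solutions.} Round one decouples phases from amplitudes: the argument mod $\pi$ of the first (resp.\ second) equation is $h_1 x_1 \equiv \pm\theta_1 \pmod 1$ (resp.\ $h_3 x_3 \equiv \pm\theta_3 \pmod 1$), whose solutions in $(0,1)$ are $x_1 = ((1-)\theta_1 + k_1)/h_1$ with $0 \leqslant k_1 < h_1$ and similarly for $x_3$. Round two reads off $|a|$ from the first equation and then $|b|$ from the second, using $\cos(x_1\pi) \neq \cos(x_3\pi)$ (valid since generically $x_1 \neq x_3$, as equality would force the parametrization to be a double cover of a smaller-degree curve). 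Round three recovers the sign of $a$ from the full first equation — the exponent on $a$ there is $1$, so this is unambiguous. Substituting into the second gives an equation of the form $b^{m_1/l_3} = (\text{known real})$: if $m_1/l_3$ is odd, $b$ is uniquely determined and every phase-pair contributes one solution; if $m_1/l_3$ is even, the RHS must be positive, which after tracking the signs of $a$ and of $(\cos(x_3\pi)-\cos(x_1\pi))^{h_3}$ translates into a parity condition $k_1 + k_3 \equiv \epsilon \pmod 2$, with $\epsilon \in \{0,1\}$ read off from the $\pm$ choices made on $(\theta_1,\theta_3)$ in round one. When this holds, there are exactly two real choices for $b$, whence the outer factor $2$ in the even case.

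\textbf{Refined weight.} For each admissible solution, the oriented curve is the image under the monomial map of matrix $A$, with $\det A = m_1(m_3-m_2) = m_\Delta/4 > 0$, of the tangent ellipse with parameters $(x_1,x_3)$. By Lemma \ref{lemma monomial behavior} applied to the tangent ellipse log-area $2\pi^2(x_3-x_1)$ computed in subsubsection \ref{subsubsection log-area of a tangent ellipse}, one obtains
\[
\mathcal{A}(\HH,\varphi)
 \;=\; \frac{m_\Delta}{4}\cdot 2\pi^2(x_3 - x_1)
 \;=\; \pi^2\bigl(2l_3((1-)\theta_3 + k_3) - 2l_1((1-)\theta_1 + k_1)\bigr),
\]
where the identities $m_\Delta/(2h_j) = 2l_j$ (coming from $h_j l_j = m_\Delta/4$) are used. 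The logarithmic rotation number of the tangent ellipse equals $\mathrm{sgn}(\cos(x_1\pi)-\cos(x_3\pi)) = \mathrm{sgn}(x_3-x_1)$, since cosine is decreasing on $(0,\pi)$; this is preserved by $A$ because $\det A > 0$, so the sign $\sigma_{k_1,k_3}$ coincides with the sign of the $q$-exponent displayed above, as asserted.

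\textbf{Main obstacle.} The bulk of the work is the sign bookkeeping of round three when $m_1/l_3$ is even: one must track the signs of $a$, of $(\cos(x_3\pi)-\cos(x_1\pi))^{h_3}$ (whose parity depends on $h_1,h_3,m_2/l_3$), and of the factors $(-1)^{k_j}$ arising from $e^{i\pi h_j x_j} = (-1)^{k_j} e^{\pm i\pi\theta_j}$, and verify that the resulting congruence is precisely $k_1 + k_3 \equiv 0$ for the diagonal sign choices $(\theta_1,\theta_3)$, $(1-\theta_1,1-\theta_3)$ and $k_1 + k_3 \equiv 1$ for the anti-diagonal ones. Once this is established, the remaining steps are straightforward substitutions.
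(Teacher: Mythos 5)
Your proposal is correct and follows essentially the same route as the paper: solve for $x_1,x_3$ from the arguments mod $\pi$, for $|a|,|b|$ from the moduli, then for the signs, with the parity of $m_1/l_3$ governing whether $b$ admits one or zero-or-two real solutions, and compute the refined weight by pushing the tangent-ellipse log-area through Lemma \ref{lemma monomial behavior} using $h_jl_j=m_\Delta/4$. The only step you state rather than execute is the mod-$2$ argument computation yielding the congruence $k_1+k_3\equiv\epsilon$; the paper does this explicitly by setting $\arg(a)\equiv\varepsilon\pi$, using that $h_1,h_3$ are even and $m_2/l_3$ is odd, and checking that the two equations determine $\varepsilon$ consistently exactly under the stated parity condition.
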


\begin{rem}
The computation of these sum depends in a complicated way of the value of $(l_1,l_3)$. It can be computed in the case $l_1=l_3=1$ and this is in fact the only case that we need. This is done later in this subsection.
\end{rem}

\begin{proof}
Let us carry out the resolution as proposed before the proposition.
\begin{itemize}[label=-]
\item First, take the image by the logarithm of modulus to get the following system:
$$\left\{ \begin{array}{l}
\log|a|+h_1\log|\cos(x_3\pi)-\cos(x_1\pi)|=0, \\
-\frac{m_2}{l_3}\log|a|+\frac{m_1}{l_3}\log|b|+h_3\log|\cos(x_1\pi)-\cos(x_1\pi)| =0.\\
\end{array}\right. $$
This allows to express the modulus of $a$ and $b$ in term of $|\cos(x_3\pi)-\cos(x_1\pi)|$, which is known if $x_1,x_3$ are known.
\item To find $x_1,x_3$ and the signs of $a$ and $b$, we take the argument mod $\pi$ of both equations:
$$\left\{ \begin{array}{l}
h_1 x_1\pi\equiv \pm\theta_1\pi \ [\pi], \\
h_3 x_3\pi \equiv \pm\theta_3\pi \ [\pi]. \\
\end{array}\right. $$
Therefore, we get
$$\left\{ \begin{array}{l}
x_1=\frac{(1-)\theta_1+k_1}{h_1}, \ 0\leqslant k_1 <h_1, \\
x_3=\frac{(1-)\theta_3+k_1}{h_3}, \ 0\leqslant k_3 <h_3.  \\
\end{array}\right. $$
The $(1-)\theta$ denotes the value $\theta$ or $1-\theta$. This convention replaces the $\pm\theta$ to get the same interval for $k_1$ in both cases. This solves for $x_1,x_3$. We get $4h_1h_3$ solutions. Then, going back to the principal system, we find the sign of $a$ in the first equation, and the sign of $b$ in the second equation according to the parity of $\frac{m_1}{l_3}$. The solving for $b$ gives $1$ solution if $\frac{m_1}{l_3}$ is odd, or $0$ or $2$ solution according to sign if it is even. In each case, we get $4h_1h_3$ solutions, for which we need to compute the contribution.
\begin{itemize}[label=$\bullet$]
\item Assume $\frac{m_1}{l_3}$ is odd. Then, there is a unique solution for the signs of $a$ and $b$ and this for each value of $(k_1,k_3)$. Moreover, the log-area is equal to 
\begin{align*}
\mathcal{A} &= \det A \times 2\pi(x_3\pi-x_1\pi) \\
	& = 2 h_j l_j \left( \frac{(1-)\theta_3 + k_3}{h_3}-\frac{(1-)\theta_1+k_1}{h_1} \right)\pi^2 \\
	& = 2l_3\left( (1-)\theta_3 + k_3 \right)\pi^2 - 2l_1\left( (1-)\theta_1 + k_1 \right)\pi^2,\\
\end{align*}
and the logarithmic rotation number is equal to its sign. Therefore, the total refined count is equal to
$$\sum_{ \substack{ 0\leqslant k_1 <h_1 \\ 0\leqslant k_3 < h_3 }} \sigma_{k_1,k_3} q^{2l_3\left( (1-)\theta_3 + k_3 \right) - 2l_1\left( (1-)\theta_1 + k_1 \right)},$$
where the sum has $4h_1h_3$ terms accounting for the values $(1-)\theta$, and the sign $\sigma_{k_1,k_3}$ is the sign of the exponent.
\item Now, assume that $\frac{m_1}{l_3}$ is even. Then $h_1=m_1$ is also even, as well as $h_3=\frac{m_1}{l_3}(m_3-m_2)$. However, as $l_3=\mathrm{gcd}(m_1,m_2)$, $\frac{m_2}{l_3}$ is odd. Taking care of the parity of these integers and taking $\arg(a)\equiv \varepsilon\pi\ [2]$, the argument mod $2\pi$ of the principal system gives us
$$\left\{ \begin{array}{l}
\varepsilon\ +\ (1-)\theta_1+k_1\ \equiv \ \pm\theta_1 \ [2],\\
\varepsilon\ +\ (1-)\theta_3+k_3\ \equiv \ \pm\theta_3 \ [2].\\
\end{array}\right.$$
Both equation give the value of $\varepsilon$, and these have to coincide. This is the case if and only if
$$\begin{array}{l}
k_1+k_3 \equiv 0 \ [2] \text{ for the pairs }(\theta_1,\theta_3) \text{ and }(-\theta_1,-\theta_3),\\
k_1+k_3 \equiv 1 \ [2] \text{ for the pairs }(-\theta_1,\theta_3) \text{ and }(\theta_1,-\theta_3).\\
\end{array}$$
The total signed contribution is then
$$2\sum_{ \substack{ 0\leqslant k_1 <h_1 \\ 0\leqslant k_3 < h_3 \\ k_1+k_3\equiv\bullet [2] }} \sigma_{k_1,k_3} q^{2l_3\left( (1-)\theta_3 + k_3 \right) - 2l_1\left( (1-)\theta_1 + k_1 \right)},$$
where the sum has $2h_1h_3$ terms accounting for the values $(1-)\theta$ and the congruence condition for $k_1,k_3$. Once again, the sign $\sigma_{k_1,k_3}$ is the sign of the exponent.
\end{itemize}
\end{itemize}

\end{proof}

Now, we get to the first variant of the enumerative problem and allow the points the complex points to take the opposite value.

\begin{prob}
How many oriented real rational curves of degree $\dcon$ have primitive moments $\pm e^{i\pi\theta_1}$ on $\CC E_1$ and $\pm e^{i\pi\theta_3}$ on $\CC E_3$ ?
\end{prob}

As in the parabola case, the solving consists in adding the contribution for the different possibilities.

\begin{coro}
The refined number of oriented real rational curves of degree $\dcon$ having primitive moments $\pm e^{i\pi\theta_1}$ on $\CC E_1$ and $\pm e^{i\pi\theta_3}$ on $\CC E_3$ is
$$4\sum_{ \substack{ 0\leqslant k_1 <h_1 \\ 0\leqslant k_3 < h_3 }} \sigma_{k_1,k_3} q^{2l_3\left( (1-)\theta_3 + k_3 \right) - 2l_1\left( (1-)\theta_1 + k_1 \right)}  .$$
\end{coro}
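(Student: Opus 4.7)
The plan is to sum the count of Proposition~\ref{prop ellipse problem} over the four sign configurations $(\varepsilon_1,\varepsilon_3)\in\{\pm\}^2$ of the primitive moments $\varepsilon_j e^{i\pi\theta_j}$, and check that the result is four copies of the basic sum irrespective of the parity of $m_1/l_3$.

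The key observation is that $-e^{i\pi\theta_j}=e^{i\pi(1-\theta_j)}$, whose complex conjugate is $-e^{-i\pi\theta_j}$, so switching $\mu_j$ to $-\mu_j$ merely reparametrizes the complex conjugate pair via $\theta_j\leftrightarrow 1-\theta_j$. Since the argument equations of the system of Proposition~\ref{prop ellipse problem} are taken modulo $\pi$, this substitution preserves the solutions $(x_1,x_3)$, and hence also the log-area, the logarithmic rotation number, and the signs $\sigma_{k_1,k_3}$; only the signs of $a$ and possibly $b$ flip.

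In the odd case, the formula of Proposition~\ref{prop ellipse problem} already ranges over both values of $(1-)\theta_j$ for each index and is manifestly invariant under $\theta_j\leftrightarrow 1-\theta_j$. Thus the four sign configurations each contribute an identical sum, and the factor $4$ is immediate.

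The main step is the even case, where Proposition~\ref{prop ellipse problem} carries both a parity constraint $k_1+k_3\equiv\varepsilon\ [2]$ depending on the $(1-)\theta$ choice, and a factor $2$ from the undetermined sign of $b$. Mirroring the parity computation in the proof of Proposition~\ref{prop ellipse problem}, one checks that for each fixed $(1-)\theta$ choice, the induced value of $\varepsilon$ takes each of $0$ and $1$ exactly twice as $(\varepsilon_1,\varepsilon_3)$ ranges over $\{\pm\}^2$. Summing over the four moment signs therefore removes the parity restriction, and combining the factor $2$ from the sign of $b$ with the doubling just described yields the coefficient $4$ of the unified formula. The only real obstacle is this modular-arithmetic bookkeeping, which is routine once the effect of $\theta_j\mapsto 1-\theta_j$ on $\varepsilon$ is tracked.
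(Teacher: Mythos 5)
Your proof is correct and follows essentially the same route as the paper: sum Proposition \ref{prop ellipse problem} over the four sign choices of the primitive moments, observe that the odd case is manifestly symmetric under $\theta_j\leftrightarrow 1-\theta_j$, and in the even case track how the parity constraint $k_1+k_3\equiv\varepsilon\ [2]$ flips so that the four configurations together cover each residue class twice, giving the overall factor $4$. (Only a cosmetic slip: $-e^{i\pi\theta_j}=e^{i\pi(1+\theta_j)}$, so it is the conjugate pair $\{-e^{\pm i\pi\theta_j}\}=\{e^{\pm i\pi(1-\theta_j)}\}$ that is reparametrized by $\theta_j\mapsto 1-\theta_j$, which is what your argument actually uses.)
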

	
\begin{proof}
One just needs to replace consider the couples $(1-\theta_1,\theta_3)$, $(\theta_1,1-\theta_3)$ and $(1-\theta_1,1-\theta_3)$ in place of $(\theta_1,\theta_3)$ in Proposition \ref{prop ellipse problem}.
\begin{itemize}
\item If $\frac{m_1}{l_3}$ is odd, the contribution is the same for each of the couples. Thus, we get the announced contribution.
\item If $\frac{m_1}{l_3}$ is even, let us write carefully the contributions. The contribution for $(\theta_1,\theta_3)$ given by Proposition \ref{prop ellipse problem} is
\begin{align*}
 & 2\sum_{ \substack{ 0\leqslant k_1 <h_1 \\ 0\leqslant k_3 < h_3 \\ k_1+k_3\equiv0 [2] }} \sigma q^{2l_3\left( \theta_3 + k_3 \right) - 2l_1\left( \theta_1 + k_1 \right)} + \sigma q^{2l_3\left( 1-\theta_3 + k_3 \right) - 2l_1\left( 1-\theta_1 + k_1 \right)} \\
+ & 
2\sum_{ \substack{ 0\leqslant k_1 <h_1 \\ 0\leqslant k_3 < h_3 \\ k_1+k_3\equiv 1 [2] }} \sigma q^{2l_3\left( 1- \theta_3 + k_3 \right) - 2l_1\left( \theta_1 + k_1 \right)} + \sigma q^{2l_3\left( \theta_3 + k_3 \right) - 2l_1\left( 1-\theta_1 + k_1 \right)}. \\
\end{align*}
The contribution is the same when replacing $(\theta_1,\theta_3)$ by $(1-\theta_1,1-\theta_3)$. However, when replacing by $(\theta_1,1-\theta_3)$ and $(1-\theta_1,\theta_3)$, the congruences on $k_1,k_3$ are exchanged. Then, adding all contribution, we get
\begin{align*}
 & 4\sum_{ \substack{ 0\leqslant k_1 <h_1 \\ 0\leqslant k_3 < h_3  }} \sigma q^{2l_3\left( \theta_3 + k_3 \right) - 2l_1\left( \theta_1 + k_1 \right)} + \sigma q^{2l_3\left( 1-\theta_3 + k_3 \right) - 2l_1\left( 1-\theta_1 + k_1 \right)} \\
+ & 
4\sum_{ \substack{ 0\leqslant k_1 <h_1 \\ 0\leqslant k_3 < h_3 }} \sigma q^{2l_3\left( 1- \theta_3 + k_3 \right) - 2l_1\left( \theta_1 + k_1 \right)} + \sigma q^{2l_3\left( \theta_3 + k_3 \right) - 2l_1\left( 1-\theta_1 + k_1 \right)}, \\
\end{align*}
which is the announced result.
\end{itemize}
\end{proof}
	
Finally, we can turn our focus on the last problem, which is to make the refined count of curves having fixed moments rather than fixed primitive moments.

\begin{prob}
Let $e^{i\pi\theta_1},e^{i\pi\theta_3}\in\CC^*$. How many oriented real rational curves of degree $\dcon$ have moments $\pm e^{\pm i\pi\theta_1}$ on $\CC E_1$ and $\pm e^{\pm i\pi\theta_1}$ on $\CC E_3$ ?
\end{prob}

The proof goes by the same trick as in the parabola case for solving problem \ref{problem parabola full}: we extract $l_1$ and $l_3$ roots to find the possible values of the respective primitive moments.

\begin{coro}
\label{coro ellipse problem}
Let $e^{i\pi\theta_1},e^{i\pi\theta_3}\in\CC^*$. The refined number of oriented real rational curves of degree $\dcon$ have moments $\pm e^{\pm i\pi\theta_1}$ on $\CC E_1$ and $\pm e^{\pm i\pi\theta_1}$ on $\CC E_3$ is
$$ \plus{2\theta_3-1}\plus{2\theta_1-1}\frac{\minus{\frac{m_\Delta}{2}}}{\minus{1}\minus{1}}-\frac{m_\Delta}{2\minus{1}} \left\{ \begin{array}{ll}
\plus{2\theta_3-1}\plus{2\theta_1} &  \text{ if }\theta_1<\theta_3,\ \theta_1+\theta_3<1,\\
\plus{2\theta_3-1}\plus{2(1-\theta_1)} & \text{ if }\theta_1>\theta_3,\ \theta_1+\theta_3>1,\\
\plus{2\theta_1-1}\plus{2\theta_3} & \text{ if }\theta_1>\theta_3,\ \theta_1+\theta_3<1,\\
\plus{2\theta_1-1}\plus{2(1-\theta_3)} & \text{ if }\theta_1<\theta_3,\ \theta_1+\theta_3>1,\\
 \end{array}.\right. \\
 .$$
\end{coro}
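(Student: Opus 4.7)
The plan is to deduce Corollary \ref{coro ellipse problem} from the previous corollary (the primitive-moment version) by the strategy used to pass from Problem \ref{problem parabola prim opposite} to Problem \ref{problem parabola full}. Given the prescribed moments $\pm e^{\pm i\pi\theta_j}$ on $\CC E_j$, the compatible primitive moments are the $l_j$-th roots of these four values; as in the parabola argument, these group into orbits on which the previous corollary's count is constant (thanks to the symmetries $\plus{-a}=\plus{a}$ and $\phi \leftrightarrow 1-\phi$ built into that formula). Choose representatives $\phi_j = (\theta_j+a_j)/l_j$ for $0\le a_j < l_j$, yielding $l_1 l_3$ representative pairs, and write the total moment count as $\sum_{a_1,a_3} R^{\mathrm{prim}}(\phi_1,\phi_3)$ with $R^{\mathrm{prim}}$ the output of the previous corollary.

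After substitution, set $Y_j = l_j((1-)\phi_j + k_j)$, so the exponent of the previous corollary becomes $E = 2(Y_3-Y_1)$. As $(a_j,k_j,(1-)_j)$ ranges over its domain, $Y_j$ traces the set
\[
T_j := \{\theta_j+b : 0 \le b < m_\Delta/4\} \cup \{-\theta_j+b : 1 \le b \le m_\Delta/4\},
\]
with a uniform multiplicity that cancels against the factor of $4$ in the $R_\Delta$ normalization. The refined count thus reduces to $\sum_{(Y_1,Y_3)\in T_1\times T_3} \mathrm{sgn}(Y_3-Y_1)\,q^{2(Y_3-Y_1)}$.

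Split this via $\mathrm{sgn}(E)q^E = q^E - 2q^E\,\mathbf{1}_{E<0}$. The unsigned sum factors as a product of one-variable sums over $T_1$ and $T_3$, each a pair of geometric progressions that evaluates in closed form to $q^{\pm m_\Delta/4}\plus{2\theta_j-1}\minus{m_\Delta/4}/\minus{1}$; multiplying the two factors and using $\minus{m_\Delta/2} = \minus{m_\Delta/4}\plus{m_\Delta/4}$ produces the main term $\plus{2\theta_1-1}\plus{2\theta_3-1}\minus{m_\Delta/2}/\minus{1}^2$. The correction $-2\sum_{E<0} q^E$ is handled by decomposing $(Y_1,Y_3)$ over the four sub-families indexed by $(\epsilon_1,\epsilon_3)\in\{\pm 1\}^2$ according to whether $Y_j$ lies in the $\epsilon_j\theta_j + (\text{integer})$ branch of $T_j$. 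In each sub-family the sign of $E$ is governed by $\epsilon_3\theta_3-\epsilon_1\theta_1$ plus an integer shift, so that the near-diagonal terms with $|Y_3-Y_1|<1$ form a stripe of $O(m_\Delta)$ terms of a fixed sign that telescope, after a geometric sum, into an expression of the form $\tfrac{m_\Delta}{2\minus{1}}\plus{\,\cdot\,}\plus{\,\cdot\,}$.

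The main obstacle is the exhaustive case analysis in the last step: according as $\theta_1 < \theta_3$ or $\theta_1 > \theta_3$, and as $\theta_1+\theta_3 < 1$ or $\theta_1+\theta_3 > 1$, a different sub-family supplies the dominant resonance stripe, producing the four distinct closed-form corrections listed in the statement. The component sums are routine geometric progressions, but one must carefully track which sub-family provides the resonance in each regime, verify that the off-diagonal contributions cancel, and identify exactly which factor $\plus{2\theta_j}$ or $\plus{2(1-\theta_j)}$ is picked out by the boundary behaviour; this combinatorial bookkeeping is the crux of the argument.
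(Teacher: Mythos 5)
Your reduction of the count to the signed double sum $\sum_{(Y_1,Y_3)\in T_1\times T_3}\mathrm{sgn}(Y_3-Y_1)\,q^{2(Y_3-Y_1)}$ is correct and is exactly the reduction the paper performs (extract $l_j$-th roots, group the primitive moments into orbits of four, and observe that $p_j+k_jl_j$ sweeps $[\![0;m_\Delta/4-1]\!]$). The gap is in the evaluation of this sum. Writing $n=m_\Delta/4$, each of your one-variable factors is indeed $q^{\pm n}\plus{2\theta_j-1}\minus{n}/\minus{1}$, but their product is then $\plus{2\theta_1-1}\plus{2\theta_3-1}\,\minus{n}^2/\minus{1}^2$, \emph{not} the main term $\plus{2\theta_1-1}\plus{2\theta_3-1}\,\minus{2n}/\minus{1}^2$ of the corollary: the identity $\minus{2n}=\minus{n}\plus{n}$ is true but does not apply, since both factors carry $\minus{n}$ and neither carries $\plus{n}$. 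The discrepancy $\minus{2n}-\minus{n}^2=2q^{-n}\minus{n}$ is nonzero (already at $n=1$ the unsigned sum is $\plus{2\theta_1-1}\plus{2\theta_3-1}$ while the main term is $\plus{2\theta_1-1}\plus{2\theta_3-1}\plus{1}/\minus{1}$), so your correction term $-2\sum_{E<0}q^E$ cannot be purely of the advertised shape $\frac{m_\Delta}{2\minus{1}}\plus{\cdot}\plus{\cdot}$; it must also absorb a piece proportional to $q^{-n}\minus{n}\plus{2\theta_1-1}\plus{2\theta_3-1}/\minus{1}^2$. Relatedly, the locus $\{E<0\}$ is not a "stripe of $O(m_\Delta)$ terms": it is essentially half of the $2n\times 2n$ grid, hence $\Theta(m_\Delta^2)$ terms, and the claim that "off-diagonal contributions cancel" is false.

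The paper's (correct) organization of the same sum avoids this: in the integer indices $(k_1,k_3)\in[\![0;n-1]\!]^2$ the sign is $+1$ on all four $\theta$-branches whenever $k_3>k_1$ and $-1$ whenever $k_3<k_1$, so the two off-diagonal halves carry the constant prefactor $\plus{2\theta_1-1}\plus{2\theta_3-1}$ and sum to $\plus{2\theta_1-1}\plus{2\theta_3-1}\bigl(\minus{2n}/\minus{1}^2-n\plus{1}/\minus{1}\bigr)$; only the diagonal $k_1=k_3$ requires the four-fold case analysis in $\theta_1\lessgtr\theta_3$, $\theta_1+\theta_3\lessgtr 1$, and its $n$ terms combine with the $-n\plus{1}/\minus{1}$ piece to factor into the four corrections $\frac{2n}{\minus{1}}\plus{\cdot}\plus{\cdot}$. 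You would need to redo your last two steps along these lines (or correctly compute $\sum_{E<0}q^E$ as a full triangular sum plus a diagonal case analysis) for the argument to close.
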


\begin{proof}
As for \ref{problem parabola full}, the primitive moment on $\CC E_1$ can be
$$\left\{ \begin{array}{l}
\pm\frac{\theta_1+p_1}{l_1}\pi \text{ for }0\leqslant p_1<l_1, \\
\pm\frac{1-\theta_1+p_1}{l_1}\pi \text{ for }0\leqslant p_1<l_1. \\
\end{array}\right.$$
Same with $1$ replaced by $3$ for $\CC E_3$. They are grouped by four: pairs of opposite pairs of conjugated points. The representatives of the arguments lying in $]0;\pi[$ can be taken to be $\frac{\theta_j+p_j}{l_j}$ for $j=1$ or $3$. The other possible choices would be to replace $\theta_j$ by $1-\theta_j$ for some of them. Thus, we only have to add the contributions for these arguments, and using \ref{prop ellipse problem}, we get
$$\sum_{\substack{0\leqslant p_1 <l_1 \\ 0\leqslant p_3<l_3}}
4\sum_{ \substack{ 0\leqslant k_1 <h_1 \\ 0\leqslant k_3 < h_3 }} \sigma q^{2l_3\left( (1-)\frac{\theta_3+p_3}{l_3} + k_3 \right) - 2l_1\left( (1-)\frac{\theta_1+p_1}{l_1} + k_1 \right)}  ,$$
which we now compute explicitly. First, notice that the exponent has value
\begin{align*}
 & 2l_3\left( (1-)\frac{\theta_3+p_3}{l_3} + k_3 \right) - 2l_1\left( (1-)\frac{\theta_1+p_1}{l_1} + k_1 \right) \\
 = & 2\left( (l_3-)\theta_3 + p_3 +k_3l_3 \right) - 2\left( (l_1-)\theta_1 + p_1+k_1l_1 \right).\\
\end{align*}
where $(l-)\theta$ denotes $\theta$ or $l-\theta$. In case the value is $l-\theta$, we can make the change of index $p_3'=l_3-1-p_3$, and we are left with $1-\theta$ instead, and it goes through the same set of values. Moreover, notice that the index $p_j+k_jl_j$ goes through the integer interval $[\![ 0;l_jh_j-1]\!]$, and $h_jl_j=\frac{m_\Delta}{4}$ is constant. Thus, the double sum over a rectangle each time is in fact a unique sum over a square of side length $\frac{m_\Delta}{4}$. The sum is then
\begin{align*}
 & \sum_{ 0\leqslant k_1,k_3 <\frac{m_\delta}{4} } 
 \sigma  q^{2\left( (1-)\theta_3 + k_3 \right) - 2\left( (1-)\theta_1 + k_1 \right)} \\
 = & \sum_{ 0\leqslant k_1,k_3 <\frac{m_\delta}{4} } 
(\sigma q^{2\theta_3-2\theta_1}+\sigma q^{2-2\theta_3-2\theta_1}+\sigma q^{2\theta_3+2\theta_1-2}+\sigma q^{2\theta_1-2\theta_3})q^{2k_3-2k_1} . \\
\end{align*}
This sum is computed in the following lemma, leading to the announced result.

\begin{lem}
For $0<\theta,\varphi<1$, one has
\begin{align*}
 S(\theta,\varphi)=& \sum_{0\leqslant i,j<n}(\sigma q^{2(\varphi-\theta)}+\sigma q^{-(2\varphi-2\theta)}+\sigma q^{2(1-\varphi-\theta)}+\sigma q^{-2(1-\varphi-\theta)})q^{2j-2i} \\
 = & \plus{2\varphi-1}\plus{2\theta-1}\frac{\minus{2n}}{\minus{1}\minus{1}}-\frac{2n}{\minus{1}} \left\{ \begin{array}{ll}
\plus{2\varphi-1}\plus{2\theta} &  \text{ if }\theta<\varphi,\ \theta+\varphi<1,\\
\plus{2\varphi-1}\plus{2(1-\theta)} & \text{ if }\theta>\varphi,\ \theta+\varphi>1,\\
\plus{2\varphi}\plus{2\theta-1} & \text{ if }\theta>\varphi,\ \theta+\varphi<1,\\
\plus{2(1-\varphi)}\plus{2\theta-1} & \text{ if }\theta<\varphi,\ \theta+\varphi>1,\\
 \end{array}.\right. \\
\end{align*}
\end{lem}

\begin{proof}
We split the sum in three parts:
\begin{itemize}[label=-]
\item If $j>i$, the exponent is always positive due to the assumption on $\theta$ and $\varphi$, and therefore $\sigma=+1$. Then, we notice that
\begin{align*}
q^{2(\varphi-\theta)}+ q^{-(2\varphi-2\theta)}+ q^{2(1-\varphi-\theta)}+q^{-2(1-\varphi-\theta)} & = \plus{2\varphi-2\theta}+\plus{2-\varphi-2\theta} \\
& = \plus{2\varphi-1}\plus{2\theta-1},\\
\end{align*}
and compute
\begin{align*}
\sum_{j=1}^{n-1}\sum_{i=0}^{j-1} q^{2(j-i)}= & \sum_{j=1}^{n-1}\sum_{i=0}^{j-1} q^{2(j-i)} \\
=&  \sum_{j=1}^{n-1} q^{2j}\frac{1-q^{-2j}}{1-q^{-2}}  \\
= &  \frac{q}{\minus{1}}q^2\frac{q^{2(n-1)}-1}{q^2-1}-(n-1)\frac{q}{\minus{1}} \\
= & \frac{q^{2n}-q^2}{\minus{1}^2}-(n-1)\frac{q}{\minus{1}} . \\
\end{align*}

\item When we add the the sum for $i>j$, the value is the same if we replce $q$ by $q^{-1}$, since the sign exponent $2(j-i)$ changes it sign. The sign $\sigma$ is this time $-1$. Hence, we get
\begin{align*}
 & \frac{q^{2n}-q^2}{\minus{1}^2}-\frac{q^{-2n}-q^{-2}}{\minus{1}^2} - (n-1)\frac{q}{\minus{1}}-(n-1)\frac{q^{-1}}{\minus{1}} \\
 = &  \frac{\minus{2n}}{\minus{1}^2}-\frac{\minus{2}}{\minus{1}^2}-(n-1)\frac{\minus{1}\plus{1}}{\minus{1}^2} \\
 = &  \frac{\minus{2n}}{\minus{1}\minus{1}}-n\frac{\plus{1}}{\minus{1}}.\\
\end{align*}

\item For the central sum, since $i=j$, we have the following factorizations:
	\begin{itemize}
	\item If $\theta<\varphi$ and $\theta+\varphi<1$:
	$$n(q^{2(\varphi-\theta)}- q^{-(2\varphi-2\theta)}+ q^{2(1-\varphi-\theta)}-q^{-2(1-\varphi-\theta)}) = -n\plus{2\varphi-1}\minus{2\theta-1}.$$
	\item If $\theta>\varphi$ and $\theta+\varphi>1$:
	$$n(-q^{2(\varphi-\theta)}+ q^{-(2\varphi-2\theta)}- q^{2(1-\varphi-\theta)}+q^{-2(1-\varphi-\theta)}) = n\plus{2\varphi-1}\minus{2\theta-1}.$$
	\item If $\theta>\varphi$ and $\theta+\varphi<1$:
	$$n(-q^{2(\varphi-\theta)}+ q^{-(2\varphi-2\theta)}+ q^{2(1-\varphi-\theta)}-q^{-2(1-\varphi-\theta)}) = -n\plus{2\theta-1}\minus{2\varphi-1}.$$
	\item If $\theta<\varphi$ and $\theta+\varphi>1$:
	$$n(q^{2(\varphi-\theta)}- q^{-(2\varphi-2\theta)}- q^{2(1-\varphi-\theta)}+q^{-2(1-\varphi-\theta)}) = n\plus{2\theta-1}\minus{2\varphi-1}.$$
	\end{itemize}

\end{itemize}
We can pass from one factorization to another using the symmetries $(\theta,\varphi)\mapsto(\varphi,\theta)$, $\theta\mapsto 1-\theta$, \dots This corresponds to the action of the diedral group over the regions delimited by the inequalities. The total contribution is then
$$ \plus{2\varphi-1}\plus{2\theta-1}\frac{\minus{2n}}{\minus{1}\minus{1}}-\frac{n}{\minus{1}}\left( \plus{1}\plus{2\varphi-1}\plus{2\theta-1}-\minus{1}G(\theta,\varphi)\right),$$
where $G(\theta,\varphi)$ is the above factor without the $n$. If $\theta<\varphi$ and $\theta+\varphi<1$, the second term can be factorized, leading to
$$ \plus{2\varphi-1}\plus{2\theta-1}\frac{\minus{2n}}{\minus{1}\minus{1}}-\frac{2n}{\minus{1}}\plus{2\varphi-1}\plus{2\theta}.$$
The other cases are obtained by symmetry and or treated similarly.
\end{proof}

\end{proof}

	\subsection{Global enumerative problem}	
	
	We now can prove Theorem \ref{theorem paper}, relating $R_{\Delta,s}$ and $N_{\Delta(s)}^{\partial,\text{trop}}$. This theorem is a consequence of Proposition \ref{firstordersol} that proves that the multiplicities of rational tropical curves of degree $\Delta(s)$ given through the correspondence the are proportional to the refined Block-G\"ottsche (Definition \ref{refined multiplicity}) used to compute $N_{\Delta(s)}^{\partial,\mathrm{trop}}$.\\	

Let $\mathcal{P}_t$ be a symmetric configuration of points depending on a parameter $t$, chosen as in section \ref{classical problem}. For a pair of complex conjugated points $p,\overline{p}\in\mathcal{P}_t$, let $\pm\theta_p\pi$ be their aguments, with $0<\theta_p<1$. We keep similar notations for the rest. Being given a parametrized tropical curve $h_0:\Gamma_0\rightarrow N_\RR$ of degree $\Delta(s)$ with $\text{ev}(\Gamma_0)=\mu$, the task of computing its multiplicity amounts to two things. The first is to find the parametrized real tropical curves of degree $\Delta$ having the same image and admitting first order solutions. The second task consists in finding the first order solutions to $\Theta(\chi,\alpha,\beta)=(1,\zeta^\Gamma)$.\\

The first problem is taken care of with the following lemma.

\begin{lem}
\label{lem no flat vertex}
The parametrized real tropical curves with a trivalent flat vertex cannot be the tropicalization of a family of parametrized real rational curves passing through the symmetric configuration $\mathcal{P}_t$.
\end{lem}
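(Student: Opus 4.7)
The plan is to contradict the existence of a realizing family by exhibiting an algebraic constraint on the first-order moments $\zeta^\Gamma$ that is forced by the flat vertex and generically violated by $\mathcal{P}_t$.

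First, locate where flat vertices arise. For generic $\mu$ the parametrization $h_0:\Gamma_0\to N_\RR$ of degree $\Delta(s)$ satisfies $m_{\Gamma_0}^\CC\neq 0$, hence has no flat vertex; every real reparametrization $(\Gamma,h)=(\Gamma(\mathcal R),h_{\mathcal R})$ of degree $\Delta$ is obtained from $\Gamma_0$ by the doubling construction of Proposition~\ref{realstruc}. A trivalent flat vertex $V$ of $\Gamma$ can therefore only come from a point of $\mathcal R$ lying in the interior of some edge $\gamma_0$ of $\Gamma_0$: the three edges at $V$ carry slopes $(n,\tfrac{n}{2},\tfrac{n}{2})$, with $n$ the slope of $\gamma_0$, the two halved slopes corresponding to the $\sigma$-exchanged doubled copies of $\gamma_0$ and the slope $n$ to the fixed part.

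Second, reduce the equations at $V$. Let $v\in N$ be primitive such that the three slopes at $V$ are all proportional to $v$, and consider the character $m:=\iota_v\omega\in M$. Since $\langle m,n_\gamma\rangle=0$ for every edge $\gamma$ at $V$, evaluating the three transfer equations of Proposition~\ref{morphism}$(iii)$ at $V$ on $m$ kills every $\alpha_\gamma^{n_\gamma}$ contribution. The three resulting scalar equations all constrain the single quantity $\chi_V(m)$, hence the local system is overdetermined: eliminating $\chi_V(m)$ produces two independent algebraic relations involving only the $\phi_\gamma(m)$'s (functions of the $\beta$'s by Proposition~\ref{propcalcul}) and the values $\chi_w(m)$ at the three neighboring vertices of $V$.

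Third, propagate to moments. Using the transfer equations at all other (non-flat, by Step 1) vertices and the moment equations $\chi_{v_j}(m_j)\varphi_j=\zeta_j^\Gamma$ at the unbounded ends, every $\chi_w(m)$ can be expressed as an explicit rational function of the $\beta$'s and of the first-order moments $\zeta^\Gamma$. Substituting into the two relations from Step 2 yields two polynomial identities in $(\beta,\zeta^\Gamma)$ that must hold for any first-order solution localized at $(\Gamma,h)$. These identities are genuinely non-trivial in $\zeta^\Gamma$: if they were tautological, the inductive Jacobian computation in the proof of Theorem~\ref{realization theorem} would remain invertible at $V$ also in the flat case; but that computation used, at each step, the freeness of the pair of characters $(m_0,m_1)$ attached to the two new ends, which breaks down precisely when the three slopes at $V$ are collinear. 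Consequently the locus cut out by the two relations has positive codimension in moment-space, and genericity of $\mathcal P_t$ forces $\zeta^\Gamma$ to avoid it, so no first-order solution exists and no family through $\mathcal P_t$ can tropicalize to $(\Gamma,h)$.

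The hard part will be the non-triviality claim in Step 3: one must rule out that the two relations obtained at $V$ are already implied by the Menelaus condition and the other transfer equations. I would handle this by tying the infinitesimal calculation directly to the Jacobian failure at a flat vertex in Theorem~\ref{realization theorem}, showing that the flatness shifts exactly two degrees of freedom from $\chi_V$ onto the moment side, so that the resulting codimension-$2$ condition on $\zeta^\Gamma$ is genuinely new.
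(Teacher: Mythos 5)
Your Step 1 agrees with the paper: a trivalent flat vertex of $(\Gamma,h)$ can only come from a splitting point of $\mathcal R$ in the interior of an edge of $\Gamma_0$, producing a \emph{fixed} vertex $V$ with collinear slopes $(n,\tfrac n2,\tfrac n2)$ whose two halved edges lead to $\sigma$-exchanged branches. From there, however, the argument has a genuine gap: the entire content of the lemma is the non-triviality of the obstruction you extract in Step 2, and Step 3 does not establish it. The inference ``if the relations were tautological, the Jacobian computation of Theorem \ref{realization theorem} would remain invertible at $V$'' is a non sequitur — a degenerate Jacobian is exactly what one expects when the relations \emph{are} tautological and the solution set is positive-dimensional, so the breakdown of the inductive step tells you nothing about whether the relations genuinely constrain $\zeta^\Gamma$. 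Even if you proved the differential of $\Theta$ degenerate everywhere on this combinatorial type, you would still need the image to miss the particular slice $\{1\}\times(\text{moment space})$ for generic moments, which does not follow from the image being a proper subvariety of the full target. Note also that two of your three equations at $V$ are complex conjugates of each other (by the reality statements (i)--(ii) of Proposition \ref{morphism}), so the count of ``two independent algebraic relations'' needs care.

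The paper closes exactly this gap with an explicit phase computation rather than a dimension count. Every vertex beyond $V$ in the two exchanged branches is trivalent, so the transfer equation evaluated at $m_\gamma=\iota_{n_\gamma}\omega$ reduces at first order to $\chi_{\mathfrak t(\gamma)}(m_\gamma)=\pm\chi_{\mathfrak h(\gamma)}(m_\gamma)$; iterating down a branch and using the balancing relation $\chi_W(m_\gamma)=\prod_{\gamma'}\chi_W(m_{\gamma'})$ yields $\chi_V(m_\gamma)=\pm\prod_e\chi_{v_e}(m_e)$, a product over the unbounded ends of the branch. The point conditions pin the argument of each factor to $\pm\theta_e\pi$ modulo $\pi$, so the argument of $\chi_V(m_\gamma)$ is $\sum\pm\theta_e\pi$ modulo $\pi$, which is nonzero for generic $\theta_e\in\left]0;1\right[$; but $V$ is a fixed vertex, so $\chi_V$ must take real values. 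This is the concrete violation your Step 3 is missing — it is the explicit form of the ``imaginary part'' relation you would obtain by eliminating $\chi_V(m)$ — and you should substitute it for the Jacobian heuristic.
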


\begin{proof}
Let $h_0:\Gamma_0\rightarrow N_\RR$ be a parametrized rational tropical curve of degree $\Delta(s)$ such that $\text{ev}(\Gamma_0)=\mu$. Let $C_\text{trop}=h_0(\Gamma_0)$ be its image, which is a plane tropical curve. The different possible real structures are described in Proposition \ref{realstruc}. Let $(\Gamma,h)$ be one of them.\\

Assume that there exists a trivalent flat vertex $w$ in $(\Gamma,h)$, with two outgoing edges leading to branches exchanged by the involution. Let $f_t:\CC P^1\dashrightarrow N\otimes\CC((t))^*$ be a parametrized real rational curve tropicalizing to $(\Gamma,h)$.\\

First, notice that at each vertex with outer edges directed by $n_i$, by denoting $m_i=\iota_{n_i}\omega$, due to balancing condition, one has $\prod_i\chi_V(m_i)=1$. Thus, for every bounded edge $\gamma$, one has
$$\chi_\hg(m_\gamma)=\prod_{\mathfrak{t}(\gamma')=\hg}\chi_\hg(m_{\gamma'}).$$
Moreover, the vertices in each branch are trivalent, therefore, at first order, $\phi_\gamma=(\pm 1)^{n_{\gamma'}}$, except for edge $\gamma$ such tht $\tg=V$, where $\phi_\gamma=(2i)^{n_\gamma}$. In any case, the transfer relation $\phi_\gamma\cdot\frac{\chi_\tg}{\chi_\hg}\cdot\alpha_\gamma^{n_\gamma}=1$ evaluated at $m_\gamma$ and at first order gives us
$$\chi_\tg(m_\gamma)=\pm\chi_\hg(m_\gamma).$$
Multiplying these relations for every vertex in the branch, for $\gamma$ the one of the complex edges adjacent to $V$, we get
$$\chi_V(m_\gamma)=\pm\prod_e \chi_{v_e}(m_{e}),$$
where the product is indexed by the unbounded ends belonging to the branch, and $v_e$ denotes the adjacent vertex to a branch $e$, and $m_e$ the character used to computes its moment. Due to condition of passing through $\mathcal{P}_t$, the argument of $\chi_{v_e}(m_e)$ is $\pm\theta_e$ or $\pi\pm\theta_e$, \textit{i.e.} the argument of the point through which the curve passes. Therefore, the argument of $\chi_V(m_\gamma)$ is a sum of $\pm\theta_e$. In particular, for generic points, it is not real, which should be the case if $V$ is fixed: $\chi_V$ would be real. Hence, the curve cannot have any trivalent flat vertex if the configuration is generic.
\end{proof}

We now count the first order solutions, which lead to true solutions thanks to the correspondence theorem. It hapens thaht the knowing of the first order is sufficient to recover the sign and the logarithmic area. The refined count of these first order solutions thus gives the right multiplicity used to count tropical curves.

\begin{prop}\label{firstordersol}
Let $\mathcal{P}_t$ be a symmetric real configuration of points as previously chosen, tropicalizing on a family of moments $\mu$. Let $h_0:\Gamma_0\rightarrow N_\RR$ be a parametrized tropical curve of degree $\Delta(s)$ having moments $\mu$. The refined count according to the log-area and sign $\sigma$ is given by
$$m'_{\Gamma_0}=4\frac{2^{|s|}}{\minus{1}^{|s|}} \prod_{V}\minus{\frac{m_V}{2}},$$
where the product is over the trivalent vertices of $\Gamma_0$.
\end{prop}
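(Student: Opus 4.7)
The plan is to reduce $m'_{\Gamma_0}$ to the local enumerative problems of the previous subsections, and then sum over the admissible real structures on the plane tropical curve $h_0(\Gamma_0)$. By Proposition \ref{realstruc}, every real parametrized rational tropical curve of degree $\Delta$ with image $h_0(\Gamma_0)$ arises as $(\Gamma(\mathcal{R}),h_{\mathcal{R}},\sigma)$ for a unique admissible set $\mathcal{R}$ in the even subgraph $\Gamma_\mathrm{even}$. Lemma \ref{lem no flat vertex} excludes those real structures that would produce a trivalent flat vertex, so I restrict attention to the remaining ones. For each such $\mathcal{R}$, the correspondence theorem (Theorem \ref{realization theorem}) converts the count of real curves tropicalizing to $(\Gamma(\mathcal{R}),h_{\mathcal{R}})$ and passing through $\mathcal{P}_t$ into the count of first-order solutions of $\Theta(\chi,\alpha,\beta) = (1,\zeta^\Gamma)$.

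Next, I would argue that for each fixed $\mathcal{R}$ the refined count factors as a product over the fixed vertices of $\Gamma(\mathcal{R})$. Additivity of the log-area near the tropical limit, combined with its vanishing on complex vertices (Section \ref{local computations}), shows that the log-area is a sum over fixed vertices; the sign $\sigma$ behaves multiplicatively at first order for the same reason. The local system $\Theta|_V$ at a fixed vertex $V$ is then exactly one of the three problems solved earlier: the line problem when $V$ has three real branches, the parabola problem when $V$ has one pair of conjugate branches, and the ellipse problem when $V$ has two. The local refined counts are given by Proposition \ref{prop trivalent real vertex}, the corollary following Proposition \ref{prop parabola problem}, and Corollary \ref{coro ellipse problem} respectively. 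The complex vertex pairs contribute only a numerical multiplicity via Proposition \ref{prop trivalent complex vertex}, since their log-area and sign contribution vanish.

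The final step is to sum this product over admissible sets $\mathcal{R}$. A doubled unbounded end of $\Gamma_0$ corresponds to a pair of conjugate boundary points of some normalized argument $\theta\pi$; the choice of $\mathcal{R}$ determines at which vertex of $\Gamma_\mathrm{even}$ the pair splits into two conjugate edges. Summing over the splitting position traverses the path from the doubled end back to its stem: each intermediate line-type vertex contributes its factor $\minus{m_V/2}$, and the $\plus{l(2\theta-1)}/\minus{l}$ factor from the parabola vertex telescopes via the same identity that appears in the proof of the corollary after Proposition \ref{prop parabola problem}, producing a clean $2/\minus{1}$ per doubled end. The pentavalent ellipse terms that arise when two doubled ends share a path in $\Gamma_\mathrm{even}$ must cancel their $\theta$-dependent residues against the configurations where the two ends split separately; this cancellation is precisely captured by the correction term in Corollary \ref{coro ellipse problem}. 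Iterating over all doubled ends yields the claimed formula, with the prefactor of $4$ accounting for the two orientations and the deck symmetry $(\pm 1, \pm 1)$.

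The main obstacle is this combinatorial cancellation in the last step, especially the ellipse case: two doubled ends sharing a common splitting path interact nontrivially and require the second term of Corollary \ref{coro ellipse problem} to vanish exactly against the ``separated'' splittings. I would prove this by induction on $|s|$, with base case $|s|=0$ reducing to Proposition \ref{prop trivalent real vertex} applied at each vertex of the tree; the inductive step removes one doubled end at a time, exploiting the transfer structure already used in the proof of Theorem \ref{realization theorem}. This localizes the analytic complexity into a single $q$-identity that can be verified directly.
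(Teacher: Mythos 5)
Your proposal follows essentially the same route as the paper's proof: decompose via Proposition \ref{realstruc} into the real structures $\Gamma(\mathcal{R})$, discard flat vertices by Lemma \ref{lem no flat vertex}, reduce to first-order solutions via Theorem \ref{realization theorem}, feed in the three local counts, and resolve the sum over splittings by the cancellation between the $\theta$-dependent correction term of Corollary \ref{coro ellipse problem} and the separated-splitting configurations --- which is exactly the content of the key lemma inside the paper's proof, where the quantity $R_\gamma+\frac{C_\gamma}{\minus{1}}\sum_{\Theta_\gamma}\plus{2\theta-1}$ is shown to be multiplicative over vertices. The only difference is organizational (you induct on $|s|$, the paper recurses over edges of the tree from the leaves to the root), and your identification of the single $q$-identity carrying the analytic weight is the correct one.
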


Before proving this proposition, we can now prove Theorem \ref{theorem paper}.

\begin{proof}[Proof of Theorem \ref{theorem paper}]
This is a consequence of the correspondence theorem that states that each of the first order solutions lifts to a unique solution, given by Proposition \ref{firstordersol}: we obtain $R_{\Delta,s}$ by counting curves with multiplicities $\frac{1}{4}m'_{\Gamma_0}$ while $N_{\Delta(s)}^{\partial,\mathrm{trop}}$ is obtained by counting them with multiplicity $m_{\Gamma_0}^q$. Finally, notice that
$$\frac{1}{4}m_{\Gamma_0}'=\frac{2^{|s|}\minus{\frac{1}{2}}^{m-2-|s|}}{\minus{1}^{|s|}}m_{\Gamma_0}^q.$$
\end{proof}

We now prove Proposition \ref{firstordersol}.

\begin{proof}[Proof of Proposition \ref{firstordersol}]
The proof is made with an induction on the number of vertices of the curve $\Gamma$. Exceptionally, to suit the induction, a vector $n_j$ of $N$ directing an unbounded end $e_j$ may not be primitive. In that case, we denote by $m_j=\omega\left(\frac{n_j}{l(n_j)},-\right)$ the dual vector, but still of lattice length $1$.\\

Let $(\Gamma_0,h_0)$ be a parametrized tropical curve of degree $\Delta(s)$ with $\mathrm{ev}(\Gamma_0,h_0)=\mu$. The real tropical curve that may have first order solutions are real tropical curves $(\Gamma,h,\sigma)$ with the same image and without any flat vertex. The proof proceeds as follows: we compute the refined number of first order solution by evicting one by one all the vertices for any real tropical curve $(\Gamma,h)$. Then, we add all the refined count for every real tropical curve $(\Gamma,h)$ such that $h(\Gamma)=h_0(\Gamma_0)$.\\

Let $(\Gamma,h)$ be a real tropical curve with $h(\Gamma)=h_0(\Gamma_0)$ without flat vertex. For a vertex, we have two possibilities: either it is a complex vertex, or it is a real vertex. In the second case, we assume that it is an extremal vertex of the subgraph $\fix$, meaning that every vertex $W\succ V$ is a complex vertex. Let $V$ such a vertex. We have three possibilities: $V$ is trivalent and adjacent to two real unbounded ends, $V$ is quadrivalent and adjacent to one real unbounded end, or $V$ is pentavalent. The next four points do the following:
\begin{enumerate}[label=(\roman*)]
\item evict a trivalent real vertex,
\item evict a complex branch,
\item evict a quadrivalent vertex,
\item evict a pentavalent vertex.
\end{enumerate}

\begin{enumerate}[label=(\roman*)]
\item First, assume that $V$ is adjacent to two unbounded ends indexed by $0$ and $1$. Let $\gamma$ be the unique edge with $\hg=V$. If $V$ is the only vertex of the tropical curve, the refined count has already been proven to be $4\minus{\frac{m_V}{2}}$. Otherwise, $\gamma$ is bounded and the coordinates associated to $V$ and $\gamma$ are $\alpha_\gamma$ and $\chi_V$. Then, we have to solve for $\chi_V:M\rightarrow\RR^*$ the following system:
$$\left\{\begin{array}{l}
\chi_V(m_0)=\pm\zeta_0^\Gamma\in\RR^* \\
\chi_V(m_1)=\pm\zeta_1^\Gamma\in\RR^* \\
\end{array}\right. .$$
Recall that the vectors $n_0$ and $n_1$ might not be primitive, but $m_0$ and $m_1$ are. According to Proposition \ref{prop trivalent real vertex} , this system leads to $4$ solutions: if $(e_1^*,e_2^*)$ is a basis of $M$, the absolute value of $\chi_V(e_1^*)$ and $\chi_V(e_2^*)$ is uniquely determined, while the sign may be chosen arbitrarily. Notice that some choices of signs for the right-hand side of the system may provide several solutions, while other provide none. Let $m_\gamma$ be the primitive vector dual to $n_\gamma$. Let $\tilde{m}_\gamma$ be such that $(m_\gamma,\tilde{m}_\gamma)$ is a basis of $M$. These $4$ solutions separate themselves into two groups of $2$, according to the sign of $\chi_V(m_\gamma)$. We have the transfer equation
$$\phi_\gamma\cdot\frac{\chi_\tg}{\chi_V}\cdot\alpha_\gamma^{n_\gamma}=1\in N\otimes\RR^*.$$
We evaluate at $m_\gamma$, leading to
$$\phi_\gamma(m_\gamma)\chi_\tg(m_\gamma)=\chi_V(m_\gamma)\in\RR^*.$$
Recall that according to the choice of signs of $\pm\zeta_j$, the sign of $\chi_V(m_\gamma)$ may change. So we migt as well add $\pm$, leading to
$$\phi_\gamma(m_\gamma)\chi_\tg(m_\gamma)=\pm|\chi_V(m_\gamma)|\in\RR^*.$$
Let replace the bounded edge $\gamma$ by an unbounded end with direction $n_\gamma$, leading to a new parametrized tropical curve $(\Gamma',h')$. The above  equation is the equation associated to this new unbounded end in $\Gamma'$, in the corresponding system $\Theta(\chi,\alpha,\beta)=(1,\zeta^\Gamma)$. Thus, we can proceed by induction. Let $4R$ denote the refined signed count of oriented curves lifting $\Gamma'$. These $4R$ oriented curves separate themselves into four groups of $R$ oriented curves according to the value of the signs the function $\phi_\gamma\cdot\chi_\tg$ takes on the basis $(m_\gamma,\tilde{m}_\gamma)$, using the action of the deck transformation group $\{\pm 1\}^2$. On the other side, we get only the $2$ solutions among those for which $\phi_\gamma(m_\gamma)\chi_\tg(m_\gamma)=\chi_V(m_\gamma)$. Last, we need to solve for $\alpha_\gamma$. By evaluating at $\tilde{m}_\gamma$, we get
$$\alpha_\gamma^{\langle n_\gamma,\tilde{m}_\gamma\rangle}=\frac{\chi_V(\tilde{m}_\gamma)}{\phi_\gamma(\tilde{m}_\gamma)\cdot\chi_\tg(\tilde{m}_\gamma)}.$$
Notice that through the presence of $\phi_\gamma$ and $\chi_\tg$, the right hand term depends on which of the $4R$ solutions is chosen. The solving as well as the number of solutions depends on the sign of the right-hand side.
	\begin{itemize}[label=$\star$]
	\item If $n_\gamma$ has odd integer length, then we solve uniquely for $\alpha_\gamma$ for each possible sign of $\chi_V(\tilde{m}_\gamma)$. The sign of $\chi_V(m_\gamma)$ is already determined since we have
	$$\phi_\gamma(m_\gamma)\chi_\tg(m_\gamma)=\chi_V(m_\gamma).$$
Thus, each of the oriented curves in each of the groups have two possible solutions for $\chi_V$. This corresponds to the gluing of two possible curves, one increasing the logarithmic rotation number by one, the other decreasing it by one. In each case, the orientation of the curve propagates, and the signed count becomes
	$$4\times (q^{m_V/2}-q^{-m_V/2})R.$$
	
	\item If $n_\gamma$ has even integer length, then the sign of $\chi_V(m_\gamma)$ is still determined, and the sign of $\chi_V(\tilde{m}_\gamma)$ is forced in order to have at least one solution for $\alpha_\gamma$. In that case, we have only one of the four possible curves that we can glue, but there are two possible values for $\alpha_\gamma$, which means two gluings. One of these choices decreases the logarithmic rotation number by one while the other increases it by one. The signed count then becomes again
	$$4\times (q^{m_V/2}-q^{-m_V/2})R.$$
	\end{itemize}

\item Before considering quadrivalent and pentavalent vertices, we make point about the importance of  the repartition of the complex points inside a complex branch. Consider a complex branch $\mathcal{B}$ of the real tropical curve $\Gamma$. This means that there exists some quadrivalent or pentavalent vertex $V$, and $\mathcal{B}$ is the subgraph of $\Gamma$ accessible via one of the complex edges adjacent to $V$. Let $\gamma_0$ be this edge, called root of the branch. Notice that for a branch $\mathcal{B}$, we have the conjugated branch $\sigma(\mathcal{B})$.\\

For any edge $\gamma$, recall we distinguish between its moment, which is $\chi_\tg(m_\gamma)$, and its primitive moment, which is $\chi_\tg\left(\frac{m_\gamma}{l(m_\gamma)}\right)$. The goal of this paragraph is to express the moment of the root of the branch in term of the moments of the unbounded ends, and then count the possible lifts.\\

Let index by $[\![ 1;p]\!]$ the (complex) unbounded ends belonging to $\mathcal{B}$. Each unbounded end indexed $j$ is associated to a pair of conjugated points having moment $\pm e^{\pm i\pi\theta_j}$. To distinguish between $\mathcal{B}$ and $\sigma(\mathcal{B})$, we assume that the point belonging to $\mathcal{B}$ associated to the first end has moment $e^{i\pi\theta_1}$ or $-e^{-i\pi\theta_1}$. Then, there are $2^{p-1}$ possible repartitions for the other points according to whether it is the point with argument $\theta_j\pi$ or $-\theta_j\pi$ that belongs to $\mathcal{B}$.\\

For any repartition of the arguments, we have:
	\begin{itemize}[label=$\star$]
	\item due to balancing condition, at any vertex $W$ with ingoing edge $\gamma$ and outgoing edges $\gamma_1$ and $\gamma_2$, one has
	$$\chi_V(m_\gamma)=\chi_V(m_1)\chi_V(m_2).$$
	\item for any edge $\gamma$, due to the transfer equation evaluated at $\frac{m_\gamma}{l(m_\gamma)}$, one has
	$$ \chi_\hg\left( \frac{m_\gamma}{l(m_\gamma)} \right)=\phi_\gamma\left( \frac{m_\gamma}{l(m_\gamma)} \right)\chi_\tg\left( \frac{m_\gamma}{l(m_\gamma)} \right)=\pm\chi_\tg\left( \frac{m_\gamma}{l(m_\gamma)} \right) ,$$
	and thus also
	$$ \chi_\hg\left( m_\gamma\right)=\phi_\gamma\left(m_\gamma\right)\chi_\tg\left(m_\gamma \right)=\pm\chi_\tg\left(m_\gamma\right) .$$
	\end{itemize}
	Therefore, if the moments of the unbounded ends are fixed, the moment of every bounded edge is also fixed, and is equal up to sign to the product of the moments of the unbounded ends.\\
	
	We can now compute the number of first order solution for every possible primitive moment of the root $\gamma_0$. Fix one of these possible values. Let $V_0=\mathfrak{h}(\gamma_0)$. Proposition \ref{prop trivalent complex vertex} ensures that there is $\frac{m_{V_0}}{4l(m_{\gamma_0})}$ choices for $\chi_{V_0}$. The $4$ is to account that the multiplicity appearing in \ref{prop trivalent complex vertex} is not the multiplicity of $V_0$ in the curve $\Gamma_0$. This choice now fixes the primitive moment of the edges outgoing from $V_0$ and one can proceed by induction. We get a number of first order  solutions equal to
	$$\prod_{W\in\mathcal{B}-\{V\}}\frac{m_W}{4l(m_{\gamma_W})},$$
where $\gamma_W$ denotes the unique edge such that $\hg=W$. This number of solutions has to be multiplied by the number of possible first order solutions of $\alpha_\gamma$. This solving for $\alpha_\gamma$ is obtained by evaluating the transfer equation at some vector completing $\frac{m_\gamma}{l(m_\gamma)}$ into a basis of $M$. We thus get $l(m_\gamma)$ solutions. In total, we get $\prod_W\frac{m_W}{4}$ first order solutions for each possible value of the primitive moment of $\gamma_0$.\\

To finish this part of the computation, we give the set $\Theta$ of possible values of the moment of the root edge $\gamma_0$. According to the above, it is up to sign the product of the moments of the unbounded ends in $\mathcal{B}$. The possible values are
$$\pm e^{i\pi (\pm\theta_1\pm\theta_2\pm\cdots\pm\theta_p)}.$$
There are $2^{p+1}$ such values, \textit{i.e.} $2^p$ pairs of conjugated values, themselves consisting in $2^{p-1}$ pairs of opposite conjugated pairs. On the other side, as the moment of each unbounded end can take four values, and the conjugated branch takes the conjugated moments, there are $\frac{4^p}{2}=2^{2p-1}$ repartitions of the moments. They are equally spread between the $2^{p}$ pairs of conjugated moments. Thus, each pair of conjugated moments is attained by $\frac{2^{2p-1}}{2^{p}}=2^{p-1}$ repartitions. Another set of $2^{p-1}$ repartitions is assigned to the opposite pair of conjugated moments.\\

\begin{expl}
For $p=1$, there is only one $4$-tuple. For $p=2$, there are two $4$-tuple, each one attained by four repartitions of the arguments of the unbounded ends. There are indeed $8$ possible repartitions: choose $\theta_1$ or $1-\theta_1$ (opposite pair), and the point with him on the branch, of which there are $4$.
\end{expl}

Finally, the multiplicity to account for the first order solutions corresponding to coordinates in the branch is
$$2^{p-1}\prod_W \frac{m_W}{4}=\prod_W \frac{m_W}{2},$$
since there are $p-1$ vertices, and we are left to solve the problem where the branch is replaced by a pair of conjugated ends, and the condition imposed is a fixed moment.

\item Now, assume that $V$ is a quadrivalent vertex adjacent to a real unbounded end and two exchanged complex edges $\gamma^\CC,\sigma(\gamma^\CC)$ which might be unbounded or not. We denote by $\mathcal{B}$ the branch accessible by $\gamma^\CC$. Let $n_0$ be the common slope of $h$ on the complex edges, and $n_1$ be the slope of $h$ on the real unbounded end. Let $V_0=\mathfrak{h}(\gamma^\CC)$ be the first complex vertex in the branch if there is one. Let $\beta$ and $\chi_V$ be the coordinates associated to $V$.\\

Using the previous point, to find $\chi_V$ and $\beta$, we have to solve the moment problem for a parabola, equivalent to the following system:
$$\left\{ \begin{array}{l}
\chi_V(m_0) (\beta-i)^{\left\langle m_0,n_1 \right\rangle }=\pm\chi_{V_0}(m_0) ,\\
\chi_Vm_1(\beta^2+1)^{\left\langle m_1,n_0 \right\rangle}=\pm\zeta^\Gamma_1.\\
\end{array}\right. $$
The first equation is the first order of the transfer equation of $\gamma^\CC$ evaluated at vector $m_0=\iota_{n_0}\omega$, while the second equation is the moment equation for the real unbounded end. If the complex edges adjacent to $V$ are unbounded ends, the first equation is replaced with the moment equation $\chi_V(m_0) (\beta-i)^{\left\langle m_0,n_1 \right\rangle }=\pm\zeta^\Gamma_0\in\CC^*$. This system is the parabola problem for a curve of degree given by the outgoing edges of $V$, and with a pair of complex points having moments $\pm e^{i\pi\theta}$, where $\theta$ takes the values provided by the previous point. For each possible value, the refined count was proven to be
$$4\plus{2\theta-1}\frac{\minus{\frac{m_V}{2}}}{\minus{1}}.$$
This contribution has to be multiplied by $\prod_W\frac{m_W}{2}$ and added over the $2^{p-1}$ values that may be taken by $\theta$.

\item If the vertex $V$ is pentavalent, adjacent to two pairs of exchanged complex edges behind which we find four branches, similarly, the number of solutions in the branches has already been taken care of: the refined count for the moment problem associated to $V$ and each possible value of the moments of the adjacent complex edges is multiplied by $\prod_W\frac{m_W}{2}$, where $W$ goes through the vertices in both branches. Let $0<\theta,\varphi<1$ be such that the moments of the complex edges may take the value $\pm e^{\pm i\pi\theta}$ and $\pm e^{\pm i \pi\varphi}$, where $\varphi$ and $\theta$ may take several values, described by the second point of this proof. Then, the refined count is given by \ref{coro ellipse problem}. Total contribution is computed later on.\\
\end{enumerate}

All the cases of multiplicities have been dealt with for a specific real tropical curve. The last thing to do is to add the various contributions of all the real tropical curves of degree $\Delta$ that can be obtained from a a specific curve of degree $\Delta(s)$. These multiple choices are described by Proposition \ref{realstruc}.\\

Let $\gamma$ be an edge of $\Gamma_0$, and let $\mathcal{B}=\mathcal{B}(\gamma)$ be the branch of $\Gamma_0$ consisting of points accessible via $\gamma$ in the oriented structure of the tropical curve. We distinguish two cases according to whether the lift of $\gamma$ in $\Gamma$ is a pair of complex edges or a fixed edge.
\begin{itemize}
\item In case the lift consist in a pair of complex edges, let $\Theta_\gamma$ be the set of values of normalized arguments (in $]0;1[$) and up to sign that the moment of the edge may take when we vary the moments of the unbounded edges, so that we have exactly one representative by $4$-tuple. In the second point, we proved that if the branch $\mathcal{B}$ has $p$ unbounded ends, $|\Theta_\gamma|=2^{p-1}$.
\item Otherwise, the lift is a fixed edge.
\end{itemize}

Now, for an edge $\gamma$ in a component of $\Gamma_\mathrm{even}$, let $R_\gamma$ be the number of first order solutions in the branch $\mathcal{B}(\gamma)$ if the edge $\gamma$ is lifted to a fixed edge of $\gamma$, and let $C_\gamma$ be the number of first order solutions in the branch if $\gamma$ is lifted to a pair of conjugated edges.\\

Let $\gamma$ be an edge, and let $\gamma_1$ and $\gamma_2$ be the two edges emanating from $\hg$. We drop the index for quantities concerning $\gamma$ and index $1$ and $2$ quantities concerning $\gamma_1$ or $\gamma_2$. We have already proven the following statements.
\begin{itemize}[label=$\star$]
\item The complex contributions being a product over the vertices in the branch, they satisfy
$$C=C_1C_2\frac{m_\hg}{2}.$$
\item If $\gamma$ were an unbounded end having moment $\pm e^{\pm i\pi\theta}$, we would have $\Theta=\{\theta\}$.
\item The possible moments satisfy
$$\Theta=(\Theta_1+\Theta_2)\cup(1+\Theta_1+\Theta_2) \ \mathrm{mod } 1.$$
This relation is compatible with $2\times2^{p_1-1}\times 2^{p_2-1}=2^{p_1+p_2-1}$.
\end{itemize}
Finally, we have the following recursive relation, which emphasizes the following disjonction: if a bounded edge $\gamma$ is lifted to a fixed edge, the lift of $\hg$ in $\Gamma$ is either trivalent real vertex, a pentavalent vertex, or a quadrivalent vertex, and the two adjacent exchanged edges may be a lift of either $\gamma_1$ or $\gamma_2$. Therefore, one has,
\begin{align*}
R= &  \minus{\frac{m_\hg}{2}}R_1R_2+\sum_{(\theta,\varphi)\in\Theta_1\times\Theta_2}S(\theta,\varphi)C_1C_2 \\
& + \sum_{\theta_1\in\Theta_1}\plus{2\theta_1-1}\frac{\minus{\frac{m_\hg}{2}}}{\minus{1}}C_1R_2 + \sum_{\theta_2\in\Theta_2}\plus{2\theta_2-1}\frac{\minus{\frac{m_\hg}{2}}}{\minus{1}}R_1C_2.\\
\end{align*}

\begin{itemize}[label=-]
\item The first term corresponds to $\hg$ being lifted to a trivalent vertex, and the $\minus{\frac{m_\hg}{2}}$ accounts for the refined number of first order solutions associated to $\chi_\hg$).
\item The first sum corresponds to $\hg$ being lifted to a pentavalent vertex. recall that the refined number of local lifts is $S(\theta,\varphi)$, and is computed in \ref{coro ellipse problem}.
\item The second sum corresponds to $\hg$ being lifted to a quadrivalent vertex whose adjacent complex edges are lifts of $\gamma_1$.
\item Same with $\gamma_2$.
\end{itemize}
 
The final step is to compute 
$$\minus{\frac{m_\tg}{2}}R+\frac{\minus{\frac{m_\tg}{2}}}{\minus{1}}C \sum_{\theta\in\Theta}\plus{2\theta-1},$$
for the root edge of a component of $\Gamma_\mathrm{even}$, and a surprising simplification occurs, leading to a mere product over the vertices. Using the recursive formulas, we prove the following lemma.

\begin{lem}
If $\gamma_1$ and $\gamma_2$ are the edges leaving $V$ and $\gamma$ is such that $V=\hg$, then one has
$$R+\frac{C}{\minus{1}} \sum_{\theta\in\Theta}\plus{2\theta-1} = \minus{\frac{m_V}{2}}
\left(R_1+\frac{C_1}{\minus{1}} \sum_{\theta_1\in\Theta_1}\plus{2\theta_1-1}\right)
\left(R_2+\frac{C_2}{\minus{1}} \sum_{\theta_2\in\Theta_2}\plus{2\theta_2-1}\right).$$
\end{lem}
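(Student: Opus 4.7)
The idea is to substitute the recursion for $R$ stated just above the lemma into the left-hand side, expand the right-hand side directly, and reduce the whole statement to a combinatorial identity among the polynomials $\plus{2\theta-1}$. Using the product relation $C = C_1 C_2 \cdot m_V/2$, three out of four terms in each of the expanded expressions match verbatim: the $\minus{m_V/2} R_1 R_2$ term, and the two mixed terms $R_i C_j$ that come from the last two sums in the recursion. After cancelling these, what remains to be proven is
$$\sum_{(\theta_1,\theta_2) \in \Theta_1 \times \Theta_2} S(\theta_1,\theta_2) + \frac{m_V/2}{\minus{1}} \sum_{\theta \in \Theta}\plus{2\theta-1} = \frac{\minus{m_V/2}}{\minus{1}^2}\sum_{(\theta_1,\theta_2)}\plus{2\theta_1-1}\plus{2\theta_2-1}.$$

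The next step is to feed in the explicit value of $S(\theta_1,\theta_2)$ given by Corollary \ref{coro ellipse problem}, which writes it as a leading term $\frac{\minus{m_V/2}}{\minus{1}^2}\plus{2\theta_1-1}\plus{2\theta_2-1}$ plus a correction $-\frac{m_V/2}{\minus{1}} G(\theta_1,\theta_2)$, where $G$ is one of four piecewise products of $\plus{\cdot}$ values. Summing the leading terms already reproduces the right-hand side, so the statement collapses to the purely combinatorial identity
$$\sum_{(\theta_1,\theta_2) \in \Theta_1 \times \Theta_2} G(\theta_1,\theta_2) = \sum_{\theta \in \Theta}\plus{2\theta - 1}.$$

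The main obstacle, and the heart of the argument, is to verify this last identity. I would do it by a short case-by-case computation. Applying the product-to-sum rule $\plus{a}\plus{b} = \plus{a+b}+\plus{a-b}$ in each of the four regions defining $G$, together with the invariance $\plus{-a}=\plus{a}$ and the integer shifts coming from the freedom $\theta \sim \theta\pm 1$ in the description of moments, every case reduces to
$$G(\theta_1,\theta_2) = \plus{2\alpha-1} + \plus{2\beta-1},$$
where $\alpha,\beta\in(0,1)$ are the representatives of $(\theta_1+\theta_2)\bmod 1$ and $|\theta_1-\theta_2|$. These $\alpha$ and $\beta$ are precisely the two normalized arguments of the moment of $\gamma$ produced from the pair $(\theta_1,\theta_2)$ by the ``$\pm\theta_1\pm\theta_2$'' rule for $\Theta$ recalled in the second step of the proof of Proposition \ref{firstordersol}. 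Summing over $(\theta_1,\theta_2)\in \Theta_1\times\Theta_2$ therefore sweeps each element of $\Theta$ exactly once, which establishes the required identity and completes the proof of the lemma.
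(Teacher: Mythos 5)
Your proposal is correct and follows essentially the same route as the paper: expand the recursion for $R$, use $C=C_1C_2\,\frac{m_V}{2}$ to match the $R_1R_2$ and mixed terms, and then cancel the piecewise correction part of $S(\theta_1,\theta_2)$ against $\sum_{\theta\in\Theta}\plus{2\theta-1}$ via the case-by-case identity $G(\theta_1,\theta_2)=\plus{2\alpha-1}+\plus{2\beta-1}$ for the two representatives $\alpha,\beta$ of $\Theta$ arising from $(\theta_1,\theta_2)$. The only cosmetic difference is that you state this key identity as a sum of the two $\plus{\cdot}$ values, which is the correct reading of the (slightly ambiguously typeset) displayed equations in the paper's own proof.
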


\begin{proof}
We use the recursive relation to get
\begin{align*}
 R+\frac{C}{\minus{1}} \sum_{\theta\in\Theta}\plus{2\theta-1}
= & \minus{\frac{m_V}{2}}\left[ R_1R_2 + R_1\frac{C_2}{\minus{1}}\sum_{\Theta_2}\plus{2\theta_2-1}+R_2\frac{C_1}{\minus{1}}\sum_{\Theta_1}\plus{2\theta_1-1} \right] \\
&  +  \frac{m_V}{2}\frac{C_1C_2}{\minus{1}}\sum_\Theta \plus{2\theta-1}  + C_1C_2\sum_{\Theta_1\times\Theta_2} S(\theta_1,\theta_2) .\\
\end{align*}
We now relate the sum over $\Theta$ and the sum over $\Theta_1\times\Theta_2$.
\begin{itemize}
\item The value of $S(\theta_1,\theta_2)$ given by Corollary \ref{coro ellipse problem} depends on whether $\theta_1<\theta_2$ and $\theta_1+\theta_2<1$. It is a sum of two terms. The first does not depend on this condition while the value of the second depends. Thus, we have
	\begin{align*}
	\sum_{\Theta_1\times\Theta_2}S(\theta_1,\theta_2) = & \frac{\minus{\frac{m_V}{2}}}{\minus{1}\minus{1}}\sum_{\Theta_1\times\Theta_2} \plus{2\theta_1-1}\plus{2\theta_2-1} \\
	& -\frac{m_V}{2\minus{1}}\sum_{\substack{\theta_1<\theta_2 \\ \theta_1+\theta_2<1}} \plus{2\theta_1}\plus{2\theta_2-1} \\
	& -\frac{m_V}{2\minus{1}}\sum_{\substack{\theta_1>\theta_2 \\ \theta_1+\theta_2>1}} \plus{2(1-\theta_1)}\plus{2\theta_2-1} \\
	& -\frac{m_V}{2\minus{1}}\sum_{\substack{\theta_1>\theta_2 \\ \theta_1+\theta_2<1}} \plus{2\theta_1-1}\plus{2\theta_2} \\
	& -\frac{m_V}{2\minus{1}}\sum_{\substack{\theta_1<\theta_2 \\ \theta_1+\theta_2>1}} \plus{2\theta_1-1}\plus{2(1-\theta_2)}. \\
	\end{align*}

\item For $\sum_\Theta \plus{2\theta-1}$, we remember that $\Theta=(\Theta_1+\Theta_2)\cup(1+\Theta_1+\Theta_2)$, where the representatives are taken in $]0;1[$, mod $1$ and up to sign. Since each $\theta$ stands for a quadruple $\pm e^{\pm i\pi\theta}$, the sign corresponds to the conjugation and the mod $1$ to taking to opposite moments. Thus, if $\theta_1,\theta_2\in\Theta_1\times\Theta_2$, with $0<\theta_1,\theta_2<1$, the choice of representative for both sets $\Theta_1+\Theta_2$ and $1+\Theta_1+\Theta_2$ also depends on the inequalities $\theta_1<\theta_2$ and $\theta_1+\theta_2<1$:
\begin{itemize}[label=-]
\item If $\theta_1<\theta_2$ and $\theta_1+\theta_2<1$, representatives in $]0;1[$ can be taken to be $\theta_2-\theta_1$ and $\theta_1+\theta_2$, and we have
$$ \plus{2(\theta_2-\theta_1)-1}\plus{2(\theta_1+\theta_2)-1}=\plus{2\theta_1}\plus{2\theta_2-1} .$$
\item If $\theta_1>\theta_2$ and $\theta_1+\theta_2>1$, representatives in $]0;1[$ can be taken to be $\theta_1-\theta_2$ and $\theta_1+\theta_2-1$, and we have
$$ \plus{2(\theta_1-\theta_2)-1}\plus{2(\theta_1+\theta_2-1)-1}=\plus{2(1-\theta_1)}\plus{2\theta_2-1} .$$
\item If $\theta_1>\theta_2$ and $\theta_1+\theta_2<1$, representatives in $]0;1[$ can be taken to be $\theta_1-\theta_2$ and $\theta_1+\theta_2$, and we have
$$ \plus{2(\theta_1-\theta_2)-1}\plus{2(\theta_1+\theta_2)-1}=\plus{2\theta_1-1}\plus{2\theta_2} .$$
\item If $\theta_1<\theta_2$ and $\theta_1+\theta_2>1$, representatives in $]0;1[$ can be taken to be $\theta_2-\theta_1$ and $\theta_1+\theta_2-1$, and we have
$$ \plus{2(\theta_2-\theta_1)-1}\plus{2(\theta_1+\theta_2-1)-1}=\plus{2\theta_1-1}\plus{2(1-\theta_2)} .$$
\end{itemize}
\end{itemize}
Thus, the sums where the disjonction occurs cancel themselves. We are left with
\begin{align*}
R+\frac{C}{\minus{1}} \sum_{\theta\in\Theta}\plus{2\theta-1}
= & \minus{\frac{m_V}{2}}\left[ R_1R_2 + R_1\frac{C_2}{\minus{1}}\sum_{\Theta_2}\plus{2\theta_2-1}+R_2\frac{C_1}{\minus{1}}\sum_{\Theta_1}\plus{2\theta_1-1} \right. \\
 & \left. +\frac{C_1C_2}{\minus{1}\minus{1}} \sum_{\Theta_1\times\Theta_2}\plus{2\theta_1-1}\plus{2\theta_2-1} \right] \\
= & \minus{\frac{m_V}{2}}\left[ R_1R_2 + R_1\frac{C_2}{\minus{1}}\sum_{\Theta_2}\plus{2\theta_2-1}+R_2\frac{C_1}{\minus{1}}\sum_{\Theta_1}\plus{2\theta_1-1} \right. \\
& \left. +\frac{C_1C_2}{\minus{1}\minus{1}} \left(\sum_{\Theta_1}\plus{2\theta_1-1}\right)\left(\sum_{\Theta_2}\plus{2\theta_2-1}\right) \right] \\
= & \minus{\frac{m_V}{2}}\left( R_1+\frac{C_1}{\minus{1}}\sum_{\Theta_1}\plus{2\theta_1-1} \right)\left( R_2+\frac{C_2}{\minus{1}}\sum_{\Theta_2}\plus{2\theta_2-1} \right).\\
\end{align*}

\end{proof}

With this easier recursive relation, using that for an unbounded end,
$$R+\frac{C}{\minus{1}}\sum_\Theta \plus{2\theta-1} = 0+\frac{1}{\minus{1}}\plus{2\theta-1}=\frac{\plus{2\theta-1}}{\minus{1}},$$
we immediately get that the contribution for a branch $\mathcal{B}$ with $p$ unbounded ends is
$$\frac{1}{\minus{1}^p}\prod_{i=1}^p\plus{2\theta_i-1}\prod_{W\in\mathcal{B}}\minus{ \frac{m_W}{2} }.$$
To conclude, finishing the recursion over the real part of the curve using (i), we get a refined number of first order solutions equal to
$$4\prod_i \frac{\plus{2\theta_i-1}}{\minus{1}}\prod_V \minus{ \frac{m_V}{2} }.$$
Last, to get quantum indices instead of log-areas, it suffices to make each $\theta_i$ go to $1$, finally leading to the announced result
$$4\frac{2^{|s|}}{\minus{1}^{|s|}}\prod_{V}\minus{ \frac{m_V}{2} },$$
since $\plus{0}=2$.

\end{proof}

\bibliographystyle{plain}
\bibliography{biblio}

{\ncsc Institut de Math\'ematiques de Jussieu - Paris Rive Gauche\\[-15.5pt] 

Sorbonne Universit\'e\\[-15.5pt] 

4 place Jussieu,
75252 Paris Cedex 5,
France} \\[-15.5pt]

\medskip

{\it and}

\medskip

{\ncsc D\'epartement de math\'ematiques et applications\\[-15.5pt]

Ecole Normale Sup\'erieure\\[-15.5pt]

45 rue d'Ulm, 75230 Paris Cedex 5, France} \\[-15.5pt]

{\it E-mail address}: {\ntt thomas.blomme@imj-prg.fr}

\end{document}